\numberwithin{equation}{section}
\newtheoremstyle{my}{1.5em}{0.5em}{\em}{}{\sc}{.}{0.5em}{}
\newtheorem{theorem}{Theorem}[section]
\newtheorem{thm}{Theorem}[section]
\newtheorem{Theorem}[thm]{Theorem}
\newtheorem*{Theorem*}{Theorem}
\newtheorem{Corollary}[thm]{Corollary}
\newtheorem{corollary}[thm]{Corollary}
\newtheorem*{corollary*}{Corollary}
\newtheorem{Lemma}[thm]{Lemma}
\newtheorem{prop}[thm]{Proposition}
\newtheorem{Proposition}[thm]{Proposition}
\newtheorem*{conjecture*}{Conjecture}
\newtheorem*{question*}{Question}
\newtheorem{defn}[thm]{Definition}
\newtheorem{Definition}[thm]{Definition}
\newtheorem*{definitions*}{Definitions}
\newtheorem{rem}[thm]{Remark}
\newtheorem*{rem*}{Remark}
\newtheorem{Remark}[thm]{Remark}
\newtheorem*{remark*}{Remark}
\newtheorem*{remarks*}{Remarks}
\newtheorem*{example*}{Example}
\newtheorem{Example}[thm]{Example}
\newtheorem*{examples*}{Examples}
\newtheorem*{convention*}{Convention}
\newtheorem*{conventions*}{Conventions}
\newtheorem{Addendum}[thm]{Addendum}
\newtheorem{Notation}[thm]{Notation}
\newtheorem*{exercise*}{Exercise}
\newtheorem*{bibliographical-note*}{Bibliographical note}
\newtheorem{cor}[thm]{Corollary}
\newtheorem{lemma}[thm]{Lemma}
\newtheorem{remark}[thm]{Remark}
\newcommand{\Acknowledgements}{{\em Acknowledgements.} }
\newcommand{\scrH}{\EuScript{H}}
\newcommand{\scrA}{\EuScript{A}}
\newcommand{\scrT}{\EuScript{T}}
\newcommand{\calC}{\mathcal{C}}
\newcommand{\calF}{\mathcal{F}}
\newcommand{\scrK}{\EuScript{K}}
\newcommand{\scrC}{\EuScript{C}}
\newcommand{\scrS}{\EuScript{S}}
\newcommand{\scrW}{\EuScript{W}}
\newcommand{\scrL}{\EuScript{L}}
\newcommand{\bE}{\mathbb{E}}
\newcommand{\bS}{\mathbb{S}}
\newcommand{\bM}{\mathbb{M}}
\newcommand{\bF}{\mathbb{F}}
\newcommand{\bR}{\mathbb{R}}
\newcommand{\bZ}{\mathbb{Z}}
\newcommand{\bQ}{\mathbb{Q}}
\newcommand{\bC}{\mathbb{C}}
\newcommand{\bN}{\mathbb{N}}
\newcommand{\bP}{\mathbb{P}}
\newcommand{\cdbar}{\mathrm{\overline{\partial}}}
\newcommand{\id}{\mathrm{id}}
\renewcommand{\ker}{\mathrm{ker}}
\newcommand{\Hom}{\mathrm{Hom}}
\newcommand{\univ}{\textnormal{univ}}
\newcommand{\Herm}{\textnormal{Herm}}
\newcommand{\vdim}{\mathrm{vdim}}
\newcommand{\rank}{\mathrm{rank}}
\def\lra#1{\overset{#1}{\longrightarrow}}
\renewcommand{\hbar}{\overline{\frak{h}}}
\newcommand{\Ham}{\mathrm{Ham}}
\newcommand{\Diff}{\mathrm{Diff}}
\newcommand{\scrM}{\EuScript{M}}
\newcommand{\scrE}{\EuScript{E}}
\newcommand{\scrF}{\EuScript{F}}
\newcommand{\scrU}{\EuScript{U}}
\newcommand{\ccL}{\mathcal L}
\newcommand{\ccM}{\mathcal M}
\newcommand{\ccMbar}{\overline{\mathcal M}}
\newcommand{\bK}{\mathbb{K}}
\newcommand{\K}{\EuScript{K}}
\numberwithin{equation}{section}
\renewcommand{\leq}{\leqslant}
\renewcommand{\geq}{\geqslant}
\newcommand{\C}{\mathbb C}
\newcommand{\N}{\mathcal{N}}
\newcommand{\Z}{\mathbb{Z}}
\def\frh{\mathfrak{h}}
\def\frs{\mathfrak{s}}
\def\fre{\mathfrak{e}}
\def\fro{\mathfrak{o}}
\def\frg{\mathfrak{g}}
\newcommand{\Ext}{\operatorname{Ext}}
\renewcommand{\sc}{\operatorname{sc}}
\newcommand{\rk}{\operatorname{rk}}
\newcommand{\vfc}{\mathrm{vfc}}
\renewcommand{\Im}{\operatorname{Im}}
\title[Complex cobordism, Hamiltonian loops and global Kuranishi charts]{Complex cobordism, Hamiltonian loops, and \\ global Kuranishi charts}
\author{Mohammed Abouzaid, Mark McLean, Ivan Smith}
\address{Mohammed Abouzaid, Columbia}
\email{abouzaid@math.columbia.edu}
\address{Mark McLean, Stony Brook}
\email{mark.mclean@stonybrook.edu}
\address{Ivan Smith, Cambridge}
\email{is200@cam.ac.uk}
\date{October 2021.}
\begin{document}
\maketitle

\thispagestyle{empty}

\begin{abstract} Let $(X,\omega)$ be a closed symplectic manifold.  A loop $\phi: S^1 \to \Diff(X)$ of diffeomorphisms of $X$ defines a fibration $\pi: P_{\phi} \to S^2$.   By applying Gromov-Witten theory to moduli spaces of holomorphic sections of $\pi$, Lalonde, McDuff and Polterovich proved that if $\phi$ lifts to the Hamiltonian group $\Ham(X,\omega)$, then the rational cohomology of $P_{\phi}$ splits additively.  We prove, with the same assumptions, that the $\bE$-generalised cohomology of $P_{\phi}$ splits additively for any complex-oriented cohomology theory $\bE$, in particular the integral cohomology splits. This class of examples includes all  complex projective varieties equipped with a smooth morphism to $\bC \bP^1$, in which case the analogous rational result was proved by Deligne using Hodge theory. The argument employs virtual fundamental cycles of moduli spaces of sections of $\pi$ in Morava $K$-theory and results from chromatic homotopy theory. Our proof involves  a construction of independent interest:  we build global Kuranishi charts  for moduli spaces of pseudo-holomorphic spheres in $X$ in a class $\beta \in H_2(X;\bZ)$, depending on a choice of integral symplectic form $\Omega$ on $X$ and ample Hermitian line bundle over the moduli space of one-pointed degree $d = \langle \Omega,\beta\rangle$ stable genus zero curves in $\bC\bP^d$. \end{abstract}
{ \setcounter{tocdepth}{1}
\hypersetup{linkcolor=black}
\tableofcontents
}

\section{Introduction}

Let $(X,\omega)$ be a closed symplectic manifold and denote by $\Ham(X,\omega)$ the group of Hamiltonian diffeomorphisms of $X$, equipped with its $C^{\infty}$-topology. 
A loop $\phi: S^1 \to \Ham(X,\omega)$ defines  via the clutching construction a Hamiltonian fibration $P_{\phi} \to S^2$, i.e. a smooth fibre bundle with structure group $\Ham(X,\omega)$.  

\begin{Theorem}\label{thm:main}
The cohomology  $H^*(P_{\phi};\bZ)$ splits additively as $H^*(X;\bZ) \otimes H^*(S^2;\bZ)$. 
\end{Theorem}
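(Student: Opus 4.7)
The strategy is to adapt the approach of Lalonde-McDuff-Polterovich, replacing rational Gromov--Witten invariants with virtual fundamental cycles valued in complex-oriented generalized cohomology theories $\bE$. Since $H\bZ$ is complex-orientable, establishing the splitting for every such $\bE$ yields the integral conclusion. A preliminary step reduces matters, by a standard Serre spectral sequence and Poincar\'e duality argument exploiting that $X$ is stably almost complex and hence $\bE$-oriented, to the production, for each complex-oriented $\bE$, of a single class $\Theta_\bE \in \bE^{2n}(P_\phi)$ whose fibre-integral equals the unit of $\bE^0(\mathrm{pt})$.

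Geometrically, $\Theta_\bE$ should arise as the pushforward under evaluation at a fixed basepoint of $S^2$ of the virtual fundamental cycle $[\scrM(A)]^{\virt} \in \bE_*(\scrM(A))$ of the moduli space of pseudo-holomorphic sections of $\pi$ in a class $A \in H_2(P_\phi;\bZ)$ projecting to the generator of $H_2(S^2;\bZ)$ and chosen so that $\vdim_\bR \scrM(A) = 2n$. That this pushforward has the required fibre-integral translates into the invertibility of the Seidel element of $\phi$ in the quantum cohomology of $X$: the original rational Gromov--Witten argument of LMP is the template, and its algebraic input (the Seidel representation and the associated pair-of-pants and gluing compatibilities) survives verbatim once $\bE$-valued virtual fundamental cycles are available.

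The main technical step is therefore the construction of $[\scrM(A)]^{\virt}$ in $\bE$-homology, and this is exactly what the global Kuranishi chart construction advertised in the abstract provides: the Gromov-compactified moduli space is presented as the zero locus of an equivariant section of a complex equivariant vector bundle over a smooth manifold carrying a free action of a compact Lie group $G$. For any complex-oriented $\bE$ the VFC is then defined, in the usual way, by capping the ambient $G$-equivariant $\bE$-fundamental class with the $\bE$-Thom class of the obstruction bundle and pushing forward to the zero set. To reach $\bE = H\bZ$, the cleanest route signalled by the abstract is to carry this out for all Morava $K$-theories $K(n)$ at every prime $p$ and height $n$, combine with the LMP rational splitting, and invoke a chromatic fracture argument to reassemble an integral splitting.

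The principal obstacle is the construction of global Kuranishi charts with sufficient control along the Gromov boundary strata of $\scrM(A)$. Classical multisection perturbations used to define virtual classes are intrinsically $\bQ$-linear and blind to $p$-primary torsion; producing instead a single finite-dimensional smooth $G$-equivariant model that encodes every bubbled stratum simultaneously --- built from an integral symplectic form $\Omega$ on $X$ and an ample Hermitian line bundle over $\scrMbar_{0,1}(\bC\bP^d, d)$ with $d = \langle \Omega, A\rangle$ --- together with the verification that the resulting $\Theta_\bE$ is independent of these auxiliary choices, is the delicate analytic and homotopy-theoretic heart of the argument.
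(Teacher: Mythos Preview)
There is a genuine gap in your understanding of the global Kuranishi chart, and it propagates through the rest of the argument. You write that the moduli space is presented as a zero locus in a smooth manifold ``carrying a free action of a compact Lie group $G$''. This is not so: the $G$-action on the thickening $\scrT$ has only \emph{finite stabilisers} (Theorem~\ref{thm:Kuranishi}, Proposition~\ref{Prop:finite_orbit_type}), reflecting the fact that stable maps can have nontrivial finite automorphism groups. The quotient is therefore an orbifold, not a manifold. If the action were free, your sentence ``for any complex-oriented $\bE$ the VFC is then defined in the usual way'' would be correct, and you could take $\bE = H\bZ$ directly---but then there would be no content to the paper. The entire difficulty is that orbifolds do \emph{not} carry fundamental classes in $H\bZ$ (or in $MU$, or in a general complex-oriented $\bE$); they do so only rationally, which is why LMP's argument stops at $\bQ$.

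The substitute is the \emph{ambidexterity} property of Morava $K$-theories: for a $K(n)$-local complex-oriented spectrum $\bK$, a smooth $G$-manifold with finite stabilisers satisfies equivariant Poincar\'e duality in $\bK$ (this is Cheng's theorem, Section~\ref{Sec:Cheng}). The VFC therefore exists only for such $\bK$, and the passage to $\bZ$ is genuinely indirect. The paper's route for Theorem~\ref{thm:main} is not chromatic fracture but a more elementary device: it uses Morava theories $K_{p^k}(n)$ built on $\bZ/p^k$ (Proposition~\ref{Prop:coefficients-text}), so that for $2(p^n-1)>\dim P_\phi$ the Atiyah--Hirzebruch spectral sequence degenerates and one recovers the $\bZ/p^k$-cohomological splitting directly; splitting over every $\bZ/p^k$ then forces splitting over $\bZ$ (Lemma~\ref{lemma morava split}). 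The chromatic input (Hovey's theorem) is reserved for Theorem~\ref{thm:main_generalised}. Note also that the paper avoids constructing a Seidel representation or proving associativity of any quantum product; it works instead with a single push--pull map through a specific degeneration $\widetilde{P}\to\bS$ (Section~\ref{Sec:degeneration}), which sidesteps the gluing compatibilities your proposal would require.
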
 

 \begin{Corollary} \label{Cor:main}
 The `sweepout' map defined by $S^1 \times X \stackrel{\phi}{\longrightarrow} X$
\begin{equation}
\delta_{\phi}: H_*(X;\bZ) \to H_{*+1}(X;\bZ)
\end{equation} vanishes.
\end{Corollary}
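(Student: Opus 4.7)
My plan is to deduce Corollary~\ref{Cor:main} from Theorem~\ref{thm:main} by identifying the sweepout map $\delta_\phi$ with a connecting homomorphism in the Mayer--Vietoris sequence for the Hamiltonian fibration $X \hookrightarrow P_\phi \to S^2$; the additive splitting then forces this connecting map to vanish.

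First I would decompose $S^2 = D_+ \cup D_-$ with $D_+ \cap D_- \simeq S^1$, so that $P_\phi|_{D_\pm} \cong D_\pm \times X$ and the clutching identifies $(t, x)_+$ with $(t, \phi_t(x))_-$ on the overlap. The resulting Mayer--Vietoris sequence takes the form
\begin{equation*}
\cdots \to H_n(S^1 \times X; \bZ) \xrightarrow{f_n} H_n(X;\bZ)^{\oplus 2} \to H_n(P_\phi; \bZ) \to H_{n-1}(S^1 \times X;\bZ) \to \cdots
\end{equation*}
with $f_n = (\iota_{+,*}, \iota_{-,*})$, where $\iota_+(t,x) = x$ and $\iota_-(t,x) = \phi_t(x)$. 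Using the Künneth splitting $H_n(S^1 \times X; \bZ) \cong H_n(X;\bZ) \oplus H_{n-1}(X;\bZ)$, together with the fact that $\phi_{t_0}$ acts trivially on $H_*(X;\bZ)$ (being Hamiltonian-isotopic to the identity), a direct calculation shows
\begin{equation*}
f_n(\alpha + [S^1]\times\beta) = (\alpha,\; \alpha + \delta_\phi(\beta)) \in H_n(X;\bZ)^{\oplus 2}.
\end{equation*}
A linear change of basis on the target identifies $\coker(f_n)$ with $H_n(X;\bZ)/\Im\,\delta_\phi|_{H_{n-1}(X;\bZ)}$ and $\ker(f_{n-1})$ with $\ker\,\delta_\phi|_{H_{n-2}(X;\bZ)}$, yielding the short exact sequence
\begin{equation*}
0 \to H_n(X;\bZ)/\Im\,\delta_\phi|_{H_{n-1}(X;\bZ)} \to H_n(P_\phi;\bZ) \to \ker\,\delta_\phi|_{H_{n-2}(X;\bZ)} \to 0.
\end{equation*}

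Theorem~\ref{thm:main} together with the universal coefficient theorem yields, for any field $\bF$, the dimension identity $\dim_\bF H_n(P_\phi;\bF) = \dim_\bF H_n(X;\bF) + \dim_\bF H_{n-2}(X;\bF)$. Tensoring the short exact sequence above with $\bF$ (which is flat over $\bZ$) and comparing dimensions forces $\delta_\phi$ to vanish on $H_*(X;\bF)$ for every field $\bF$. Since a $\bZ$-linear map between finitely generated abelian groups vanishes if and only if it vanishes after tensoring with $\bQ$ and with every $\bF_p$, this upgrades to $\delta_\phi = 0$ over $\bZ$. The single substantive step is the explicit formula for $f_n$, which follows directly from the definitions of the clutching construction and of the sweepout; I foresee no real obstacle here, since Theorem~\ref{thm:main} carries all the geometric content and what remains is elementary linear algebra.
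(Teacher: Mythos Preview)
Your Mayer--Vietoris setup is essentially a repackaging of the Wang sequence the paper uses in Lemma~\ref{Lem:early}, and the identification of the connecting map with $\delta_\phi$ is fine. There is a minor slip---$\bF_p$ is not flat over $\bZ$, so you cannot tensor the integral short exact sequence; you should instead rerun the Mayer--Vietoris sequence with $\bF$-coefficients from the start, which yields the same dimension count and gives $\delta_{\phi,\bF}=0$ as you claim.

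The genuine gap is your final step: the assertion that a $\bZ$-linear map of finitely generated abelian groups vanishes once it vanishes after tensoring with $\bQ$ and every $\bF_p$ is \emph{false}. Multiplication by $2$ on $\bZ/4$ is a counterexample: it becomes zero after tensoring with any field, yet is nonzero. So knowing $\delta_{\phi,\bF}=0$ for every field $\bF$ does not, on its own, give $\delta_{\phi,\bZ}=0$.

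The paper avoids this issue entirely: Theorem~\ref{thm:main} is in fact proved by exhibiting the restriction $H^*(P_\phi;\bZ)\to H^*(X;\bZ)$ as a split epimorphism (see Sections~\ref{Sec:Splitting} and~\ref{Sec:conclusions}), and Lemma~\ref{Lem:early} shows directly that surjectivity of restriction is equivalent to vanishing of the integral sweepout. If you want to salvage a counting argument from the bare statement of Theorem~\ref{thm:main}, one route is to observe that Poincar\'e duality and the universal coefficient theorem give $|H_n(P_\phi;\bZ/p^k)|=|H_n(X;\bZ/p^k)|\cdot|H_{n-2}(X;\bZ/p^k)|$ for all primes $p$ and all $k\geq 1$; comparing orders in the Wang sequence over $\bZ/p^k$ then forces $\delta_\phi=0$ with $\bZ/p^k$-coefficients for every $k$, and since $\bigcap_k p^k H_{*+1}(X;\bZ)$ contains no $p$-torsion this yields the integral vanishing.
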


The result answers a twenty-year-old question of McDuff \cite{McDuff}. It is new even for a smooth projective variety equipped with a non-singular map to $\bC\bP^1$.   Combining Theorem \ref{thm:main} with topological arguments from  \cite[Section 4]{Lalonde-McDuff}, it follows that the  $\bF_p$-cohomology of a Hamiltonian fibration with base space $B$ splits additively when $B$ is (for instance) a product of complex projective spaces. For definiteness, we only discuss fibrations over $S^2$ in this paper.  Simple examples such as the Hopf surface $S^1\times S^3 \to S^2$ and the odd Hirzebruch surface $\bP(\mathcal{O} \oplus \mathcal{O}(-1)) \to \bP^1$ show respectively that the Hamiltonian hypothesis on $\phi$ is essential, and that for Hamiltonian loops the cohomology need not split multiplicatively. 
\medskip

  Theorem \ref{thm:main} is a special case of a more general result.  For a generalised cohomology theory $\bE$, we write $H^*(X;\bE) := \bE^*(X)$ for the $\bE$-cohomology groups of $X$, and $\bE_*$ for the coefficients groups $H^*(\mathrm{pt};\bE)$.
  
  \begin{Theorem} \label{thm:main_generalised}
  Suppose that $\bE$ is complex oriented.  Then, in the notation of Theorem \ref{thm:main}, there is an additive splitting $H^*(P_{\phi};\bE) \cong H^*(X;\bE) \otimes_{\bE_*} H^*(S^2;\bE)$.
  \end{Theorem}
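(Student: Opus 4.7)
The strategy is to generalise the Seidel-style argument of Lalonde--McDuff--Polterovich from rational cohomology to $\bE$-cohomology for any complex-oriented $\bE$. Because the base $S^2$ has cells only in dimensions $0$ and $2$, the Atiyah--Hirzebruch--Leray--Serre spectral sequence
\[ E_2^{p,q} = H^p(S^2; \bE^q(X)) \implies \bE^{p+q}(P_\phi) \]
has at most one potentially non-zero differential, $d_2$. Its vanishing is equivalent to the surjectivity of the fibre restriction $\bE^*(P_\phi) \to \bE^*(X)$; given an $\bE_*$-linear section of this restriction, a Leray--Hirsch argument produces the asserted additive splitting $\bE^*(P_\phi) \iso \bE^*(X) \otimes_{\bE_*} \bE^*(S^2)$. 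The central task is therefore to construct such a section.

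The geometric input is the moduli space $\scrM_{0,\infty}(P_\phi, \sigma)$ of genus-zero pseudo-holomorphic sections of $\pi : P_\phi \to S^2$, equipped with two marked points constrained to lie over $0, \infty \in S^2$, in classes $\sigma \in H_2(P_\phi;\bZ)$ projecting to $[S^2]$, with respect to a fibrewise-compatible almost complex structure. Extending the sphere-moduli construction of the paper to this relative setting, one equips each $\scrM_{0,\infty}(P_\phi, \sigma)$ with a global Kuranishi chart whose obstruction bundle carries a canonical stable complex structure. This permits the definition of a virtual fundamental class $[\scrM_{0,\infty}(P_\phi, \sigma)]^\virt$ for any complex-oriented $\bE$. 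Using evaluation at the $0$-point (landing in the fibre $X = \pi^{-1}(0)$) and at the $\infty$-point (landing in $P_\phi$ as a whole), and summing with Novikov weights $T^{\omega(\sigma)}$, one obtains a map
\[ \widetilde{\Psi} : \bE^*(X) \longrightarrow \bE^*(P_\phi) \otimes_{\bE_*} \Lambda, \]
where $\Lambda$ is the Novikov completion of $\bE_*$ along the area homomorphism. Its composition with restriction to a fibre recovers the Seidel endomorphism $\Psi$ of $\bE^*(X) \otimes_{\bE_*} \Lambda$, and the Hamiltonian hypothesis on $\phi$ implies, exactly as in the classical case, that the zero-area (constant-section) contribution to $\Psi$ is the identity. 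Hence $\Psi$ is invertible over $\Lambda$, and composing $\widetilde{\Psi}$ with $\Psi^{-1}$ produces a Novikov-lifted splitting of the restriction. A standard argument exploiting the grading by area then descends this to an honest splitting over $\bE^*$.

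The principal obstacle is the construction of global Kuranishi charts for moduli of sections with sufficient compatibility under gluing and boundary strata, so that $\Psi = \mathrm{res} \circ \widetilde{\Psi}$ is $\Lambda$-linear with Novikov-leading term equal to the identity; in particular, one must handle sphere-bubbling in fibres of $\pi$ in a manner compatible with the complex structure on the obstruction bundle. A secondary obstacle concerns the target theory: virtual fundamental classes arising from global Kuranishi charts with stable complex structure are most naturally produced in Morava $K$-theories $K(n)$, and the extension to a general complex-oriented $\bE$ requires input from chromatic homotopy theory — notably the universality of $MU$ among complex-oriented cohomology theories together with nilpotence and chromatic-convergence results — to assemble the height-by-height splitting information into the uniform statement of Theorem \ref{thm:main_generalised}.
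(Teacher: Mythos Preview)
Your proposal contains an internal inconsistency and a genuine gap in the chromatic step.

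The second paragraph asserts that the stable complex structure on the obstruction bundle ``permits the definition of a virtual fundamental class $[\scrM_{0,\infty}(P_\phi,\sigma)]^{\virt}$ for any complex-oriented $\bE$'', yet your final paragraph correctly concedes that global Kuranishi charts only produce virtual classes in Morava $K$-theories. The latter is the truth: constructing the virtual class from a global chart $(G,\scrT,E,s)$ requires an equivariant Poincar\'e duality isomorphism for the orbifold $\scrT/G$ (Cheng's theorem), which in turn rests on the ambidexterity property---that the norm map is an equivalence for finite groups. This holds for $K(n)$-local spectra but fails for general complex-oriented theories; it fails for $MU$ itself. So your map $\widetilde{\Psi}$ cannot be built directly for arbitrary $\bE$, and the argument in the second paragraph does not go through as written.

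The passage from $K(n)$-local theories to general $\bE$ is not achieved by the vague ``nilpotence and chromatic-convergence results'' you invoke. The paper's mechanism is sharper and operates at the level of spectra rather than cohomology groups. One first reformulates the splitting as a null-homotopy of the stable sweepout map $\eta_\phi \wedge \bE : S^1 \wedge X_+ \to X_+ \wedge \bE$; the geometric argument furnishes this null-homotopy when $\bE$ is $K(n)$-local and complex oriented. The decisive input is then Hovey's theorem that $BP \to \prod_n L_{K(n)} BP$ is the inclusion of a wedge summand: this yields the null-homotopy for $BP$, hence for each $MU_{(p)}$ via Quillen's splitting, and then for $MU$ by the injectivity of $[Y,\bE] \to \prod_p [Y,\bE_{(p)}]$ for finite $Y$. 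Only at this final stage does complex orientation of $\bE$---which factors the unit $\bS \to \bE$ through $MU$---finish the proof. Lifting from a cohomological splitting to a spectrum-level null-homotopy is essential here, and your Novikov-ring argument would not supply it.

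Separately, the paper's geometric set-up differs from yours. Rather than constructing a Seidel endomorphism and inverting it over a Novikov ring (the paper explicitly notes that naive quantum Morava $K$-theory is not expected to be associative, so this route is delicate), it degenerates $P_\phi \cup_X P_{\phi^{-1}}$ to the trivial bundle $X \times S^2$ over a blown-up $\bP^1 \times \bP^1$, and compares the push-pull map for $P_\phi$ directly to that for the trivial fibre, where a regular open chart shows it is the identity. This avoids ever forming or inverting a Seidel element, and sidesteps your ``standard argument exploiting the grading by area'', whose details you have not supplied.
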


The complex-oriented hypothesis on $\bE$ is essential; the real $KO$-theory of the odd Hirzebruch surfaces does not split additively.  A striking feature of the proof of Theorem \ref{thm:main_generalised} is that the geometry of spaces of holomorphic curves gives us access to the case of $\bE$ a Morava $K$-theory, a class of spectra which are `algebraically' natural, whilst results from chromatic homotopy theory then allow us to pass to the case of $\bE = MU$, the geometrically natural theory of complex cobordism.  See Section \ref{Sec:Examples} for several examples illustrating Theorems \ref{thm:main} and \ref{thm:main_generalised}.  

Recall that there are elements  $Q_i$ in the Milnor basis \cite{Milnor:Steenrod} of the Steenrod algebra $\scrA_p$, determined inductively in terms of the Bockstein $\beta$ and power operations $P^j$ via 
\begin{equation}
Q_0 = \beta; \ Q_{i+1} = [P^{\,p^i}, Q_i].
\end{equation}
The definition applies when $p=2$ taking $Q_{i+1} = [Sq^{2^{i}}, Q_{i}]$, giving $Q_0 = Sq^1$, $Q_1 = [Sq^2,Sq^1]$, etc. We use the techniques behind Theorem \ref{thm:main_generalised}  to show:

\begin{Corollary} \label{Cor:Steenrod}
The additive isomorphism $H^*(P_{\phi};\bZ/p) \cong H^*(X;\bZ/p) \otimes_{\bZ/p} H^*(S^2;\bZ/p)$ in Theorem \ref{thm:main_generalised} entwines the action of the subalgebra of the Steenrod algebra $\scrA_p$ generated by the operations $Q_i$. 
\end{Corollary}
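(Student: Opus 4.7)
The plan is to exploit the naturality in $\bE$ of the splittings of Theorem~\ref{thm:main_generalised}, together with the cofiber sequences of spectra whose boundary maps realize the Milnor operations.

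For each $n \geq 0$, the operation $Q_n$ on mod-$p$ cohomology is (up to sign) the composition $\pi \circ \delta$, where $\delta$ is the connecting homomorphism of the cofiber sequence
\[
\Sigma^{2(p^n - 1)} k(n) \xrightarrow{\ v_n\ } k(n) \xrightarrow{\ \pi\ } H\bZ/p,
\]
and $k(n)$ is connective Morava $K$-theory at $p$; taking $k(0) = H\bZ_{(p)}$ and $v_0 = p$ recovers the Bockstein $Q_0 = \beta$. Applying Theorem~\ref{thm:main_generalised} to $\bE = k(n)$ and to $\bE = H\bZ/p$ produces splittings on the corresponding cohomology of $P_\phi$. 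I claim these may be chosen so as to be compatible under $\pi$: the Seidel-type class $S_\phi \in \bE^*(P_\phi)$ that induces the splitting is built by pairing with a virtual fundamental cycle in $\bE$ of the moduli of holomorphic sections of $P_\phi \to S^2$, constructed via the paper's global Kuranishi charts, and this construction is natural under multiplicative transformations of complex-oriented theories, in particular under $\pi$.

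With compatible splittings in $k(n)^*$ and in $H^*(-;\bZ/p)$, the long exact sequence associated to the cofiber sequence above becomes a commutative diagram of tensor decompositions $H^*(X;-) \otimes H^*(S^2;-)$. The boundary $\delta$, and therefore $Q_n = \pi \circ \delta$, must then intertwine the splitting
\[
H^*(P_\phi;\bZ/p) \cong H^*(X;\bZ/p) \otimes H^*(S^2;\bZ/p),
\]
with $Q_n$ acting as a derivation on the right. Since the subalgebra of $\scrA_p$ generated by the $Q_i$ is the exterior algebra $E(Q_0, Q_1, \ldots)$, this yields the corollary.

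The principal technical task is verifying that the Seidel-type cohomology class is natural in the choice of complex-oriented $\bE$: one must check that the virtual fundamental cycles of the section moduli, built from global Kuranishi charts equipped with stable complex structures, are sent one to the other by $\pi\colon k(n) \to H\bZ/p$. This reduces to compatibility of the Thom isomorphisms for complex vector bundles under $\pi$, which holds once the chosen complex orientations of $k(n)$ and $H\bZ/p$ are compatible, as both can be taken to be induced from the universal orientation of $MU$ (equivalently, from $BP$).
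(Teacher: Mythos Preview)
Your approach is genuinely different from the paper's and, with one correction to the justification, would give a cleaner proof. The paper argues purely algebraically once the $MU$-module equivalence $(P_\phi)_+ \wedge MU \simeq (S^2 \times X)_+ \wedge MU$ is in hand (Section~\ref{sec:null-homot-compl}): such an equivalence forces the induced isomorphism on $H_*(-;\bZ/p)$ to respect the coaction of the quotient $(\scrA_p^\vee)_{MU} = \pi_*(H\bZ/p \wedge_{MU} H\bZ/p)$ (Lemma~\ref{lem:senger}), and the paper then quotes computations of Senger ($p$ odd) and Lawson--Tilson ($p=2$) identifying which dual Steenrod elements survive in this quotient---these turn out to be exactly the Milnor duals of the $Q_i$. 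Your cofiber-sequence realisation of $Q_n$ sidesteps that Hopf-algebroid computation entirely, which is a real gain.

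The gap is in how you justify compatibility of the $k(n)$- and $H\bZ/p$-splittings. You appeal to naturality of the virtual fundamental class in the complex-oriented target, but the paper's vfc machinery rests on Cheng's ambidexterity (Theorem~\ref{thm:Cheng-homological} and Lemma~\ref{lem:norm_map_equivalence_K-local}), which requires the target spectrum to be $K(n)$-local. Connective $k(n)$ is not $K(n)$-local, so no geometric virtual class in $k(n)$-coefficients is ever constructed; the splitting of Theorem~\ref{thm:main_generalised} for a general complex-oriented $\bE$ is obtained indirectly, by first producing a null-homotopy of $\eta_\phi \wedge MU$ via Hovey's theorem and then composing with the unit $MU \to \bE$ (see Section~\ref{sec:null-homot-compl} and Remark~\ref{Rmk:Uniqueness}). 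Your ``principal technical task'' paragraph is therefore aimed at the wrong target.

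The fix is immediate: take the $k(n)$- and $H\bZ/p$-splittings to be those induced from a single fixed null-homotopy of $\eta_\phi \wedge MU$ along the compatible ring maps $MU \to k(n) \to H\bZ/p$. Tracing through Lemma~\ref{Lem:null}, the resulting splitting maps $(P_\phi)_+ \to X_+ \wedge k(n)$ and $(P_\phi)_+ \to X_+ \wedge H\bZ/p$ are then compatible under $\pi$ by construction, and your long-exact-sequence argument goes through verbatim.
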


The isomorphism cannot be chosen to entwine the entire Steenrod algebra: the $MU$-modules associated to the product $S^2 \times S^2$ and the odd Hirzebruch surface $\bP(\mathcal{O} \oplus \mathcal{O}(-1))$ are isomorphic, but the operation $Sq^2$ is trivial in the first case but not in the second  as can be easily seen by considering the parity of the intersection form.

\begin{Remark} \label{Rmk:Uniqueness}
If $\bE$ is a $p$-local cohomology theory (for example, cohomology with $\bZ/p$ coefficients as in Corollary \ref{Cor:Steenrod}), then the splitting of Theorem \ref{thm:main_generalised} depends only on splitting the map of spectra of Theorem \ref{Thm:Hovey} (this has nothing to do with our particular problem and can be done universally).  The additive splitting of cohomology that results in principle depends on this choice.  Starting from Remark \ref{rem:sharpen}, one can show the splitting is compatible with restriction to the fibre $H^*(X;\bE)$ and with the pullback action of $H^*(S^2;\bE)$.  The splitting for a general complex-orientable cohomology theory (including integral cohomology as in Theorem \ref{thm:main}) depends on a further choice, because our techniques do not ensure that the various mod $p$ choices are `consistent' with the choice for rational cohomology theories and glue together in the arithmetic fracture square.  
\end{Remark}

 Theorems \ref{thm:main} \&  \ref{thm:main_generalised} and Corollaries \ref{Cor:main} \& \ref{Cor:Steenrod} can be viewed as new obstructions to surjectivity of the map $\pi_1\Ham(X) \to \pi_1\Diff(X)$.  
 \medskip

The rational cohomology analogue of Theorem \ref{thm:main} was known in two situations classically:  if $P_{\phi} \to \bC\bP^1$ is a projective morphism of complex algebraic varieties, the result follows from Deligne's degeneration of the Leray spectral sequence \cite{Deligne}; if $\phi$ is a Hamiltonian circle action, it follows from Kirwan's description of cohomology of symplectic quotients \cite{Kirwan}.   The statements for rational cohomology were established for a general monotone $X$ by Lalonde, McDuff and Polterovich \cite{LMP}, and for all $X$ by McDuff \cite{McDuff}, using Gromov-Witten theory.  The restriction to rational coefficients from that viewpoint arises because moduli spaces of holomorphic curves are (closed subsets of) orbifolds, and have virtual fundamental cycles only in rational cohomology.  McDuff \cite{McDuff} asked whether the result also holds integrally.  Abouzaid and Blumberg have introduced Hamiltonian Floer homology groups with coefficients in Morava $K$-theory \cite{AbouzaidBlumberg2021}, using crucially the fact that the classifying space of a finite group satisfies Poincar\'e duality in Morava $K$-theory and hence that compact orbifolds admit fundamental classes in that theory (the `ambidexterity' property).  Following that insight, and borrowing heavily from work of Cheng \cite{Cheng}, Theorem \ref{thm:main} gives a positive answer to McDuff's question.
 
 \begin{Remark}  The Morava $K$-theories are generalized cohomology theories $K(n)$ depending on a choice of prime $p\geq 2$ (usually suppressed from the notation) and a natural number $n \geq 1$, with coefficients $\bF_p[v^{\pm 1}]$ with $|v| = 2(p^n-1)$.  Traditionally, one sets $K(0) = H \bQ$; $K(1)$ is a wedge summand of mod $p$ complex $K$-theory; some authors write $K(\infty) = H\bF_p$. The theories with $n>1$ are very hard to compute, and our argument uses only general properties: most notably, ambidexterity, which is a duality statement for the Morava $K$-theories of classifying spaces of finite groups.  Along with suitable  Eilenberg-Maclane spectra, the Morava $K$-theories are the `only fields' in the stable homotopy category localised at $p$ \cite{DHS,HKR}.  The proof of Theorem \ref{thm:main} also employs Morava $K$-theories with coefficients related to the rings $\bZ/p^{k}\bZ$, which are not completely standard, and which we discuss in Section  \ref{Sec:more_coefficients}. 
 \end{Remark}

  A noteworthy feature of our approach to Theorem \ref{thm:main} is that we construct  \emph{global} Kuranishi charts for moduli spaces of genus zero curves, see Section \ref{Sec:curves}. This allows us to construct the virtual fundamental cycle in one go, rather than via a local-to-global argument involving a Kuranishi cover.  The simplifications that arise from the use of global charts cannot be overstated; not  only does it allow us to formulate all constructions for Theorem \ref{thm:main} at the level of generalised cohomology theories, but it also makes the paper both independent of \cite{AbouzaidBlumberg2021} and significantly shorter. The precise result that we prove is the following:
 
 \begin{Theorem} \label{thm:Kuranishi}
 Let $(X,\omega,J)$ be a compact symplectic manifold with compatible almost complex structure $J$ and fix a class $\beta \in H_2(X;\bZ)$.  The moduli space of stable genus zero $J$-holomorphic maps with $n$ marked points $\ccMbar_{0,n}(X,J,\beta)$ in class $\beta$ admits a \emph{global Kuranishi chart}: there is a compact Lie group $G$, a topological $G$-manifold $\scrT$ with a $G$-equivariant vector bundle $\scrE\to\scrT$ and a $G$-equivariant section $\frak{s}$ of $\scrE$ with $\ccMbar_{0,n}(X,J,\beta) = \frak{s}^{-1}(0)/G$.
 \end{Theorem}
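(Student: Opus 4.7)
The plan is to construct $\scrT$ as a moduli space of framed stable maps to $X$, where a framing identifies the holomorphic sections of a canonical line bundle on the domain with a fixed vector space and thereby embeds the domain into $\bP^d$. Fix an integral symplectic form $\Omega$ on $X$ with $d = \langle \Omega, \beta \rangle$, close enough to $\omega$ that $J$ is tamed by $\Omega$, and let $(L, A) \to X$ be a pre-quantum Hermitian line bundle whose unitary connection $A$ has curvature $-2\pi i\Omega$. For any smooth map $u: C \to X$, the connection $u^*A$ has a $(0,1)$-part which, together with $J$, defines a Cauchy-Riemann operator on $u^*L$. When $u$ is $J$-holomorphic in class $\beta$, an exact sequence calculation on the tree of rational components of $C$ yields $h^0(C, u^*L) = d+1$ and $h^1(C, u^*L) = 0$; hence each unitary basis of $H^0(C, u^*L)$ determines a degree-$d$ stable map $\varphi: C \to \bP^d$.

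Set $G = U(d+1)$. I would define $\scrT$ to consist of tuples $(C, u, F, \eta)$, where $C$ is a nodal genus zero curve with $n$ marked points, $u: C \to X$ is a smooth map of class $\beta$, $F: \bC^{d+1} \xrightarrow{\sim} H^0(C, u^*L)$ is an $L^2$-unitary isomorphism such that the induced $\varphi_F: C \to \bP^d$ is a degree-$d$ stable map, and $\eta$ lies in a finite-dimensional obstruction fibre $\scrE_{(C, u, F)}$. This fibre is obtained by pulling back a sufficiently high tensor power of the promised ample Hermitian line bundle on $\ccMbar_{0,1}(\bP^d, d)$ along the map $C \to \ccMbar_{0,1}(\bP^d, d)$, $c \mapsto [(C, c, \varphi_F)]$, and coupling with $u^*TX$. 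The group $G$ acts on $\scrT$ via precomposition on $F$. The section $\frak{s}: \scrT \to \scrE$ sends $(C, u, F, \eta) \mapsto \bar\partial_J u - \mathrm{ev}(\eta) \in \Omega^{0,1}(C, u^*TX)$. By choosing the tensor power large enough, ampleness guarantees that $\mathrm{ev}$ surjects onto the cokernel of the linearized operator $D\bar\partial_J$ at every $J$-holomorphic $u$, so $\scrT$ is locally cut out by a finite-dimensional smooth problem and $\frak{s}^{-1}(0)/G \cong \ccMbar_{0,n}(X, J, \beta)$.

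The main obstacle is establishing that $\scrT$ is a \emph{topological} $G$-manifold across the whole Deligne-Mumford type stratification of the domain. Over the open stratum (smooth irreducible domains) this is standard Fredholm theory for $\bar\partial_J$ in a suitable Sobolev completion, giving a smooth manifold chart of the expected dimension. Near nodal configurations one must execute a gluing construction compatible with the universal curve structure over $\ccMbar_{0,n}(\bP^d, d)$; the crucial point is that $h^0(C, u^*L) = d+1$ remains constant through smoothings (which follows from the uniform vanishing of $h^1$), ensuring that the unitary framing $F$ varies continuously as gluing parameters are switched on, and that the classifying map used to define $\scrE$ extends continuously across boundary strata. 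Compactness of $G = U(d+1)$, secured by the choice of \emph{unitary} rather than complex-linear framings, is essential for the Morava $K$-theoretic arguments (ambidexterity of $BG$) to be invoked later in the paper.
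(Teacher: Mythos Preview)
Your architecture is correct in outline, but the bookkeeping of $\scrT$, $\scrE$, and $\frak{s}$ is scrambled. As you define it, $\scrT$ is the total space of a finite-rank bundle over the infinite-dimensional space of smooth triples $(C,u,F)$, so it is not a topological manifold; and $\frak{s} = \bar\partial_J u - \mathrm{ev}(\eta)$ lands in $\Omega^{0,1}(C,u^*TX)$, which is not $\scrE$. Worse, your $\frak{s}^{-1}(0)$ consists of \emph{all} $\eta$-perturbed curves rather than only $J$-holomorphic ones, so $\frak{s}^{-1}(0)/G \neq \ccMbar$. In the paper the perturbed equation $\bar\partial_J u + \langle\eta\rangle \circ d\iota_F = 0$ is imposed in the \emph{definition} of $\scrT$, and $\frak{s}$ then records $\eta$ together with a framing defect (see below); the zero locus is exactly framed $J$-curves.

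The main technical device you miss is the Gromov trick, and it drives a design choice you did not make. The paper does \emph{not} require $F$ to be $L^2$-orthonormal: it asks only that the Gram matrix $\scrH(u,\Sigma,F)$ have positive eigenvalues, and adds a summand $\scrH_{d+1}$ to $\scrE$ with section $\exp^{-1}(\scrH(u,\Sigma,F))$. With this relaxation, the perturbed equation is rewritten as a genuine $\widetilde J_\Psi$-holomorphic curve equation for a sheared almost complex structure on the total space of a bundle over $X \times \scrC$. This is what makes the manifold structure on $\scrT$ across nodal strata accessible: one then quotes off-the-shelf gluing (Pardon, Appendix B) to obtain the fibrewise-smooth $C^1_{loc}$ submersion $\scrT \to \scrF$, which is precisely the structure fed into smoothing theory later. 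Your sentence ``one must execute a gluing construction'' names the problem but not this solution. Two further points: the surjectivity onto $\mathrm{coker}\,D\bar\partial_J$ is not automatic from ampleness alone---the paper proves it via H\"ormander $L^2$ estimates and Tian-type peak sections approximating $\delta$-functions; and marked points are added by pulling the $n=0$ chart back along the forgetful map $\scrF_n(d) \to \scrF(d)$, not built in from the start.
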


   The case of higher genus curves is substantially more delicate, and will be treated elsewhere.

   \begin{rem}
Our construction of global Kuranishi charts may be heuristically compared to earlier points of view as follows: one approach to constructing a fundamental chain on $\ccMbar_{0,0}(X,J,\beta)$ is to fix, locally on the moduli space, a collection of Donaldson-type divisors in $X$ whose inverse images under a stable map stabilise the domain curve. An open subset of the moduli space then maps to a space of pointed curves $\ccMbar_{0,N}$.  Our global chart $\scrT$ for $\ccMbar_{0,0}(X,J,\beta)$ comes with a globally defined map to the space of stable maps to projective space $\ccMbar_{0,0}(\bP^d,d)$. Heuristically, we replace the `hard' choice of  Donaldson divisors on $X$ with a `soft' choice of a connection on a line bundle and universal co-ordinate hyperplanes in projective space.  The heart of the matter is that we rigidify a stable map by framing the curve, and not just scattering marked points.      
   \end{rem}

 \begin{Remark}
 We do not prove that $\scrT$ is smooth, but we equip it with a  `fibrewise smooth structure' over a moduli space of domains; that, coupled with classical smoothing theory, suffices for the applications in this paper.  One could circumvent the smoothing theory by reworking some of the results of \cite{Cheng} for topological manifolds as was done in \cite{AbouzaidBlumberg2021}; the essential ingredient in this alternative approach is that the tangent spherical fibration of $\scrT$ admits a vector bundle lift, the proof of which uses the fibrewise smooth structure. The use of the spherical fibration in \cite{AbouzaidBlumberg2021} relied on explicit constructions to globalise local arguments, and it would be difficult to appeal to smoothing theory in that context.
\end{Remark}
 
  For concreteness, we first give a complete proof of Theorem \ref{thm:main}, and then revisit and elaborate on parts of that argument to establish Theorem \ref{thm:main_generalised} at the end of the paper.  We prove the latter result first in the case of complex cobordism, $\bE=MU$, by combining the geometric argument underlying Theorem \ref{thm:main} with deep results in chromatic homotopy theory; see Section \ref{Sec:chromatic}. The proof of the result over $MU$ involves showing that a stable $K(n)$-local version of the  sweepout map from Corollary \ref{Cor:main} is actually nullhomotopic, and not just cohomologically trivial.  This suffices to conclude Theorem \ref{thm:main_generalised} for a general complex-oriented $\bE$.  
  \medskip

The proof of Theorem \ref{thm:main} works directly with certain moduli spaces of sections of $P_{\phi}\to S^2$ for some fixed almost complex structure, and corresponding Kuranishi charts as obtained from Theorem \ref{thm:Kuranishi}. In particular, we do not need to construct `quantum Morava $K$-theory'  or a Seidel representation \cite{Seidel:QH_invertible} into its invertibles.  As with quantum $K$-theory for smooth projective varieties \cite{Lee:QKtheory}, the associativity of quantum Morava $K$-theory (defined as a module over the Novikov ring $\Lambda_{\bK_*}$, where $\bK = K(n)$ is a Morava $K$-theory) relies on working with a deformation of the usual Poincar\'e pairing  on $K^*(X;\Lambda_{\bK_*})$ constructed from moduli spaces of curves with two marked points.  A sequel to this paper \cite{AMS} uses global Kuranishi charts to construct and investigate symplectic quantum (ordinary and Morava) $K$-theory.  
 
  \medskip

\noindent   \textbf{Organisation of the paper.} Section \ref{Sec:outline} reviews and mildly re-interprets  the structure of the classical proof  (over $\bQ$) of Theorem \ref{thm:main} in a way which is well-adapted to generalisation.  After a background Section \ref{Sec:globalKuranishi} on global Kuranishi charts, Section \ref{Sec:Morava_virtual_class} constructs the virtual fundamental class in Morava $K$-theory, and proves Theorem \ref{thm:main} assuming the existence of global Kuranishi charts for moduli spaces of genus zero curves. Section \ref{Sec:curves} then constructs such global charts, proving Theorem \ref{thm:Kuranishi}, and completing the proof of Theorem \ref{thm:main}.  Up to this point, we are careful to work only with the language of generalised cohomology theories, avoiding technicalities with spectra.  Section \ref{Sec:review} reviews background from stable homotopy theory and gives an account of Cheng's theorems from \cite{Cheng}. Section \ref{Sec:chromatic} gives the proof of Theorem  \ref{thm:main_generalised}. The last two sections appeal to foundations of stable homotopy theory that are formulated in terms of classical spectra, so that the reader can use \cite{Adams1974} as a reference for non-equivariant constructions, and \cite{LMSM} for equivariant constructions.

\medskip

\medskip

\noindent \Acknowledgements MA would like to thank Andrew Blumberg for the collaboration \cite{AbouzaidBlumberg2021} which made it possible to imagine this work. 
The authors are grateful to Tobias Barthel, Andrew Blumberg, Tyler Lawson and Oscar Randal-Williams for generous explanations which simplified, strengthened, and corrected the stable homotopy parts of our arguments; to Dhruv Ranganathan and Dror Varolin for helpful conversations; and to Andrew Blumberg, Amanda Jenny, Tyler Lawson, Dusa McDuff, Oscar Randal-Williams
and Mohan Swaminathan for comments on and corrections to an earlier draft. 
\medskip

\noindent MA was partially supported by NSF awards DMS-1609148, DMS-1564172, and DMS-2103805, the Simons Collaboration on Homological Mirror Symmetry, a Simons Fellowship award, and the Poincar\'e visiting professorship at Stanford University.
MM was partially supported by the NSF grant DMS-1811861.  IS was partially supported by Fellowship N/01815X/1 from the EPSRC. 

\section{Examples and illustrations}\label{Sec:Examples}

We collect a number of examples illustrating Theorems \ref{thm:main} and \ref{thm:main_generalised}.

 \begin{Example} Let $X \subset \bC\bP^N$ be a smooth $n$-dimensional complex projective variety which is a $\bC\bP^k$-bundle over an $(n-k)$-fold $S$ such that the $\bC\bP^k$-fibres are linearly embedded. If $2k>n$ then $X$ has `positive defect' \cite{Ein}, i.e. its dual variety has codimension $>1$,  which means that there is a natural algebraic fibration $\Sigma \to X \to \bC\bP^1$ with fibres being smooth hyperplane sections $\Sigma \subset X$. Theorem \ref{thm:main} shows that $H^*(X;\bZ)$ splits (note that $S$, and the hyperplane section $\Sigma$, can have complicated torsion in cohomology). 
  \end{Example}

 Suppose a manifold $L$ admits a free circle action with quotient $Q$.  Viewing the action as a loop of diffeomorphisms, the corresponding fibration $L \to P \to S^2$ has total space $P = (L \times S^3)/S^1$ where $S^1$ acts diagonally, so there is a second fibration $S^3 \to P \to Q$ with Euler class the square of that defining $L\to Q$.  Comparing the Leray-Serre spectral sequences of these fibrations gives many examples of loops of diffeomorphisms for which the cohomology of $P$ splits rationally and not integrally.  Few symplectic manifolds admit free symplectic circle actions (the quotient is always the mapping torus of a symplectomorphism \cite{FGM}), and Hamiltonian circle actions always have fixed points, but the above examples are relevant to loops of diffeomorphisms of a manifold $X$ which happen to preserve a submanifold $L$: functoriality of the Leray-Serre spectral sequence and Theorem \ref{thm:main} yield constraints on the image of the map $\pi_1(\Ham(X,L)) \to \pi_1(\Diff(L))$.
 
 \begin{Example} \label{Ex:submanifold_preserved}
The Hopf circle action on $L = \bR\bP^3$ yields a fibration $P_{\phi} \to S^2$ whose cohomology splits over $\bQ$ but not over $\bZ/2$. If $L = \bR\bP^3 \subset X$ induces a surjection $H^*(X;\bZ/2) \to H^*(\bR\bP^3;\bZ/2)$, then an element of $\pi_1\, \Diff(X,L)$ which induces the Hopf rotation on $L$ cannot lift to $\pi_1\,\Ham(X,L)$. 
  \end{Example}

  \begin{Example}
  Consider the loop of diffeomorphisms $\phi$ defining the standard Hopf circle action on $\bR\bP^5$.  Since this is a free circle action, as above we have the dual descriptions of the associated bundle 
  \begin{equation} \label{eqn:two_descriptions}
  \bR\bP^5 \to P_{\phi} \to S^2 \quad \text{and} \quad S^3 \to P_{\phi} \to \bC\bP^2.
  \end{equation}
    By comparing the Serre spectral sequences of these two fibrations, one sees that the sweepout map for $\phi$ is non-trivial over $\bZ$: there is exactly one non-trivial differential 
    \begin{equation}
    \bZ = H^0(S^2;H^5(\bR\bP^5)) \to H^2(S^2;H^4(\bR\bP^5)) = \bZ/2
    \end{equation}
   It follows that the sweepout over $\bZ$ for the \emph{square} of the action vanishes, and the cohomology of $P_{\phi^2}$ splits additively over $\bZ$. However, it does not split additively in complex $K$-theory.  By \cite[Proposition 2.7.7]{Ktheory},
    \begin{equation}
    KU^i(\bR\bP^5) = \begin{cases} \bZ \oplus \bZ/4 & i \text{ even} \\ \bZ & i \text{ odd} \end{cases}
    \end{equation}
A mildly involved comparison of the two Atiyah-Hirzebruch-Serre spectral sequences associated to \eqref{eqn:two_descriptions} shows that there is a non-trivial component of the differential 
\begin{equation}
\bZ = H^0(S^2; KU^1(\bR\bP^5)) \to H^2(S^2; KU^0(\bR\bP^5)) = \bZ \oplus \bZ/4 \to \bZ/4
\end{equation}
which is a surjection $\bZ \to \bZ/4$, and the corresponding arrow is also non-trivial in the spectral sequence for $P_{\phi^2}$. Such examples become relevant to symplectic topology if one considers Hamiltonian loops preserving submanifolds, as in Example \ref{Ex:submanifold_preserved}.
\end{Example}

  \begin{Example}
  The additive splitting of $H^*(P_{\phi};\bK)$ does not hold for all generalised cohomology theories. Consider the Hirzebruch surfaces $F_0 = \bC\bP^1\times \bC\bP^1$ and $F_1 = \bC\bP^2 \# \overline{\bC\bP}^2$, which are both $S^2$-bundles over $S^2$ associated to appropriate Hamiltonian loops. In \cite{Bahri-Bendersky}, the additive $KO$-theory of any toric manifold $X$ is computed, starting from an Adams-type spectral sequence 
  \begin{equation}
  \Ext_{\scrA(1)}^{s,t}(H^*(X;\bZ/2);\bZ/2) \Rightarrow ko_{s-t}(X)
  \end{equation}
  where $\scrA(1)$ is the subalgebra of the Steenrod algebra generated by $Sq^1$ and $Sq^2$ and $ko$ denotes connective $KO$-theory. Let $S$ denote the $\scrA(1)$-module which is rank one in degree $0$ with trivial operations, and $M$ the $\scrA(1)$-module generated by elements $x$ of degree $0$ and $y$ of degree $2$ with $Sq^2(x)=y$.  Then $H^*(X;\Z/2)$ is a direct sum of shifts of $S$ and $M$ as an $\scrA(1)$-module, and this decomposition completely determines $KO^*(X)$ additively. Concretely, for $F_0$ and $F_1$ one has
  \begin{equation}
  H^*(F_0;\bZ/2) = S \oplus S[2] \oplus S[2] \oplus S[4], \qquad H^*(F_1;\bZ/2) = S \oplus S[2] \oplus M[2]
  \end{equation}
  (the cohomology ring of $F_0$ is even so $Sq^2$ vanishes, whilst for $F_1$ it is non-trivial on the exceptional divisor); these are distinguished by their 2-torsion,  and it follows that $H^*(F_1;KO)$ is not additively split.
  \end{Example}

\section{The argument in outline\label{Sec:outline}}

This section reviews and simplifies the classical argument (for the rational cohomology analogue of Theorem \ref{thm:main}) in a format which will guide the translation from classical cohomology to generalized cohomology.   Throughout the paper, we will use the following

\begin{Notation} 
For a subspace $A \subset Y$, we set $H^*(Y|A;\bK) := H^*(Y, Y\backslash A;\bK)$.
\end{Notation}

 In this section, $\bK$ denotes a ring (in later sections, it will typically be a ring spectrum).

\subsection{Background} Keep the previous notation; a loop $\phi \in \pi_1(\Ham(X,\omega))$ gives rise to a Hamiltonian fibre bundle $P_{\phi} \to S^2$ with fibre $X$. 

\begin{Lemma} \label{Lem:early}
The following are equivalent:
\begin{enumerate}
\item \label{item sweepout} The map $\delta_{\phi}: H_*(X;\bK) \to H_{*+1}(X;\bK)$ vanishes.
\item \label{item homlogy inj} The natural map $H_*(X;\bK) \to H_*(P_\phi;\bK)$ is injective.
\item \label{item ss degen} The cohomology spectral sequence for the fibration $P_\phi \to S^2$ degenerates.
\item \label{item cohomlogy surj} The restriction $H^*(P_{\phi};\bK) \to H^*(X;\bK)$ is surjective.
\end{enumerate} 
If $\bK$ is a field then the above conditions are equivalent to the condition that
$H^*(P_{\phi};\bK)$ splits additively as $H^*(X;\bK) \otimes_{\bK} H^*(S^2;\bK)$. 
\end{Lemma}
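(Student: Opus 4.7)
The plan is to exploit the fact that the Serre spectral sequence of $\pi\colon P_\phi \to S^2$ is extremely simple: since $S^2$ is simply connected with cohomology concentrated in degrees $0$ and $2$, the $E_2$-page in cohomology is
\[
E_2^{p,q} = H^p(S^2;\bK) \otimes_\bK H^q(X;\bK),
\]
supported only in the columns $p = 0, 2$, and the same holds in homology. A bidegree count on $d_r\colon E_r^{p,q} \to E_r^{p+r, q-r+1}$ shows that the only differential that can possibly be nonzero is $d_2$ (respectively $d^2\colon E^2_{2,q} \to E^2_{0,q+1}$ in homology), and all higher differentials vanish automatically. So degeneration of either spectral sequence is equivalent to the vanishing of this single differential.

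The main computational step is to identify these differentials with the sweepout map. Presenting $P_\phi$ via the clutching construction $P_\phi = (D^2_+ \times X) \cup_{S^1 \times X} (D^2_- \times X)$ and applying the Mayer--Vietoris sequence, one computes the connecting map explicitly. Under the Künneth decomposition $H_*(S^1 \times X;\bK) \cong H_*(X;\bK) \oplus H_{*-1}(X;\bK)$, the connecting map on the second summand is governed by $\delta_\phi$; this identifies $d^2$ with $\delta_\phi$ after collapsing $H_2(S^2;\bK) \cong \bK$ via the fundamental class. Dually, the cohomological $d_2$ is identified with the cohomological sweepout $\delta_\phi^*$ defined via $\phi^*$ and the Künneth projection.

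With these identifications, the equivalences are formal. For $(1)\Leftrightarrow(2)$, a direct Mayer--Vietoris calculation shows that the fiber-inclusion $H_*(X;\bK) \to H_*(P_\phi;\bK)$ has kernel exactly $\operatorname{im}(\delta_\phi)$, so is injective iff $\delta_\phi = 0$. The equivalence $(1)\Leftrightarrow(3)$ is tautological given the identification of $d^2$ with $\delta_\phi$. For $(3)\Leftrightarrow(4)$, the edge map
\[
H^*(P_\phi;\bK) \twoheadrightarrow E_\infty^{0,*} \hookrightarrow E_2^{0,*} = H^*(X;\bK)
\]
is surjective iff $d_2$ vanishes on column zero, iff the cohomology spectral sequence degenerates, iff $\delta_\phi^* = 0$, which is equivalent to $\delta_\phi = 0$ since both arise from the same map $\phi\colon S^1 \times X \to X$ and are transpose under the Kronecker pairing. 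When $\bK$ is a field, degeneration presents $H^n(P_\phi;\bK)$ as an extension of $E_\infty^{0,n} \cong H^n(X;\bK)$ by $E_\infty^{2,n-2} \cong H^{n-2}(X;\bK)$; every extension of $\bK$-vector spaces splits, yielding the additive isomorphism $H^*(P_\phi;\bK) \cong H^*(X;\bK) \otimes_\bK H^*(S^2;\bK)$, and the converse follows from an $E_\infty$ rank count.

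The only genuine content is the identification of the spectral sequence differentials with the sweepout, which is classical but requires care with Künneth and Mayer--Vietoris sign conventions; once in hand, everything else is immediate from the very restricted shape of the $E_2$-page.
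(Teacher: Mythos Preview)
Your approach is essentially the same as the paper's: both arguments recognise that the Serre/Wang sequence has a single possible differential, identify it with the sweepout, and read off the equivalences from exactness. The paper phrases things via the Wang sequence rather than Mayer--Vietoris, but these are equivalent here.

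There is, however, a genuine gap in your bridge between the homology conditions (1), (2) and the cohomology conditions (3), (4). You assert that $\delta_\phi = 0$ is equivalent to $\delta_\phi^* = 0$ because they ``are transpose under the Kronecker pairing.'' For a general ring $\bK$ the Kronecker pairing need not be perfect, so a map and its Kronecker adjoint can have different vanishing behaviour; this justification is insufficient. The paper handles this differently: having shown that the \emph{homology} spectral sequence degenerates iff $\delta_\phi = 0$, it invokes Poincar\'e duality on the closed oriented manifold $P_\phi$ to conclude that the homology spectral sequence degenerates iff the cohomology one does. You could equally well repair your argument by applying Poincar\'e duality on $X$ to identify $\delta_\phi$ and $\delta_\phi^*$ directly (both $X$ and $P_\phi$ are closed oriented manifolds in this setting, which is what makes this work). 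The rest of your argument is fine.
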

\begin{proof} 
The sweepout map $\delta_{\phi}$ arises in the Wang sequence
\begin{equation}
	\begin{tikzcd}
		H_{i}(X;\bK) \arrow[rr, "\delta_{\phi}"] && H_{i+1}(X;\bK) \arrow[rr, "\mathrm{incl}_*"] && H_{i+1}(P_{\phi};\bK) \arrow[r] & H_{i-1}(X;\bK).
	\end{tikzcd}
\end{equation}
It vanishes if and only if $\mathrm{incl}_*$
is injective and hence
conditions
(\ref{item sweepout})
and
(\ref{item homlogy inj})
are equivalent.
The homology Serre spectral sequence for $P_\phi \to S^2$
has a unique possible differential
which is equal to the map $\delta_\phi$ above.
Hence it degenerates if and only
if $\delta_\phi$ vanishes.
By Poincar\'{e}
duality on the total space $P_{\phi}$, 
the homology Serre spectral sequence for $P_\phi \to S^2$
degenerates if and only
if the corresponding cohomology
one does.
Therefore conditions
(\ref{item sweepout})
and (\ref{item ss degen})
are equivalent.
The restriction map
$H^*(P_{\phi};\bK) \to H^*(X;\bK)$
is surjective
if and only if the cohomology spectral sequence degenerates, 
by looking at the cohomology Wang sequence.
Hence (\ref{item ss degen}) and (\ref{item cohomlogy surj}) are equivalent.
Finally if $\bK$ is a field
then the cohomology spectral sequence
degenerates if and only if
$H^*(P_{\phi};\bK) \cong H^*(X;\bK) \otimes_{\bK} H^*(S^2;\bK)$.
 \end{proof}

 The basic structure of the proof in \cite{LMP}, when $\bK = \bQ$, is to use holomorphic sections of $P_{\phi} \to S^2$ to build a map $\widehat{\Phi}_{\phi}: H^*(X;\bK) \to H^*(P_{\phi};\bK)$ (more properly, a map with Novikov coefficients $\Lambda_{\bQ}$)   whose composition with restriction 
 \begin{equation}
\Phi_{\phi}: H^*(X;\bK) \stackrel{\widehat{\Phi}_{\phi}}{\longrightarrow} H^*(P_{\phi};\bK) \stackrel{\mathrm{res}}{\longrightarrow}H^*(X;\bK)
\end{equation} 
satisfies:
\begin{itemize}
\item $\Phi_{\id}$ is an isomorphism;
\item $\Phi_{\phi} \circ \Phi_{\phi^{-1}} = \Phi_{\id}$.
\end{itemize}
The argument above is inspired by the existence of the Seidel homomorphism \cite{Seidel:QH_invertible}
\begin{equation}\label{eqn:Seidel}
\pi_1\Ham(X,\omega) \longrightarrow QH^*(X,\omega;\Lambda_{\bR})^{\times}
\end{equation}
to invertibles in quantum cohomology (in general defined over the universal one-variable Novikov field over $\bR$).  However, to prove the cohomological splitting of Theorem \ref{thm:main} (or its rational cohomology counterpart) one can actually get away with somewhat less.  This will be helpful, since we will later work in a setting in which $\bK$ is not a coefficient field but an extraordinary cohomology theory.

\subsection{The degeneration\label{Sec:degeneration}}

Fix as before a Hamiltonian loop $\phi \in \pi_1(\Ham(X,\omega))$, and let $\bK$ be a commutative ring; we shall eventually be interested in the case $\bK = \bZ$, but working over the field $\bK = \bF_p$ is an interesting test case for our methods which avoids some complications.

Let $\bS$ denote the one-point blow-up of $\bC\bP^1\times \bC\bP^1$; we view $\bS$ as the total space of a ruled surface over $B = \bC\bP^1$ with a unique singular fibre over $0 \in B$, in particular with a smooth fibre at $\infty \in B$.  Let $\pi_B : \bS \to B$ be this ruling. We equip $\bS$ with a K\"ahler form.

\begin{Lemma} \label{lemma fibration existence}
There is a symplectic fibration $\pi_{\bS}: \widetilde{P} \to \bS$ with fibre $(X,\omega)$, with the property that 
\begin{enumerate}
\item on a neighborhood $W_\infty$ of $\infty \in B$, $\widetilde{P}|_{\pi_B^{-1}(W_\infty)}$ is isomorphic to the trivial
symplectic fibration
$(S^2 \times W_\infty \times X, \omega_{S^2} \oplus \omega_{S^2}|_{W_\infty} \oplus \omega_X)$ in such a way that $\pi_B|_{\pi_B^{-1}(W_\infty)}$ corresponds to the projection map $S^2 \times W_\infty \to W_\infty$;

\item $P_0 = P_{\phi} \cup_X P_{\phi^{-1}} \to \bC\bP^1 \vee \bC\bP^1$, where each component of the reducible singular fibre carries the canonical deformation class of symplectic structure.
\end{enumerate}
\end{Lemma}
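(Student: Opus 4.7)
The plan is to construct $\widetilde P$ in two stages: first as a smooth locally trivial $X$-bundle over $\bS$ satisfying (1) and (2), and then upgrade it to a symplectic fibration via Thurston's construction. The key input is that $\pi_1(\Ham(X,\omega))$ is a group, so the concatenation $\phi \cdot \phi^{-1}$ is nullhomotopic. I would fix a smooth nullhomotopy, i.e.\ a map $\Phi: [0,1] \times S^1 \to \Ham(X,\omega)$ with $\Phi_0 \equiv \id$ and $\Phi_1 = \phi \cdot \phi^{-1}$, arranged (after reparametrizing $\phi$ to equal $\id$ on a small arc near a basepoint of $S^1$) so that each $\Phi_s$ is the identity on a neighbourhood of the concatenation point of $S^1$. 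This local constancy is what will permit a smooth extension of the bundle across the node of the singular fibre.

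Cover $B \cong \bC\bP^1$ by open disks $W_\infty \ni \infty$ and $W_0 \ni 0$ with annular overlap $A$, and pick a radial cutoff $r: W_0 \to [0,1]$ with $r(0)=1$ and $r \equiv 0$ near $\partial W_0$. Over $\pi_B^{-1}(W_\infty) \cong W_\infty \times S^2$ take the trivial bundle $W_\infty \times S^2 \times X$ with the product symplectic form; this realizes~(1). Over $\pi_B^{-1}(W_0)$, work in complex coordinates $(z,w)$ near the node so that $\pi_B$ is locally $(z,w) \mapsto zw$. Place $P_\phi$ over the component $\{z=0\}$ of the singular fibre and $P_{\phi^{-1}}$ over $\{w=0\}$, identified at the node via the distinguished fibre $X$ of each; this realizes~(2). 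On a smooth fibre $\{zw=t\}$ place the Hamiltonian bundle $P_{\Phi_{r(t)}}$, parametrizing the loop so that its concatenation point sits on the vanishing circle $|z|=|w|=\sqrt{|t|}$. The classical identification of $P_{\phi \cdot \phi^{-1}}$ with $P_\phi \cup_X P_{\phi^{-1}}$ under the pinch map $S^2 \to S^2 \vee S^2$, together with the local constancy of $\Phi_s$, ensures that the family of Hamiltonian bundles extends smoothly across the nodal fibre. On the overlap $\pi_B^{-1}(A)$, $\Phi_{r(t)}$ is the constant loop, so the bundle there is trivial and glues to the one on $\pi_B^{-1}(W_\infty)$.

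To upgrade to a symplectic fibration, apply Thurston's construction: since the structure group lies in $\Ham(X,\omega)$ and hence preserves $\omega$, the fibrewise form extends to a closed $2$-form on $\widetilde P$; adding a sufficiently large positive multiple of the pullback of the K\"ahler form on $\bS$ then yields the required symplectic form. The main technical obstacle is the smoothness of the extension across the nodal fibre---in particular, verifying that the bundles $P_{\Phi_{r(t)}}$ on nearby smooth fibres glue compatibly with the reducible bundle $P_\phi \cup_X P_{\phi^{-1}}$ on the nodal fibre. Thanks to the local constancy of $\Phi_s$, this reduces to identifying trivial $X$-bundles on a neighbourhood of the vanishing circle, where the algebraic-geometric smoothing of a nodal conic in $\bS$ translates directly into a smoothing of the Hamiltonian bundle structure.
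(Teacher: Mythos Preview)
Your proposal is correct and follows the standard construction; the paper itself simply refers the reader to \cite[Section 3]{McDuff} for the proof, and your sketch is a faithful outline of what that reference does (nullhomotopy of $\phi\cdot\phi^{-1}$ in $\Ham$, clutching to build the smooth bundle over $\bS$, local triviality near the node to handle the singular fibre, and Thurston's argument to obtain the symplectic form).
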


\begin{proof} See \cite[Section 3]{McDuff}. \end{proof}

\begin{figure}[ht]
\begin{center}
\begin{tikzpicture}[scale=0.9]

\draw[semithick] (-2,0) -- (-2,5);
\draw[semithick] (-1,0) -- (-1,5);

\draw[semithick] (0,5) arc (180:200:10);
\draw[semithick] (0,0) arc (180:160:10);

\draw[semithick] (1,0) -- (1,5);
\draw[semithick,blue] (2,0) -- (2,5);
\draw[semithick] (3,0) -- (3,5);
\draw[semithick,red] (-2.5,4.5) -- (3.5,4.5);

\draw[fill] (2,4.5) circle (0.1);
\draw[fill] (0.0,4.5) circle (0.1);

\draw (1.7,4.8) node {$X_{t}$};
\draw (0.4,4.8) node {$X_0$};
\draw (-0.1,-.15) node {$P_{\phi^{-1}}$};
\draw (-0.1,2.85) node {$P_{\phi}$};
\draw (1.4,5.3) node {\color{blue} $P_{t_0} \cong X\times S^2$};
\draw (-3.5,4.5) node {\color{red} $(S^2 \times X)_{hor}$};

\draw[dashed] (0,-.5) -- (0,-2.2);
\draw[dashed] (2,-.5) -- (2,-2.2);

\draw[thick, arrows = ->] (0.5,-1) -- (0.5,-2);
\draw[semithick] (-2.5,-3) -- (3.5,-3);
\draw (-3.5,-3) node {$B = \mathbb{P}^1$};
\draw[thick, arrows = ->] (-3,2) -- (-3,-2.2);
\draw (-3, 2.5) node {$\mathbb{S}$};

\draw (-5,3.9) node {$\widetilde{P}$};
\draw[thick, arrows = ->] (-4.7,3.7) -- (-3.2, 2.7);

\draw[fill] (2,-3) circle (0.1);
\draw[fill] (0,-3) circle (0.1);
\draw (0.2,-2.7) node {$0$};

\draw (2.2,-2.7) node {$t$};

\draw [dashed,purple] (2.5,5) rectangle (3.5,0);
\node at (2.5,-3) {$($};
\node at (3.5,-3) {$)$};
\node at (3.1,-3.5) {$W_\infty$};
\node at (3.3,-0.4) {\color{purple} $\pi_B^{-1}(W_\infty)$};
\node at (4,-1) {\color{purple} $=S^2 \times W_\infty \times X$};
\draw[fill] (3,-3) circle (0.1);
\node at (3.1,-2.7) {$\infty$};

\draw[fill] (3,4.5) circle (0.1);
\node at (4.1,5.3) {$X_\infty$};
\draw [->](3.9,5) -- (3.2,4.6);
\end{tikzpicture}
\caption{The total space of the degeneration $\widetilde{P} \to \mathbb{S}$\label{Figure:degeneration}}
\end{center}
\end{figure}
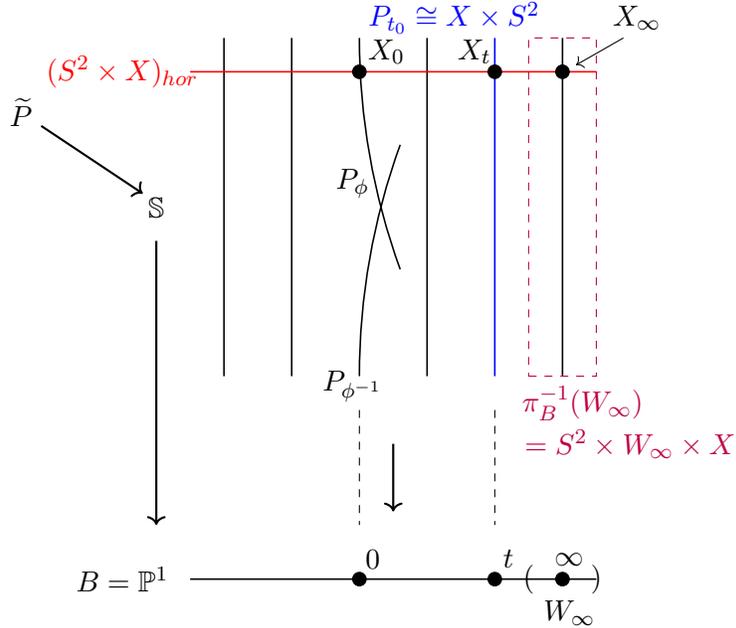

Let $S^2_{hor} \subset \bS$ be the image of a holomorphic section of the fibration $\bS \to B$ passing through a point in $P_\phi \backslash P_{\phi^{-1}}$.
The preimage of $S^2_{hor}$ in $\widetilde{P}$ is a Hamiltonian fibration over $S^2_{hor}$ which is isotopic to a trivial one.
We let $(X \times S^2)_{hor}$ be this preimage.
We let $X_t := \pi_{\bS}^{-1}(\pi_B^{-1}(t))$ be the fiber of $(S^2 \times X)_{hor}$ over $t$ for each $t \in B \cong S^2_{hor}$.
We also define $P_t := \widetilde{P}|_{\pi_B^{-1}(t)}$ for each $t \in B$.
For $t \in B \backslash 0$, $P_t$ is isotopic as a Hamiltonian fibration over $\pi_B^{-1}(t)$ to $(S^2 \times X)$
and is canonically equal to such a product when $t \in W_\infty$.
These objects are pictured in the Figure \ref{Figure:degeneration}.

\begin{Lemma} \label{Lem:Reformulate}
If the natural map $\Theta: H^*(\widetilde{P}|X_t;\bK) \to H^*(\widetilde{P};\bK)$ is injective for some $t \in B$,
then the sweepout map
$\delta_{\phi}: H_*(X;\bK) \to H_{*+1}(X;\bK)$
vanishes.
	\end{Lemma}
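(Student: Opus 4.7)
The plan is to translate the hypothesis on $\Theta$ into the injectivity of the inclusion-induced map $H_*(X;\bK) \to H_*(P_\phi;\bK)$, which by part (\ref{item homlogy inj}) of Lemma~\ref{Lem:early} is equivalent to the vanishing of $\delta_\phi$.

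First I would identify $\Theta$ topologically. The submanifold $X_t \subset \widetilde{P}$ is a fiber of the smooth fibration $\pi_{\bS}\colon \widetilde{P} \to \bS$ over the point $s(t) \in \bS$ at which the horizontal section meets $\pi_B^{-1}(t)$, so its normal bundle is the pullback of $T_{s(t)}\bS$, a trivial real rank-four bundle. Excision and the Thom isomorphism, which holds over any commutative ring $\bK$ because this normal bundle is (trivially) orientable, yield
\begin{equation*}
	H^{*}(\widetilde{P}|X_t;\bK) \;\cong\; H^{*-4}(X_t;\bK),
\end{equation*}
and Poincar\'e duality on the closed oriented manifolds $X_t$ and $\widetilde{P}$ then identifies $\Theta$ with the homology pushforward
\begin{equation*}
	(\iota_t)_* \colon H_*(X_t;\bK) \longrightarrow H_*(\widetilde{P};\bK)
\end{equation*}
induced by the inclusion $\iota_t\colon X_t \hookrightarrow \widetilde{P}$.

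Next, by Lemma~\ref{lemma fibration existence} and the construction of $(X \times S^2)_{hor}$, the submanifolds $\{X_t\}_{t \in B}$ are the fibers of a trivial $X$-bundle inside $\widetilde{P}$, hence the inclusions $\iota_t$ are pairwise isotopic and the kernel of $(\iota_t)_*$ is independent of $t$; I may therefore specialise to $t = 0$. By choice of the horizontal section, $X_0$ lies in the smooth component $P_\phi$ of the singular fiber $P_0 = P_\phi \cup_X P_{\phi^{-1}}$, giving a factorisation $X_0 \hookrightarrow P_\phi \hookrightarrow \widetilde{P}$. Injectivity of $(\iota_0)_*$ consequently forces $H_*(X;\bK) \to H_*(P_\phi;\bK)$ to be injective, and Lemma~\ref{Lem:early} finishes the argument.

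The only step requiring genuine care is the identification of $\Theta$ with $(\iota_t)_*$: this amounts to the naturality of Poincar\'e--Lefschetz duality under the pair morphism $(\widetilde{P}, \widetilde{P}\setminus X_t) \to (\widetilde{P}, \varnothing)$, together with the standard identification of the image of the Thom class of a closed submanifold with framed (here trivial) normal bundle with its Poincar\'e dual. Modulo this mild book-keeping the argument is formal, and it adapts verbatim to any setting where $\widetilde{P}$ is oriented for the coefficient ring $\bK$ (which will be crucial when $\bK$ is later promoted to a complex-oriented ring spectrum).
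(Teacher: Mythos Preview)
Your proposal is correct and follows essentially the same approach as the paper: both identify $\Theta$ with the homology pushforward $(\iota_t)_*$ via Poincar\'e duality and the Thom isomorphism for the trivially-framed fibre $X_t$, then specialise to $t=0$ to factor through $P_\phi$ and invoke Lemma~\ref{Lem:early}. The paper packages the duality/Thom identification into an explicit commuting diagram, but the substance is identical.
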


\begin{proof}
Let $2n=\dim_{\bR}(X)$. 
The map $\Theta$ does not depend on the choice of fibre $X_t$ of $\tilde{P} \to \bS$, and hence does not depend on $t$; let $t = 0$.
Let $\iota : X_0 \lra{} P_\phi$
and $\iota_\phi : P_\phi \lra{} \widetilde{P}$
be the inclusion maps. 
Noting that all spaces involved are canonically $\bK$-oriented, we have a commutative diagram
\begin{equation}
  \begin{tikzcd}
H_i(X_0) \ar[r,"(\iota_\phi)_* \circ \iota_*"] \ar[d,"PD"] & H_i(\widetilde{P})  \ar[d,"PD"] \\
H^{2n-i}(X_0) \ar[r] \ar[d,"\mathrm{Thom}"]& H^{2n+4-i}(\widetilde{P}) \ar[d,"="] \\
H^{2n+4-i}(\widetilde{P}|X_0)  \ar[r,"\Theta"] & H^{2n+4-i}(\widetilde{P})    
  \end{tikzcd}
\end{equation}
in which the vertical maps are isomorphisms, the horizontal maps at top and bottom are the maps induced by the natural inclusions, and we use the fact that, for the regular fibre $X = X_t \subset \widetilde{P}$ we have $\widetilde{P} / (\widetilde{P} \backslash X) \, \cong \, \Sigma^4(X_+)$. 
Since $\Theta$ is injective,
we have by the diagram above that
$(\iota_\phi)_* \circ \iota_*$
is injective and hence
$\iota_*$ is injective.
The result now follows from Lemma \ref{Lem:early}.
\end{proof}

\subsection{Moduli spaces of $J$-holomorphic curves}
\label{sec:moduli-spaces-j}

Fix a compatible almost complex structure $J$ on $\widetilde{P}$ so that the natural map $\pi_\bS : \widetilde{P} \to \bS$ is $J$-holomorphic and so that
it is a product almost complex structure on $\widetilde{P}|_{\pi_{\bS}^{-1}(W_\infty)} = X \times S^2 \times W_\infty$ (cf. the properties in Lemma \ref{lemma fibration existence}).
We let $\beta \in H_2(\widetilde{P};\bZ)$ denote the homology class of the constant curve $\{pt\} \times S^2 \times \{\infty\}$ in $X \times S^2 \times W_\infty$. Let $\ccMbar = \ccMbar_{0,2}(\beta)$ denote the moduli space of stable genus zero $J$-holomorphic curves $u: \bC\bP^1 \to \widetilde{P}$ which lie in the homology class $\beta$. We have an evaluation map
\begin{equation} \label{eqn evaluationev}
ev: \ccMbar \to \widetilde{P} \times \widetilde{P}
\end{equation}
and we let $\ccMbar_h \subset \ccMbar$ be the inverse image $ev^{-1}(\widetilde{P} \times (S^2 \times X)_{hor})$ (the subscript $h$ is a reminder that one marked point should evaluate into the `horizontal' copy of $S^2 \times X$).

\begin{Lemma} \label{lemma virtual dimension}
$\ccMbar_h$ has virtual dimension $\dim_\bR(\widetilde{P}) = 2n+4$.
\end{Lemma}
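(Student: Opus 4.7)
The plan is to apply the standard virtual dimension formula for moduli of stable genus zero $J$-holomorphic maps, compute the Chern number $c_1(T\widetilde{P})\cdot\beta$, and then subtract the codimension of the incidence constraint imposed on the second marked point.

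First, the target $\widetilde{P}$ is a symplectic fibration over the complex surface $\bS$ with fibre $X$, so $\dim_\bC\widetilde{P}=n+2$. The virtual real dimension of $\ccMbar_{0,k}(Y,\beta)$ for a symplectic manifold $Y$ with $\dim_\bC Y = m$ is $2\bigl(c_1(TY)\cdot\beta + m - 3 + k\bigr)$; specialised to our case $(m,k)=(n+2,2)$ this gives
\begin{equation}
\vdim_\bR\bigl(\ccMbar\bigr) \;=\; 2\bigl(c_1(T\widetilde{P})\cdot\beta + n + 1\bigr).
\end{equation}

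Second, I would compute $c_1(T\widetilde{P})\cdot\beta$ by exploiting the chosen representative of $\beta$. Over the neighbourhood $W_\infty$ of $\infty\in B$ the fibration splits as $\widetilde{P}|_{\pi_B^{-1}(W_\infty)} \cong S^2\times W_\infty\times X$, and $\beta$ is represented by the embedded sphere $C = \{\mathrm{pt}\}\times S^2\times\{\infty\}$. The restriction $T\widetilde{P}|_C$ splits as a trivial bundle (coming from the $TX$ and $TW_\infty|_\infty$ factors) direct sum with $TS^2$, so $c_1(T\widetilde{P})\cdot\beta = c_1(TS^2)[S^2] = 2$. Substituting gives $\vdim_\bR(\ccMbar) = 2n+6$.

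Third, the subspace $(S^2\times X)_{hor}\subset\widetilde{P}$ is the preimage under $\pi_{\bS}$ of a holomorphic section $S^2_{hor}\subset\bS$ of $\pi_B$, so it has real dimension $2+2n$ and hence real codimension $2$ in $\widetilde{P}$. The incidence condition $\ccMbar_h = ev^{-1}\bigl(\widetilde{P}\times(S^2\times X)_{hor}\bigr)$ imposes a constraint of real codimension $2$ on the second evaluation map, so
\begin{equation}
\vdim_\bR(\ccMbar_h) \;=\; (2n+6) - 2 \;=\; 2n+4 \;=\; \dim_\bR(\widetilde{P}),
\end{equation}
as claimed.

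The only step I would want to be careful about is the last one: passing from a numerical codimension count to an honest virtual dimension for the fibre product requires that the evaluation map be transverse to the submanifold in the relevant (virtual) sense. At the level of virtual dimensions this is automatic, since the virtual dimension of a fibre product is additive in codimensions, and no genuine transversality claim is needed to state Lemma \ref{lemma virtual dimension}; any transversality subtleties will be handled by the global Kuranishi chart construction of Theorem \ref{thm:Kuranishi} when the virtual fundamental class is actually built.
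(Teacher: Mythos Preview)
Your proof is correct and follows essentially the same approach as the paper: both compute $c_1(T\widetilde{P})\cdot\beta = 2$ from the product splitting over $W_\infty$, deduce $\vdim_\bR(\ccMbar_{0,2}(\widetilde{P},\beta)) = 2n+6$, and then subtract the codimension-$2$ constraint from $(S^2\times X)_{hor}$. The only cosmetic difference is that the paper phrases the first step via parametrized curves ($2n+8$) minus the residual $\bC^*$ reparametrisation, whereas you apply the $\ccMbar_{0,k}$ formula directly.
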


\begin{proof} 
	Since $\beta$ is represented by $\{pt\} \times S^2 \times \{\infty\}$ in $X \times S^2 \times W_\infty$,
	the homology class $\beta \in H_2(\widetilde{P})$ satisfies $\langle c_1(T\widetilde{P}), \beta\rangle = 2$. 
	Therefore the space of stable genus $0$ parametrized curves in $\widetilde{P}$ in class $\beta$ has real virtual dimension $\dim(\widetilde{P}) + 4 = 2n+8$, where $2n=\dim_\bR(X)$. We impose a divisorial condition on one evaluation map, and there is a residual reparametrization action of $\bC^*$ since the domains have only two marked points, which brings down the virtual dimension to $2n+4$.
\end{proof}

Recall, by Lemma \ref{lemma fibration existence}, we have an identification
$\widetilde{P}|_{\pi_B^{-1}(W_\infty)} = S^2 \times W_\infty \times X$
for some neighborhood $W_\infty \subset B$ of $\infty \in B = \C P^1$, 
where $S^2 \times W_\infty$ is identified with $\pi_B^{-1}(W_\infty)$ in the natural way so that $\pi_{\bS}$ is the projection map to $S^2 \times W_\infty$.
This gives us a natural identification:
\begin{equation}
	(S^2 \times X)_{hor} \cap (S^2 \times W_\infty \times X) = W_\infty \times X.
\end{equation}
After fixing such identifications, we define
$\ccMbar_{W_\infty} := ev^{-1}((S^2 \times W_\infty \times X) \times (W_\infty \times X))$
and
\begin{equation}
	ev|_{W_\infty} : \ccMbar_{W_\infty} \to (S^2 \times W_\infty \times X) \times (W_\infty \times X), \quad ev|_{W_\infty}(x) := ev(x).
\end{equation}

\begin{lemma} \label{Lem:good_open_chart}
	The moduli space $\ccMbar_{W_\infty}$ is regular
	and
	$ev|_{W_\infty}$ is an orientation preserving diffeomorphism onto the diagonal submanifold
	\begin{equation} \label{equation diagonal map}
		S^2 \times W_\infty \times X \to (S^2 \times W_\infty \times X) \times (W_\infty \times X),
	\end{equation}
$$
(x,y,z) \to ((x,y,z),(y,z)), \ \forall \ (x,y,z) \in (S^2 \times W_\infty \times X).
$$
\end{lemma}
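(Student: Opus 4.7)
The strategy is to show that every stable $J$-holomorphic map contributing to $\ccMbar_{W_\infty}$ has image contained in the trivial product region, classify such curves explicitly, verify surjectivity of the linearised $\cdbar$ operator, and identify the evaluation map with the diagonal embedding.

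First I would argue that if $[u] \in \ccMbar_{W_\infty}$, then the image of $u$ lies inside $\pi_{\bS}^{-1}(\pi_B^{-1}(W_\infty)) = S^2 \times W_\infty \times X$. Composing $u$ with the projection $\pi_B \circ \pi_{\bS}: \widetilde{P} \to B$ yields a holomorphic map from the nodal $\bC\bP^1$ to $B$ whose homology class equals $(\pi_B \circ \pi_{\bS})_* \beta = 0$, since $\pi_{\bS\,*}\beta$ is a fibre class of $\pi_B$. This composition is therefore constant with some value $b_0 \in B$, and the constraint $ev_1(u) \in S^2 \times W_\infty \times X$ forces $b_0 \in W_\infty$. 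Hence the image of $u$ is contained in $\pi_{\bS}^{-1}(\pi_B^{-1}(b_0)) \subset S^2 \times W_\infty \times X$.

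Next I would exploit the product structure to write $u = (u_{S^2}, u_{W_\infty}, u_X)$, with each component separately $J$-holomorphic. Since both $u_{W_\infty}$ and $u_X$ have zero homology class and therefore zero symplectic area, they are constant on every irreducible component of the domain; connectedness forces them to be globally constant with values $b_0 \in W_\infty$ and some $x \in X$. The $S^2$-component carries the full degree one of $\beta$ on a single component --- a biholomorphism $\bC\bP^1 \to S^2 \isom \bC\bP^1$ --- together with possible ghost bubbles where the two marked points have collided. After dividing by $\mathrm{Aut}(\bC\bP^1) = PSL_2\bC$, I normalise the main component to be the identity; the constraint $ev_2(u) \in W_\infty \times X$ then pins the second marked point to $pt_0$, while $p_1$ and the pair $(b_0, x)$ remain free. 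This gives an identification
\[ \ccMbar_{W_\infty} \;\isom\; S^2 \times W_\infty \times X, \qquad [u] \leftrightarrow (p_1, b_0, x), \]
extending continuously across the bubbled stratum $\{p_1 = pt_0\}$ in the standard way.

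Regularity then follows from the product splitting
\[ D_u\cdbar \;=\; D_{u_{S^2}} \oplus D_{u_{W_\infty}} \oplus D_{u_X} \]
acting on $\Omega^0(u^*T\widetilde{P}) = \Omega^0(T\bC\bP^1) \oplus \Omega^0(\underline{\bC}) \oplus \Omega^0(\underline{\bC^n})$: the vanishing of $H^1(\bC\bP^1; \mathcal{O}(2))$ and $H^1(\bC\bP^1; \mathcal{O})$ implies surjectivity of $D_u\cdbar$, with the analogous vanishing (plus a gluing parameter) handling the bubbled stratum. Under the identification above, $ev|_{W_\infty}$ is exactly $(p_1, b_0, x) \mapsto ((p_1, b_0, x), (pt_0, b_0, x))$, which after dropping the fixed $pt_0$-coordinate to identify $(S^2 \times X)_{hor} \cap (S^2 \times W_\infty \times X)$ with $W_\infty \times X$ becomes the diagonal embedding \eqref{equation diagonal map}. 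Orientation preservation is automatic: every identification is complex-linear with respect to the integrable product complex structure, and both the moduli space and the target inherit their orientations from these complex structures. The one step that requires real care is verifying that the parametrisation and regularity extend smoothly across the bubble locus where the marked points collide; this is routine given the product structure and the $H^1$-vanishing above, but is the sole point where one cannot remain purely at the level of algebraic classification.
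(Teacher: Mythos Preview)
Your proof is correct and follows essentially the same approach as the paper: show that any curve in $\ccMbar_{W_\infty}$ projects to a constant in $B$ and hence lies in the product region $S^2 \times W_\infty \times X$, then use the product splitting of $J$ to classify curves and verify regularity. The paper's own proof is a three-sentence sketch to this effect, and you have simply filled in the standard details (the explicit parametrisation, the $H^1$-vanishing for regularity, the bubble locus, orientations).
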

\begin{proof}
The composition of a stable map representing a point of $\ccMbar_{W_\infty}$ with the projection map $\pi_{\bS}$
is a holomorphic curve with image equal to a fiber of $\pi_B$.
Hence any such curve has image contained in $S^2 \times W_\infty \times X$.
Our result now follows from the fact that $J$ is a product in $S^2 \times W_\infty \times X$.
\end{proof}

\smallskip

Consider now the evaluation map 
\begin{equation}
ev: \ccMbar_h \to \widetilde{P} \times (S^2 \times X)_{hor}
\end{equation}
 introduced above.
 Let 
\begin{equation}
f: \widetilde{P} \times (S^2 \times X)_{hor} \to \bS \times X \qquad \mathrm{and} \qquad F: \widetilde{P} \times (S^2 \times X)_{hor} \to \widetilde{P}
\end{equation}
 be respectively the map obtained from the projections $\widetilde{P} \to \bS$ and $X\times S^2 \to X$, for $f$, and the projection to the first factor for $F$.  We also have projections
 \begin{equation}
 Q_\bS: \bS\times X \to \bS \qquad \mathrm{and} \qquad Q_X: \bS\times X \to X.
 \end{equation}
 The maps fit into the following diagram:
 \begin{equation} \label{eqn:not_too_large}
   \begin{tikzcd}
 \ccMbar_h \ar[d,"ev"] & &  \\ 
 \widetilde{P} \times (S^2 \times X)_{hor} \ar[d,"F"] \ar[rr,"f"] &&  \bS\times X \ar[dl,"Q_{\bS}"] \ar[dr,"Q_X"] &  \\
 \widetilde{P} \ar[r,"\pi_\bS"] & \bS && X 
\end{tikzcd} \end{equation}
 from which we see that 
 \begin{equation}
 Q_{\bS}\circ f\circ ev = \pi_\bS \circ F \circ ev.
 \end{equation}
Since we know that
 \begin{equation}
 \vdim(\ccMbar_h) = \dim_{\bR} (\bS\times X) = \dim_{\bR}(\widetilde{P}),
 \end{equation}
in a `regular' situation in which the moduli space $\ccMbar_h$ was everywhere transversely cut out and there was no bubbling $f\circ ev$ would be a map of closed manifolds of the same dimension.
 
\begin{Lemma} \label{Lem:injection_suffices}
If $f\circ ev: \ccMbar_h \to \bS \times X$ induces an injection on cohomology, then the sweepout map
$\delta_{\phi}: H_*(X;\bK) \to H_{*+1}(X;\bK)$
vanishes.
\end{Lemma}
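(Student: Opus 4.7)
The plan is to verify the injectivity hypothesis of Lemma \ref{Lem:Reformulate} for a suitable regular fibre $X_t$, by comparing the map $\Theta^{\widetilde{P}}\colon H^*(\widetilde{P}\,|\,X_t;\bK)\to H^*(\widetilde{P};\bK)$ with an analogous map for the closed manifold $\bS\times X$, through the correspondence provided by $\ccMbar_h$. Write $p=f\circ ev$ and $q=F\circ ev$ for brevity. Fix $t\in W_\infty$ and a basepoint $a=(x_0,t)\in S^2\times W_\infty\subset \bS$. Under the structural identifications of Lemma \ref{lemma fibration existence}, both the fibre $X_t\subset\widetilde{P}$ and the submanifold $\{a\}\times X\subset \bS\times X$ correspond to the same subset $\{x_0\}\times\{t\}\times X$ inside the common open piece $V:=S^2\times W_\infty\times X$.

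Unwinding the formula in Lemma \ref{Lem:good_open_chart} together with the definitions of $f$ and $F$ from \eqref{eqn:not_too_large}, under the canonical diffeomorphism $\ccMbar_{W_\infty}\cong V$ the maps $q$ and $p$ restrict, respectively, to the open inclusions $V\hookrightarrow\widetilde{P}$ and $V\hookrightarrow\bS\times X$. In particular the two preimages coincide, as the same subset
\[
Z \;:=\; p^{-1}(\{a\}\times X) \;=\; q^{-1}(X_t) \;=\; \{x_0\}\times\{t\}\times X \;\subset\; \ccMbar_{W_\infty}.
\]
Excising the complements of $V$, respectively of $\ccMbar_{W_\infty}$, produces canonical isomorphisms
\[
H^*(\widetilde{P}\,|\,X_t;\bK)\;\xleftarrow{\ \sim\ }\; H^*(V\,|\,X_t;\bK)\;\xrightarrow{\ \sim\ }\; H^*(\bS\times X\,|\,\{a\}\times X;\bK),
\]
together with a parallel iso with $H^*(\ccMbar_h\,|\,Z;\bK)$. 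Letting $\kappa$ denote the composite of the two outer isomorphisms, the fact that $p$ and $q$ coincide with the common open inclusion of $V$, combined with naturality of the forgetful map $\Theta^Y\colon H^*(Y\,|\,A;\bK)\to H^*(Y;\bK)$ from relative to absolute cohomology, gives the identity
\[
q^*\circ\Theta^{\widetilde{P}} \;=\; p^*\circ\Theta^{\bS\times X}\circ\kappa
\]
as maps $H^*(\widetilde{P}\,|\,X_t;\bK)\to H^*(\ccMbar_h;\bK)$.

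To conclude, I would show that $\Theta^{\bS\times X}$ is injective: the trivial oriented rank-$4$ normal bundle of $\{a\}\times X\subset\bS\times X$ and the Thom isomorphism identify it with the Gysin map $H^{*-4}(X;\bK)\to H^*(\bS\times X;\bK)$ sending $\alpha$ to $\mathrm{PD}_\bS[a]\otimes\alpha$, which is injective by K\"unneth since $H^*(\bS;\bZ)$ is torsion-free, so $H^*(\bS;\bK)$ is a free $\bK$-module. Combined with the hypothesis that $p^*$ is injective and the fact that $\kappa$ is an isomorphism, the displayed identity forces $\Theta^{\widetilde{P}}$ to be injective, and Lemma \ref{Lem:Reformulate} then yields the vanishing of $\delta_\phi$. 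The only delicate step is the diagram chase producing the displayed identity, where the geometric input — the regularity and product structure near $\infty$ encoded in Lemma \ref{Lem:good_open_chart} — enters essentially.
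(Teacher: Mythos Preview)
Your proof is correct and follows essentially the same approach as the paper's. Both arguments use the regular product structure over $W_\infty$ from Lemma \ref{Lem:good_open_chart} to identify the relative cohomology groups $H^*(\widetilde{P}\,|\,X_t;\bK)$ and $H^*(\bS\times X\,|\,\{a\}\times X;\bK)$, establish the key identity $q^*\circ\Theta^{\widetilde{P}}=p^*\circ\Theta^{\bS\times X}\circ\kappa$ (the paper writes this as $(f\circ ev)^*\circ\iota=(F\circ ev)^*\circ\Theta$), verify that the $\bS\times X$ side is injective, and then invoke Lemma \ref{Lem:Reformulate}. Your treatment is slightly more explicit about the excision isomorphism $\kappa$, and your justification for the injectivity of $\Theta^{\bS\times X}$ via freeness of $H^*(\bS;\bK)$ and K\"unneth is a minor variant of the paper's, which instead argues directly that the orientation class of $\bS$ splits $H^*(\bS\,|\,t)\to H^*(\bS)$.
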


\begin{proof}
Let $t \in \pi_B^{-1}(\infty)$.
The map $H^*(\bS|t) \to H^*(\bS)$ is injective (and defines the cohomological fundamental class of $\bS$), so
\begin{equation}
\iota: H^*(\bS\times X | \{t\}\times X) \to H^*(\bS\times X)
\end{equation}
is injective (this follows from the K\"unneth theorem if $\bK$ is a field, but more generally one can use the fact that the restriction $H^*(S^4) \cong H^*(\bS|t) \to H^*(\bS)$ is split by the choice of orientation class for $\bS$).  Thus, if $(f\circ ev)$ induces a cohomology injection, then $(f\circ ev)^* \circ \iota$ is also injective. 
By Lemma \ref{Lem:good_open_chart},
the maps $f \circ ev$ and $F \circ ev$ agree along $S^2 \times W_\infty \times X$.
Hence
\begin{equation}
(f\circ ev)^* \circ \iota = (F \circ ev)^* \circ \Theta
\end{equation}
where $\Theta: H^*(\widetilde{P}|X_\infty) \to H^*(\widetilde{P})$ comes from Lemma \ref{Lem:Reformulate} and where
we have naturally identified:
\begin{equation}
	H^*(\widetilde{P}|X_\infty) = H^*(S^2 \times W_\infty \times X|\{pt\} \times \{\infty\} \times X) = H^*(\bS \times X| \{t\} \times X).
\end{equation}
Therefore, $(F \circ ev)^* \circ \Theta$ is injective and so $\Theta$ is too.
\end{proof}

The moduli space $\ccMbar_h$ is typically poorly behaved. However if it admits a virtual fundamental class, one can then approach the hypothesis of the previous Lemma in terms of the map $f \circ ev$ being `degree one' in a suitable sense via Lemma \ref{Lem:good_open_chart}. 

\subsection{Splitting\label{Sec:Splitting}}
\label{sec:splitting}

The previous set-up shows that, given a virtual fundamental class for $\ccMbar_h$ of the correct virtual dimension, and a regular open subset of the moduli space as in Lemma \ref{Lem:good_open_chart}, one can satisfy the conditions of Lemma \ref{Lem:injection_suffices} and hence prove that the sweepout map $\delta_\phi$ vanishes.  This implies $H^*(P_{\phi};\bK) \cong H^*(X;\bK) \otimes H^*(S^2;\bK)$ if $\bK$ is a field by Lemma \ref{Lem:early}, but not when $\bK = \bZ$. 

For that we require a stronger condition, which is that the natural map:
\begin{equation} \label{eqn section}
	\rho : H^*(P_\phi;\bK) \to H^*(X;\bK)
\end{equation}
is a split epimorphism, so there is a $\bK$-module map $s : H^*(X;\bK) \to H^*(P_\phi;\bK)$
called the \emph{splitting map}
satisfying $\rho \circ s = \id$.
This, in turn, would give us an isomorphism:
\begin{equation} \label{eqn decomposition}
\begin{tikzcd}
	H^*(S^2;\bK) \otimes_{\bK} H^*(X;\bK) \cong H^*(X;\bK) \oplus H^*(P_\phi|X;\bK) \arrow[r, "{s\oplus e}"] & H^*(P_\phi;\bK)
	\end{tikzcd}
\end{equation}
with $e$ the natural map. 
If $\bK = \Z$ this would prove Theorem \ref{thm:main}.

Consider the moduli space
\begin{equation} \label{equation mbarbullet}
	\ccMbar_\bullet := ev^{-1}(P_\bullet \times (S^2 \times X)_{hor}) \subset \ccMbar_h
\end{equation}
together with the evaluation map
\begin{equation} \label{equation evalutionbullet}
	ev_\bullet : \ccMbar_\bullet \to P_\bullet \times (S^2 \times X)_{hor}
\end{equation}
where $\bullet = \phi$ or $\infty$.
We wish to use the moduli space $\ccMbar_\phi$ to build a map $H^*(X;\bZ) \to H^*(P_{\phi};\bZ)$ which splits the natural restriction map.
The proof of the splitting property will also use the moduli spaces $\ccMbar_h$ and $\ccMbar_\infty$ in a similar manner.

\begin{remark}
Note that the curves in $\ccMbar_\phi$ do not have image contained in $P_\phi$. Instead they have image in $P_\phi \cup_X P_{\phi^{-1}}$
and so they are not the same holomorphic sections that are used in \cite{LMP, McDuff}.
\end{remark}

For the moment, we will work under the unrealistic assumption that the map
\begin{equation} \label{eqn unrealistic}
	\ccMbar_h \lra{ev} \widetilde{P} \times (S^2 \times X)_{hor} \lra{pr} \widetilde{P}
\end{equation}
is a submersion.
This implies that $\ccMbar_h$, $\ccMbar_\phi$ and $\ccMbar_\infty$
are all oriented manifolds with fundamental classes
$[\ccMbar_\bullet] \in H_{vd}(\ccMbar_\bullet;\bK)$, $\bullet = h$, $\phi$ or $\infty$.
Hence we get the following commutative diagram:
\begin{equation} \label{eqn:push_pull}
\begin{tikzcd}
	{H^*((S^2 \times X)_{hor})} & {H^*(\ccMbar_h)} & {H_{vd-*}(\ccMbar_h)} & {H_{vd-*}(\widetilde{P})} & {H^*(\widetilde{P})} \\
	{H^*((S^2 \times X)_{hor})} & {H^*(\ccMbar_\bullet)} & {H_{vd_\bullet-*}(\ccMbar_\bullet)} & {H_{vd_\bullet-*}(P_\bullet)} & {H^*(P_\bullet)}
	\arrow["{ev^*}", from=1-1, to=1-2]
	\arrow["\lambda", from=1-2, to=1-3]
	\arrow["{ev_*}", from=1-3, to=1-4]
	\arrow["D", from=1-4, to=1-5]
	\arrow["\lambda", from=2-2, to=2-3]
	\arrow["{ev_*}", from=2-3, to=2-4]
	\arrow["D", from=2-4, to=2-5]
	\arrow["res", from=1-5, to=2-5]
	\arrow["{\cap Thom_{\ccMbar_\bullet}}", from=1-3, to=2-3]
	\arrow["{\cap Thom_{P_\bullet}}", from=1-4, to=2-4]
	\arrow["res", from=1-2, to=2-2]
	\arrow[Rightarrow, no head, from=2-1, to=1-1]
	\arrow["{ev^*}", from=2-1, to=2-2]
	\arrow["{\Psi_\bullet}"', curve={height=18pt}, from=2-1, to=2-5]
	\arrow["{\Psi_h}", curve={height=-30pt}, from=1-1, to=1-5]
\end{tikzcd}
\end{equation}
defining the push-pull maps $\Psi_h$ and $\Psi_\bullet$ for $\bullet = \phi$ or $\infty$
where $\lambda$ and $D$ are Poincar\'{e} duality maps.
Now consider the following commutative diagram:
\begin{equation} \label{eqn pushpull compatibility}
	\begin{tikzcd}
		{H^*(X)} & {H^*((S^2 \times X)_{hor})} & {H^*(\widetilde{P})} & {H^*(X)} \\
		&& {H^*(P_\bullet)}
		\arrow["{pr^*}"', from=1-1, to=1-2]
		\arrow["{\Psi_h}"', from=1-2, to=1-3]
		\arrow["res"', from=1-3, to=1-4]
		\arrow["{\Psi_\bullet}"', from=1-2, to=2-3]
		\arrow["res"', from=1-3, to=2-3]
		\arrow["res"', from=2-3, to=1-4]
	\end{tikzcd}
\end{equation}
By Lemma \ref{Lem:good_open_chart},
we have that the map $\Psi_\infty$ is equal to the map $pr_X^*$
where $pr_X$ is the projection map from $P_\infty = S^2 \times X$ to $X$.
This implies that $res \circ \Psi_\bullet \circ pr^*$ is the identity map in the commutative diagram above
with $\bullet = \infty$ and hence $res \circ \Psi_h \circ pr^*$ is the identity map,
which in turn implies that $res \circ \Psi_\phi \circ pr^*$ is the identity.
Hence \eqref{eqn section} is a split epimorphism with splitting map $s := \Psi_\phi \circ pr^*$, proving Theorem \ref{thm:main} if $\bK = \Z$.

It remains to drop the unrealistic assumption above.
In Section \ref{Sec:conclusions}, and again in Section \ref{Sec:nullhomotopy}, we will consider push pull maps $\Psi_h$ and $\Psi_\phi$
from \eqref{eqn:push_pull} in which $\ccMbar_h$ is replaced by a global Kuranishi chart for which the necessary smoothness and transversality properties do hold. This will lead to the cohomological splitting of Theorem \ref{thm:main} over $\bZ$, rather than just the degeneration of the Serre spectral sequence.

\section{Global Kuranishi Charts \label{Sec:globalKuranishi}}

A global Kuranishi chart presents a space as the zero set of a vector bundle section
over an orbifold, itself presented as a global Lie group quotient.
We will use such global Kuranishi charts of moduli spaces of genus zero curves 
to equip them with a  virtual fundamental class with respect
to certain generalized cohomology theories.
In order to do this, our global Kuranishi charts need additional structure.
In this section we will define global Kuranishi charts, describe basic operations on them
and give some conditions needed to put additional structures on such charts.

\subsection{Initial definitions} \label{section initial defns}

\begin{Definition}
	A \emph{global Kuranishi chart} is a tuple $(G,\scrT,E,s)$ where
	\begin{enumerate}
		\item  $G$ is a compact Lie group;
		\item $\scrT$ is a  topological manifold admitting a continuous action of $G$ with finite stabilisers;
		\item $E \to \scrT$ is a $G$-vector bundle;  
		\item $s: \scrT \to E$ is a $G$-equivariant section. 
	\end{enumerate}
	A \emph{global Kuranishi chart} for a metric space $M$ is a global Kuranishi chart $(G,\scrT,E,s)$ as above
	together with a `footprint' homeomorphism $M \to s^{-1}(0) / G$.
	The \emph{virtual dimension} of $(G,\scrT,E,s)$ is defined to be $\vdim(G,\scrT,E,s) := \dim(\scrT) - \rank(E) - \dim(G)$.

	A global Kuranishi chart $(G',\scrT',E',s')$ is \emph{homeomorphic} to $(G,\scrT,E,s)$
	if $G$ is isomorphic to $G'$ and there is a $G$-equivariant homeomorphism $\scrT \to \scrT'$
	and a $G$-vector bundle isomorphism $E \to E'$ covering this homeomorphism sending $s$ to $s'$.
	
\end{Definition}

We will refer to $\scrT$ as the \emph{thickening}. 
If it is clear what the `footprint' homeomorphism is, then we will omit it from our notation
and just refer to the quadruple $(G,\scrT,E,s)$ as a Kuranishi chart for $M$.
If it is clear which Kuranishi chart is associated to $M$,
then we will just write $\vdim(M)$ instead of $\vdim(G,\scrT,E,s)$.

\begin{Example}
	If $M$ is a manifold then $(0,M,M,0)$ is a global Kuranishi chart where $0$ is the trivial Lie group
	and $M$ is viewed as the zero dimensional vector bundle over $M$.
\end{Example}

In general moduli spaces of holomorphic curves will be  highly singular.
However, for computational purposes, it is helpful to look at nicer
regions of these moduli spaces as in the definitions below.   In the following definition, we always lift subsets of $s^{-1}(0)/G$ to $s^{-1}(0)$ by the natural quotient map.

\begin{defn} \label{Defn free regular}
Let $(G,\scrT,E,s)$ be a Kuranishi chart.
\begin{itemize}
\item 
	We say that $(G,\scrT,E,s)$ is \emph{free} along $C \subset s^{-1}(0)/G$ if
	the $G$-action is free along the preimage of $C$ in $s^{-1}(0)$. Such a chart is \emph{free} if it is free everywhere (i.e. along $s^{-1}(0)/G$).
	
\item	A global Kuranishi chart $(G,\scrT,E,s)$
	is \emph{smooth}
	if both $\scrT$ and $E$ are smooth and if the corresponding $G$-actions on $\scrT$ and $E$ are smooth. Note that the section $s$ need not be smooth.

\item 
	A smooth global Kuranishi chart $(G,\scrT,E,s)$ is \emph{regular}
	along a subset $C \subset s^{-1}(0)/G$ if $s$ is smooth and transverse to $0$ along the preimage of $C$ in $s^{-1}(0)$.
	It is \emph{regular} if it is regular along $s^{-1}(0)/G$.
	\end{itemize}
\end{defn}

Note that for a smooth global Kuranishi chart,
$s^{-1}(0)/G$ is naturally a smooth orbifold of dimension $\vdim(G,\scrT,E,s)$ along its locus of regular points and a smooth manifold along the set of points which are both regular and free.

\begin{Example}
	If $M$ is a smooth orbifold with a bounded number of stabilizer groups,
	then by \cite[Corollary 1.3]{pardon2019enough} there is a smooth Kuranishi chart
	$(U(m),\scrT,\scrT,0)$ for $M$ where $m \in \bN$.
	In other words, this orbifold is expressed as a global quotient by a Lie group.
\end{Example}

Global Kuranishi charts $(G,\scrT,E,s)$ for a given metric space $M$
are not unique. We now describe operations
on $(G,\scrT,E,s)$
which we will later show preserve the virtual fundamental class.

\begin{enumerate}
	
	\item (Germ equivalence).
	Take a $G$-invariant open neighborhood $U \subset \scrT$ of $s^{-1}(0)$, and replace $(G,\scrT,E,s)$ by $(G,U,E|_U, s|_U)$.
	
	\item (Stabilization).
	 Let $p: W \to \scrT$ be a $G$-equivariant vector bundle, and replace $(G,\scrT,E,s)$ by $(G,W, p^*E \oplus p^*W, p^*s\oplus\Delta)$, where $\Delta$ is the tautological `diagonal' section of $p^*W \to W$. We call this a {\it stabilization} of $(G,\scrT,E,s)$ by $W$.
	
	\item (Group enlargement).
	 Let $G'$ be a compact Lie group,  let $q: P \to \scrT$ be a $G$-equivariant principal $G'$-bundle, and replace $(G,\scrT,E,s)$ by $(G\times G', P, q^*E, q^*s)$.
	
\end{enumerate}

These operations each preserve the property of being free, smooth or regular.
In Gromov-Witten theory, we sometimes wish to pull back submanifolds
under an evaluation map defined on the moduli space.
To make such pullbacks well behaved, it is helpful
to ensure that the corresponding maps from the thickening
are topological submersions.

\begin{Lemma}  \label{Lem:transverse_global_chart}
	Let  $\K = (G,\scrT,E,s)$ be a smooth global Kuranishi chart for a compact metric space $M$
	and suppose we have a continuous (but not necessarily smooth) $G$-equivariant evaluation map $ev: \scrT \to X$ to a smooth manifold $X$.
	Then there is a continuous $G$-equivariant map $\widetilde{ev} : ev^*TX \to X$ and a $C^0$-small
	fiber-preserving $G$-equivariant homeomorphism $h : ev^*TX \to ev^*TX$ so that the following properties hold.
	\begin{enumerate}
		\item $\widetilde{ev}|_\scrT = ev$.
		\item $ev^*TX$ admits a smooth structure with respect to which the map $\widetilde{ev} \circ h$ is a smooth submersion near $\scrT$.
		\item The map $h$ is $C^0$-small and isotopic through $G$-equivariant homeomorphisms to the identity.
		\item If, in addition, $ev$ is already smooth near a compact subset $K \subset \scrT$,
		then we can assume that $h$ is the identity near $K$.
	\end{enumerate}
	In particular the stabilization of $\K$ by $ev^*TX$ is a smooth global Kuranishi chart
	together with a smooth submersion to $X$ extending $ev$.
\end{Lemma}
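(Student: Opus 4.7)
The strategy is to promote $ev$ to the total space of $ev^*TX$ via a fibrewise exponential, so that the extension is a submersion in the fibre directions, and then to absorb the non-smoothness of $ev$ into a small fibrewise translation $h$. The key observation is that the lemma only demands $\widetilde{ev}\circ h$ to be smooth, so one may off-load the roughness of $ev$ onto $h$ rather than onto $\widetilde{ev}$ itself.

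First I would fix a $G$-invariant Riemannian metric on $X$ (which exists since $G$ is compact and acts smoothly on $X$), giving a smooth $G$-equivariant exponential map $\exp$. Using standard equivariant smoothing for continuous equivariant maps between smooth $G$-manifolds (Illman/Bredon), I would then choose a smooth $G$-equivariant map $ev':\scrT\to X$ that $C^0$-approximates $ev$ finely enough that, for every $p\in\scrT$, there is a unique length-minimising geodesic from $ev'(p)$ to $ev(p)$, and that moreover $ev'=ev$ on an open neighbourhood of $K$ whenever $ev$ is already smooth near $K$. Let $\sigma(p)\in T_{ev'(p)}X$ be the initial velocity of that short geodesic, so that $\exp_{ev'(p)}(\sigma(p))=ev(p)$; then $\sigma$ is a continuous $G$-equivariant section of $(ev')^*TX$ which vanishes near $K$.

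Parallel transport along these short geodesics gives a continuous $G$-equivariant bundle isomorphism $\Phi:ev^*TX\to (ev')^*TX$; pulling back the smooth structure on $(ev')^*TX$ along $\Phi$ endows $ev^*TX$ with a smooth $G$-vector bundle structure for which $\Phi$ is a diffeomorphism. I would then set
\begin{equation}
\widetilde{ev}(p,v) := \exp_{ev'(p)}\bigl(\Phi(p,v)+\sigma(p)\bigr),\qquad
h(p,v) := \bigl(p,\,v-\Phi^{-1}(\sigma(p))\bigr).
\end{equation}
Both maps are continuous and $G$-equivariant; $\widetilde{ev}(p,0)=\exp_{ev'(p)}(\sigma(p))=ev(p)$, yielding (1); $h$ is fibrewise, $C^0$-small (since $\sigma$ is small), and linearly isotopic to $\mathrm{id}$ through $h_t(p,v):=(p,v-t\Phi^{-1}(\sigma(p)))$, yielding (3); and $h$ is the identity wherever $\sigma$ vanishes, yielding (4). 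A direct substitution gives $\widetilde{ev}\circ h\circ\Phi^{-1}(p,w)=\exp_{ev'(p)}(w)$, which is smooth in $(p,w)$ (as $ev'$ and $\exp$ are smooth) and a submersion near the zero section (its fibre-derivative at $w=0$ is the identity of $T_{ev'(p)}X$), yielding (2). The only real technical point is producing the $G$-equivariant smooth approximation $ev'$ that agrees with $ev$ on a neighbourhood of the prescribed smooth locus $K$; this is standard given compactness of $G$, but is the step where one must be careful to invoke the correct relative equivariant smoothing statement.
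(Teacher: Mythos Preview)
Your argument is correct and follows essentially the same approach as the paper: extend $ev$ to $ev^*TX$ via the Riemannian exponential, smoothly approximate $ev$ by a $G$-equivariant map $ev'$ (equal to $ev$ near $K$), and absorb the discrepancy into a small fibrewise translation. The paper's version is marginally more direct in that it takes $\widetilde{ev}(e,v)=\exp_{ev(e)}(v)$ without the parallel transport and $\sigma$-shift you introduce, and instead puts all of the correction into $h$; your explicit description of the smooth structure on $ev^*TX$ via $\Phi$ is actually more detailed than what the paper writes down.
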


\begin{proof}
	Choose a complete Riemannian metric on $X$ and let $\exp : TX \to X$ be the exponential map.
	Define
	\begin{equation}
		\widetilde{ev} : ev^*TX \to X, \quad (e,v) \to \exp(v), \ \forall \ e \in \scrT, \ v \in T_{ev(e)}V.
	\end{equation}
	
	By a mollification argument, we can find a continuous family of $G$-equivariant maps
	$ev_t : \scrT \lra{} X$ so that $ev_0 = ev$ and $ev_1$ is smooth and so that all of these maps are $C^0$
	close to each other.
	We can also assume these maps are the identity near our compact subset $K$.
	Now choose a $C^0$ small $G$-equivariant
	continuous family of fiber preserving homeomorphisms $h_t$, $t \in [0,1]$ of $ev^*TX$ whose restriction to each fiber is a translation map and
	so that $h_1$ sends $x \in \scrT$ to the unique point  in $ev^*TX|_x \cap \widetilde{ev}^{-1}(ev_1(x))$.
	\end{proof}

\subsection{Microbundles}
The global Kuranishi charts for moduli spaces of holomorphic curves that we obtain from geometry are not obviously smooth.  We will use the theory of microbundles to show that, by stabilizing, one can equip the moduli space with some smooth global chart. 
A microbundle is a generalization of the concept of a vector bundle,
allowing us to speak of the `tangent bundle' of a  topological manifold.
A prototype statement is that if the tangent microbundle of a topological manifold admits a vector bundle lift, then the product of that manifold with a Euclidean space 
admits a smooth structure.

\begin{Remark} We will later use a number of results of Cheng \cite{Cheng} on $K(n)$-duality for compact Lie group actions on smooth manifolds with finite stabilisers. An alternative  to appealing to smoothing theory would be to observe that Cheng's results also apply \emph{mutatis mutandis} to topological manifolds with locally linear actions under the same hypothesis of the microbundle admitting a vector bundle lift.  Since the technical ingredients in both approaches are largely the same, and having smooth global charts for moduli spaces of genus zero curves may be of wider interest, we have chosen the former approach. \end{Remark}

\begin{Definition} A \emph{microbundle} of rank $n$ over a topological space $X$ is a diagram $\{X \stackrel{s}{\longrightarrow} E \stackrel{p}{\longrightarrow} X\}$ where $E$ is a topological space and $p$ and $s$ are continuous maps satisfying 
	\begin{itemize}
		\item $p\circ s =\id_X$;
		\item for each $x\in X$ there are neighbourhoods $U_x \subset X$ of $x$ and $V_x \subset E$ of $s(x)$ and a homeomorphism $h_x: U_x \times \bR^n \to V_x$ for which $p\circ h_x = \mathrm{pr}_{U_x}$ and $h_x|_{U_x\times\{0\}} = s$. (Here $pr_{U_x}$ denotes the natural projection map to $U_x$.)
	\end{itemize}
A \emph{morphism} $\phi : E_1 \lra{} E_2$ of microbundles
$\{X \stackrel{s_i}{\longrightarrow} E_i \stackrel{p_i}{\longrightarrow} X\}$, $i=1,2$
is the equivalence class of a continuous map 
$\phi : U_1 \lra{} E_2$ defined on a neighborhood $U_1$ of $s_1(X)$ and which commutes with the
maps $s_i$ and $p_i$, $i=1,2$ respectively.
Two such maps  are in the same equivalence class if they agree on a neighborhood of $s_1(X)$.
\end{Definition}

Product neighbourhoods and morphisms are illustrated in the following commutative diagrams:

\begin{center}
\begin{equation}\begin{tikzcd}
	&&&&& {E_1} \\
	X & E & X && X & {U_1} & X \\
	{U_x} & {V_x} & {U_x} && X & {E_2} & X \\
	{} & {U \times \bR^n} & {}
	\arrow["s", from=2-1, to=2-2]
	\arrow[from=3-1, to=3-2]
	\arrow["\cong"', from=3-2, to=4-2]
	\arrow["{\mathrm{pr}_{U_x}}"', from=4-2, to=3-3]
	\arrow[hook, from=3-1, to=2-1]
	\arrow[hook, from=3-2, to=2-2]
	\arrow[hook, from=3-3, to=2-3]
	\arrow["p", from=2-2, to=2-3]
	\arrow["{\id_X}", curve={height=-18pt}, from=2-1, to=2-3]
	\arrow[hook, from=2-6, to=1-6]
	\arrow["{s_1}", from=2-5, to=2-6]
	\arrow["{s_2}", from=3-5, to=3-6]
	\arrow["{p_1}"', from=1-6, to=2-7]
	\arrow[Rightarrow, no head, from=2-5, to=3-5]
	\arrow[Rightarrow, no head, from=2-7, to=3-7]
	\arrow["\phi", from=2-6, to=3-6]
	\arrow["{p_1}", from=3-6, to=3-7]
\end{tikzcd}\end{equation}
\end{center}

When the context is clear, we will write $E$ instead of the microbundle $X \lra{s} E \lra{p} X$.
If $V \to X$ is a rank $n$ vector bundle then $X \lra{0} V \lra{} X$ is its \emph{associated microbundle}.
We will sometimes write $V_\mu$ for the associated microbundle or, if the context is clear, we will just write $V$ for this microbundle.
A \emph{vector bundle lift} of a microbundle $E$
is a vector bundle $V$ together with an isomorphism from $V_\mu$ to $E$.
If $X$ is paracompact, then
a  theorem of Kister \cite{Kister} says that equivalence classes of microbundles over $X$ are in bijection with isomorphism classes of fiber bundles whose fiber is homeomorphic to $\bR^n$ and whose structure group consists of homeomorphisms fixing $0$.

\begin{Definition} 
	The \emph{tangent microbundle} $T_\mu X$ of a topological manifold\footnote{The tangent microbundle is also written as $\tau X$ in the literature.} $X$
	is the microbundle
	$\{X \stackrel{\Delta}{\longrightarrow} X\times X \stackrel{p_1}{\longrightarrow} X\}$
	where $\Delta$ is the diagonal inclusion map and $p_1$ is the projection to the first factor of $X \times X$. 
\end{Definition}

If $X$ is smooth, then the `usual' tangent bundle $TX$  is a vector bundle lift of $T_\mu X$.
This lift is constructed, after choosing a complete metric on $X$, using the exponential map from the normal bundle of the diagonal in $X \times X$.

\begin{Definition} \label{defn direct sum}
	Let $X \lra{s_i} E_i \lra{p_i} X$, $i=1,2$ be two microbundles and let $\Delta$ be the diagonal inclusion map as above.
	Let $E_{12} := (p_1 \times p_2)^{-1}(\Delta(X)) \subset E_1 \times E_2$ where $\Delta$ is the diagonal inclusion map as above.
	The \emph{direct sum} $E_1 \oplus E_2$ is the microbundle
	\begin{equation}
	\begin{tikzcd}
		X \arrow[rr, "(s_1 \times s_2) \circ \Delta"] && E_{12} \arrow[rr, "p_1 \times p_2"] && \Delta(X) \, =\, X.
		\end{tikzcd}
	\end{equation}
\end{Definition}

If $X \lra{s} E \lra{p}  X$ is a microbundle and $\phi_i : U \lra{} E_i$, $i=1,2$ are
microbundle morphisms with $U \subset E$ a neighborhood of $s(X) \subset E$,
then we get another microbundle morphism
$\phi_1 \oplus \phi_2 : E \lra{} E_1 \oplus E_2$ sending $x \in U$ to $(\phi_1(x),\phi_2(x))$.
The theorem of Kister \cite{Kister} mentioned above gives an alternative description of the direct sum
of a rank $n$ and $m$ bundle respectively as a fiber product of the corresponding $\bR^n$ and $\bR^m$ bundles.

\begin{Definition}
	If $X \lra{s} E \lra{p} X$ is a microbundle and $f : \check{X} \lra{} X$ a continuous map
	then we define the \emph{pullback microbundle}  to be
	$\check{X} \lra{f^*s} f^*E \lra{f^*p} X$
	where
	\begin{equation}
		f^*E = \{(e,x) \in E \times \check{X} \ | \ p(e) = x \} \subset E \times X
	\end{equation}
	and $f^*s$ sends $x$ to $(s(f(x)),x)$ and $f^*p$ sends $(e,x)$ to $x$.
	If $\check{X}$ is a subset of $X$ and $f$ the corresponding inclusion map then we define the \emph{restriction}
	$E|_{\check{X}}$ to be the corresponding pullback $f^*E$.
\end{Definition}

There are $G$-equivariant versions of these notions for a topological group $G$.
\begin{defn}
	Let $X$ be a $G$-space.
	A \emph{$G$-microbundle} on $X$ is a microbundle $X \lra{s} E \lra{p} X$ where $E$ is a $G$-space such that 
	$g \cdot s(x) = s(g \cdot x)$ and $p(g \cdot x) = g \cdot p(x)$.
	\end{defn}
	
Morphisms and direct sums are defined in the obvious way. 
		A \emph{morphism} between two $G$-microbundles $\{X \stackrel{s_i}{\longrightarrow} E_i \stackrel{p_i}{\longrightarrow} X\}$, for $i=1,2$
	is a $G$-equivariant morphism between the corresponding microbundles $E_1$, $E_2$, 
	where $G$ acts on the space of such morphisms by pre and post composition with an element of $G$ and its inverse respectively.
	The \emph{direct sum} of two $G$-microbundles $E_1$ and $E_2$ is the direct sum as in Definition
	\ref{defn direct sum},  where $G$ acts diagonally on $E_1 \times E_2$.

Given a $G$-vector bundle $V \to X$ over a $G$-space $X$, there is an \emph{associated $G$-microbundle}
$X \lra{0} V \twoheadrightarrow X$.
A \emph{$G$-vector bundle lift} of a $G$-microbundle $X \lra{s} E \lra{p} X$
is a $G$-vector bundle together with an equivalence between the associated $G$-microbundle of $V$ to $E$.

\begin{Example} \label{example natural smooth lift}
	If $M$ is a smooth manifold with a smooth $G$-action for some compact Lie group $G$,
	then there is a natural $G$-equivariant lift $TM \to T_\mu M$ defined as follows:
	Choose a complete $G$-equivariant metric on $M$ and let $\exp : TM \to M$ be the exponential map.
	Then the lift sends $v \in T_p M$ to $(p,\exp(v)) \in T_\mu M \subset M \times M$.
	The space of such lifts is contractible due to the fact that the space of complete
	$G$-equivariant metrics is contractible.
\end{Example}

\subsection{Smoothing theory}

We will use the following result from \cite[Theorem 6.8]{Lashof}:

\begin{Proposition}[Lashof] \label{prop lashof}
	Let $M$ be a topological manifold equipped with a continuous action of a compact Lie group $G$.  Assume that
	\begin{enumerate}
		\item There are finitely many orbit types;
		\item The microbundle $T_{\mu}M$ admits a $G$-vector bundle lift $l : E \to T_\mu M$.
	\end{enumerate}
	Then there is a $G$-representation $V$ for which the product $V \times M$ admits a $G$-equivariant smooth structure.
\end{Proposition}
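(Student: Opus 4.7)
My plan is to follow the equivariant smoothing theory of Lashof, which parallels the classical non-equivariant theory of Kirby--Siebenmann but replaces the classifying spaces $B\mathrm{TOP}(n)$ and $B\mathrm{O}(n)$ with their equivariant counterparts $B\mathrm{TOP}_G(V)$ and $B\mathrm{O}_G(V)$ for $G$-representations $V$. The datum of a $G$-vector bundle lift $l : E \to T_\mu M$ is precisely a reduction of the classifying map of the tangent microbundle along the forgetful map $B\mathrm{O}_G \to B\mathrm{TOP}_G$, which is the primary input required by smoothing theory.

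First, I would use the finiteness of orbit types together with the equivariant slice theorem to produce an equivariant handlebody (or $G$-CW) decomposition of $M$: locally near each orbit $G/H \subset M$, the $G$-action is modeled on a $G$-equivariant vector bundle over $G/H$, so each stratum inherits a natural smooth $G$-structure. The task is then to inductively assemble these local smooth structures into a global one, handle by handle. The obstructions to extending a smooth structure over a new handle lie in equivariant cohomology groups with coefficients in the homotopy sheaves of the fiber $\mathrm{TOP}_G(V)/\mathrm{O}_G(V)$, evaluated orbit-type by orbit-type; finiteness of the orbit types ensures that only finitely many such homotopy groups intervene.

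Next, I would use the $G$-vector bundle lift $l$ to trivialize the primary obstruction: the lift provides a section of the fiber bundle whose homotopy classes of sections parametrize reductions, so the structure group reduction exists up to homotopy. What remains is to promote a reduction up to homotopy to an actual smooth structure, which is where the stabilization step enters. The key fact, analogous to the classical product structure theorem, is that the stabilization map $\mathrm{TOP}_G(V)/\mathrm{O}_G(V) \to \mathrm{TOP}_G(V \oplus W)/\mathrm{O}_G(V \oplus W)$ becomes sufficiently connected after a large enough $G$-representation $W$ is added, killing the remaining secondary obstructions and allowing one to convert the homotopy-theoretic reduction into a genuine smooth atlas on $V \times M$.

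The main obstacle is precisely this last step: controlling the secondary obstructions in the equivariant setting, where $\mathrm{TOP}_G/\mathrm{O}_G$ is considerably more complicated than in the non-equivariant case and must be analyzed stratum by stratum using the representation theory of the finite stabilisers. This is where Lashof's work does the real technical lifting, and for our purposes I would simply quote \cite[Theorem 6.8]{Lashof} rather than reprove it; the hypothesis of finitely many orbit types is exactly what is needed to apply that theorem, and the $G$-vector bundle lift $l$ is the input it consumes.
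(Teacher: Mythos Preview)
Your proposal is correct and lands in the same place as the paper: this proposition is not proved in the paper at all but simply cited from \cite[Theorem 6.8]{Lashof}, exactly as you conclude in your final paragraph. Your outline of the underlying argument is reasonable context, but for the paper's purposes the statement is a black-box input from Lashof.
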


\begin{Addendum}\label{add lashof}
	In  Proposition \ref{prop lashof}, we can furthermore ensure that
	the  lift $T(M \times V) \to T_\mu (M \times V)$ constructed using the exponential map as in Example \ref{example natural smooth lift}, when  restricted to $M \times 0$, is isotopic through $G$-equivariant lifts to the induced lift $l \oplus \id_V : E \oplus V \to T_\mu M \oplus V$ (see \cite[Lemma 3.5]{Lashof}).
	\end{Addendum}
	
Lashof's result is somewhat stronger: it gives a bijection between stable isotopy classes of $G$-smoothings and homotopy classes of $G$-vector bundle lift of $T_{\mu}M$. 

\begin{Remark}
Later results \cite{KirbySiebenmann} show that, provided $\dim_{\bR}(M)\neq 4$, under the same hypotheses as in Proposition \ref{prop lashof} one can equip $M$ itself with a smooth structure, i.e. taking $V=\{0\}$. However, stabilisations of Kuranishi charts by $G$-representations appear naturally in our work anyway. 
\end{Remark}

\begin{Corollary} \label{Cor:smooth_fattening}
	Let $\scrK = (G,\scrT,E,s)$ be a global Kuranishi chart so that the $G$-action on $\scrT$ admits only finitely many orbit types
	and so that $T_\mu \scrT$ admits a $G$-vector bundle lift $l : V \to T_\mu \scrT$.
	Then there is a $G$-representation $V$, and a stabilization $\K'$ of $(G,\scrT,E,s)$ with thickening $\scrT\times V$, which is equivalent to a smooth Kuranishi chart.
	\end{Corollary}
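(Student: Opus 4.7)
The plan is to invoke Proposition \ref{prop lashof} directly to produce the required smooth structure on the thickening, and then clean up by smoothing the obstruction bundle using standard equivariant smoothing theory.

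First I would check the hypotheses of Proposition \ref{prop lashof}. By assumption, the $G$-action on the topological manifold $\scrT$ admits only finitely many orbit types, and by hypothesis the tangent microbundle $T_\mu \scrT$ admits a $G$-vector bundle lift $l : V \to T_\mu \scrT$. Thus Lashof's theorem produces a $G$-representation $V$ for which the product $\scrT \times V$ carries a $G$-equivariant smooth structure compatible with the given topological $G$-action. (Addendum \ref{add lashof} will not be needed for this particular statement, but records extra compatibility for later use.)

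Next, I would form the stabilization $\K'$ of $\K$ by the trivial $G$-equivariant vector bundle $p : \scrT \times V \to \scrT$ with fibre the $G$-representation $V$. By the definition of stabilization recalled in Section \ref{section initial defns}, the resulting chart has thickening $\scrT \times V$, obstruction bundle $p^{*}E \oplus p^{*}V$, and section $p^{*}s \oplus \Delta$, where $\Delta$ is the tautological diagonal section. The thickening $\scrT \times V$ is now smooth with a smooth $G$-action by the previous paragraph.

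Finally, I need to exhibit $\K'$ as homeomorphic to a genuinely smooth Kuranishi chart in the sense of Definition \ref{Defn free regular}, where both the thickening and the obstruction bundle (but not the section) must be smooth with smooth $G$-actions. The factor $p^{*}V$ is canonically the trivial $G$-bundle $\scrT \times V \times V \to \scrT \times V$ on the smooth manifold $\scrT \times V$, and so is already a smooth $G$-bundle. For $p^{*}E$, I would appeal to the standard fact that over a paracompact smooth $G$-manifold with compact Lie group $G$ acting with finitely many orbit types, every continuous $G$-vector bundle is $G$-equivariantly isomorphic to a smooth $G$-vector bundle; this follows from the equivariant slice theorem together with a partition-of-unity smoothing of transition functions. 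Pulling the section $p^{*}s \oplus \Delta$ through such an isomorphism yields a smooth Kuranishi chart $\K''$ together with a homeomorphism of Kuranishi charts $\K' \to \K''$, establishing the claim. The only substantive input is Lashof's theorem; the remaining steps are bookkeeping and standard equivariant smoothing, so I do not anticipate a serious obstacle.
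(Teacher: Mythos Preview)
Your proposal is correct and follows essentially the same approach as the paper: apply Lashof's theorem (Proposition \ref{prop lashof}) to obtain a smooth $G$-structure on $\scrT \times V$, then invoke the standard fact that every topological $G$-vector bundle over a smooth $G$-manifold is $G$-equivariantly isomorphic to a smooth one to handle the obstruction bundle. The paper's proof is terser but identical in content.
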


\begin{proof}
By the proposition above, there is a $G$-representation $V$ so that the $G$-manifold
$V \times \scrT$ admits a smooth structure.
Also every topological $G$-vector bundle over $V \times \scrT$ is isomorphic to a smooth $G$-vector bundle.
Hence stabilizing $(G,\scrT,E,s)$ to $V \times \scrT$ gives us our result.
\end{proof}

\begin{Remark} \label{rmk:smooth_fattening}
Looking ahead to our discussion of orientation in Section \ref{Section Orientations of KC} below, we observe at this stage that, in the situation of Corollary \ref{Cor:smooth_fattening}, there is a natural 1-1 correspondence between $\bK$-orientations of $\scrK$ and $\scrK'$. Indeed,  the natural lift $T(\scrT \times V) \to T_\mu (\scrT \times V)$ obtained from the exponential map as in Example \ref{example natural smooth lift},  restricted to $\scrT \times 0$, is isotopic through $G$-equivariant lifts to the induced lift $l \oplus \id_V : E \oplus V \to T_\mu \scrT \oplus V$, by Addendum \ref{add lashof}. This 
gives a natural bijective correspondence between $\bK$-orientations of $(G,\scrT,E,s)$ and of the stabilized global Kuranishi chart $\scrK'$.
\end{Remark}

\subsection{Topological submersions}

To apply Proposition \ref{prop lashof}, we need a nice geometric condition ensuring that a given
topological manifold with $G$-action admits a vector bundle lift of its tangent $G$-microbundle.
The following section explores when such conditions hold for topological submersions.
Throughout this section, let us fix a compact Lie group $G$,
topological manifolds $M$ and $B$ of dimension $n$ and $k$
respectively together with $G$-actions on both $M$ and $B$
and a $G$-equivariant continuous map  $\pi : M \to B$.
For each subset $W \subset M$, define $W|_b := \pi^{-1}(b) \cap W$
for each $b \in B$.
For a $G$-space $X$ and  $x \in X$,
we let $G_x \subset G$ be the corresponding stabilizer subgroup.

\begin{Definition} \label{defn product neighborhood}
	Let $p \in M$ and let $b = \pi(p) \in B$.
	A \emph{product neighborhood (of $p$)}
	is a homeomorphism
	$\iota : W \to W|_b \times \pi(W)$
	where $W \subset M$ is a neighborhood of $p$
	satisfying:
	\begin{itemize}
		\item $\pi \circ \iota^{-1}$ is the projection map to $\pi(W)$
		\item and $\iota|_{(W|_b)} : W|_b \to W|_b \times \{\pi(p)\}$ is the identity map.
	\end{itemize}
	We say that $\pi$ is a \emph{topological submersion} if every point $p \in M$
	admits a product neighborhood. In that case, the
\emph{vertical tangent microbundle} $T^{vt}_\mu(\pi)$ of $\pi$
is the microbundle:
\begin{equation}
	M \lra{\Delta} M \times_B M \lra{\pi \times \pi} M
\end{equation}
where $M \times_B M$ is the fiber product of $\pi$ with itself,
$\Delta$ is the diagonal map and $\pi \times \pi$ sends $(p,q)$ to $\pi(p) = \pi(q)$.
	\end{Definition}

	Each $g \in G$ sends a product neighborhood $\iota$ of $p \in  M$ to a new product neighborhood $g_*\iota$ of $g \cdot p \in M$
satisfying
\begin{equation}
	g_*\iota : g(W) \to g(W|_b) \times g(\pi(W)), \quad g_*(\iota)(w) = g \iota(g^{-1} \cdot w).
\end{equation}
where $g$ acts diagonally on $W|_b \times \pi(W)$.

\begin{Definition}
	A \emph{$G_p$-invariant product neighborhood} is a product neighborhood of $p \in M$
	which is $G_p$-invariant under the action above.
	
	We say that $\pi$ is a \emph{$G$-equivariant topological submersion}
	if every point $p \in M$ admits a $G_p$-invariant product neighborhood.
\end{Definition}

\begin{Definition}
	A $G$-action on a topological manifold is \emph{locally linear}
	if for each point $p$ in this manifold, there is a chart centered at $p$ so that
	$G_p$ acts linearly on this chart.
	We call this a \emph{linear chart at $p$}.
	We say that $\pi$ is \emph{fiberwise locally linear}
	if it is a topological submersion and
	if for each $b \in B$, the group $G_b$
	is locally linear on the manifold $\pi^{-1}(b)$.
\end{Definition}

\begin{lemma} \label{lemma product chart}
	Suppose that $B$ is locally linear and that $\pi$ is fiberwise locally linear and a $G$-equivariant toplogical submersion.
	Then for each $p \in M$, there exists a chart $(x_1,\cdots,x_n) : U \lra{} \bR^n$
	centered at $p$ satisfying the following properties:
	\begin{itemize}
		\item There is a continuous map $\nu : \bR^k \lra{} B$ which is a homeomorphism onto its image
		so that the composition $\nu \circ (x_{n-k+1},\cdots,x_n)$ is equal to $\pi$. In other words, $\pi$ is essentially the projection to the last $k$ coordinates.
		\item For each $g \in G_p$, $g \cdot U = U$.
		\item For each $g \in G_p$ there are $(n-k) \times (n-k)$ and $k \times k$ matrices $A_g$ and $B_g$
		respectively so that $g \cdot (x_1,\cdots,x_n) = (A_g(x_1,\cdots,x_{n-k}),B_g(x_{n-k+1},\cdots,x_n))$.
		In other words, $g$ acts linearly and by a block diagonal matrix.
	\end{itemize}
\end{lemma}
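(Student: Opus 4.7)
The plan is to combine three pieces available by hypothesis: a $G_b$-linear chart of $B$ at $b := \pi(p)$, a $G_p$-linear chart of the fibre $\pi^{-1}(b)$ at $p$, and a $G_p$-invariant product neighbourhood of $p$ in $M$ splitting it as a product of these. Note that $G_p \subset G_b$ because $\pi$ is $G$-equivariant, so a $G_b$-linear chart at $b$ is automatically $G_p$-linear.

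By local linearity of $B$, choose a $G_b$-invariant chart $\nu^{-1} : V \to \bR^k$ around $b$ on which $G_b$ acts by a linear representation $g \mapsto B_g$; after shrinking $V$ $G_b$-equivariantly and rescaling (using a $G_b$-invariant inner product on the target $\bR^k$, available by averaging over the compact group $G_b$), one may take $\nu : \bR^k \to V$ to be a $G_b$-equivariant homeomorphism onto $V \subset B$. Similarly, fibrewise local linearity provides a $G_p$-invariant chart $\mu : W_0 \to \bR^{n-k}$ of $\pi^{-1}(b)$ centered at $p$ with $G_p$ acting via a linear representation $g \mapsto A_g$. The $G$-equivariant topological submersion hypothesis then provides a $G_p$-invariant product neighbourhood $\iota : W \to W|_b \times \pi(W)$, which by invariance means exactly that $\iota$ intertwines the $G_p$-action on $W$ with the diagonal $G_p$-action on the product. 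Set $V' := V \cap \pi(W)$ and $W_0' := W_0 \cap W|_b$; both are open and $G_p$-invariant, using that $W|_b$ is $G_p$-invariant since $W$ is and $G_p$ fixes $b$. Define $U := \iota^{-1}(W_0' \times V')$, a $G_p$-invariant open neighbourhood of $p$ on which $\iota$ restricts to a product neighbourhood with $U|_b = W_0'$ and $\pi(U) = V'$. The desired chart is then
\begin{equation}
(x_1,\ldots,x_n) \;:=\; (\mu \times \nu^{-1}) \circ \iota|_U \colon U \longrightarrow \bR^{n-k} \times \bR^k.
\end{equation}
By construction $\nu \circ (x_{n-k+1},\ldots,x_n) = \pi$ on $U$, and since $\iota$ is diagonally $G_p$-equivariant while $\mu$ and $\nu^{-1}$ are $G_p$-linear, each $g \in G_p$ acts by the block-diagonal matrix $\mathrm{diag}(A_g, B_g)$.

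There is no hard step; the argument is essentially a bookkeeping exercise in carrying out the three constructions $G_p$-equivariantly and then intersecting. The only technical points are (i) arranging the base chart so that $\nu$ is defined on all of $\bR^k$, accomplished by composing with a $G_b$-equivariant radial homeomorphism from $\bR^k$ onto a small $G_b$-invariant open ball in the linear representation, and (ii) verifying that the shrunken neighbourhoods preserve $G_p$-invariance, which follows immediately from the invariance already built into each input piece.
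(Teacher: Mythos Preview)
Your proof is correct and follows essentially the same approach as the paper: combine a $G_p$-invariant product neighbourhood with linear charts on the fibre and base, then pull back. You are slightly more explicit than the paper in noting $G_p \subset G_b$ and in arranging $\nu$ to be defined on all of $\bR^k$, but the argument is the same.
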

\begin{proof}
	Take a $G_p$-equivariant product neighborhood $\iota : W \lra{} W|_b \times \pi(W)$, $b = \pi(p)$,
	centered at $p$. Choose local coordinate charts
	$y_1,\cdots, y_{n-k}$ on $W|_b$ centered at $p$
	and $y_{n-k+1},\cdots,y_n$ on $B$ centered at $b$
	so that $G_p$ acts linearly on both of these coordinate systems.
	We let $B_1 \times B_2 \subset W|_b \times \pi(W)$ be a product of open balls centered at the origin
	with respect to these coordinates
	and we let $U = \iota^{-1}(B_1 \times B_2)$. These balls have to be small enough so that their closure is compact in $W|_b$ and $\pi(W)$ respectively.
	We let $x_1,\cdots,x_n$ be the pullback of the coordinates $y_1,\cdots,y_{n-k}$ and $y_{n-k+1},\cdots,y_n$
	via the composition of $\iota$ with the respective projection maps to $W|_b$ and $\pi(W)$.
\end{proof}

We wish to construct a natural isomorphism of microbundles $T_\mu M \cong T_\mu^{vt}(M) \oplus \pi^* T_\mu B$.
This isomorphism will be a direct sum of a map $P : T_\mu M \lra{} T_\mu^{vt}(M)$ 
and the natural map $\tau : T_\mu M \lra{} \pi^*T_\mu B$ induced by $\pi$ (see the paragraph after Definition \ref{defn direct sum}).
For such a direct sum to induce an isomorphism of bundles, we want the map
$P$ to look like a fiberwise toplogical submersion.
By definition, the bundle map $P$ sends points $(q,q')$ in a neighborhood of the diagonal in $M \times M$
to a point $(q,\phi(q,q'))$ for some map $\phi$. The definition below gives conditions
on such a map $\phi$ to ensure that the direct sum of $P$ and $\tau$ induces a $G$-equivariant isomorphism
of microbundles over an open subset $W \subset M$.

\begin{defn} \label{defn microbundle submersion}
	Let $W \subset M$ be a $G$-invariant open subset of $M$.
	A \emph{$G$-equivariant fiber submersion along $W$}
	is a continuous map $\phi : \widetilde{W} \lra{} M$
	where $\widetilde{W} \subset W \times W$ is an open neighborhood of the diagonal
	satisfying the following properties:
	\begin{itemize}
		\item  $\phi_{q,b'} := \phi|_{\widetilde{W} \cap (q \times \pi^{-1}(b'))}$ is a homeomorphism onto
		an open subset of $\pi^{-1}(\pi(q))$ for each $b' \in B$, $q \in W$,
		\item $\phi_{q,\pi(q)}$ sends each point $(q,q')$ in $\widetilde{W} \cap (q \times \pi^{-1}(\pi(p)))$ to $q' \in M$ (in other words, it is the identity map),
		\item $\widetilde{W}$ is $G$-equivariant where $G$ acts diagonally and $\phi$ is a $G$-equivariant map.
	\end{itemize}

\end{defn}

In order to construct $G$-equivariant fiber submersions, we need a slice theorem.

\begin{defn} \label{defn slice}
	For each subset $Q \subset M$, and each subgroup $G' \subset G$, define $G' \cdot Q \subset M$
	to be the set of points equal to $g \cdot q$ for each $g \in G'$ and $q \in Q$.
	
	A \emph{slice} through a point $p \in M$ is a topological subspace $S_p \subset M$ satisfying the following properties:
	\begin{itemize}
		\item $p \in S_p$,
		\item $G_p \cdot S_p = S_p$,
		\item the natural map $G \times_{G_p} S_p \lra{} G \cdot S_p$ sending $(g,s)$ to $g \cdot s$ is a homeomorphism and
		\item $G \cdot S$ is open in $M$.
	\end{itemize}
\end{defn}

We have the following theorem:
\begin{theorem} \label{theorem slice} \cite[Theorem 2.1]{mostowembeddings}
	There exists a slice through every point in $M$.
\end{theorem}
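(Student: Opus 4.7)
The approach is the classical one due to Mostow: reduce to the case of a linear action on a Euclidean representation, where slices are constructed transparently via orthogonal complements to the orbit, then pull the resulting slice back via an equivariant embedding.

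First, I would invoke Mostow's equivariant embedding theorem. Because $G$ is a compact Lie group acting continuously on a topological manifold $M$ (in particular, $M$ is paracompact and Hausdorff, and each orbit is a compact subset), there exists a $G$-invariant open neighborhood $U \subset M$ of the orbit $G \cdot p$ together with a $G$-equivariant embedding $\iota : U \hookrightarrow V$ into some finite-dimensional orthogonal $G$-representation $V$. The construction uses the Peter--Weyl theorem to produce enough $G$-finite continuous functions separating points of the orbit, combined with a $G_p$-invariant cutoff obtained by averaging a continuous cutoff over $G$ with respect to Haar measure.

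Next, I would build a slice in the ambient representation $V$. Let $v_0 := \iota(p)$. The orbit $G \cdot v_0 \subset V$ is a smooth compact submanifold, and its tangent space $T_{v_0}(G \cdot v_0)$ admits a $G_p$-invariant orthogonal complement $N \subset V$ (here we use that $G_p$ acts orthogonally on $V$ and fixes $v_0$). For sufficiently small $\epsilon > 0$, the $G_p$-invariant open ball $B_\epsilon \subset v_0 + N$ yields a slice in $V$: the map
\begin{equation}
\Phi \colon G \times_{G_p} B_\epsilon \lra{} V, \qquad (g,s) \longmapsto g \cdot s,
\end{equation}
is a homeomorphism onto an open $G$-invariant neighborhood of the orbit. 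Indeed, $\Phi$ is smooth between smooth manifolds, its differential at $(e,v_0)$ is an isomorphism by the definition of $N$, and injectivity for small $\epsilon$ follows from compactness of $G$ together with the local injectivity just established (a standard tube-theorem argument).

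Finally, I would transfer the slice back to $M$. After shrinking $\epsilon$ so that the image $\Phi(G \times_{G_p} B_\epsilon)$ lies in $\iota(U)$, set $S_p := \iota^{-1}(B_\epsilon) \subset M$. The four defining properties of a slice from Definition \ref{defn slice} transfer from $B_\epsilon$ to $S_p$ by $G$-equivariance of $\iota$ and the fact that $\iota$ is a homeomorphism onto its image: $p \in S_p$; $G_p \cdot S_p = S_p$; the canonical map $G \times_{G_p} S_p \to G \cdot S_p$ is a homeomorphism because it factors through $\Phi$ via $\iota$; and $G \cdot S_p$ is open in $M$ because $\iota$ is an open map onto $\iota(U)$ and $\Phi$ has open image.

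The main obstacle is the equivariant embedding theorem itself in the purely topological (non-smooth) setting: one cannot use a tubular neighborhood of the orbit via the exponential map. Once the embedding is available, the rest is the standard linear tube theorem applied in the smooth ambient representation $V$. The hypothesis that $M$ is a topological manifold (hence locally compact, metrizable and of finite covering dimension near the compact orbit $G \cdot p$) is exactly what makes the Peter--Weyl based construction of the embedding go through.
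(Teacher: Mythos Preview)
The paper does not prove this theorem; it simply quotes it as \cite[Theorem 2.1]{mostowembeddings} and moves on. Your sketch is essentially the classical argument of Mostow that the citation points to, so there is nothing to compare.
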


The following lemma is an extension result for $G$-equivariant fiber submersions.

\begin{lemma} \label{lemma microbundle extension}
	Let $(x_1,\cdots,x_n) : U \lra{} \bR^n$ be a coordinate chart centered at a point $p \in M$ as described in Lemma
	\ref{lemma product chart} whose image in $\bR^n$ is convex.
	Suppose that there is a slice $S_p$ through $p$ so that $G \cdot S_p$ contains $U$.
	Let $K, W \subset G \cdot U$ be $G$-invariant subsets satisfying $K \subset W$, with $W$ open and $K$ closed in $G \cdot U$.
	Let $\phi : \widetilde{W} \lra{} M$ be a $G$-equivariant fiber submersion along $W$.
	Then there exists a $G$-equivariant fiber submersion $\psi : \widetilde{U} \lra{} M$
	along $G \cdot U$ with the property that $\psi$ is equal to $\phi$ is a small neighborhood
	of the diagonal of $K \times K$ in $W \times W$.
\end{lemma}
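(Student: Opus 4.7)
The strategy has three steps: (i) build a $G_p$-equivariant ``linear model'' fibre submersion $\phi_0$ on $U$ using the coordinate chart of Lemma~\ref{lemma product chart}; (ii) interpolate $\phi_0$ with the given $\phi$ via a $G_p$-invariant bump function in chart coordinates, producing a $G_p$-equivariant fibre submersion $\phi_U$ on $U$ which agrees with $\phi$ near $K\cap U$; and (iii) propagate $\phi_U$ to $\psi$ on $G\cdot U$ by $G$-equivariance, using the slice hypothesis to control well-definedness.

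In the chart, write a point of $U$ as $(a,\alpha)$ with $a\in\bR^{n-k}$ and $\alpha\in\bR^{k}$, so that $\pi$ is determined by $\alpha$. For $((a,\alpha),(b,\beta))$ close enough to the diagonal of $U\times U$, the point with coordinates $(b,\alpha)$ lies in the chart image by convexity; setting
\[
  \phi_0\bigl((a,\alpha),(b,\beta)\bigr) := \text{the point with coordinates }(b,\alpha)
\]
produces a fibre submersion in the sense of Definition~\ref{defn microbundle submersion} that is $G_p$-equivariant, since the $G_p$-action is block-diagonally linear. Next, average a continuous bump function over the compact group $G_p$ to obtain a $G_p$-invariant $\rho:U\to[0,1]$ with $\rho\equiv 1$ on a $G_p$-neighbourhood of $K\cap U$ and $\operatorname{supp}(\rho)\subset W\cap U$. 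Writing $\phi$ in chart coordinates as $\phi(q,q')=(c_\phi(q,q'),\alpha)$, define
\[
  \phi_U\bigl((a,\alpha),(b,\beta)\bigr) := \text{the point with coordinates }\bigl(\rho(a,\alpha)\,c_\phi + (1-\rho(a,\alpha))\,b,\ \alpha\bigr).
\]
Convexity of the chart image, together with the fact that $c_\phi$ is close to $b$ for pairs near the diagonal, makes this well-defined; the result is a $G_p$-equivariant fibre submersion on $U$ that coincides with $\phi$ on $\{\rho=1\}$ and with $\phi_0$ on $\{\rho=0\}$.

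Finally, for $(x,x')$ near the diagonal of $G\cdot U\times G\cdot U$, choose $g\in G$ with $g^{-1}x\in U$ and set $\psi(x,x'):=g\cdot\phi_U(g^{-1}x,g^{-1}x')$. The hypothesis $U\subset G\cdot S_p$, together with the slice homeomorphism $G\times_{G_p}S_p\cong G\cdot S_p$, controls the ambiguity in this choice: after shrinking $U$ to a slice-adapted $G_p$-neighbourhood (e.g.\ the $G_p$-saturation of $U\cap S_p$ intersected with $U$), any two valid choices of $(g,q)$ differ by an element of $G_p$, and the $G_p$-equivariance of $\phi_U$ then yields well-definedness. The axioms of Definition~\ref{defn microbundle submersion} are local and $G$-invariant, so they transfer from $\phi_U$ to $\psi$; and since $\rho\equiv 1$ near $K\cap U$ and both $\psi$ and $\phi$ are $G$-equivariant, $\psi=\phi$ on a small neighbourhood of the diagonal of $K\times K$.

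The main obstacle is the well-definedness step: the naive $G$-translation of $\phi_U$ is multi-valued whenever the set-theoretic isotropy $\{h\in G:hU\cap U\neq\emptyset\}$ is strictly larger than $G_p$, which can happen even though $U$ is $G_p$-invariant. The slice hypothesis is precisely what lets us bound this isotropy by $G_p$ after a mild shrinking of $U$; the first two steps are essentially formal consequences of the convex coordinate structure of Lemma~\ref{lemma product chart} combined with $G_p$-equivariant averaging.
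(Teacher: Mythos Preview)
Your overall architecture---build a $G_p$-equivariant model on the chart, then propagate by the slice---matches the paper, and your step (iii) is essentially what the paper does (though rather than ``shrinking $U$'', the cleaner move is to define the interpolated map only for $q\in S_p\cap U$ and $q'\in U$, and then $G$-translate; the slice property then makes well-definedness immediate without altering $U$).

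The genuine gap is in step (ii). You interpolate linearly in the \emph{target}: the fibre component of $\phi_U(q,q')$ is $\rho(q)\,c_\phi(q,q') + (1-\rho(q))\,b$. For $\phi_U$ to be a fibre submersion you need, for each fixed $(a,\alpha)$ and $\beta$, the map
\[
  b \ \longmapsto \ \rho\,c_\phi\bigl((a,\alpha),(b,\beta)\bigr) + (1-\rho)\,b
\]
to be a homeomorphism onto an open set. But a convex combination of a homeomorphism with the identity need not be injective in the $C^0$ category, and ``$c_\phi$ is $C^0$-close to $b$ near the diagonal'' does not help. For a concrete obstruction, let the fibre be $\bR^2$, take $\alpha=0$, and let $c_\phi(\,\cdot\,,\beta)$ rotate the disc $\{|b|\leq |\beta|\}$ by $\pi$ about the origin (interpolating to the identity on $|\beta|\leq |b|\leq 2|\beta|$, and equal to the identity outside). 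Then each $c_\phi(\,\cdot\,,\beta)$ is a homeomorphism and $c_\phi(\,\cdot\,,0)=\mathrm{id}$, yet at $\rho=\tfrac12$ the map $b\mapsto \tfrac12(c_\phi(b,\beta)+b)$ collapses the entire disc $\{|b|\leq|\beta|\}$ to the origin---and this failure occurs for $\beta$ arbitrarily close to $0$, so no shrinking of $\widetilde U$ rescues it. Since the lemma is purely topological (no derivatives available to control the interpolation), there is no evident fix on the output side.

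The paper sidesteps this by interpolating on the \emph{input} instead: it replaces $q'=(b,\beta)$ by $\eta_q(q')=\bigl(b,\ \alpha+\rho(q)(\beta-\alpha)\bigr)$ and sets $\psi(q,q')=\phi\bigl(q,\eta_q(q')\bigr)$. For fixed $q$ and $\beta$ this is simply $\phi_{q,\beta'}$ with $\beta'=\alpha+\rho(q)(\beta-\alpha)$ fixed, so the homeomorphism-onto-image property is inherited directly from the given $\phi$; the identity axiom in Definition~\ref{defn microbundle submersion} guarantees that at $\rho=0$ one recovers exactly your linear projection $\phi_0$. That device---moving the base coordinate of $q'$ rather than averaging outputs---is the missing idea.
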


\begin{center}
\begin{tikzpicture}
	
	\draw[magenta] (-0.5,1.5) .. controls (-1.5,1.5) and (-2,1) .. (-2,0) .. controls (-2,-1.6) and (-0.5,-1) .. (0,0) .. controls (0.5,1) and (0.5,1.5) .. (-0.5,1.5);
	\draw[dashed] (-0.5+2,1.5) .. controls (-1.5+2,1.5) and (-2+2,1) .. (-2+2,0) .. controls (-2+2,-1.6) and (-0.5+2,-1) .. (0+2,0) .. controls (0.5+2,1) and (0.5+2,1.5) .. (-0.5+2,1.5);
	\draw[dashed] (-0.5-2,1.5) .. controls (-1.5-2,1.5) and (-2-2,1) .. (-2-2,0) .. controls (-2-2,-1.6) and (-0.5-2,-1) .. (0-2,0) .. controls (0.5-2,1) and (0.5-2,1.5) .. (-0.5-2,1.5);
	\draw [dashed](-4,1.5) -- (3.5,1.5);
	\draw [dashed](-4,-1) -- (3.5,-1) -- cycle;
	\draw [very thick,gray](-0.1,2) .. controls (-0.4,1.2) and (-0.7,0.7) .. (-1.1,0.5) .. controls (-0.8,-0.2) and (-0.9,-0.6) .. (-1,-1.7);
	\node at (-1.3,0.4) {$p$};
	\node at (-0.6,-1.7) {\color{gray} $S_p$};
	\node at (-0.8,1.2) {\color{magenta} $U$};
	\draw [blue,thick]  (-4,-0.5) rectangle (3.5,-1);\draw [thick, dashed,red] (-4,0) rectangle (3.5,-1);
	\node at (-4.6,-0.7) {\color{blue} $K$};
	\node at (-4.6,-0.2) {\color{red} $W$};
	\node at (1.7,1.2) {$G \cdot U$};
	\draw [<->](-1.1,1) -- (-1.1,0.5) -- (-0.6,0.5);
	\node at (-0.5,0.3) {$x$};
	\node at (-1.3,0.9) {$y$};
\end{tikzpicture}
\end{center}

\begin{proof} [Proof of Lemma \ref{lemma microbundle extension}]
	The key idea of the proof is to use the linear structure of the chart to linearly interpolate between
	the map $\phi$ and the natural projection map to the first $n-k$ coordinates of our coordinate system.
	A slice is needed to ensure that everything is done $G$-equivariantly.
	
	Define $S'_p := S_p \cap U$.
	Let $\rho : S'_p \lra{} [0,1]$ be a continuous function which is equal to $1$ along $K$
	and $0$ outside $W$ and which is $G_p$-equivariant (such a function can be constructed by an averaging argument).
	We will write our coordinates on $U$ in shorthand as $(x,y)$
	where $x = (x_1,\cdots,x_{n-k})$ and $y = (x_{n-k+1},\cdots,x_n)$.
	For each $q \in S'_p$ with coordinates $(x_q,y_q)$ in $U$ and each $t \in [0,1]$,
	define:
	\begin{equation}
		\eta_q : U \lra{} U, \quad \eta_q(x,y) = (x,y_q + \rho(q)(y-y_q)).
	\end{equation}
	Define $\check{V}_p \subset S'_p \times U$ to be the union of $(S'_p-W) \times U$ and $\widetilde{W} \cap (S'_p \times U)$.
	Let
	\begin{equation}
		V_p := \{(q,q') \in S'_p \times U \ : \ (q,\eta_q(q')) \in \check{V}_p \}
	\end{equation}
	and $\widetilde{U} := G \cdot V_p \subset (G \cdot U) \times (G \cdot U)$ where $G$ acts diagonally.
	The set $\widetilde{U}$ is an open neighborhood of the diagonal in $(G \cdot U) \times (G \cdot U)$
	since $S'_p \times U$ is a slice for this diagonal action.
	We define
	\begin{equation}
		\psi_p : V_p \lra{} U, \quad \psi_p(q,q') := \left\{\begin{array}{ll}
		\phi(q,\eta_q(q'))	& \textnormal{if} \ q \in W \\
		\eta_q(q')	& \textnormal{otherwise.}
		\end{array}\right.
	\end{equation}
We define $\psi : \widetilde{U} \lra{} M$ to be the unique $G$-equivariant map whose restriction to $V_p$
is $\psi_p$.
\end{proof}

\begin{prop}
	Suppose that $B$ is locally linear and that $\pi$ is fiberwise locally linear and a $G$-equivariant toplogical submersion.
	Then there exists a $G$-equivariant fiber submersion over $M$.
\end{prop}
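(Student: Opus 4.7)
The plan is a standard patching argument via paracompactness, using Lemma \ref{lemma microbundle extension} as the engine at each step. Since $M$ is a (second countable) topological manifold, it is paracompact. I cover it by $G$-invariant opens of the form $V_i = G \cdot U_i$, where $U_i$ is a coordinate chart as produced by Lemma \ref{lemma product chart} centered at some point $p_i$, with convex image, and contained in the $G$-orbit of a slice through $p_i$ supplied by Theorem \ref{theorem slice}; these are precisely the types of charts to which Lemma \ref{lemma microbundle extension} applies. I may arrange the cover $\{V_i\}_{i \geq 1}$ to be locally finite. Using normality of $M$ together with $G$-averaging of continuous cutoff functions, I select nested $G$-invariant shrinkings $V'_i \subset V''_i$ with $\overline{V'_i} \subset V''_i \subset \overline{V''_i} \subset V_i$ in such a way that $\{V'_i\}$ still covers $M$.

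I then build $\phi$ by induction on $m$, producing a $G$-equivariant fiber submersion $\phi_m$ defined on an open neighborhood of the diagonal in $W_m \times W_m$ where $W_m$ is a $G$-invariant open set containing $C_m := \bigcup_{j \leq m} \overline{V'_j}$, with each $\phi_{m+1}$ extending $\phi_m$. The base case is $W_0 = \emptyset$. For the inductive step, I first choose an auxiliary $G$-invariant open set $W'_m$ with $C_m \subset W'_m \subset \overline{W'_m} \subset W_m$, then apply Lemma \ref{lemma microbundle extension} to the chart $U_{m+1}$ with the lemma's $W$ taken to be $W_m \cap V_{m+1}$, input fiber submersion $\phi_m|_{W_m \cap V_{m+1}}$, and closed set $K := \overline{V''_{m+1}} \cap \overline{W'_m}$. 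Since $\overline{V''_{m+1}} \subset V_{m+1}$ and $\overline{W'_m} \subset W_m$, this $K$ is $G$-invariant, closed in $V_{m+1}$, and contained in $W_m \cap V_{m+1}$, so the hypotheses hold. The lemma returns a $G$-equivariant fiber submersion $\psi_{m+1}$ on $V_{m+1}$ that agrees with $\phi_m$ near the diagonal of $K \times K$.

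Setting $W_{m+1} := W'_m \cup V''_{m+1}$, an open $G$-invariant neighborhood of $C_{m+1}$, I define $\phi_{m+1}$ by gluing: $\phi_m$ on a neighborhood of the diagonal in $W'_m \times W'_m$, and $\psi_{m+1}$ on a neighborhood of the diagonal in $V''_{m+1} \times V''_{m+1}$. The two prescriptions can conflict only on a neighborhood of the diagonal in $(W'_m \cap V''_{m+1}) \times (W'_m \cap V''_{m+1})$, which is contained in a neighborhood of the diagonal in $K \times K$ since $W'_m \cap V''_{m+1} \subset \overline{W'_m} \cap \overline{V''_{m+1}} = K$. On this overlap the two agree by Lemma \ref{lemma microbundle extension}, so $\phi_{m+1}$ is well defined and $G$-equivariant. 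Local finiteness of $\{V_i\}$ ensures that on any compact set only finitely many stages make changes, so the limit $\phi := \bigcup_m \phi_m$ is a $G$-equivariant fiber submersion defined over all of $M$.

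The main technical point—and the only place the geometric hypotheses really enter after Lemma \ref{lemma product chart}—is the gluing in the inductive step: one must pick the closed set $K$ precisely so that the overlap of the two local prescriptions is swallowed by the region on which they are guaranteed to agree. Everything else (existence of product neighborhoods, slices, the coordinate charts of Lemma \ref{lemma product chart}, and the two-step shrinking of the cover) is supplied either by the hypotheses or by standard facts about paracompact topological manifolds with locally linear compact Lie group actions.
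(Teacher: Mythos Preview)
Your proposal is correct and follows essentially the same approach as the paper: the paper's proof is a one-sentence sketch (``induction argument using the extension Lemma \ref{lemma microbundle extension} on a countable covering family of charts'' together with Theorem \ref{theorem slice}), and you have supplied precisely the details of that induction, with careful bookkeeping of the shrinkings needed to make the gluing work.
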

\begin{proof}
	This follows from the fact that there exists a slice through every point by Theorem \ref{theorem slice}
	combined with an induction argument using the extension Lemma \ref{lemma microbundle extension}
	on a countable covering family of charts as described in this lemma.
\end{proof}

\begin{corollary} \label{corollary product isomorphism}
	Suppose that $B$ is locally linear and that $\pi$ is fiberwise locally linear and a $G$-equivariant toplogical submersion.
	Then there is a natural morphism of microbundles $P : T_\mu M \lra{} T_\mu^{vt}(M)$
	whose restriction to $T_\mu^{vt}(M)$ is the identity map.
	This map also induces a natural $G$-equivariant isomorphism
	\begin{equation} \label{eqn microbundle iso}
		P \oplus \tau : T_\mu(M) \to T^{vt}_\mu(M) \oplus \pi^*(T_\mu B)
	\end{equation}
	where $\tau : T_\mu(M) \to \pi^*(T_\mu(B))$ is the map
	\begin{equation}
		M \times M \supset T_{\mu}M \ni (p,w) \, \mapsto \, (p,(\pi(p),\pi(w))) \in \pi^*(T_\mu(B)) \subset M \times B \times B
	\end{equation}
	induced by the projection map $\pi$.
\end{corollary}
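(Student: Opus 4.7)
The previous proposition hands us a $G$-equivariant fiber submersion $\phi : \widetilde{W} \lra{} M$ defined on an open $G$-invariant neighborhood $\widetilde{W}$ of the diagonal in $M \times M$, with $\phi(q,q')\in\pi^{-1}(\pi(q))$, and $\phi_{q,\pi(q)} = \mathrm{id}$. The first step is to define
\begin{equation}
	P : \widetilde{W} \lra{} M \times_B M, \qquad P(q,q') := (q,\phi(q,q')),
\end{equation}
which lands in $M\times_B M$ precisely because $\phi(q,q')\in\pi^{-1}(\pi(q))$. The germ of $P$ at the diagonal is a $G$-equivariant microbundle morphism $P : T_\mu M \lra{} T^{vt}_\mu M$: it commutes with the first projection (by construction), sends the diagonal to the diagonal (since $\phi(q,q) = q$), and is $G$-equivariant because $\phi$ is. Its restriction to the sub-microbundle $T^{vt}_\mu M \subset T_\mu M$ (consisting of pairs $(q,q')$ with $\pi(q)=\pi(q')$) is the identity, since there $\phi(q,q') = \phi_{q,\pi(q)}(q,q') = q'$. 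This verifies the first assertion.

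Next, I take the direct sum with the tautological morphism $\tau : T_\mu M \lra{} \pi^*T_\mu B$ induced by $\pi$, as in the paragraph following Definition \ref{defn direct sum}, to obtain a $G$-equivariant microbundle morphism
\begin{equation}
	P \oplus \tau : T_\mu M \lra{} T^{vt}_\mu M \oplus \pi^*T_\mu B, \qquad (q,q') \mapsto \bigl(q,\phi(q,q'),\pi(q')\bigr).
\end{equation}
It remains to check that this is a microbundle isomorphism, i.e.\ a local homeomorphism near the diagonal. This is a local statement, so I work in the product coordinate chart $(x_1,\ldots,x_{n-k},x_{n-k+1},\ldots,x_n)$ furnished by Lemma \ref{lemma product chart}, in which $\pi$ is essentially projection to the last $k$ coordinates. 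In these coordinates both sides have natural local trivializations of rank $n$, and by the two defining properties of a fiber submersion in Definition \ref{defn microbundle submersion}, namely that each slice map $\phi_{q,b'}$ is a homeomorphism onto an open subset of $\pi^{-1}(\pi(q))$ and that $\phi_{q,\pi(q)}$ is the identity, the map $P\oplus \tau$ admits a local continuous inverse sending a triple $(q,\widetilde{q},b)$ with $\widetilde{q}\in\pi^{-1}(\pi(q))$ and $b\in B$ near $\pi(q)$ to the unique $q'\in\pi^{-1}(b)$ with $\phi(q,q') = \widetilde{q}$.

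The main obstacle is, as usual in smoothing theory, the fibrewise linearization needed to define $P$ equivariantly and globally; but the proposition preceding the corollary has already packaged this away via the slice theorem plus the extension argument of Lemma \ref{lemma microbundle extension}. Naturality of the construction is automatic from the recipe $P(q,q') = (q,\phi(q,q'))$: any further choice of $\phi$ arising from a convex combination or from the averaging procedure in the preceding proposition produces the same germ up to isotopy, so the microbundle morphism $P$, and hence the isomorphism $P\oplus \tau$, is natural in the sense required by the applications to follow.
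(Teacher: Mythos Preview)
Your proof is correct and follows essentially the same approach as the paper: both take the $G$-equivariant fiber submersion $\phi$ furnished by the preceding proposition, define $P(q,q') = (q,\phi(q,q'))$, and verify that $P\oplus\tau$ is a microbundle isomorphism by observing that, fibrewise in $q$, the map $q'\mapsto(\phi(q,q'),\pi(q'))$ is a local homeomorphism near $q$. You have simply filled in more of the verifications (the restriction to $T^{vt}_\mu M$, the explicit local inverse via the charts of Lemma~\ref{lemma product chart}) than the paper's terse proof, and your closing remarks on naturality are extraneous but harmless.
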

\begin{proof}
	By the previous proposition,
	there is a $G$-equivariant fiber submersion $\psi : \widetilde{U} \lra{} M$ over $M$,
	where $\widetilde{U} \subset M \times M$ is a neighborhood of the diagonal.
	We define
	\begin{equation}
		P : \widetilde{U} \lra{} M \times_B M, \quad P(q,q') := (q,\psi(q,q')).
	\end{equation}
	The map \eqref{eqn microbundle iso} is an isomorphism due to the fact that the map
	sending each point $q'$ in a neighborhood $\widetilde{U} \cap \{q \times M\} \subset q \times M=M$ of $q$ to $(\psi(q,q'),\pi(q')) \in \pi^{-1}(\pi(q)) \times B$
	is a homeomorphism onto its image for each $q \in M$.
\end{proof}

\subsection{Fiberwise smooth structures}

Throughout this section, we will fix a Lie group
$G$ and a
$G$-equivariant continuous map
$\pi : M \to B$ between $G$-manifolds $M$ and $B$ of dimensions $n$ and $k$ respectively. We will further assume that the fibres of $\pi$ are equipped with smooth structures.  For the next definition, we recall that the $C^1_{loc}$-topology on the space of $C^1$ functions on a smooth manifold is the topology associated to the collection of $C^1$ semi-norms obtained by restriction to compact subsets.

\begin{defn} \label{defn fiberwise smooth}
	Two product neighborhoods
	$\iota_i : W_i \to W_i|_{b_i} \times \pi(W_i)$, $i=1,2$ of $\pi$
	are \emph{$C^1_{loc}$-compatible}
	if for each $p \in W_1 \cap W_2$, there exists
	a product neighborhood $\iota : W \to W|_b \times \pi(W)$, $b = \pi(p)$
	so that the family of maps:
	\begin{equation} 
		\eta_v : W|_b \lra{} W_i|_{b_i}, \quad w \to \Pi_i(\iota((\iota|_{W|_v})^{-1}(w))), \quad v \in \pi(W)
	\end{equation}
	are all smooth and
	vary continuously with respect to the $C^1_{loc}$-topology,
	where $\Pi_i$ is the projection map to $W_i$ for $i=1,2$.
	\end{defn}
	
	A \emph{fiberwise smooth $C^1_{loc}$ $G$-bundle}
	is a $G$-equivariant map
	$\pi : M \to B$
	together with a collection $(\iota_i)_{i \in I}$ of
	$C^1_{loc}$-compatible $G_p$-invariant product neighborhoods around a collection of points $(p_i)_{i \in I}$
	whose domains cover $M$.
	Any $G_p$-invariant product neighborhood of $p \in M$ which is $C^1_{loc}$-compatible with each $(\iota_i)_{i \in I}$ will be called a \emph{chart} of $\pi$.
		
	\begin{defn}
The \emph{vertical tangent bundle} $T^{vt}M$ of a fibrewise smooth $C^1_{loc}$ $G$-bundle is the $G$-vector bundle
whose restriction to the domain $W$ of a chart $\iota : W \to W|_b \times \pi(W)$
of $\pi$ is the pullback of $T(W|_b)$ to $W$ via the projection map to $\pi(W)$.
\end{defn}

Note that we need the $C^1_{loc}$-compatibility condition for such a vertical tangent bundle to
make sense. If we replaced `$C^1_{loc}$' with `$C^0_{loc}$' then we would be unable to define
a vertical tangent bundle.

\begin{lemma} \label{lemma lift of vertical microbundle}
	Let $\pi : M \to B$ be a fiberwise smooth $C^1_{loc}$ $G$-bundle whose base $B$
	is a smooth $G$-manifold.
	Then there is a natural $G$-equivariant lift of the
	vertical tangent microbundle $T_\mu^{vt} M$ to $T^{vt} M$.
\end{lemma}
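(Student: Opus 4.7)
I construct the lift using a fibrewise exponential map. A vector bundle lift amounts to a $G$-equivariant microbundle morphism from the associated microbundle of $T^{vt}M$ to $T_\mu^{vt}M$, i.e.\ a $G$-equivariant continuous map $\Phi : U \to M \times_B M$, defined on a neighbourhood $U \subset T^{vt}M$ of the zero section, sending the zero section to the diagonal, covering the identity on $M$, and which is a local homeomorphism near the zero section.

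The first step is to build a $G$-invariant continuous family of fibrewise Riemannian metrics: for each $b \in B$ a smooth Riemannian metric $g_b$ on $\pi^{-1}(b)$, such that under any chart $\iota : W \to W|_{b} \times \pi(W)$ of $\pi$ the induced family of metrics on the fibres varies continuously in the $C^1_{loc}$-topology on $W|_{b}$. Local models are obvious (pull back any smooth metric on $W|_b$ along the projection $W \to W|_b$ determined by the chart); the $C^1_{loc}$-compatibility condition of Definition \ref{defn fiberwise smooth} is exactly what is needed to glue local choices by a continuous partition of unity on $B$. Averaging the resulting family against Haar measure on the compact group $G$ yields a $G$-invariant family, where $G$-invariance is made sense of via the $G_p$-invariant charts around each $p \in M$.

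The second step is to define $\Phi$ by the fibrewise exponential of $g_b$: for $v \in T^{vt}_p M = T_p(\pi^{-1}(\pi(p)))$ sufficiently small, set
\begin{equation}
\Phi(p,v) \; := \; \bigl(p,\; \exp^{g_{\pi(p)}}_p(v)\bigr) \; \in \; M \times_B M.
\end{equation}
Restricted to each fibre this is the standard smooth exponential, hence a local diffeomorphism near the zero section. That $\Phi$ is $G$-equivariant follows from $G$-invariance of the metrics $g_b$, and that it sends the zero section of $T^{vt}M$ to the diagonal and covers the identity is immediate from the definition. The choice of open neighbourhood $U$ of the zero section on which $\Phi$ is defined and a homeomorphism onto its image is made locally in charts using compactness and then globalised by a $G$-equivariant partition of unity argument on $M$.

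The main technical point, and where the hypothesis is used, is continuity of $\Phi$ as a map of topological spaces (not just fibrewise). Continuity is checked in a chart $\iota : W \to W|_b \times \pi(W)$, where $T^{vt}M|_W$ is identified with the pullback of $T(W|_b)$. The exponential map of a smooth Riemannian metric depends continuously on the metric in the $C^1_{loc}$-topology, so the $C^1_{loc}$-continuous dependence of $g_{\pi(p)}$ (transported into $W|_b$ via the chart) on the base point in $\pi(W)$ gives continuous dependence of $\Phi$ on $p$. Naturality is a consequence of the fact that any two such families of metrics are joined by a convex-combination path of families of metrics, producing a canonical isotopy between the resulting lifts. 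The overall construction is standard once the correct notion of fibrewise smoothness (namely $C^1_{loc}$-compatibility) is in place; the delicate role of the $C^1_{loc}$ hypothesis is precisely to ensure that the exponential map is globally continuous and not merely continuous along each fibre.
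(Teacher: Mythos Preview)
Your proposal is correct and follows essentially the same approach as the paper: build a $G$-invariant family of fibrewise Riemannian metrics and use the fibrewise exponential map to define the lift $v \mapsto (p,\exp^{g_{\pi(p)}}_p(v))$. The paper's proof is terser and does not spell out the averaging step for $G$-equivariance or the role of the $C^1_{loc}$ hypothesis in establishing continuity across fibres, but your more detailed treatment of these points is accurate and welcome.
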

\begin{proof}
	Using a fiberwise smooth partition of unity, we can put a metric on $T^{vt}(M)$
	so that the restriction to each fiber $M|_b = \pi^{-1}(b)$ is a smooth Riemannian metric $g_b$ for each $b \in B$.
	Let $\exp : T^{vt} M \to M$ be the map whose restriction to each fiber $T^{vt}M|_{(M|_b)}$
	is the exponential map $T^{vt}M|_{(M|_b)} \to M|_b$ with respect to $g_b$ for each $b \in B$.
	Our lift is then the map
	\begin{equation}
		T^{vt} M \to T^{vt}_\mu M \subset M \times M, \quad v \to (p,\exp(v)), \quad \forall \ p \in M, \ v \in T^{vt}_p M.
	\end{equation}
	It is straightforward to check this has the required properties.
\end{proof}

\begin{prop} \label{prop Kuranishi smoothing}
	Let $\K := (G,\scrT,E,s)$ be a global Kuranishi chart
	for which $\scrT$ is the total space of a fiberwise smooth $C^1_{loc}$ $G$-bundle
	whose base $B$ is a smooth $G$-manifold.
	Also, suppose that the $G$-action on $\scrT$ has only finitely many orbit types.
	Then there is a stabilization $\K'$ of $\K$ which admits a smooth structure.
	The thickening  of $\K'$ is $\scrT \times V$ for some $G$-representation $V$
	and the restriction to $\scrT \times 0$ of the natural lift $T(\scrT \times V) \to T_\mu (\scrT \times V)$, constructed using the exponential map as in Example \ref{example natural smooth lift},  is isotopic through $G$-equivariant lifts to the induced lift $l \oplus \id_V : E \oplus V \to T_\mu \scrT \oplus V$.
\end{prop}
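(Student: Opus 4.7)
The plan is to assemble the proposition out of the technology developed earlier in the section, essentially by chaining together Corollary \ref{corollary product isomorphism}, Lemma \ref{lemma lift of vertical microbundle}, and Corollary \ref{Cor:smooth_fattening}. The structural idea is: decompose $T_\mu \scrT$ into a vertical and a horizontal part via the submersion $\pi : \scrT \to B$, lift each part separately to a genuine $G$-vector bundle, and then invoke Lashof's smoothing theorem.

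First I would verify the hypotheses of Corollary \ref{corollary product isomorphism}. The base $B$ is a smooth $G$-manifold, hence locally linear by a standard slice argument. The map $\pi$ is a $G$-equivariant topological submersion directly from the definition of a fiberwise smooth $C^1_{loc}$ $G$-bundle, since $G_p$-invariant product neighborhoods are built into the definition. Fiberwise local linearity requires that for each $b \in B$, the stabilizer $G_b$ acts locally linearly on the fiber $\pi^{-1}(b)$; here one uses that the fibers carry smooth structures and that $G_p$-invariant product neighborhoods, being $C^1_{loc}$-compatible, force the $G_b$-action on $\pi^{-1}(b)$ to be smooth, hence locally linear after averaging a Riemannian metric. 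Having checked these, Corollary \ref{corollary product isomorphism} yields a natural $G$-equivariant isomorphism $T_\mu \scrT \cong T^{vt}_\mu \scrT \oplus \pi^* T_\mu B$ of microbundles.

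Next I would lift each summand. Lemma \ref{lemma lift of vertical microbundle} provides a natural $G$-equivariant vector bundle lift $T^{vt}\scrT \to T^{vt}_\mu \scrT$, using the fiberwise exponential map for a fiberwise smooth $G$-invariant metric. For the horizontal part, since $B$ is a smooth $G$-manifold, Example \ref{example natural smooth lift} gives a $G$-equivariant lift $TB \to T_\mu B$, which pulls back to a lift $\pi^* TB \to \pi^* T_\mu B$. Taking the direct sum produces a $G$-vector bundle lift
\begin{equation}
l : T^{vt}\scrT \oplus \pi^*TB \longrightarrow T_\mu \scrT.
\end{equation}
Combined with the hypothesis of finitely many orbit types, Corollary \ref{Cor:smooth_fattening} now applies and gives a $G$-representation $V$ and a stabilization $\K' = (G, \scrT \times V, p^*E \oplus p^*V, p^*s \oplus \Delta)$ of $\K$ whose thickening $\scrT \times V$ admits a $G$-equivariant smooth structure; any topological $G$-vector bundle over the resulting smooth $\scrT \times V$ can then be smoothed as well, so $\K'$ is equivalent to a smooth global Kuranishi chart.

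Finally, the isotopy statement is essentially Addendum \ref{add lashof} applied to this situation: Lashof's theorem guarantees not only the existence of $V$ but also that the tautological lift of $T_\mu(\scrT \times V)$ induced by the exponential map of any $G$-invariant complete Riemannian metric on the smoothed $\scrT \times V$, restricted to $\scrT \times 0$, is isotopic through $G$-equivariant lifts to $l \oplus \id_V$. The main obstacle I anticipate is the careful verification of fiberwise local linearity of the $G_b$-actions and the compatibility of the microbundle splitting with the chosen lifts; once these are in hand, the rest is a direct invocation of previously established machinery.
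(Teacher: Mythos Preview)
Your proposal is correct and follows essentially the same approach as the paper. The paper's proof is a one-line citation of Lemma~\ref{lemma lift of vertical microbundle}, Corollary~\ref{Cor:smooth_fattening}, and Remark~\ref{rmk:smooth_fattening}; you have simply unpacked this chain, making explicit the intermediate use of Corollary~\ref{corollary product isomorphism} to pass from a lift of the vertical microbundle to a lift of the full $T_\mu\scrT$, and the direct appeal to Addendum~\ref{add lashof} (which is what Remark~\ref{rmk:smooth_fattening} invokes anyway) for the isotopy statement.
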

\begin{proof}
	This follows from Lemma \ref{lemma lift of vertical microbundle} combined with
	Corollary \ref{Cor:smooth_fattening} and Remark \ref{rmk:smooth_fattening}.
\end{proof}

To apply Proposition \ref{prop Kuranishi smoothing}
to the moduli space of genus zero curves, we will show that
it admits a natural global Kuranishi chart with a fiberwise smooth $C^1_{loc}$ $G$-bundle structure.

\subsection{Pullbacks} \label{section pullbacks of kuranishi charts}

Given a Kuranishi chart describing a moduli space $M$ of holomorphic curves, one would like to  construct a Kuranishi chart for a moduli space $M'$ of curves in $M$ with additional structure
(such as adding marked points) in such a way that the forgetful map $M' \to M$
behaves well with respect to the Kuranishi charts.
In the sequel, this will be done by first mapping to another simpler moduli space,
adding the additional structure there, and pulling back the additional structure.
Here we will give the basic pullback operation for Kuranishi structures we require; it is applied to moduli spaces of curves
in Section \ref{Sec:marked_points}.

\begin{defn} \label{DEFN pullback of kuranishi charts}
	Let $\K=(G,\scrT,E,s)$ be a global Kuranishi chart
	and let $\Pi : \scrT \to B$ be a $G$-equivariant topological submersion as in Definition
	\ref{defn product neighborhood}.
	Let $f : B' \to B$ be a $G$-equivariant continuous map.
	We define the \emph{pullback} of $(G,\scrT,E,s)$ by $f$
	to be the global Kuranishi chart
	$f^*\K = (G,f^*\scrT,f^*E,f^*s)$ where
	\begin{equation}
		f^*\scrT = \left\{(x,y) \in \scrT \times B' \ : \ \Pi(x)=f(y) \right\} \subset \scrT \times B'
	\end{equation}
	is the pullback of $\scrT$ by $f$,
	$f^*E$ and $f^* s$ are the pullbacks of $E$ and $s$ via the natural projection map to the first factor
	$f^*\scrT \to \scrT$.
\end{defn}

Note that pullback commutes with germ equivalence, stabilization
and group enlargement.

\section{Morava virtual class\label{Sec:Morava_virtual_class}}

The goal of this section is to describe the virtual fundamental class in Morava $K$-theory for a global Kuranishi chart satisfying a suitable orientation hypothesis. We will also see that the virtual class has various desirable properties:
for instance, it will coincide with the usual fundamental class of a smooth manifold if the
corresponding global Kuranishi chart is smooth, free and regular.

\subsection{A rational homology warm-up}

Before we go into the details of constructing the virtual fundamental class in general,
we will briefly describe how to do this over $\bQ$.
Recall that for any spaces $X \subset Y$, we set $H^*(X|Y;\bQ) := H^*(X,X\backslash Y;\bQ)$; 
similarly for \v{C}ech cohomology.
By a \emph{homology $\bQ$-manifold} of dimension $n$,
we will mean a locally compact Hausdorff space $Q$ for which, for every $x \in Q$,  
$H_*(Q|x;\bQ)$ is equal to $\bQ$ and concentrated in degree $n$.
An \emph{orientation} for $Q$ is an element in Borel-Moore homology
$H_n^{lf}(Q;\bQ)$
whose restriction to $H_*(Q|x;\bQ)$ is non-zero for each $x \in Q$.

Now suppose that a compact Lie group $G$
acts on a topological manifold $Y$ with finite stabilizers.
Then by \cite[Theorem 2.1]{mostowembeddings},
we have that for each $x \in Y$,
there is a $G$-equivariant neighborhood of the $G$-orbit through $x$
isomorphic to $G \times_{G_x} S_x$ for some
$G_x$ space $S_x$.
Since $G \times_{G_x} S_x$ is a manifold for each $x \in Y$,
we get that $Y/G$ is a homology $\bQ$-manifold.

Now let $(G,\scrT,E,s)$ be a global Kuranishi chart for a compact metric space $M$
of virtual dimension $d$ and
suppose that the homology $\bQ$-manifold $\scrT/G$ is oriented and the bundle $E$
admits a $G$-equivariant orientation.
Let $m$ be the rank of $E$, $n$ the dimension of $G$.
Define $Z = s^{-1}(0)$.
Then the \emph{$\bQ$-virtual fundamental class} of $M$ is the composition:
\begin{equation}
\begin{split}
	\vfc_M : \check{H}^d(M;\bQ) \lra{D} \check{H}_m(\scrT/G|M;\bQ) \stackrel{\cong}{\longleftarrow} H_m^G(\scrT|Z;\bQ) \lra{s_*}   \\
	H^G_m(E|\scrT;\bQ) \lra{\small{Thom}} H_0^G(\scrT;\bQ) \lra{}
	H_0^G(pt;\bQ) \cong \bQ
\end{split}
\end{equation}
where $D$ is Alexander duality for $\bQ$-homology manifolds (see \cite[Proposition 1.3]{Skljarenko}).
This definition of the virtual fundamental class does not
depend on the choice of global Kuranishi chart up to germ equivalence, stabilization
or group enlargement.
Note that it is also sufficient to require that the virtual microbundle $T_\mu \scrT - E - \frg$
be oriented with a $G$-equivariant orientation, since one can stabilize
the Kuranishi chart until $\scrT$ and $E$ are both $G$-equivariantly oriented (See Lemma \ref{lemma stronger orientation} below).

This formulation of the virtual fundamental class is modelled on that of Pardon \cite{pardon2016algebraic}, who describes it locally in a version of \v{C}ech cochains; since we are working globally, we do not need any chain-level considerations. We wish to use the same general template to construct a virtual fundamental class
$H^*(M;\bK) \to \bK_*$ for certain global Kuranishi charts, but where $\bK$ is a ring spectrum.

\subsection{Generalised cohomology theories}

For the moment we will mostly work with generalized (co)homology theories
rather than the underlying spectra; constructions at the level of spectra (required for Theorem \ref{thm:main_generalised}) are deferred to Sections \ref{Sec:review} and \ref{Sec:chromatic}.
 We next provide a very brief summary of the notation and properties
that we will need from these (co)homology theories.

Recall that a ring spectrum $\bK$ determines a multiplicative generalised (co)homology theory; we will write $H_*(X;\bK)$ respectively $H^*(X;\bK)$ for the (co)homology groups\footnote{When $X$ has the homotopy type of a CW complex these groups are unambiguous, but for general $X$ they depend on the point-set model for the spectrum $\bK$ (i.e. they need  not be  homotopy invariant in $\bK$). The choice of that point-set model matters, for instance, in Lemma \ref{lemma Cech property}.} $\bK_*(X)$ respectively $\bK^*(X)$.   We denote the coefficients $H_*(pt;\bK)$ by $\bK_*$. We will largely need the case in which $\bK$ is one of the (periodic) Morava $K$-theories, see e.g. \cite{Rudyak, Lurie:lectures} for a discussion and background on these theories.  Compatible with our notation, there is a natural pairing
\begin{equation}
H_*(X;\bK) \otimes_{\bK_*} H^*(X;\bK) \to \bK_*
\end{equation}
and the groups $H_*(X;\bK)$ and $H^*(X;\bK)$ are (both left and right) $\bK_*$-modules.
\begin{rem}
	In the case where $H^*(X;\bK)$ is an ordinary cohomology theory, i.e. $\bK = HR$ is an Eilenberg-Maclane spectrum, this group agrees on compact Hausdorff spaces with \v{C}ech cohomology \cite{Huber}, which provides the link with the construction of \cite{pardon2016algebraic}.
\end{rem}

The following demonstrates one way in which $H^*(-;\bK)$ behaves like \v{C}ech cohomology.

\begin{lemma} \label{lemma Cech property} 
	Let $Z$ be a compact subset of a metric space $X$.
	Then
	\begin{equation}
	H^*(Z;\bK) = \varinjlim_{U \supset Z} H^*(U;\bK)\end{equation}
	where the direct limit is taken over open neighborhoods of $Z$ in $X$.
\end{lemma}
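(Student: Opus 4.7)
The plan is to establish that the natural map $\iota : \varinjlim_{U \supset Z} H^*(U;\bK) \to H^*(Z;\bK)$ is a bijection. First I would note that since $Z$ is compact in the metric space $X$, the open sets $U_n := \{x \in X : d(x,Z) < 1/n\}$ form a cofinal sequence of neighbourhoods of $Z$ with $\bigcap_n U_n = Z$. It therefore suffices to show that $\varinjlim_n H^*(U_n;\bK) \to H^*(Z;\bK)$ is an isomorphism.

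Next, I would fix the point-set model alluded to in the footnote preceding the lemma, in which $\bK$ is represented by a sequential CW spectrum $\{\bK_m\}$, so that for a paracompact Hausdorff space $Y$ one has $H^n(Y;\bK) = \varinjlim_N [\Sigma^N Y_+, \bK_{n+N}]$, and the representing spaces $\bK_{n+N}$ are CW complexes (hence ANRs when equipped with their natural metrisable topology, by Milnor's theorem). For surjectivity, a class $\alpha \in H^n(Z;\bK)$ is represented by a continuous map $f : \Sigma^N Z_+ \to \bK_{n+N}$ for some large $N$. Since $\Sigma^N Z_+$ is compact, $f$ factors through a finite subcomplex and so lands in an ANR. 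As $\Sigma^N Z_+$ is a closed subset of the metric space $\Sigma^N X_+$, Borsuk's neighbourhood extension theorem provides an extension of $f$ to an open neighbourhood of $\Sigma^N Z_+$; choosing $n$ large so that $\Sigma^N (U_n)_+$ sits inside this neighbourhood produces a class in $H^n(U_n;\bK)$ mapping to $\alpha$.

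For injectivity, suppose $\alpha \in H^n(U_n;\bK)$ is represented by $g : \Sigma^N (U_n)_+ \to \bK_{n+N}$ and satisfies $\iota(\alpha)=0$. Then $g|_{\Sigma^N Z_+}$ is null-homotopic, via some $H : \Sigma^N Z_+ \times [0,1] \to \bK_{n+N}$. Combining $H$ with $g$ on $\Sigma^N (U_n)_+ \times \{0\}$ and the constant map on $\Sigma^N (U_n)_+ \times \{1\}$ (modulo passing to a smaller neighbourhood so the image of $g$ lies in a compact piece), another application of Borsuk's extension theorem extends the null-homotopy to some $\Sigma^N (U_m)_+ \times [0,1]$ with $m \geq n$, showing that $\alpha|_{U_m} = 0$ in the colimit.

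The main obstacle is the careful choice of point-set model needed to legitimately identify $\bK$-cohomology with homotopy classes of maps into ANR-valued representing spaces on non-CW spaces like $Z$ and the open sets $U_n$; once that is in place, the argument reduces to standard extension theory for ANRs. An alternative (and arguably cleaner) route would be to invoke Huber's theorem identifying generalised cohomology with its \v{C}ech-type extension on compact Hausdorff spaces, combined with the tautological fact that \v{C}ech cohomology of $Z \subset X$ is the direct limit of the cohomologies of its open neighbourhoods in $X$.
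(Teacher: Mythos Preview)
Your approach is essentially the same as the paper's: choose a CW model for the spectrum $\bK$ and then reduce the question to an extension property for maps into CW targets. The only difference is packaging---the paper quotes \cite[Corollary~2.8]{LeeRaymond} (a Tietze-type statement that $\varinjlim [(X_n,A_n),(E,E')] = [(X,A),(E,E')]$ for CW pairs $(E,E')$) directly, whereas you re-derive this by hand via Borsuk's ANR extension theorem; your acknowledged point-set caveats about the model are exactly the ones the paper flags in the footnote and in its reference to Milnor.
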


\begin{proof} For this result, it matters that we pick a model for the spectrum $\bK$ in which each space has the homotopy type of a CW complex; this is possible since the loop space of a countable CW complex is homotopy equivalent to a countable CW complex by \cite{Milnor:CW}.  The result then follows from \cite[Corollary 2.8]{LeeRaymond}, which is in turn based on a version of the Tietze extension theorem.  They show that for a paracompact Hausdorff closed pair $(X,A)$, and a nested system $(X_n, A_n)$ for that pair comprising neighbourhoods in a fixed space $Y$,  there is a bijection of homotopy classes
\begin{equation}
\varinjlim [(X_n, A_n), (E,E’)] \ = \  [(X,A), (E,E’)]
\end{equation}
for any CW pair $(E,E')$. We apply this with $(E,E') = (K,\{pt\})$ where $K$ is one of the underlying spaces of the spectrum $\bK$ and $\{pt\}$ its basepoint.  See also the related \cite[Proposition 6.6]{AbouzaidBlumberg2021}.
\end{proof}

If $Y \subset X$ are topological spaces then we define
$X|Y$ to be the cone of the inclusion map $X \backslash Y \to X$, viewed as a pointed space, which is compatible with the notation $\widetilde{H}^*(X|Y;\bK) = H^*(X,X\backslash Y;\bK)$. We also record the 
\emph{Atiyah-Hirzebruch spectral sequence}:

\begin{thm}
For a CW-complex $X$, there is a spectral sequence computing $H^*(X;\bK)$
whose $E_2$-page is $H^p(X;\bK_q)$. The spectral sequence is functorial under continuous maps between CW-complexes.
\end{thm}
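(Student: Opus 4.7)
The plan is to construct the spectral sequence from the skeletal filtration of $X$, following the classical Atiyah--Hirzebruch template (see, e.g., \cite{Adams1974}). Write $X^{(n)}$ for the $n$-skeleton, giving an exhaustive filtration $\emptyset = X^{(-1)} \subset X^{(0)} \subset X^{(1)} \subset \cdots$ whose successive quotients $X^{(n)}/X^{(n-1)}$ are wedges $\bigvee_{I_n} S^n$ indexed by the set $I_n$ of $n$-cells. First I would apply $H^*(-;\bK)$ to the cofibre sequences $X^{(n-1)} \hookrightarrow X^{(n)} \to X^{(n)}/X^{(n-1)}$ to obtain a long exact sequence for each $n$; these assemble into an exact couple whose derived spectral sequence has
\begin{equation}
E_1^{p,q} \;=\; \widetilde H^{p+q}(X^{(p)}/X^{(p-1)};\bK) \;=\; \prod_{I_p} \widetilde H^{p+q}(S^p;\bK) \;\cong\; \prod_{I_p}\bK_q,
\end{equation}
where the last identification uses the suspension isomorphism to reduce $\widetilde H^{p+q}(S^p;\bK)$ to $\bK_q$.

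The next step is to identify the $E_1$-page together with $d_1$ with the cellular cochain complex of $X$ with coefficients in $\bK_q$. The differential $d_1$ factors as the composition of the connecting homomorphism with the restriction to the next quotient, and a diagram chase involving the attaching maps of $(p+1)$-cells to $X^{(p)}$ shows that it agrees with the cellular coboundary. The key point is that the classical computation for $\bK = H\bZ$ goes through verbatim: suspension isomorphisms and connecting homomorphisms are induced by maps of spaces, hence natural in the coefficient spectrum. This yields $E_2^{p,q} = H^p_{\mathrm{cell}}(X;\bK_q) = H^p(X;\bK_q)$ as desired.

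For functoriality, I would invoke the cellular approximation theorem: any continuous map $f : X \to Y$ between CW complexes is homotopic to a cellular map, which preserves skeleta and hence induces a morphism of the corresponding exact couples. Any two cellular approximations are cellularly homotopic, and the resulting ambiguity disappears once one passes to the $E_2$-page, giving functoriality on $E_r$ for all $r \geq 2$.

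The step I expect to require the most care is convergence. In the finite-dimensional case the spectral sequence is concentrated in a bounded range of bidegrees and so converges strongly to $H^*(X;\bK)$. In general, convergence is conditional in the sense of Boardman, and the Milnor exact sequence
\begin{equation}
0 \to \textstyle{\lim}^1\, H^{*-1}(X^{(n)};\bK) \to H^*(X;\bK) \to \lim H^*(X^{(n)};\bK) \to 0
\end{equation}
identifies the abutment; $\lim^1$ obstructions can obstruct strong convergence, but the statement of the theorem only claims that the spectral sequence \emph{computes} $H^*(X;\bK)$, so this conditional formulation suffices for the applications in the paper.
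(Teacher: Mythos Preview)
Your proof is correct and is the standard Atiyah--Hirzebruch construction via the skeletal filtration. The paper itself does not prove this theorem: it simply records the Atiyah--Hirzebruch spectral sequence as a known background result, so there is no proof in the paper to compare against.
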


For a space $X$, we write $X^c$ for its one point compactification.
We write $H^*_c(X;\bK) = \widetilde{H}^*(X^c;\bK)$ for compactly supported cohomology and $H_*^c(X;\bK)=\widetilde{H}_*(X^c;\bK)$ for locally finite homology.
If $X$ is a $G$-space, then we have the equivariant analogues of these groups:
\begin{equation}
	H_*^G(X;\bK) = H_*(EG \times_G X;\bK), \ 	H_*^{G,c}(X;\bK) = H_*(EG \times_G X^c, BG;\bK)
\end{equation}
\begin{equation}
	 H^*_G(X;\bK) = H^*(EG \times_G X;\bK), \ 	H^*_{G,c}(X;\bK) = H^*(EG \times_G X^c, BG;\bK).
\end{equation}
The same argument as for Lemma \ref{lemma Cech property} shows:

\begin{lemma} \label{lemma equivariant cech} 
	The $G$-equivariant version of Lemma \ref{lemma Cech property} holds.
\end{lemma}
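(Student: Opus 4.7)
The plan is to reduce the equivariant statement to the non-equivariant Lemma \ref{lemma Cech property} by means of a finite-dimensional exhaustion of the Borel construction. Concretely, I would choose a filtration $EG = \bigcup_n EG_n$ in which each $EG_n$ is a compact smooth free $G$-manifold (for instance, finite-stage Stiefel manifolds when $G$ is embedded in a unitary group), so that $EG \times_G Y = \varinjlim_n EG_n \times_G Y$ for each $G$-space $Y$. At level $n$ the quotient $EG_n \times_G X$ is metrizable and $EG_n \times_G Z$ is a compact subspace, so Lemma \ref{lemma Cech property} applies and gives
\begin{equation}
H^*(EG_n \times_G Z;\bK) \, = \, \varinjlim_{V} H^*(V;\bK),
\end{equation}
the colimit running over open neighborhoods $V$ of $EG_n \times_G Z$ in $EG_n \times_G X$.

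The next step is cofinality: at each level $n$, neighborhoods of the form $V = EG_n \times_G U$ for $G$-invariant open $U \supset Z$ in $X$ are cofinal. Given any open $W \supset EG_n \times_G Z$, pull back to a $G$-invariant open $\widetilde W \supset EG_n \times Z$ in $EG_n \times X$; compactness of $EG_n$ lets the tube lemma produce an open $U_0 \supset Z$ in $X$ with $EG_n \times U_0 \subset \widetilde W$. Setting $U := X \setminus G \cdot (X \setminus U_0)$ gives an open $G$-invariant neighborhood of $Z$ (open because $G$ is compact so $G \cdot (X \setminus U_0)$ is closed, and still containing $Z$ by $G$-invariance of $Z$) with $EG_n \times_G U \subset W$, as required. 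Combining with the previous step, for each fixed $n$,
\begin{equation}
H^*(EG_n \times_G Z;\bK) \, = \, \varinjlim_{U} H^*(EG_n \times_G U;\bK),
\end{equation}
where $U$ ranges over $G$-invariant open neighborhoods of $Z$ in $X$.

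Finally, I would pass from each fixed $n$ to the limit using Milnor's short exact sequence
\begin{equation}
0 \to {\lim}^1_n \, H^{*-1}(EG_n \times_G Y;\bK) \to H^*(EG \times_G Y;\bK) \to \lim_n H^*(EG_n \times_G Y;\bK) \to 0
\end{equation}
applied to $Y = Z$ and to each $Y = U$, and then exchange the directed colimit over $U$ with the tower in $n$. The hard part will be justifying that interchange: commuting a filtered colimit with $\lim_n$ and $\lim^1_n$ is not automatic, but here the cofinality identification above lets one reorganize the problem and invoke the standard fact that filtered colimits of abelian groups are exact and preserve Mittag-Leffler conditions on sequential towers arising from closed inclusions, yielding the desired equality $H^*_G(Z;\bK) = \varinjlim_U H^*_G(U;\bK)$.
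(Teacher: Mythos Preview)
Your proposal takes a genuinely different route from the paper, and the final step does not go through as written.

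The paper's argument is much shorter: it observes that the proof of Lemma \ref{lemma Cech property} already works verbatim for the Borel construction. The Lee--Raymond result quoted there applies to any paracompact Hausdorff closed pair, not just to compact subsets of metric spaces. Since $EG \times_G X$ is paracompact Hausdorff and $EG \times_G Z$ is closed in it, one applies Lee--Raymond directly to this pair. Your cofinality argument (that neighborhoods of the form $EG \times_G U$ with $U$ a $G$-invariant open set are cofinal among all neighborhoods of $EG \times_G Z$) is exactly what is needed to finish, and it is correct. No finite-dimensional approximation or limit interchange is required.

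Your approach, by contrast, introduces a genuine difficulty at the last step. You need
\[
\varinjlim_{U} \lim_n H^*(EG_n \times_G U;\bK) \;\cong\; \lim_n \varinjlim_{U} H^*(EG_n \times_G U;\bK)
\]
and the analogous statement for $\lim^1$. Filtered colimits of abelian groups do not commute with countable inverse limits in general, and the justification you sketch does not establish it here: exactness of filtered colimits is irrelevant to this interchange, and filtered colimits do not preserve the Mittag--Leffler condition (the stabilization index can drift with the colimit variable). Even if all $\lim^1$ terms vanished, you would still owe an argument for commuting $\varinjlim_U$ past $\lim_n$. Without a specific finiteness hypothesis or a direct cofinality argument on the double system, this step is a gap. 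The cleanest fix is simply to bypass the approximation entirely and apply Lee--Raymond to $EG \times_G X$ itself, as the paper does.
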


\subsection{Orientations of (virtual) bundles} \label{Section orientations of bundles}

If $\xi : V \to X$ is a vector bundle over a space $X$,
 the \emph{Thom space} $X^V = X^\xi$
is the pointed space given by the quotient $D(V) / S(V)$ where $S(V)$ and $D(V)$
are the unit sphere and disk bundles with respect to some choice of metric.
Equivalently, we can think of it as the space $V|X$; one can then similarly define the Thom space of a microbundle $X \lra{s} E \lra{p} X$ to be $X^E := E|(s(X))$.

\begin{defn} \label{DEFN oriented vector bundle}
	A \emph{$\bK$-orientation} of $V$ is a class $\fro \in H^*(X^V;\bK)=H^*(V|X;\bK)$ lying in degree $\rk(V)$ 
	whose restriction to $H^*((V|_x)\left|x\right.;\bK) \cong H^*(pt;\bK)$ is a unit for each $x \in X$.
	$V$ is \emph{$\bK$-orientable} if it admits a $\bK$-orientation.

A \emph{ $\bK$-orientation} for a microbundle $X \lra{s} E \lra{p} X$ is a class in $H^*(E|s(X);\bK)$ of degree $\rk(E)$ whose restriction to $H^*((E|_x)|x;\bK)$ is a unit for each $x \in X$.
An \emph{oriented manifold} is a manifold with an orientation of its tangent microbundle.
\end{defn}

If a microbundle admits a vector bundle lift, then both notions of orientation coincide in a
natural way and we will not distinguish between them.

A trivial bundle $\bR^k \times X$ is always $\bK$-oriented
and the set of its $\bK$-orientations bijects with the set of units
in $H^*(\bR^k|0;\bK)=H^{*-k}(pt;\bK)$ (via pullback to $\bR^k \times X$ under the projection map).
The chosen trivialization of $\bR^k \times X$ determines a canonical orientation given by pulling back the identity element in $H^*(pt;\bK)$.

\begin{Remark} 
A topological manifold may be $H\bZ$-orientable but not be $\bK$-orientable for a Morava $K$-theory $\bK$ at the prime $p=2$. See \cite[Chapter IX, Section 7]{Rudyak}.
\end{Remark}

We shall not require any sophisticated notion of equivariant orientation in this paper: we will obtain our $\bK$-orientations from a choice of geometric data (e.g. stable almost complex structure on $V$) which we can take to be $G$-equivariant. Such a choice  will induce a $\bK$-orientation of the induced  $G$-bundle over the Borel construction, which is all that we will require. Such a datum is weaker than the notion of equivariant orientation in the sense of \cite{CMW}.

The following result is classical, and is reviewed at the level of spectra in Section \ref{Subsec:thom_review}. Similar results hold for microbundles and also for compactly supported cohomology.

\begin{theorem} (Thom isomorphism theorem). \label{thm:thom_iso}
	Let $X$ be homotopic to a CW complex (for instance, a topological manifold \cite{Milnor:CW}). 	If $\xi: V\to X$ is a $\bK$-oriented rank $d$ vector bundle with orientation $\fro$,
	then there are Thom isomorphisms
	\begin{equation} \label{eqn:thom}
	\begin{tikzcd}
		H^*(X;\bK) \arrow[rr, "{\fro \cup \xi^*(-)}"] && H^{*+d}(X^V;\bK), \\ 
		H_{*+d}(X^V;\bK)  \arrow[rr, "{\fro \cap \xi^*(-)}"] && H_*(X;\bK).
		\end{tikzcd}
	\end{equation}
\end{theorem}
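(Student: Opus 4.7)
The plan is to reduce to the trivial bundle case by a local-to-global Mayer--Vietoris argument, taking advantage of local triviality of $\xi$. Both isomorphisms in \eqref{eqn:thom} are proved in parallel: the key point is that the relevant maps (cup product and cap product with $\fro$) are maps of $H^*(X;\bK)$-modules (acting via $\xi^*$) and are compatible with the Mayer--Vietoris sequences built from a trivialising open cover.

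First I would verify the trivial case. If $\xi$ is the projection $\bR^d \times X \to X$, then $X^V$ is canonically identified with $\Sigma^d(X_+)$, the $H^*(X;\bK)$-module structure on $H^*(X^V;\bK)$ corresponds to the $H^*(X;\bK)$-module structure on $H^*(\Sigma^d X_+;\bK) \cong H^{*-d}(X;\bK)$, and the set of $\bK$-orientations bijects with the group of units of $\bK_*$ (as observed after Definition \ref{DEFN oriented vector bundle}). For the canonical orientation coming from the trivialisation, cup product with $\fro$ is the suspension isomorphism; any other orientation differs by multiplication by a unit, hence yields an isomorphism as well. The homology statement is identified in the same way with the suspension isomorphism in homology.

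Second, I would prove the Mayer--Vietoris step. Given open sets $U, W \subset X$ with $X = U \cup W$, the restrictions of the orientation class $\fro$ to $U$, $W$ and $U \cap W$ are $\bK$-orientations for $V|_U$, $V|_W$ and $V|_{U \cap W}$, and cup/cap product with $\fro$ is natural in these restriction maps. The cover $\{V|_U, V|_W\}$ of $V$ produces a Mayer--Vietoris long exact sequence relating $H^*(V|X;\bK)$ to the corresponding groups over $V|_U$, $V|_W$, $V|_{U \cap W}$, and this is intertwined by the Thom cup product with the ordinary Mayer--Vietoris for $H^*(-;\bK)$ on $X$; similarly in homology. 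A five-lemma argument reduces the Thom isomorphism on $V$ to the Thom isomorphism for the three smaller pieces.

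Finally, I would induct over a trivialising good cover. Since $X$ is homotopy equivalent to a CW complex we may assume $X$ itself admits a countable open cover $\{U_i\}$ over each of which $\xi$ trivialises; the first step handles each $V|_{U_i}$. An induction on the number of open sets, using the Mayer--Vietoris step, gives the isomorphism for $V$ over any finite union $U_{i_1} \cup \cdots \cup U_{i_k}$, and we pass to the colimit using the compatibility of $H^*(-;\bK)$ and $H_*(-;\bK)$ with increasing unions of open subsets (via the Milnor $\varprojlim^{\!1}$ exact sequence for cohomology, which presents no obstacle because cup product with $\fro$ is compatible with restrictions). The microbundle version follows from the vector bundle version whenever a vector bundle lift is chosen, which is all that we shall use. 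The main point to be careful about is the compatibility of $\fro \cup \xi^*(-)$ with the boundary maps in Mayer--Vietoris, which is a standard consequence of the $H^*(X;\bK)$-module structure on $H^*(V|X;\bK)$ induced by $\xi^*$.
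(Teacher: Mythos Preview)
The paper does not prove this in the main body; it calls the result classical and points to Section~\ref{Subsec:thom_review}, where the argument is recast at the spectrum level via the Thom diagonal $\Delta: X^{\xi} \to \Sigma^\infty_+ X \wedge X^{\xi}$. There a Thom class is a map $u_\xi: X^{\xi} \to \bK$, and one \emph{defines} $\bK$-orientability by requiring the composite $X^{\xi} \wedge \bK \to X_+ \wedge \bK$ (built from $\Delta$, $u_\xi$, and the product on $\bK$) to be an equivalence; the Thom isomorphisms on (co)homology are then tautological, and the equivalence of this definition with the fibrewise-unit formulation of Definition~\ref{DEFN oriented vector bundle} is cited to Rudyak.

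Your Mayer--Vietoris argument is a correct and more elementary route, essentially the proof in Milnor--Stasheff for ordinary coefficients, which goes through unchanged for any multiplicative theory. It has the advantage of being self-contained and of working directly with the fibrewise-unit definition the paper actually states. One technical point worth tightening: the Milnor $\varprojlim^{\!1}$ sequence applies to towers of cofibrations, and inclusions of arbitrary open subsets need not be cofibrations. It is cleanest to choose the trivialising cover so that $U_k$ is a neighbourhood deformation retract of the $k$-skeleton of a CW model for $X$ (the bundle trivialises over each such $U_k$ since it is homotopy equivalent to a disjoint union of cells); the filtration is then, up to homotopy, the skeletal one, the inclusions are cofibrations, and your limit step goes through by applying the five-lemma to the pair of Milnor sequences intertwined by cup product with~$\fro$.
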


Suppose that $\check{V}$ is another vector bundle over $X$. For the virtual vector bundle $V-\check{V}$, we define 
\begin{equation}
H^*(X^{V - \check{V}};\bK) = H^{*+N}(X^{V \oplus \check{V}^\perp};\bK)
\end{equation}
where $\check{V}^\perp$ is any vector bundle satisfying
$\check{V} \oplus \check{V}^\perp \cong \bR^N$. We set $H^*(X^{-\check{V}};\bK) = H^*(X^{0 - \check{V}};\bK)$. Again, similar definitions hold for microbundles.  (One can make sense of $X^{V - \check{V}}$ in the category of spectra, as we will recall in Section \ref{Subsec:thom_review}.)

Suppose that $\check{\fro}$ is a $\bK$-orientation on $\check{V}$.
Then the Theorem \ref{thm:thom_iso} gives us a natural bijection between $\bK$-orientations of $V$
and $\bK$-orientations of $V \oplus \check{V}$ by the following
map:
\begin{equation} \label{equation orientation cup}
\begin{tikzcd}
	H^*(V|X;\bK) \arrow[rr, "{\pi^*_{\check{V}}(\check{\fro}) \cup \pi_V^*(-)}"] && H^{*+d}(\check{V} \oplus V|X;\bK) \end{tikzcd}
\end{equation}
where $\pi_V$ and $\pi_{\check{V}}$ are the natural projection maps from $V \oplus \check{V}$ to $V$ and $\check{V}$ respectively.
There is a similar identification for microbundles.

We conclude that, when discussing orientations,  one does not need to distinguish between $V$ and $V \oplus \bR^k$; thus we can also talk about orientations of virtual vector bundles.

\begin{defn} \label{DEFN oriented difference}
	A \emph{$\bK$-orientation} on the formal difference $V - \check{V}$ is a $\bK$-orientation on $V \oplus \check{V}^\perp$
	where we have chosen a vector bundle $\check{V}^\perp$ with $\check{V} \oplus \check{V}^\perp \cong \bR^N$.
\end{defn}
	
It is straightforward to apply this notion to the Borel equivariant setting.

It follows from  Equation \eqref{equation orientation cup} that 
one can say $V- \check{V}$ is {\it $\bK$-oriented} without referring to a specific choice of $\check{V}^{\perp}$. 
The discussion above also ensures that if $\check{V}$ is the trivial vector bundle $0$,
then both notions of $\bK$-orientation for $V = V - \check{V}$ coincide (Definitions \ref{DEFN oriented vector bundle} and \ref{DEFN oriented difference}).
Again, there is a similar story for microbundles.

The following result is classical\footnote{This also holds for topological manifolds on replacing the tangent bundle with the tangent microbundle. We do not know a canonical reference, but it is proved and extensively discussed in \cite{AbouzaidBlumberg2021}.}:
\begin{theorem} (Atiyah Duality)
	If $W$ is a smooth manifold and $Z \subset W$ is a closed subset, 
	\begin{equation} \label{eqn:atiyah_duality}
		H_*(W|Z;\bK) \cong H^{-*}(Z^{-TW|_Z};\bK).
	\end{equation}
      \end{theorem}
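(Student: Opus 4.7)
The plan is to upgrade classical Atiyah duality for closed smooth manifolds to a Spanier--Whitehead duality for the pair $(W,Z)$, and then extract the cohomological isomorphism by the standard adjunction between duality and (co)homology. The starting point is the cofiber sequence of pointed spaces $(W\setminus Z)_+ \to W_+ \to W|Z$. After embedding $W$ as a closed smooth submanifold of some $\bR^N$, classical Atiyah duality provides Spanier--Whitehead identifications $D(W_+) \simeq W^{-TW}$ and $D((W\setminus Z)_+) \simeq (W\setminus Z)^{-TW|_{W\setminus Z}}$ (using that $W\setminus Z$ is open in $W$, so its tangent bundle coincides with $TW|_{W\setminus Z}$). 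Dualising the cofiber sequence then produces a cofiber sequence $D(W|Z) \to W^{-TW} \to (W\setminus Z)^{-TW|_{W\setminus Z}}$, in which the second map is the natural restriction of Thom spectra along the open inclusion $W\setminus Z \hookrightarrow W$.

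In parallel, for any virtual bundle $V$ over a space $X$ and any decomposition of $X$ into a closed subspace $Z$ with open complement $U$, there is a standard cofiber sequence of Thom spectra $Z^{V|_Z} \to X^V \to U^{V|_U}$, whose first map is induced by the closed inclusion and whose second is restriction to the open subset. Specialising to $X = W$ and $V = -TW$ yields a cofiber sequence $Z^{-TW|_Z} \to W^{-TW} \to (W\setminus Z)^{-TW|_{W\setminus Z}}$ with the same middle term and the same second map as the dualised sequence above. Comparing the two, the fibers are canonically identified, so $D(W|Z) \simeq Z^{-TW|_Z}$. Applying $\bK$-cohomology and using the standard identity $H^{-*}(DY;\bK) \cong H_*(Y;\bK)$ then yields the desired isomorphism $H_*(W|Z;\bK) \cong H^{-*}(Z^{-TW|_Z};\bK)$.

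The main technical hurdle will be the careful formulation of classical Atiyah duality for the non-compact manifold $W\setminus Z$ and the verification that the two candidate maps $W^{-TW} \to (W\setminus Z)^{-TW|_{W\setminus Z}}$ --- one from Spanier--Whitehead-dualising the open inclusion $(W\setminus Z)_+ \hookrightarrow W_+$, and the other from the geometric restriction of Thom spectra --- in fact agree up to homotopy. The most efficient strategy is to embed $W$ as a \emph{closed} subset of $\bR^N$, fix a tubular neighbourhood $U \cong \nu$ of $W$, and realise $W^{-TW}$ and $(W\setminus Z)^{-TW|_{W\setminus Z}}$ concretely as $\Sigma^{-N}(\nu / (\nu\setminus W))$ and $\Sigma^{-N}(\nu/(\nu\setminus(W\setminus Z)))$ respectively; the compatibility of the two collapse maps is then visible by construction, and excision together with a deformation retraction of $\nu\setminus Z$ onto $\nu\setminus\nu|_Z$ identifies the fiber with the Thom spectrum over $Z$. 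For topological manifolds the same argument applies with $TW$ replaced by the tangent microbundle $T_\mu W$, using the microbundle framework developed above and pursued in detail in \cite{AbouzaidBlumberg2021}.
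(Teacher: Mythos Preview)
The paper does not prove this theorem: it is stated as a classical result, with a footnote pointing to \cite{AbouzaidBlumberg2021} for the topological-manifold version and a subsequent Remark indicating that when $Z$ is not a neighbourhood deformation retract one must invoke a continuity argument (Lemma~\ref{lemma Cech property}). So there is nothing in the paper to compare against directly; what matters is whether your argument stands on its own.

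Unfortunately it does not. The core strategy---identify $D(W|Z)$ by comparing two cofibre sequences with a common second map---is reasonable, but each of the identifications you rely on fails for non-compact pieces. First, Atiyah duality in the form $D(Y_+)\simeq Y^{-TY}$ requires $Y$ \emph{compact}; for the open manifold $Y=W\setminus Z$ it is false. Take $W=S^1$, $Z=\{p\}$: then $(W\setminus Z)_+\simeq\bR_+\simeq S^0$ has dual $\bS$, while $(W\setminus Z)^{-T(W\setminus Z)}=\bR^{-\underline{\bR}}\simeq S^{-1}$. (The correct statement for open $Y$ is $D(Y^c)\simeq Y^{-TY}$, which involves the one-point compactification, not $Y_+$.) Second, your ``standard'' cofibre sequence $Z^{V|_Z}\to X^V\to U^{V|_U}$ is wrong as written: with $V=0$, $X=\bR$, $Z=\{0\}$, it would assert that the cofibre of $\{0\}_+\hookrightarrow\bR_+$ is $(\bR\setminus\{0\})_+\simeq S^0\vee S^0$, but that inclusion is a homotopy equivalence, so its cofibre is contractible. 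Third, the deformation retraction of $\nu\setminus Z$ onto $\nu\setminus\nu|_Z$ that you invoke in the last paragraph is false: with $W=\bR$, $Z=\{0\}$, $\nu=\bR^2$, one has $\nu\setminus Z=\bR^2\setminus\{0\}\simeq S^1$ but $\nu\setminus\nu|_Z=(\bR\setminus\{0\})\times\bR\simeq S^0$.

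The route the paper implicitly has in mind avoids the non-compact complement altogether: by excision, replace $W$ by a compact codimension-zero submanifold with boundary $\bar U$ containing $Z$ in its interior, and use honest Atiyah duality for manifolds with boundary, $D(\bar U/\partial\bar U)\simeq\bar U^{-TW|_{\bar U}}$. Then pass to the limit over shrinking $\bar U\supset Z$ using the \v Cech property of Lemma~\ref{lemma Cech property} to obtain $H_*(W|Z;\bK)\cong\varinjlim H^{-*}(\bar U^{-TW|_{\bar U}};\bK)\cong H^{-*}(Z^{-TW|_Z};\bK)$. If $Z$ happens to be a neighbourhood deformation retract, a single $\bar U$ suffices and no limit is needed; this is exactly the dichotomy the paper's Remark is flagging.
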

      \begin{rem}
One can formulate Atiyah Duality as a statement about spectra, i.e. as the fact that $W|Z$ and $Z^{-TW|_Z}$ are dual. In this form, there is a straightforward equivariant generalisation to the setting in which a group $G$ acts on $W$, preserving $Z$. However, one does not conclude from that an generalisation of Equation \eqref{eqn:atiyah_duality} involving (Borel) equivariant homology and cohomology, as one can evidently see by noting that the equivariant homology and cohomology groups of the Borel construction are not in general isomorphic. The notion of ambidextrous cohomology theories, discussed in Section \ref{Sec:Cheng} below, provides a context in which such an isomorphism holds. 
      \end{rem}

 We recall that the Thom isomorphism and Atiyah duality are compatible in the following sense: if $W$ is a $\bK$-oriented compact manifold and $\scrW \to W$ is a $\bK$-oriented rank $r$ vector bundle, then the total space of $\scrW$ is also oriented as a non-compact manifold, and there is a commuting diagram
 \begin{equation} \label{eqn:atiyah_meets_thom}
 \xymatrix{
 H^*(W;\bK) \ar[rr]^-{D_W}\ar[d]_-{\cdot u_{\scrW}}  && H_{n-*}(W;\bK) \ar[d]_{i_*} \\
 H^{*+r}_c(\scrW;\bK) \ar[rr]^-{D_\scrW} && H_{n-*}(\scrW;\bK)
 }
 \end{equation}
 where $i: W \to \scrW$ is the natural inclusion and on the left we have the composition of cup product with the Thom class and the natural map $H^*(\scrW|0;\bK) \to H^*_c(\scrW;\bK)$.

\begin{Remark}
	If $Z\subset W$ is not a neighbourhood deformation retract,  \eqref{eqn:atiyah_duality} 
	uses a continuity argument based on Urysohn's Lemma, cf. Lemma \ref{lemma Cech property} and \cite{Huber}.
\end{Remark}

\subsection{Orientations of global Kuranishi charts} \label{Section Orientations of KC}

In order to define virtual fundamental classes, we need a notion of orientation for a global Kuranishi chart.
Let us fix a ring spectrum $\bK$ as above.
Fix a compact Lie group $G$. Let $\frg$ be the Lie algebra of $G$.
By abuse of notation, for a $G$-space $X$ we will sometimes write
$\frg$
instead of the $G$-vector bundle $\frg \times X$
where $G$ acts diagonally.

\begin{defn} \label{DEFN oriented Kuranishi}	
	A (Borel equivariant) \emph{$\bK$-orientation} on a global Kuranishi chart $(G,\scrT,E,s)$ is a $\bK$-orientation
	on the virtual bundle over the Borel construction of $\scrT$ associated $T_\mu \scrT - E - \frg$.
	It is \emph{$\bK$-oriented} if it is equipped with a $\bK$-orientation.
\end{defn}

If it is a smooth chart then the microbundle $T_\mu \scrT$ can be replaced by $T\scrT$ (Example \ref{example natural smooth lift}). 
Note that a germ equivalence, stabilization or group enlargement $(G',\scrT',E',s')$ of a global
Kuranishi chart $(G,\scrT,E,s)$ induces a natural bijection between $\bK$-orientations of
$(G,\scrT,E,s)$ and $(G',\scrT',E',s')$ (Equation \eqref{equation orientation cup}).

The virtual fundamental class will be built from various classes
on $\scrT$ and the Thom space of $E$.  The following Lemma makes it possible to work with charts which satisfy a superficially stronger orientation hypothesis.

\begin{lemma} \label{lemma stronger orientation}
	Let $(G,\scrT,E,s)$ be a $\bK$-oriented smooth global Kuranishi chart.
	Then there is a smooth $G$-vector bundle $W$ over $\scrT$
	so that the stabilization $(G,\scrT',E',s')$ of $(G,\scrT,E,s)$ by $W$ 
	has the property that $T\scrT' - \frg$ and $E'$ both admit Borel equivariant $\bK$-orientations, whose difference recovers the $\bK$-orientation on $T\scrT' - E' - \frg$ obtained from Definition \ref{DEFN oriented difference}
	and Equation \eqref{equation orientation cup}.
\end{lemma}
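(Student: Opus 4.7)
The plan is to take $W = E$ and exploit the canonical $G$-equivariant complex structure $J(v_1,v_2) = (-v_2,v_1)$ on $E \oplus E$. Since $\bK$ is complex orientable (we are working with Morava $K$-theories at this stage of the paper), the $G$-equivariant complex bundle $E \oplus E$ descends to a complex bundle over the Borel construction $EG \times_G \scrT$, and so carries a canonical Borel equivariant $\bK$-orientation, which I denote $\fro_E$.

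With $W = E$, the stabilized chart has thickening $\scrT' = E$ (the total space of the projection $p : E \to \scrT$), obstruction bundle $E' = p^*E \oplus p^*E \cong p^*(E \oplus E)$, and tangent bundle $T\scrT' = p^*T\scrT \oplus p^*E$. Pulling back $\fro_E$ via $p$ therefore gives a Borel equivariant $\bK$-orientation on $E'$. To orient $T\scrT' - \frg$, I would invoke the formal virtual bundle identity
\[
T\scrT' - \frg \;=\; p^*(T\scrT - E - \frg) \,+\, p^*(E \oplus E)
\]
over $EG \times_G \scrT'$: the first summand inherits an orientation from the hypothesis by pullback, and the second is oriented by the complex structure, so cupping the corresponding Thom classes as in Equation \eqref{equation orientation cup} produces a $\bK$-orientation on $T\scrT' - \frg$.

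The compatibility clause is then essentially automatic from the construction: the induced $\bK$-orientation on $T\scrT' - E' - \frg$ obtained via Definition \ref{DEFN oriented difference} by combining the orientations on $T\scrT' - \frg$ and $E'$ is, by associativity of the Thom class cup product, precisely the pullback of the original orientation on $T\scrT - E - \frg$. I expect the main technical nuisance to be the bookkeeping required to fix complementary bundles $\frg^\perp$ and $(E')^\perp$ and to track the Thom isomorphisms consistently across the three virtual bundles; this is routine but somewhat finicky given the layered application of Equation \eqref{equation orientation cup}.
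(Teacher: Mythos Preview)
Your proposal is correct and takes essentially the same approach as the paper: both take $W = E$ (the paper writes ``$E'$'' here, which appears to be a typo). The paper's two-sentence proof simply asserts that one may ``choose a $\bK$-orientation on $TW - \frg$ and hence on $E'$''; your argument makes explicit \emph{why} such a choice is possible, namely via the $G$-equivariant complex structure on $E\oplus E$ together with the complex orientability of $\bK$, and then tracks the compatibility carefully. This is exactly the content the paper is suppressing.
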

\begin{proof}
	Take the vector bundle $W$ to be given by $E'$.
	We can then choose a $\bK$-orientation on $TW - \frg$ and hence on $E'$.
\end{proof}

\begin{lemma} \label{lemma pullback orientation}
	Let $\K=(G,\scrT,E,s)$ be a global Kuranishi chart
	and let $\Pi : \scrT \to B$ be a $G$-equivariant topological submersion as in Definition
	\ref{defn product neighborhood}.
	Let $f : B' \to B$ be a $G$-equivariant continuous map between $\bK$-oriented
	manifolds $B'$ and $B$.
	If $\K$ admits a $\bK$-orientation then its pullback $f^*\bK = (G,f^*\scrT,f^*E,f^*s)$
	admits a natural \emph{pullback orientation}.
\end{lemma}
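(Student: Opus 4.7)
The plan is to use the splitting of the tangent microbundle afforded by the submersion structure on $\Pi$, together with the orientations on the two base manifolds, to ``divide out'' the $B$-direction and ``multiply in'' the $B'$-direction. Concretely, applying Corollary \ref{corollary product isomorphism} to $\Pi : \scrT \to B$ (implicitly using, as will be the case in the moduli-space applications, that $B$ is locally linear and $\Pi$ is fiberwise locally linear) gives a $G$-equivariant isomorphism of microbundles
\[
T_\mu \scrT \;\cong\; T_\mu^{vt}(\Pi) \oplus \Pi^* T_\mu B.
\]
The pulled-back map $f^*\Pi : f^*\scrT \to B'$ is again a $G$-equivariant topological submersion, since the product neighbourhoods from Definition \ref{defn product neighborhood} pull back directly, and it inherits the fiberwise local linearity necessary to apply the same corollary to yield
\[
T_\mu(f^*\scrT) \;\cong\; T_\mu^{vt}(f^*\Pi) \oplus (f^*\Pi)^* T_\mu B'.
\]
A direct inspection of the fiber-product definition of the vertical tangent microbundle shows that the projection $\tilde{f} : f^*\scrT \to \scrT$ identifies $T_\mu^{vt}(f^*\Pi)$ canonically with $\tilde{f}^* T_\mu^{vt}(\Pi)$, and by construction $f^*E \cong \tilde{f}^*E$.

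Next, working on the Borel construction and using Equation \eqref{equation orientation cup}, I would combine the given $\bK$-orientation on $T_\mu \scrT - E - \frg$ with the $\bK$-orientation on $\Pi^*T_\mu B$ (obtained by pulling back the orientation of $B$) to produce a $\bK$-orientation on the virtual microbundle $T_\mu^{vt}(\Pi) - E - \frg$. Pulling this back under $\tilde{f}$ gives a $\bK$-orientation on $T_\mu^{vt}(f^*\Pi) - f^*E - \frg$. Finally, adding in the $\bK$-orientation on $(f^*\Pi)^*T_\mu B'$, pulled back from the $\bK$-orientation of $B'$, produces the pullback orientation on
\[
T_\mu^{vt}(f^*\Pi) \oplus (f^*\Pi)^*T_\mu B' - f^*E - \frg \;\cong\; T_\mu(f^*\scrT) - f^*E - \frg.
\]

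The main obstacle is the naturality of Corollary \ref{corollary product isomorphism}: the splitting it produces depends on an auxiliary choice of $G$-equivariant fiber submersion, and one must check that two such choices give the same pullback orientation, so that the construction depends only on the orientations of $\scrT$, $B$, and $B'$. This should follow from the contractibility of the space of compatible choices (implicit in the slice-theoretic proof of Lemma \ref{lemma microbundle extension}) together with the homotopy-invariance of the orientation class under such microbundle isomorphisms. A secondary, purely bookkeeping point is that before invoking Equation \eqref{equation orientation cup} one may need to stabilise as in Lemma \ref{lemma stronger orientation} so that the relevant virtual bundles are presented by actual vector bundles with individually orientable summands; since stabilisation and the pullback operation of Definition \ref{DEFN pullback of kuranishi charts} commute, this preliminary step does not affect the final orientation.
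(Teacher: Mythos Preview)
Your proposal is correct and follows essentially the same route as the paper's proof: split $T_\mu\scrT$ via Corollary \ref{corollary product isomorphism}, use the $\bK$-orientation of $B$ to extract an orientation on $T_\mu^{vt}(\Pi)-E-\frg$, pull this back along $\tilde{f}$, and then re-insert the $\bK$-orientation of $B'$ using the analogous splitting of $T_\mu(f^*\scrT)$. The paper's proof is simply a terser version of exactly this argument; it does not pause to discuss the naturality concern you raise about the choice of fiber submersion (it simply calls the splitting ``natural''), nor does it invoke any preliminary stabilisation, but your additional remarks on these points are reasonable and do no harm.
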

\begin{proof}
	By Corollary \ref{corollary product isomorphism}, we have a natural
	$G$-equivariant splitting $T_\mu(\scrT) \cong T^{vt}_\mu(\scrT) \oplus \pi^*(T_\mu B)$
	where $T^{vt}_\mu(\scrT)$ is the vertical tangent microbundle of $\Pi$.
	Using this splitting, we see that $T^{vt}_\mu(\scrT) - E - \frg$ is naturally $\bK$-oriented.
	Hence $T^{vt}_\mu(f^*\scrT) - f^*E - \frg$ is also $\bK$-oriented where $T^{vt}_\mu(f^*\scrT)$
	is the vertical tangent microbundle of the natural map $f^*\scrT \to B'$.
	Since $T_\mu B'$ is also $\bK$-oriented,  by the $G$-equivariant
	splitting isomorphism $T_\mu(f^*\scrT) \cong T^{vt}_\mu(f^*\scrT) \oplus \pi^*(T_\mu B')$,
	we get a $\bK$-orientation for $f^*\bK$.
\end{proof}

\subsection{Morava $K$-theories}

For background on Morava $K$-theories, the reader can consult \cite{Lurie:lectures,Wurgler}. This paper will not define them, but only use a number of properties which we collect here.  

By a \emph{stable complex structure} on a vector bundle $V$, we mean
a complex structure on $V \oplus \bR^N$ for some $N \in \bN$.

\begin{Lemma}
Let $\bK = K_p(n)$ denote the Morava $K$-theory associated to a prime $p$ and an integer $n>0$. 
\begin{enumerate}
\item The coefficient ring is $H^*(pt;\bK) = \bF_p[v^{\pm 1}]$ with  $|v| = 2(p^n-1)$.
\item If $X$ and $Y$ are homotopy equivalent to CW complexes then the K\"unneth theorem holds: $H^*(X\times Y;\bK) \cong H^*(X;\bK)\otimes_{\bK_*} H^*(Y;\bK)$.
\item Any vector bundle with a stable complex structure is $\bK$-oriented.
Hence any stably almost complex smooth manifold $X$ is $\bK$-oriented.
\item If $p > 2$, then any oriented vector bundle is $\bK$-oriented.
Hence if $p>2$, any oriented smooth manifold is $\bK$-oriented.
\end{enumerate}
\end{Lemma}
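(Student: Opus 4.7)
The plan is to verify each of the four listed properties by appealing to standard references in chromatic homotopy theory, since these are classical facts about periodic Morava $K$-theory rather than statements requiring new arguments.

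For item (1), this is essentially part of the definition of $K_p(n)$: the theory is built as a complex-oriented spectrum with formal group law of height exactly $n$ over $\bF_p$, obtained by starting from $BP$ (or $MU_{(p)}$) and killing all $v_i$ for $i \neq n$ while inverting $v = v_n$, so $\pi_* K(n) = \bF_p[v^{\pm 1}]$ with $|v|=2(p^n-1)$. I would simply cite \cite{Lurie:lectures} or \cite{Wurgler}. For item (2), the key point is that $K(n)_* = \bF_p[v^{\pm 1}]$ is a graded field, so that every graded $K(n)_*$-module is free. This makes the relevant $\mathrm{Tor}$ groups in the Künneth spectral sequence vanish and gives the isomorphism $H^*(X\times Y;\bK) \cong H^*(X;\bK) \otimes_{\bK_*} H^*(Y;\bK)$ for spaces homotopy equivalent to CW complexes; this is standard (see e.g. \cite{Wurgler}).

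For item (3), the argument is that $K(n)$ is a complex-oriented cohomology theory: by construction it receives a map from $MU$ (or $BP$) sending the universal formal group law to its height-$n$ one, which is exactly a complex orientation in the sense of \cite{Adams1974}. A complex orientation yields, by the usual Thom isomorphism argument, canonical Thom classes for every complex vector bundle, and hence for every stably complex vector bundle; applying this to $TX \oplus \bR^N$ for a stably almost complex manifold $X$ gives the $\bK$-orientation of $X$.

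For item (4), at an odd prime $p$, one invokes the fact that $K(n)$ admits an $E_\infty$ (or at least a suitable multiplicative) structure refining the complex orientation so that every real oriented bundle is $\bK$-orientable. Concretely, the obstruction lives in $H^*(BSO;K(n))$, and at odd primes the relevant Bockstein-type obstructions vanish because the mod-$p$ cohomology of $BSO$ agrees with that of $BSpin$ away from $2$; equivalently, real orientability follows from complex orientability plus the fact that $2$ is invertible in $\bK_*$, which allows the square-root construction identifying the real and complex Thom classes up to a unit. This is the content of \cite[Chapter IX]{Rudyak}, which I would cite directly. The main (non-)obstacle is the $p=2$ case, which is exactly why the hypothesis $p>2$ is imposed: there the $KO$-orientation of $\mathrm{Spin}$-manifolds does not extend to all oriented manifolds, and the analogous failure persists for $K(n)$.
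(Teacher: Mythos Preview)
Your approach is essentially the same as the paper's: both treat the lemma as a collection of standard facts and defer to \cite{Rudyak} and \cite{Lurie:lectures}. The paper's proof is in fact even terser than yours---it only spells out that the coefficients form a graded field (so Künneth follows formally) and otherwise simply cites the references.

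One caution: your discussion of item (4) is muddled. The $E_\infty$ structure on $K(n)$ is irrelevant to $MSO$-orientability (and in any case $K(n)$ is famously \emph{not} $E_\infty$). The comparison of $BSO$ and $BSpin$ is also beside the point. The clean argument is simply that any complex-oriented ring spectrum in which $2$ is a unit admits an $MSO$-orientation; this is what \cite[Chapter IX]{Rudyak} proves, and it is enough just to cite that. Your square-root heuristic is roughly the right idea, but the surrounding remarks about $E_\infty$ structures and Bocksteins should be dropped.
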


\begin{proof} The coefficients form a graded field, i.e. a graded ring with the property that all non-zero homogeneous elements are invertible, and all graded modules are projective. The K\"unneth theorem then follows essentially formally. (Note that this involves tensoring left and right $\bK_*$-modules; it is fact that the left and right module structures agree, cf. Lemma \ref{Lem:K-linear} below.) For a full discussion and for orientations, see \cite{Rudyak,Lurie:lectures}. \end{proof}

In fact, we shall use a slightly more general notion of Morava $K$-theories, whose construction we discuss in Section \ref{Sec:more_coefficients}:
\begin{Proposition} \label{Prop:coefficients-text}
For a prime $p$ and natural numbers $n, k>0$ there are cohomology theories $K_{p^k}(n)$ with the following properties:
\begin{itemize}
\item the coefficients are given by $K_{p^k}(n)_* = \bZ/(p^k)[v_n^{\pm}]$ with $|v_n| = 2(p^n-1)$;
\item any vector bundle with a stable complex structure is $K_{p^k}(n)$-oriented.
Hence any stably almost complex smooth manifold $X$ is $K_{p^k}(n)$-oriented.
\item the spectrum $K_{p^k}(n)$ is a $k$-fold iterated extension of $K_{p}(n)$.
\end{itemize}
\end{Proposition}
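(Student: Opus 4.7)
The plan is to construct $K_{p^k}(n)$ as a suitable quotient of the Brown--Peterson spectrum followed by a localization, in the manner of the standard construction of $K(n)$ itself. Recall that $BP_* = \bZ_{(p)}[v_1, v_2, \ldots]$ with $|v_i| = 2(p^i-1)$, and that $(p^k, v_1, \ldots, v_{n-1})$ is a regular sequence in $BP_*$ (since $(p, v_1, \ldots, v_{n-1})$ is regular and $BP_*/(v_1, \ldots, v_{n-1})$ is torsion free). First I would form the $BP$-module spectrum
\begin{equation}
BP/(p^k, v_1, \ldots, v_{n-1})
\end{equation}
by the usual procedure of iterated cofiber sequences: starting from $BP$, form the cofiber of multiplication by $p^k$, then by $v_1$, etc. Then I would invert $v_n$ by taking the telescope of self-maps realising $v_n$-multiplication. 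The result is a $BP$-module spectrum with homotopy groups $\bZ/(p^k)[v_n^{\pm 1}]$. An associative ring structure (as a $BP$-algebra) can be put on this spectrum via the general machinery for constructing quotients of structured ring spectra by regular sequences, following Strickland, Lazarev, and Angeltveit; this gives us the cohomology theory $K_{p^k}(n)$.

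For complex orientability, observe that $K_{p^k}(n)$ is a $BP$-module and comes with a ring map $MU_{(p)} \to BP \to K_{p^k}(n)$. Any such map equips $K_{p^k}(n)$ with a complex orientation and in particular shows that any vector bundle with a stable complex structure is $K_{p^k}(n)$-orientable in the sense of Definition \ref{DEFN oriented vector bundle}; the corresponding statement for stably almost complex smooth manifolds then follows from the Thom isomorphism theorem.

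For the iterated extension claim, I would exploit the short exact sequence of abelian groups
\begin{equation}
0 \to \bZ/p \to \bZ/p^k \to \bZ/p^{k-1} \to 0
\end{equation}
realised at the spectrum level by the cofiber sequence of $BP$-modules
\begin{equation}
BP/(p, v_1, \ldots, v_{n-1}) \lra{} BP/(p^k, v_1, \ldots, v_{n-1}) \lra{} BP/(p^{k-1}, v_1, \ldots, v_{n-1}),
\end{equation}
obtained by smashing $BP/(v_1, \ldots, v_{n-1})$ with the cofiber sequence $S/p \to S/p^k \to S/p^{k-1}$ of Moore spectra (equivalently, iterating the cofiber construction in the first step above). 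Since inverting $v_n$ preserves cofiber sequences, this descends to a cofiber sequence
\begin{equation}
K_p(n) \lra{} K_{p^k}(n) \lra{} K_{p^{k-1}}(n),
\end{equation}
and induction on $k$ presents $K_{p^k}(n)$ as a $k$-fold iterated extension of copies of $K_p(n)$.

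The main technical obstacle is ensuring that the construction can be done in a structured enough category to endow $K_{p^k}(n)$ with an associative ring structure that is compatible with its $BP$-module structure (so that the multiplicative statement of complex orientability makes sense), and to ensure that the cofiber sequence above is a cofiber sequence of $K_p(n)$-modules, as will be needed in downstream applications. This is standard but delicate; the key tool is the theory of $BP$-algebra quotients by regular sequences, which suffices here since we only need $K_{p^k}(n)$ to be a multiplicative cohomology theory, not an $E_\infty$ ring spectrum.
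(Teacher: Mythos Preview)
Your construction has a genuine gap: the spectrum $v_n^{-1} BP/(p^k, v_1, \ldots, v_{n-1})$ does \emph{not} have homotopy groups $\bZ/(p^k)[v_n^{\pm 1}]$. Since $BP_* = \bZ_{(p)}[v_1, v_2, \ldots]$, quotienting by the regular sequence $(p^k, v_1, \ldots, v_{n-1})$ and inverting $v_n$ gives
\[
\bZ/(p^k)[v_n^{\pm 1}, v_{n+1}, v_{n+2}, \ldots],
\]
with all the higher $v_i$ still present. You have constructed something closer to a $\bZ/(p^k)$-version of $v_n^{-1}P(n)$, not of $K(n)$. The fix is to also kill the infinite regular sequence $v_{n+1}, v_{n+2}, \ldots$; equivalently (and more efficiently) one can start not from $BP$ but from the Johnson--Wilson spectrum $E(n)$, whose coefficients $\bZ_{(p)}[v_1, \ldots, v_{n-1}, v_n^{\pm 1}]$ already have the higher generators removed. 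This is the route the paper takes: form the ``integral'' Morava $K$-theory as a completion of $E(n)/(v_1,\ldots,v_{n-1})$, with coefficients $\bZ_p[v_n^{\pm 1}]$, and then quotient by the regular element $p^k$. With this correction your cofibre-sequence argument for the iterated-extension claim goes through unchanged.

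One further point of care, which the paper flags explicitly and your sketch handles only implicitly: at the prime $p=2$ the ring-spectrum structure on these quotients is more delicate, and Strickland's treatment of the odd-prime case (his Theorem~2.6) does not apply directly. The paper invokes Strickland's Proposition~2.9, which works with quotients of $MU_{(2)}$ rather than of $BP$ or $E(2)$. Your citation of Angeltveit would also cover this, but it is worth being explicit that the $p=2$ case requires a separate argument.
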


Having theories based upon rings $\bZ/(p^k)$ and not just $\bZ/p$ gives access to integral cohomology.
We will use the following lemma to show that the map \eqref{Sec:Splitting} with $\bK=\Z$ is a split epimorphism.

\begin{lemma} \label{lemma morava split}
	Given a map of finite-dimensional compact manifolds  $f : X\to Y$,  if $H^*(Y;K_{p^k}(n)) \to H^*(X;K_{p^k}(n))$ is a split epimorphism for all Morava $K$-theories, then the map $H^*(Y,\bZ) \to H^*(X, \bZ)$ is also a split epimorphism.
\end{lemma}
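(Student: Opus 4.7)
Proof plan.

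The approach is to use the Atiyah-Hirzebruch spectral sequence to convert the Morava-theoretic hypothesis into a splitting of $f^* \otimes \bZ_p$ for each prime $p$, and then invoke the local-to-global principle for maps of finitely generated abelian groups.

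Let $d$ bound the cohomological dimensions of the compact finite-dimensional manifolds $X$ and $Y$, and fix a prime $p$ and integer $k \geq 1$. For $n$ large enough that $2(p^n - 1) > d$, the AHSS $H^*(Z; K_{p^k}(n)_*) \Rightarrow K_{p^k}(n)^*(Z)$ collapses at $E_2$ for $Z \in \{X, Y\}$: the coefficient ring $\bZ/p^k[v_n^{\pm}]$ is concentrated in degrees divisible by $2(p^n - 1)$, so any nontrivial differential $d_r$ must satisfy $r \geq 1 + 2(p^n - 1) > d + 1$, forcing its target into cohomological degree exceeding $d$. This yields a $K_{p^k}(n)_*$-linear identification of $K_{p^k}(n)^*(Z)$ with $H^*(Z; \bZ/p^k)[v_n^{\pm}]$ compatible with the restriction map induced by $f$.

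Next, I would run a Mittag-Leffler argument over $k$. Let $T_k$ denote the set of $K_{p^k}(n)_*$-linear sections of the restriction map $K_{p^k}(n)^*(Y) \to K_{p^k}(n)^*(X)$; this is non-empty by hypothesis and finite because the graded pieces of $K_{p^k}(n)^*(Z)$ are finite in each degree. The natural map of spectra $K_{p^{k+1}}(n) \to K_{p^k}(n)$ (coming from $\bZ/p^{k+1} \to \bZ/p^k$) yields a transition map $T_{k+1} \to T_k$, well-defined because $\bZ/p^{k+1}$-linearity forces $s_{k+1}$ to send $p^k \cdot (-)$ into $p^k \cdot (\text{target})$, which vanishes after mod-$p^k$ reduction. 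By Mittag-Leffler, $\varprojlim_k T_k$ is non-empty. Using that $H^*(Z; \bZ)$ is finitely generated and $\bZ_p$ is $\bZ$-flat, one identifies $\varprojlim_k K_{p^k}(n)^m(Z)$ with $H^m(Z; \bZ) \otimes \bZ_p$ in the degree range $|m| \leq d$; an element of $\varprojlim_k T_k$ then provides a $\bZ_p$-linear section of $f^* \otimes \bZ_p$.

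Since $H^*(X; \bZ)$ and $H^*(Y; \bZ)$ are finitely generated, $f^*$ is a split surjection if and only if $f^* \otimes \bZ_p$ is one for each prime $p$. Surjectivity follows because $\coker(f^*)$ is finitely generated and $p$-divisible for every $p$, hence zero; the vanishing of the extension class in the finite group $\Ext^1_{\bZ}(H^*(X; \bZ), \ker f^*)$ reduces, $p$-primarily, to splitting of the $p$-complete extension, provided by the previous step. I expect the main obstacle to be the Mittag-Leffler step: verifying that reduction of coefficient rings $\bZ/p^{k+1} \to \bZ/p^k$ induces a well-defined transition on sections (using the AHSS identification from the first step), and correctly identifying the inverse limit of $K_{p^k}(n)$-cohomology with $p$-completed integral cohomology.
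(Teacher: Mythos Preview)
Your proposal is correct and follows essentially the same route as the paper. Both arguments begin by choosing $n$ with $2(p^n-1) > d$ so that the Atiyah--Hirzebruch spectral sequence degenerates and $K_{p^k}(n)^*(Z) \cong H^*(Z;\bZ/p^k)[v_n^{\pm}]$, yielding a split surjection on $H^*(-;\bZ/p^k)$ for all $p,k$. The paper then invokes the Chinese remainder theorem to obtain a splitting over $\bZ/m$ for every $m$ and asserts this suffices for a splitting over $\bZ$; you instead run an explicit Mittag--Leffler argument on the (finite, nonempty) sets of sections to assemble a $\bZ_p$-linear splitting for each $p$, then apply the local-to-global principle for $\Ext^1$ of finitely generated abelian groups. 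These are two phrasings of the same algebraic fact, and your version simply makes the final step explicit. One minor simplification: you can run Mittag--Leffler directly on sections of $H^*(Y;\bZ/p^k) \to H^*(X;\bZ/p^k)$ rather than on Morava-level sections, which spares you from tracking the spectrum-level transition maps $K_{p^{k+1}}(n) \to K_{p^k}(n)$.
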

\begin{proof}
	For $2(p^n-1) > \dim_{\bR}(Y)$, the Atiyah-Hirzebruch spectral sequence for Morava $K$-theory degenerates,
	and 
	\begin{equation}
		\begin{split}
	H^*(X;K_{p^k}(n)) \cong H^*(X;\bZ/p^k) \otimes_{\bZ/p^k} \bZ/p^k [v^{\pm}], \\ 
	H^*(Y;K_{p^k}(n)) \cong H^*(Y;\bZ/p^k) \otimes_{\bZ/p^k} \bZ/p^k [v^{\pm}].
			\end{split}
	\end{equation}
	Hence
	\begin{equation} \label{eqn pk split}
		f_* : H^*(Y;\bZ/p^k) \lra{} H^*(X;\bZ/p^k)
	\end{equation}
	is a split epimorphism for each prime $p$ and each $k \in \bN$.
	By the Chinese remainder theorem combined with the universal coefficient theorem,
	the map
	\begin{equation} \label{eqn fZ}
		f_* : H^*(Y;\bZ) \lra{} H^*(X;\bZ)
	\end{equation}
	splits when tensored with $\Z/m$ for any integer $m$.
	This is enough to show that \eqref{eqn fZ} splits.
\end{proof}

\subsection{Cheng's theorems\label{Sec:Cheng}}
\label{sec:chengs-theorems}

We would like to lift Atiyah duality  \eqref{eqn:atiyah_duality} to a statement in the $G$-equivariant setting. 
The relevant results are due to Cheng \cite{Cheng}, who works in the language of equivariant homotopy theory as established in \cite{LMSM}. Here we formulate his results at the cohomological level.  For completeness, we briefly recall the underlying spectrum-level construction in Section \ref{Sec:review}. (That review indicates that the results should also hold for locally linear actions on topological manifolds, but we will stick to Cheng's smooth setting for simplicity.) 
 
Throughout this section, $\bK$ denotes a ring spectrum which is complex-oriented and $K(n)$-local for some Morava $K$-theory $K(n)$.  The paper \cite{Cheng} considers only the case in which $\bK = K(n)$, but the arguments apply equally well in the more general setting, as follows from our r\'esum\'e of Cheng's work in Section \ref{Sec:Cheng_review}, and in particular from Lemma \ref{lem:norm_map_equivalence_K-local}.

\begin{Theorem}[Cheng] \label{thm:Cheng-homological}
Let $M$ be a smooth manifold on which a compact Lie group $G$ acts smoothly with finite stabilisers. If $M$ is closed, there is a distinguished equivalence

\begin{equation} \label{eqn:Cheng_equivalence}
\lambda_{G,M}: H_*^G(M;\bK) \stackrel{\sim}{\longrightarrow} \widetilde{H}^{-*}_G(M^{-TM\oplus \frg};\bK).
\end{equation}
If $M$ is open, and $M^c$ denotes the one-point compactification, then we have 
\begin{equation}\label{eqn:relative_lambda_homol}
\lambda_{G,M}: H_*^G(M^c ; \bK)  \stackrel{\sim}{\longrightarrow} H^{-*}_G(M^{-TM\oplus \frg};\bK).
\end{equation}

\end{Theorem}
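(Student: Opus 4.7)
My plan is to realise $\lambda_{G,M}$ as the composite of classical $G$-equivariant Atiyah duality with a $K(n)$-local ambidexterity (norm map) equivalence, and then to verify that the resulting map is an equivalence by a local-to-global argument which reduces to the case of actions by finite groups.

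For closed $M$, I would work in the genuine $G$-equivariant stable category of \cite{LMSM}, where smooth $G$-manifolds satisfy classical equivariant Atiyah duality: $\Sigma^\infty_G M_+$ and the Thom spectrum $M^{-TM}$ are Spanier--Whitehead duals. Passing to Borel $G$-spectra and then $\bK$-localising gives a natural equivalence
\begin{equation}
H_*^G(M;\bK) \;\simeq\; H^{-*}_G(M^{-TM};\bK).
\end{equation}
The additional twist by $\frg$ is produced by the Greenlees--Hopkins--Sadofsky/Hovey--Sadofsky ambidexterity theorem: for any $K(n)$-local complex-oriented ring spectrum $\bK$ and finite group $H$, the $\bK$-local norm map realises an equivalence between homotopy orbits and homotopy fixed points of any $H$-spectrum. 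Applied fibrewise to the Borel construction of a $G$-action with finite stabilisers, the corresponding norm map is twisted by $\Sigma^{\frg}$, the Thom shift coming from the adjoint representation of $G$, which is exactly the shift relating the genuine and Borel equivariant categories.

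To prove that $\lambda_{G,M}$ is an equivalence I would argue locally using Mostow's slice theorem (Theorem \ref{theorem slice}): $M$ admits a $G$-invariant open cover by tubes of the form $G \times_{G_x} S_x$ with each stabiliser $G_x$ finite. Both sides of \eqref{eqn:Cheng_equivalence} satisfy equivariant Mayer--Vietoris, so it suffices to verify the equivalence on each such tube and on finite intersections. On $G \times_{G_x} S_x$, a Shapiro/induction identification reduces the Borel construction to that of the $G_x$-action on $S_x$, hence the claim reduces to $G_x$-equivariant Atiyah duality for the finite group $G_x$; this in turn follows directly from classical equivariant Atiyah duality combined with the finite-group case of $K(n)$-local ambidexterity. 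The open case then follows by exhausting $M$ with $G$-invariant open subsets having compact closure and passing to the appropriate (co)limit, using that both functors in question interact well with $K(n)$-localisation (cf. Lemma \ref{lemma equivariant cech}).

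The main obstacle will be verifying that the fibrewise norm maps constructed over the Borel base are genuinely equivalences globally, and not merely on each slice: this requires checking compatibility of the $K(n)$-local ambidexterity data across varying finite stabilisers as one moves between slice neighbourhoods, and careful bookkeeping of how the $\Sigma^{\frg}$ shift interacts with induction from $G_x$ to $G$. This compatibility, together with the identification of the transfer with its expected Thom-twisted inverse in the $K(n)$-local equivariant stable category, is the technical heart of Cheng's argument in \cite{Cheng}.
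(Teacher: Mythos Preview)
Your overall architecture matches the paper's account of Cheng's proof: genuine equivariant Atiyah duality, the norm map, and a reduction to finite stabilisers are exactly the three ingredients. However, you have misattributed which step does what. Genuine Atiyah duality gives $\Sigma^\infty_G M_+ \simeq F(M^{-TM},\bS_G)$ in the genuine $G$-category, but ``passing to Borel $G$-spectra and $\bK$-localising'' does \emph{not} by itself yield $H_*^G(M;\bK)\simeq H^{-*}_G(M^{-TM};\bK)$: the left side is built from homotopy orbits and the right from homotopy fixed points, and comparing these is precisely the content of the norm map being an equivalence, not a formal consequence of duality. Conversely, the $\frg$-shift does not arise from ambidexterity; it comes from the Adams isomorphism $(EG_+\wedge X)^G\simeq \Sigma^{-\frg}(EG_+\wedge X)/G$ for free $G$-spectra, which is how one identifies the derived fixed points of $EG_+\wedge\Sigma^{-\frg}M_+\wedge\iota_*\bK$ with $(EG\times_G M)_+\wedge\bK$. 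In the paper's summary (Section~\ref{Sec:Cheng_review}), the norm map is applied to $X=\Sigma^{-\frg}\Sigma^\infty_+M\wedge\iota_*\bK$; Adams identifies the source with Borel homology, and Atiyah duality plus adjunction identifies the target with the cohomology of the Thom spectrum $(EG\times_G M)^{-TM\oplus\frg}$.

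Your local-to-global step via Mostow slices and Mayer--Vietoris is a perfectly good alternative to the paper's cell-induction over a $G$-CW structure built from cells $(G/H)_+\wedge S^k$ with $H$ finite (Lemma~\ref{lem:tate-vanishes}); both reduce to the finite-group norm equivalence of Greenlees--Sadofsky/Hovey--Strickland. The cell-induction argument has the advantage that the map $\lambda_{G,M}$ is defined once globally and then shown to be an equivalence by checking on generating cells, so the compatibility issue you flag as the ``main obstacle'' never arises: there is nothing to patch.
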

\vspace{1em}

\begin{Notation} If $U$ is a compact smooth manifold with boundary $\partial U$, we abbreviate 
\begin{equation}
U^c := \mathrm{int}(U)^c = U / \partial U.
\end{equation}
With that convention, \eqref{eqn:relative_lambda_homol} also applies to manifolds with boundary.
\end{Notation}

If the virtual bundle $-TM\oplus\frg$ is $\bK$-oriented, then the cohomology groups appearing in Theorem \ref{thm:Cheng-homological} are just cohomology groups of $EG\times_G M$ with a shift in degree via the discussion on virtual bundles in Section \ref{Section orientations of bundles}.

There are two important addenda to Cheng's result.  The first is a compatibility with open inclusions: 

\begin{Addendum} \label{Add:relative_Cheng-homological}
If $U \subset M$ is a $G$-invariant open submanifold there is a commutative diagram
\begin{equation}
  \begin{tikzcd}
H_*^G(M^c;\bK) \ar[d] \ar[r,"\lambda_{G,M}"] &  H^{-*}_G(M^{-TM\oplus \frg};\bK) \ar[d] \\ 
H_*^G(U^c;\bK) \ar[r,"\lambda_{G,U}"] & H^{-*}_G(U^{-TU\oplus \frg}; \bK)    
  \end{tikzcd}
\end{equation}
where the vertical arrows are respectively induced by the collapse map $M^c \to U^c$ and by restriction.
\end{Addendum}

The second addendum is a compatibility with `change-of-group' i.e. with the choice of presentation of a given orbifold as a global quotient.

\begin{Addendum} \label{Add:change_of_group-homological}
Suppose there is an equivalence of smooth orbifolds $M/G = N/H$.  There is then a commuting diagram
\begin{equation}\label{eqn:group_enlargement_general}
\xymatrix{
H^{G}_{*}(M^c;\bK) \ar[rr]^-{\lambda_{G,M}} \ar@{-}[d]_{\simeq} & &  H^{-*}_{G}(M^{-TM\oplus \frg};\bK) \\
H^{H}_{*}(N^c;\bK)\ar[rr]^-{\lambda_{H,P}} & & H^{-*}_{H}(N^{-TN\oplus\frh};\bK)  \ar@{-}[u]_{\simeq}
}
\end{equation}
where the vertical maps are induced by canonical weak equivalences $EG \times _G M \simeq EH \times_H N$ and $(EG\times_G M)^{-TM + \frak{g}} \simeq (EH\times_H N)^{-TN +\frak{h}}$.
\end{Addendum}

Cheng introduces a manifold $P$ (the fibre product of $M$ and $N$ over the common orbifold quotient) with $P/(G\times H)$ isomorphic as an orbifold to $M/G \cong N/H$, and the vertical equivalences are obtained by factoring through this. 

As a special case, if $M$ is a smooth free $G$-manifold, and hence $M/G = N$ is a smooth manifold, we get a diagram
\begin{equation}
  \begin{tikzcd}
H_*^G(M^c; \bK) \ar[r,"\lambda_{G,M}"] \ar[d] & H^{-*}_G(M^{-TM\oplus \frg};\bK) \ar[d] \\
H_*(N^c;\bK) \ar[r,"\lambda_{N}"]& H^{-*}(N^{-TN};\bK)    
  \end{tikzcd}
\end{equation}
where $\lambda_N$ is the equivalence induced by Atiyah duality.

\begin{Remark} Cheng states Addendum \ref{Add:change_of_group-homological} in the case where $M$ and $N$ are closed, but his proof also yields the case in which these have non-empty boundary, upon replacing $N$ and $M$ by $N^c$ and $M^c$ on the left half of the diagram as above.
\end{Remark}

Since we have assumed that $\bK$ represents a multiplicative cohomology theory, for any $G$-space $U$ there is a natural element
\begin{equation}
1 \in H^0_G(U;\bK) = H^0(EG \times_G U;\bK)
\end{equation}
obtained from the unit in $\bK_*$ and the module structure over the coefficients. 

\begin{Definition} Suppose that $U$ is open (or compact with boundary) and $-TU \oplus \frg$ is $\bK$-oriented.  Then the image $[U^c]_{\bK}$ of the unit $1$ under the isomorphism
\begin{equation}
(\lambda_{G,U})^{-1}: H^0_G(U;\bK) \to H^{G,c}_{\dim(U)-\dim(G)}(U^c; \bK)
\end{equation}
is called the ($G$-equivariant) \emph{fundamental class with compact supports} of $U$.
\end{Definition}

There is a natural pairing
\begin{equation}
H^*_{G,c}(U;\bK) \otimes H_*^{G,c}(U;\bK) \to \bK_*
\end{equation}
and in particular, when $-TU \oplus \frg$ is $\bK$-oriented,  there is a natural map 
\begin{equation} \label{eqn:evaluate_against_fundamental_class}
\langle \bullet, [U^c]_{\bK} \rangle: \ H^{\dim(U)-\dim(G)}_{G,c}(U;\bK) \longrightarrow \bK_*
\end{equation}
given by evaluation against the compactly supported fundamental class.  If $Z\subset \mathrm{int}(U)$ is a compact $G$-invariant subset of the interior of $U$, then the base-point-preserving collapse map $U/\partial U \to U/(U\backslash Z)$ induces a map 
\begin{equation}
H^*_{G}(U|Z;\bK) \to H^*_{G,c}(U;\bK)
\end{equation}
and hence a map $H^*_G(U|Z;\bK) \to \bK_*$ by composing with \eqref{eqn:evaluate_against_fundamental_class}.

The following result generalises the compatibility between classical Atiyah duality and the Thom isomorphism, diagram \eqref{eqn:atiyah_meets_thom},  to the orbifold situation.

\begin{Lemma} \label{Lem:compatible_virtual_classes}
	Let $\scrT$ be a smooth $G$-manifold with boundary and $q: \scrW \to \scrT$ a $G$-equivariant vector bundle over $\scrT$. Then there is a commuting diagram
	\begin{equation}
		\xymatrix{
			H^*_{G}(\scrT;\bK) \ar[rr]^-{\lambda_{G,\scrT}} \ar[d]_{q^*} & & H^{G,c}_{\dim(\scrT) - \dim(G)-*}(\scrT;\bK) \ar[d]^{\mathrm{Thom}_{\scrW}}  \\
			H^*_{G}(\scrW;\bK) \ar[rr]^-{\lambda_{G,\scrW}} & & H^{G,c}_{\dim(\scrT) + \rk(\scrW)- \dim(G)-*}(\scrW;\bK) 
		}
	\end{equation}
	where the right vertical map is induced by the Thom isomorphism for $\scrW$.
\end{Lemma}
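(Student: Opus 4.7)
The plan is to reduce the claimed commutativity, by $H^*_G(\scrT;\bK)$-module linearity, to a single identity of fundamental classes, which can then be verified via the spectrum-level description of Cheng's equivalence.

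First I would apply Lemma~\ref{lemma stronger orientation} together with Remark~\ref{rmk:smooth_fattening} so that, after stabilisation, both $-T\scrT\oplus\frg$ and $\scrW$ carry Borel-equivariant $\bK$-orientations; the induced $\bK$-orientation on $-T\scrW\oplus\frg$, which decomposes as $q^*(-T\scrT\oplus\frg)$ minus $q^*\scrW$, is then the product of these. With these orientations in hand, both $\lambda_{G,\scrT}$ and $\lambda_{G,\scrW}$ are cap products with Cheng fundamental classes $[\scrT^c]_\bK$ and $[\scrW^c]_\bK$ respectively, while $\mathrm{Thom}_\scrW$ on compactly supported homology is cap product with a class pulled back from $H^*_G(\scrT;\bK)$ via $q^*$.

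Next I would observe that all three relevant maps are $H^*_G(\scrT;\bK)$-linear: $\lambda_{G,\scrT}$ and $\lambda_{G,\scrW}$ by the general fact that the Cheng equivalences arise from a spectrum-level duality pairing (reviewed in Section~\ref{Sec:review}); $q^*$ tautologically; and $\mathrm{Thom}_\scrW$ because it is cap product with a pulled-back class. Consequently the commutativity of the square is equivalent to its commutativity on the unit $1\in H^0_G(\scrT;\bK)$, which reduces to the single identity
\begin{equation}
\mathrm{Thom}_{\scrW}\bigl([\scrT^c]_\bK\bigr)\;=\;[\scrW^c]_\bK \quad \in \; H^{G,c}_{\dim\scrW-\dim G}(\scrW;\bK).
\end{equation}

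Finally I would prove this identity by unwinding Cheng's duality for $\scrW$. Since $\scrW^c$ is the Thom space of $\scrW\to\scrT$ and $\scrW$ deformation retracts onto $\scrT$, there are canonical identifications $\scrW^c\simeq \scrT^\scrW$ and $\scrW^{-T\scrW\oplus\frg}\simeq\scrT^{-T\scrT\oplus\frg-\scrW}$; these give a factorisation of Cheng's duality for $\scrW$ as Cheng's duality for $\scrT$ composed with the Thom isomorphism for $\scrW$, which is precisely the equivariant analogue of the classical compatibility \eqref{eqn:atiyah_meets_thom} between Atiyah duality and the Thom isomorphism. Chasing the unit through this factorisation gives $\lambda_{G,\scrW}^{-1}(1)=\mathrm{Thom}_\scrW(\lambda_{G,\scrT}^{-1}(1))$, as required. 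The main obstacle is keeping the various Thom-space identifications and module structures consistent; this is conceptually routine but cleanest in the spectrum-level framework of Section~\ref{Sec:review}, and could equivalently be extracted from a change-of-groups argument modelled on Addendum~\ref{Add:change_of_group-homological}.
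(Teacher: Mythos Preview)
Your proposal is correct, and the core argument in your final paragraph---identifying $\scrW^c\simeq\scrT^{\scrW}$ and $\scrW^{-T\scrW\oplus\frg}\simeq\scrT^{-T\scrT\oplus\frg-\scrW}$ so that Cheng's duality for $\scrW$ factors as Cheng's duality for $\scrT$ followed by the Thom isomorphism---is exactly the paper's (sketched) proof. The module-linearity reduction in your second step is correct but unnecessary, since the spectrum-level factorisation you give at the end already yields commutativity of the full diagram, not just on the unit; and the appeal to Lemma~\ref{lemma stronger orientation} is slightly misplaced (that lemma is about Kuranishi charts), the relevant orientation hypotheses being implicit in the statement of the lemma itself.
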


\begin{proof}[Sketch] This follows from the construction of the ambidexterity maps in \cite{Cheng} and reviewed in Section \ref{Sec:Cheng}.  The Atiyah dual of the Thom spectrum $\scrT^{\scrW}$ is $\scrT^{-T\scrT-\scrW}$, using that $T\scrW$ and $q^*T\scrT \oplus q^*\scrW$ are stably isomorphic, and $q$ is homotopic to the identity. Before passing to $G$-fixed points and using contractibility of the cofibre, the map $\lambda_{G,M} \wedge \bK$ is obtained from a map $\alpha_{G,M}$ which is parametrized Atiyah duality.
\end{proof}

In order to take advantage of Cheng's results, we will define a virtual class for smooth Kuranishi charts, and show the result is independent of our operations (stabilisation, germ equivalence, group enlargement).  Since we know we can apply smoothing theory to topological Kuranishi charts that have appropriate fibrewise smooth structures, this will suffice for our applications.

\subsection{Pushforward maps and the virtual fundamental class}\label{sec:vfc}

We continue to work as in Section \ref{sec:chengs-theorems} with a fixed cohomology theory $\bK$ and a compact Lie group $G$ with Lie algebra $\frg$.

\begin{defn} \label{vfc on thickened space}
	Let $\K := (G,\scrT,E,s)$ be a smooth Kuranishi chart and let $Z = s^{-1}(0)$.
	Let $f : \scrT \to X$ be a continuous $G$-equivariant map to a $\bK$-oriented compact smooth manifold $X$.
	Suppose $T\scrT - \frg$ and $E$ are both $\bK$-oriented.
	Let $d = \vdim(\K)$, $m = \dim(E)$, $k = \dim(\frg)$ and $l = \dim(X)$.
	Then the \emph{pushforward map}
	is defined to be the composition:
\begin{equation} \label{eqn pushforward}
	\begin{split}
		f_*^\K : H^*_G(\scrT;\bK) \lra{\lambda_{G,\scrT}^{-1}} H_{d+m-*}^G(\scrT^c;\bK) \lra{res} H_{d+m-*}^G(\scrT|Z;\bK) \lra{s_*} \dots
		\\ 
		\dots H_{d+m-*}^G(E|\scrT;\bK)
		\lra{\small{Thom}} H_{d - *}^G(\scrT;\bK) \lra{f_*} H_{d-*}(X;\bK)
		\lra{\lambda_{0,X}} H^{*-d+l}(X;\bK)
\end{split}
\end{equation}
where $\lambda_{G,\scrT}$
and $\lambda_{0,X}$
are Poincar\'{e} duality maps as in Theorem \ref{thm:Cheng-homological} composed with the corresponding Thom isomorphism maps 
\begin{equation}
H^{-*}_G(\scrT^{-T\scrT \oplus \frg};\bK) \cong H_{d+m-*}^G(\scrT;\bK), \quad
H^{*-d}(X^{-TX};\bK) \cong H^{*-d+l}(X;\bK).
\end{equation}
\end{defn}

\begin{remark} \label{rmk:swapped} One can use  Lemma \ref{Lem:compatible_virtual_classes} to swap the order of applying Thom isomorphism and ambidexterity, and describe the pushforward $H^d_G(\scrT;\bK) \to \bK_*$ via the  pairing \eqref{eqn:evaluate_against_fundamental_class}. 
\end{remark}

First of all, we will show that the push-forward  is well defined with respect to germ equivalence.

\begin{lemma} \label{lemma germ vfc well behaved}
	Let $\K := (G,\scrT,E,s)$ be a smooth Kuranishi chart with $T\scrT - \frg$ and $E$ both $\bK$-oriented and
	let $f : \scrT \to X$ be a continuous $G$-equivariant map to a $\bK$-oriented smooth manifold $X$.
	Let $U \subset \scrT$ be an open neighborhood of $\scrT$
	and let $\K|_U := (G,U,E|_U,s|_U)$ be the restriction of the Kuranish chart to $U$.
	Then the restriction of $f_*^\K$ to $U$ is  the pushforward map $f_*^{\K|_U}$ associated to $\K|_U$.
\end{lemma}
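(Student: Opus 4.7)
The plan is to verify that each of the six arrows defining $f_*^\K$ in the composition \eqref{eqn pushforward} is compatible with restriction from $\scrT$ to the open neighbourhood $U$ of $Z$, thereby assembling a commutative ladder whose top row (after composing with Poincar\'{e} duality on $X$) is $f_*^{\K}$ and whose bottom row is $f_*^{\K|_U}\circ \mathrm{res}$, where $\mathrm{res}: H^*_G(\scrT;\bK)\to H^*_G(U;\bK)$ is the open restriction.

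First I would handle the ambidexterity step: by Addendum \ref{Add:relative_Cheng-homological}, the collapse map $\scrT^c \to U^c$ together with the cohomology restriction map intertwines $\lambda_{G,\scrT}^{-1}$ and $\lambda_{G,U}^{-1}$. Next, for the localisation at $Z$, the base-point-preserving collapse maps $\scrT^c \to \scrT|Z$ and $U^c \to U|Z$ fit into a natural square, and since $Z\subset U$ with $U$ open in $\scrT$, excision supplies a canonical isomorphism
\begin{equation*}
H^G_*(U|Z;\bK)\; \xrightarrow{\ \sim\ }\; H^G_*(\scrT|Z;\bK),
\end{equation*}
which I use to identify the two relative homology groups. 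For $s_*$, the section $s$ restricts to $s|_U$ and the pair map $(\scrT, \scrT\setminus Z)\to (E, E\setminus \scrT)$ restricts to $(U, U\setminus Z)\to (E|_U, E|_U\setminus U)$, producing a commutative square at this stage. The Thom class of $E|_U$ is simply the pullback of the Thom class of $E$ along $U\hookrightarrow \scrT$, so the two Thom isomorphisms fit into a commutative square, giving naturally $H^G_{d-*}(U;\bK)\to H^G_{d-*}(\scrT;\bK)$ on the ``output'' side. Finally, the non-equivariant pushforward along $f:\scrT\to X$ factors through $f|_U$ by functoriality: both are induced by the projection $EG\times_G (-)\to X$, and the inclusion $EG\times_G U\hookrightarrow EG\times_G \scrT$ commutes with that projection.

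Stacking the five commutative squares above gives a single commutative diagram identifying $f_*^{\K}$ with $f_*^{\K|_U}\circ \mathrm{res}$, which is the statement of the lemma. The only step requiring anything beyond naturality and excision is the first one, so the main (and very mild) obstacle is Step~1, but this is precisely the content of Addendum \ref{Add:relative_Cheng-homological}; no new input is needed.
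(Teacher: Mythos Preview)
Your proposal is correct and follows essentially the same approach as the paper, which gives a one-line proof citing Addendum \ref{Add:relative_Cheng-homological} and functoriality of Thom classes under pullback. You have simply unpacked the commutative ladder in detail, making explicit the excision and naturality steps that the paper leaves implicit.
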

\begin{proof}
	This follows from Addendum \ref{Add:relative_Cheng-homological} and the fact that Thom classes are functorial with respect to pullbacks.
\end{proof}

The lemma above enables us to construct pushforward maps from the domain of the footprint map as follows.

\begin{defn} \label{vfc on zero set}
	Let $\K := (G,\scrT,E,s)$ be a smooth Kuranishi chart for a compact metric space $M$
	with $T\scrT - \frg$ and $E$ both $\bK$-oriented.
	Let $f : M \to X$ be a continuous map to a $\bK$-oriented smooth manifold $X$.
	Let $\widetilde{f} : \scrT \lra{} X$ be any $G$-equivariant continuous map whose restriction to $s^{-1}(0)$
	is the natural composition $s^{-1}(0) \lra{pr} M \lra{f} X$.

	Then the \emph{pushforward map} 
	is defined to be the map:
	\begin{equation} \label{eqn vfc on zero set}
	\begin{tikzcd}
	f^M : H^*(M;\bK) \leftarrow H^*_G(s^{-1}(0);\bK) \stackrel{\cong}{\longleftarrow} \\ \qquad \qquad  \varinjlim_{U \supset s^{-1}(0)} H^*_G(U;\bK) \arrow[rr, "\widetilde{f}_*^{\K|_U}"] && H^{*+\vdim(M)-\dim(X)}(X;\bK)
	\end{tikzcd}
	\end{equation}
where the maps come from Lemma \ref{lemma equivariant cech} and  Lemma \ref{lemma germ vfc well behaved} respectively.
\end{defn}

By Lemma \ref{lemma Cech property}, this does not depend on the choice of extension $\widetilde{f}$ of $f$.
We will now show that the push-forward behaves well with respect to stabilization.

\begin{lemma} \label{lemma stabilization}
	Let $\K := (G,\scrT,E,s)$ be a smooth Kuranishi chart
	with $T\scrT - \frg$ and $E$ both $\bK$-oriented.
	Let $f : \scrT \to X$ be a continuous $G$-equivariant map to a $\bK$-oriented smooth manifold $X$. Let $\pi : W \to \scrT$ be a smooth $\bK$-oriented $G$-vector bundle over $\scrT$ and
	let $\K_W$ be the stabilization of $\K$ by $W$.
	Then 
	\begin{equation}
	f_*^{\K_W} \circ \pi^* = f_*^\K.\end{equation} In particular,
	$f_*^\K$ is independent of the stabilization operation.
\end{lemma}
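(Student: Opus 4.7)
The plan is to verify $f_*^{\K_W} \circ \pi^* = f_*^{\K}$ by assembling the constituent maps into a commutative ladder. Write $\sigma : \scrT \to W$ for the zero section, which is $G$-equivariantly homotopy inverse to $\pi$, and observe that $(\pi^*s \oplus \Delta)^{-1}(0) = \sigma(Z)$, since the tautological diagonal section $\Delta$ of $\pi^*W \to W$ vanishes precisely on the zero section.

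The first square is Lemma \ref{Lem:compatible_virtual_classes}, which on inversion yields $\lambda^{-1}_{G,W} \circ \pi^* = \mathrm{Thom}_W \circ \lambda^{-1}_{G,\scrT}$, with $\mathrm{Thom}_W$ the Thom isomorphism on compactly supported Borel homology for $\pi : W \to \scrT$. The Thom isomorphism is natural under the open inclusions $\scrT \setminus Z \hookrightarrow \scrT$ and $W \setminus \sigma(Z) \hookrightarrow W$; this provides the second commuting square and passes to a relative Thom isomorphism $\mathrm{Thom}_W^{\mathrm{rel}} : H^G_*(\scrT|Z) \xrightarrow{\cong} H^G_{*+w}(W|\sigma Z)$ with $w = \rank W$.

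The decisive step is the third square, which identifies $\mathrm{Thom}_{\pi^*E \oplus \pi^*W} \circ (\pi^*s \oplus \Delta)_*$, pre-composed with $\mathrm{Thom}_W^{\mathrm{rel}}$, with $\mathrm{Thom}_E \circ s_*$. Decompose $\mathrm{Thom}_{\pi^*E \oplus \pi^*W} = \mathrm{Thom}_{\pi^*E} \circ \mathrm{Thom}_{\pi^*W}$. The $\Delta$-factor contributes $\mathrm{Thom}_{\pi^*W} \circ \Delta_*$, which by transversality of $\Delta$ to the zero section of $\pi^*W$ (with zero locus $\sigma(\scrT)$ and normal bundle canonically $W$) equals the inverse of $\mathrm{Thom}_W^{\mathrm{rel}}$; this cancels the $\mathrm{Thom}_W^{\mathrm{rel}}$ introduced in the preceding step. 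The residual composition $\mathrm{Thom}_{\pi^*E} \circ (\pi^*s)_*$ is identified with the pullback of $\mathrm{Thom}_E \circ s_*$ by naturality of Thom isomorphisms and pushforwards in the Cartesian square relating $\pi^*E \to E$ and $W \to \scrT$.

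After these cancellations, the bottom composition in $H^G_{d-*}(W)$ equals $\sigma_*$ of the top composition in $H^G_{d-*}(\scrT)$; since $(f \circ \pi)_* = f_* \circ \pi_*$ and $\pi_* \circ \sigma_* = \id$, applying the final pushforward and Poincar\'e duality on $X$ yields the claimed equality. The main technical obstacle is the diagonal cancellation $\mathrm{Thom}_{\pi^*W} \circ \Delta_* = (\mathrm{Thom}_W^{\mathrm{rel}})^{-1}$, a form of the self-intersection formula for the zero section: it follows from $\Delta^* \mathrm{Thom}_{\pi^*W} = \mathrm{Thom}_W$ under the identification $W \cong \pi^*W|_{\sigma\scrT}$, by naturality of Thom classes under bundle maps.
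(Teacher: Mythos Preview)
Your proposal is correct and follows the same approach as the paper: both rest on Lemma~\ref{Lem:compatible_virtual_classes} for the compatibility of $\lambda_{G,\scrT}$ and $\lambda_{G,W}$. The paper's proof is a one-line citation of that lemma; you have spelled out the remaining naturality steps (restriction to the zero locus, the diagonal-section cancellation, and the final pushforward) that the paper leaves implicit.
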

\begin{proof}
This follows from Lemma \ref{Lem:compatible_virtual_classes}.
\end{proof}

\begin{corollary} \label{corollary orientation dependence}
	Let $\K := (G,\scrT,E,s)$ be a smooth oriented Kuranishi chart for a compact metric space $M$ (Definition \ref{DEFN oriented Kuranishi}).
	Let $f : \scrT \to X$ be a continuous $G$-equivariant map to a $\bK$-oriented smooth manifold $X$. The push-forward map $f_*^\K$ depends only on the orientation on the Kuranishi chart in the following sense: 
	
	Let $\fro_\scrT$, $\fre_\scrT$ be $\bK$-orientations on $T\scrT - \frg$ and
	$\fro_E$, $\fre_E$ $\bK$-orientations on $E$
	so that the `difference' orientations on $T\scrT - E - \frg$
	induced by $\{\fro_\scrT, \fro_E\} $ and $\{\fre_\scrT, \fre_E\}$ both agree with the given orientation on $\scrK$. 
	Then the pushforward map $f_*^\K$ defined using $\{\fro_T, \fro_E\}$ agrees 
	with that defined using $\{\fre_T, \fre_E\}$.
\end{corollary}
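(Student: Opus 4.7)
The plan is to exploit the fact that, among the factors in \eqref{eqn pushforward}, only $\lambda_{G,\scrT}^{-1}$ depends on the orientation $\fro_\scrT$ and only the Thom map depends on the orientation $\fro_E$; then to verify that the composite rescaling factor of the pushforward coincides with the factor by which the difference orientation on $T\scrT - E - \frg$ rescales, so that the former is trivial whenever the latter is.

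I would begin by noting that any two $\bK$-orientations of a Borel equivariant virtual bundle over $\scrT$ differ by multiplication by a unit in $H^0_G(\scrT;\bK)^\times$; write $\fre_\scrT = u\cdot\fro_\scrT$ and $\fre_E = v\cdot\fro_E$ for such units $u,v$. By Definition \ref{DEFN oriented difference} together with Equation \eqref{equation orientation cup}, the difference orientation on $T\scrT - E - \frg$ is obtained as the cup product of $\fro_\scrT$ with the orientation on a complement $E^\perp$ (satisfying $E\oplus E^\perp \cong \bR^N$ with its canonical orientation) induced from $\fro_E$. Since the canonical orientation on $\bR^N$ is fixed, scaling $\fro_E$ by $v$ rescales the induced orientation on $E^\perp$ by $v^{-1}$, and hence the difference orientation rescales by $uv^{-1}$. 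Consequently, the hypothesis that both sets of orientations induce the same difference orientation on $T\scrT - E - \frg$ is equivalent to the relation $u = v$.

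Next I would track the rescaling of each step in \eqref{eqn pushforward}. The ambidexterity isomorphism $\lambda_{G,\scrT}$ of Theorem \ref{thm:Cheng-homological} is orientation-free at the level of the exotic Thom spectrum $\scrT^{-T\scrT\oplus \frg}$; the identification with $H^*_G(\scrT;\bK)$ is the Thom isomorphism for the $\fro_\scrT$-orientation, so $\lambda_{G,\scrT}$ is rescaled by $u$ when $\fro_\scrT$ is rescaled by $u$, and $\lambda_{G,\scrT}^{-1}$ by $u^{-1}$. The Thom map $H_{d+m-*}^G(E|\scrT;\bK) \to H_{d-*}^G(\scrT;\bK)$ is cap product with the Thom class $\fro_E$, so it is rescaled by $v$. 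The remaining factors (restriction, $s_*$, $f_*$, and $\lambda_{0,X}$) are independent of the orientations in question. The pushforward therefore rescales by $u^{-1}v$, which equals $1$ by the relation $u=v$ from the previous paragraph.

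The main subtlety lies in correctly identifying how the Thom isomorphisms implicit in the ambidexterity identification respond to a change of orientation; a cleaner alternative would be to apply Lemma \ref{Lem:compatible_virtual_classes} to the bundle $E \to \scrT$ to combine $\lambda_{G,\scrT}^{-1}$, $s_*$, and the Thom map into a single virtual Atiyah-duality isomorphism that manifestly depends only on the orientation of the virtual bundle $T\scrT - E - \frg$, sidestepping the bookkeeping entirely and exhibiting the invariance directly.
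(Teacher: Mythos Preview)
Your unit-tracking approach is sound in outline and reaches the right conclusion, but it differs substantially from the paper's argument, and there is one step you gloss over.

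\textbf{The gap.} You assert that ``the pushforward rescales by $u^{-1}v$.'' Here $u,v$ are units in $H^0_G(\scrT;\bK)^\times$, not in the coefficient ring $\bK_*$, so ``rescaling'' means pre- or post-composition with cup or cap by a cohomology class on $\scrT$. For these separate rescalings (one at the $\lambda_{G,\scrT}^{-1}$ step, one at the Thom step) to combine into a single factor $u^{-1}v$, you need the intermediate maps---in particular $\lambda_{G,\scrT}$ itself---to be $H^*_G(\scrT;\bK)$-linear, i.e.\ to intertwine cap with cup. This is the assertion that Cheng's ambidexterity map is given by capping with a fundamental class; it is plausible (and true for classical Atiyah duality), but it is not proved in the paper. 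Indeed Remark~\ref{rem:could_upgrade} explicitly sidesteps even the weaker question of whether $\lambda_{G,M}$ is a $\bK$-module map. So your main argument leans on a fact the paper has not established. Your ``cleaner alternative'' via Lemma~\ref{Lem:compatible_virtual_classes} would indeed avoid this bookkeeping, and is a legitimate route.

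\textbf{Comparison with the paper.} The paper's proof is quite different and shorter: it stabilises $\scrK$ by the bundle $E$ itself. After stabilisation the thickening has tangent bundle $T\scrT\oplus E$, and one checks that the hypothesis (equality of difference orientations) is precisely the statement $\fro_\scrT\oplus\fre_E=\fre_\scrT\oplus\fro_E$, while $\fro_E\oplus\fre_E=\fre_E\oplus\fro_E$ holds since $\fre_E=v\cdot\fro_E$ with $v$ in degree zero. Thus the two orientation pairs become \emph{literally identical} on the stabilised chart, and one simply invokes Lemma~\ref{lemma stabilization} (invariance of the pushforward under stabilisation). This neatly converts the question into one already answered, with no tracking of units at all.

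Your approach has the virtue of being conceptually transparent about \emph{why} only the virtual orientation matters; the paper's approach has the virtue of reducing immediately to a lemma already in hand, without needing any module compatibility of the duality map.
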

\begin{proof}
	Stabilize $\scrT$ by $E$ so that the orientation on $TE|_\scrT - \frg = T\scrT \oplus E - \frg$
	is $\fro_\scrT \oplus \fre_E = \fre_\scrT \oplus \fro_E$, and use the previous lemma.
\end{proof}

The corollary above enables us to define the pushforward for oriented (but not necessarily smooth)  Kuranishi charts.

\begin{defn} \label{defn pushforward on oriented chart}
	Let $\K := (G,\scrT,E,s)$ be an oriented global Kuranishi chart for a compact metric space $M$
	and suppose that $\scrT$ is the total space of a fiberwise smooth $C^1_{loc}$ $G$-bundle
	whose base $B$ is a smooth $G$-manifold (Definition \ref{defn fiberwise smooth}). 
	Suppose that the $G$-action on $\scrT$ only has finitely many orbit types.
	Let $f : M \to X$ be a continuous map to a $\bK$-oriented smooth manifold $X$.
	
	Then the \emph{pushforward map} 
	\begin{equation}
	f_*^M : H^*(M;\bK) \to H^{*-\vdim(M)+\dim(X)}(X; \bK)
	\end{equation}
	is constructed as follows:
	By Proposition \ref{prop Kuranishi smoothing} and Lemma \ref{lemma stronger orientation}, replacing  $\K$ with a stabilization if necessary, we can assume that it is a smooth Kuranishi chart
	equipped with $G$-equivariant $\bK$-orientations on $T\scrT - \frg$ and $E$ whose difference agrees with the  given orientation.
	We now define $f_*^M$ as in Definition \ref{vfc on zero set}.
	\end{defn}
	
	\begin{defn}\label{defn:vfc} In the setting of Definition \ref{defn pushforward on oriented chart}, 
	the \emph{$\bK$-virtual fundamental class} of $M$  is the map
	$\vfc_M : H^*(M;\bK) \to \bK_*$ given by $f_*^M$ where $f : M \to \{pt\}$ is the map to a point.
\end{defn}

The pushforward $f_*^M$ (and hence the virtual class) does not depend on the choice of stabilization, by Lemma
\ref{lemma stabilization}, nor on the choices of orientation on $T\scrT - \frg$ and $E$ by Corollary \ref{corollary orientation dependence}.
It is also does not depend on $\K$ up to germ equivalence
by combining Lemma \ref{lemma germ vfc well behaved} with  the fact that germ equivalence commutes with stabilization.

\begin{Remark}
It should be possible  to generalize the pushforward
map to the  `relatively $\bK$-oriented' case, where only $T\scrT - \frg - E - \widetilde{f}^*TX$ is $\bK$-oriented
where $\widetilde{f}$ is as in Definition \ref{vfc on zero set}. We will not make use of such a generalization in this paper.
\end{Remark}

\begin{lemma} \label{lemma group enlargement}
	Let $\K := (G,\scrT,E,s)$ be a $\bK$-oriented global Kuranishi chart
	and suppose that $\scrT$ is the total space of a fiberwise smooth $C^1_{loc}$ $G$-bundle
	whose base $B$ is a smooth $G$-manifold.
	Suppose that the $G$-action on $\scrT$ only has finitely many orbit types.
	Let $f : \scrT \to X$ be a continuous $G$-equivariant map to a $\bK$-oriented smooth manifold $X$.

	Let $\K' := (G \times G',P, q^*E, q^*s)$ be a group enlargement of $\K$
	where $q: P \to \scrT$ is a $G$-equivariant principal $G'$-bundle for some other Lie group $G'$.
	Then the pushforward maps $H^*(M;\bK) \to H^*(X;\bK)$ constructed using $\K$ and $\K'$ coincide.
\end{lemma}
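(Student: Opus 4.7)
The plan is to reduce to the smooth setting and then assemble the argument as a ladder of commuting squares whose vertical arrows are induced by pullback along $q$. First I would use Proposition \ref{prop Kuranishi smoothing} and Lemma \ref{lemma stronger orientation} to replace $\K$ by a stabilisation which is smooth and for which $T\scrT - \frg$ and $E$ both carry $\bK$-orientations whose difference is the given orientation of $T\scrT - E - \frg$. The stabilisation and group-enlargement operations commute (both arise from taking fibred products over $\scrT$), so the same stabilisation applied to $\K'$ is smooth, and since $q : P \to \scrT$ is a principal $G'$-bundle one has the canonical identification $TP \cong q^* T\scrT \oplus \frg'$ of $(G\times G')$-bundles, so that the orientation data on $\scrT$ transports to orientation data on $P$ in a compatible way and agrees with the pullback orientation of Lemma \ref{lemma pullback orientation}.

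With smoothness in place, both pushforwards are described by diagram \eqref{eqn pushforward}, and the task is to show that the pullback $q^*$ on equivariant cohomology intertwines the two sequences of maps. The crucial step is to compare the ambidexterity maps $\lambda_{G,\scrT}$ and $\lambda_{G\times G', P}$. For this I invoke Addendum \ref{Add:change_of_group-homological}: the orbifolds $\scrT/G$ and $P/(G\times G')$ are canonically identified, because $G'$ acts freely on $P$ with quotient $\scrT$, so the addendum supplies a commutative diagram relating the two ambidexterity maps via canonical weak equivalences $EG \times_G \scrT \simeq E(G \times G') \times_{G \times G'} P$, together with the matching equivalence of Thom spectra coming from $TP \oplus \frg \cong q^*(T\scrT) \oplus (\frg \oplus \frg')$.

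The remaining entries of diagram \eqref{eqn pushforward} — restriction to the zero set, pushforward along the inclusion of $\scrT$ into the total space of $E$, the Thom isomorphism, and the pushforward to $X$ — are all functorial under pullback by $q$. One has $(q^*s)^{-1}(0) = q^{-1}(s^{-1}(0))$ set-theoretically; the Thom class of $q^*E$ is the pullback of the Thom class of $E$; and $f \circ q$ is the $(G\times G')$-equivariant map on $P$ which descends to the same map of footprints as $f$, so compatibility with the final pushforward to $X$ holds by naturality of Cheng's isomorphism on the smooth $\bK$-oriented manifold $X$ (trivially so, since $G'$ acts trivially on $X$). Stacking these compatibilities produces a commuting diagram whose top row is $f_*^\K$ and whose bottom row is $f_*^{\K'}$, with identity maps on $H^*(M;\bK)$ and $H^*(X;\bK)$ at the two ends.

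The main obstacle will be ensuring consistency of the auxiliary smoothing choices between $\scrT$ and $P$. The Lashof smoothing of $\scrT$ used in Proposition \ref{prop lashof} depends on complete $G$-equivariant Riemannian metrics and vector-bundle lifts of $T_\mu \scrT$; to apply it simultaneously to $P$ one needs $(G \times G')$-equivariant analogues, and these are obtained by averaging a pulled-back structure over the compact group $G'$ using the identification $TP \cong q^* T\scrT \oplus \frg'$. Once these auxiliary data are chosen compatibly, the two applications of smoothing theory produce stabilisations whose underlying thickenings are $V \times \scrT$ and $V \times P$ for a common $G$-representation $V$ (regarded as a $(G\times G')$-representation with trivial $G'$-action), and the argument above applies directly to conclude $f_*^{\K'} = f_*^{\K}$.
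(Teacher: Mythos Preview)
Your proposal is correct and takes essentially the same approach as the paper: stabilise to a smooth chart (Proposition \ref{prop Kuranishi smoothing}, Lemma \ref{lemma stronger orientation}), observe that group enlargement commutes with stabilisation, then apply Addendum \ref{Add:change_of_group-homological} via $P/(G\times G') = \scrT/G$. The paper is terser and simply notes that once $\scrT$ is smooth one can take $P$ to be a smooth principal $G'$-bundle over it, rather than applying Lashof's theorem to $P$ separately as you anticipate; your detailed verification of each square in diagram \eqref{eqn pushforward} is extra care the paper leaves implicit.
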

\begin{proof}
	Since group enlargement commutes with stabilization,
	we can stabilize $\K$ so that it is smooth and that we have choices of orientation
	for $T\scrT - \frg$ and $E$ whose difference recovers the given orientation (see Proposition \ref{prop Kuranishi smoothing} and Lemma \ref{lemma stronger orientation}).
	We can also make sure that $P$ is a smooth principal bundle with $G'$ acting smoothly.
	Since $P/(G\times G') = \scrT/G$, the result follows from Addendum \ref{Add:change_of_group-homological}.
	\end{proof}

\begin{corollary} \label{corollary usual fundamental class}
	Let $\K = (G,\scrT,E,s)$ be an oriented smooth, regular and free Kuranishi chart
	for a compact metric space (manifold!) $M$.
	Let $f : M \to X$ be a smooth map to a $\bK$-oriented manifold.
	Then the pushforward map $f_*^M$ coincides
	with the composition:
	\begin{equation} \label{eqn pushforard smooth}
		H^*(M;\bK) \lra{\lambda_{0,M}^{-1}} H_{\dim(M)-*}(M;\bK) \lra{f_*}
		H_{\dim(M)-*}(X;\bK) \lra{\lambda_{0,X}} H^{*-\dim(M)+\dim(X)}(X;\bK)
	\end{equation}
	where $\lambda_{0,M}$ and $\lambda_{0,X}$
	are Poincar\'{e} duality maps associated to the $\bK$-oriented manifolds $M$
	and $X$ from Theorem \ref{thm:Cheng-homological}.
\end{corollary}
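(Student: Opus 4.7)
The strategy is to reduce the global Kuranishi chart to a standard tubular-neighborhood model, then pass to the quotient by the free $G$-action, and finally recognize the residual composition as the classical compatibility between Thom and Poincar\'e duality.

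First, by germ equivalence (Lemma \ref{lemma germ vfc well behaved}), I may shrink $\scrT$ to any $G$-invariant neighborhood of $Z := s^{-1}(0)$. Since $\K$ is smooth, regular, and free, $Z$ is a smooth $G$-submanifold on which $G$ acts freely, and the derivative of $s$ identifies the normal bundle of $Z$ in $\scrT$ with $E|_Z$ as a $G$-bundle. The equivariant tubular neighborhood theorem then identifies some such neighborhood $G$-diffeomorphically with a neighborhood $\mathcal{U}$ of the zero section in $E|_Z \to Z$; after identifying $E|_\mathcal{U}$ with $\pi^*(E|_Z)$ via the deformation retract $\mathcal{U} \to Z$, the section $s$ becomes a section $\tilde{s}$ of $\pi^*(E|_Z)$ which is transverse to zero along $Z$ with the canonical (identity) linearization there.

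Second, freeness of the $G$-action lets me pass to quotients: $\bar{\mathcal{U}} := \mathcal{U}/G$ is a smooth tubular neighborhood of $M$ in the vector bundle $\bar{E} := (E|_Z)/G$, and $\tilde{s}$ descends to a section $\bar{s}$ of $\pi^*\bar{E}|_{\bar{\mathcal{U}}}$ transverse to zero along $M$ with canonical linearization. By Addendum \ref{Add:change_of_group-homological} (in the free case explicitly noted immediately after its statement), the Cheng duality $\lambda_{G,\scrT}$ corresponds to classical Atiyah duality $\lambda_{0,\bar{\mathcal{U}}}$, and the other maps in \eqref{eqn pushforward} descend to their classical manifold analogues on $\bar{\mathcal{U}}$.

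The main task -- and main potential obstacle -- is then to verify that the resulting classical composition
\begin{equation*}
H^*(\bar{\mathcal{U}};\bK) \xrightarrow{\lambda_{0,\bar{\mathcal{U}}}^{-1}} H_{d+m-*}^{c}(\bar{\mathcal{U}};\bK) \to H_{d+m-*}(\bar{\mathcal{U}}|M;\bK) \xrightarrow{\bar{s}_*} H_{d+m-*}(\pi^*\bar{E}|\bar{\mathcal{U}};\bK) \xrightarrow{\mathrm{Thom}^{-1}} H_{d-*}(\bar{\mathcal{U}};\bK)
\end{equation*}
coincides, under the homotopy equivalence $\bar{\mathcal{U}} \simeq M$ induced by the bundle projection, with $\lambda_{0,M}^{-1}$. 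Using Remark \ref{rmk:swapped} to interchange Thom and ambidexterity, this reduces to checking that cup product with $\bar{s}^*$ of the Thom class of $\pi^*\bar{E}$ agrees with the Thom isomorphism for the normal bundle of $M$ in $\bar{\mathcal{U}}$ -- which holds because $\bar{s}$ is a transverse section with canonical linearization -- combined with diagram \eqref{eqn:atiyah_meets_thom} applied to $(W,\scrW) = (M, \bar{E}|_M)$. Appending the pushforward by any smooth $G$-equivariant extension $\widetilde{f} : \bar{\mathcal{U}} \to X$ of $f$ and $\lambda_{0,X}$ to both ends of the comparison yields the asserted equality with \eqref{eqn pushforard smooth}.
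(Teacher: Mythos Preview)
Your proof is correct, but the paper takes a slicker route that avoids the explicit tubular-neighborhood computation in your final step. The paper simply observes that the classical composition \eqref{eqn pushforard smooth} is by definition the pushforward $f_*^{\K_M}$ for the \emph{trivial} Kuranishi chart $\K_M = (0,M,M,0)$ (trivial group, $M$ itself as thickening, rank-zero obstruction bundle), and then notes that the given chart $\K$ is obtained from $\K_M$ by group enlargement (along the principal $G$-bundle $Z \to M$), stabilization (by $E|_Z$), and germ equivalence; the invariance Lemmas \ref{lemma germ vfc well behaved}, \ref{lemma stabilization}, and \ref{lemma group enlargement} then finish immediately.

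Your argument is essentially an explicit unpacking of this: your passage to the tubular neighborhood is germ equivalence, your quotient by $G$ is the reverse of group enlargement, and your final Thom/Atiyah compatibility check is what underlies the stabilization invariance in this special case. The paper's formulation buys brevity and makes clear that no new ingredient is needed beyond the three invariance lemmas already established; your version has the virtue of being self-contained and showing concretely why the section $s$ being transverse with canonical linearization is exactly what makes the Thom classes match.
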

\begin{proof}
	The map (\ref{eqn pushforard smooth}) is equal to the map $f_*^{\K_M}$
	where $\K_M = (0,M,M,0)$.
	Our result now follows from the fact that $\K$
	is obtained from $\K_M$ via group enlargement, stabilization and germ equivalence.
\end{proof}

In this paper, we deal with moduli spaces that are preimages
of submanifolds under an evaluation map.
See, for instance, the moduli space $\ccMbar_h$ in Section \ref{sec:moduli-spaces-j}.
We need a lemma to help us compute what the corresponding pushforward maps
for these moduli spaces are.

\begin{lemma} \label{lemma restricting to submanifold}
	Let $\K = (G,\scrT,E,s)$ be a smooth oriented Kuranishi chart.
Let $f : \scrT \lra{} X$ be a smooth  $G$-equivariant
	submersion.
	Let $S \subset X$ be a smooth $\bK$-oriented submanifold of $X$. 
	Let $\widetilde{S} = \widetilde{f}^{-1}(S)$ and
	consider the Kuranishi chart $\K' = (G,\widetilde{S},E|_{\widetilde{S}},s|_{\widetilde{S}})$ with the evaluation map
	\begin{equation}
		f_S : \widetilde{S} \to S, \quad f_S(x) = f(x).
	\end{equation}
	Then $(f_S)_*^{\K'}(\alpha|_{\widetilde{S}}) = f_*^{\K}(\alpha)|_S$ for each $\alpha \in H^*_G(\scrT;\bK)$.
\end{lemma}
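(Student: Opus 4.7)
The identity $(f_S)_*^{\K'}(\alpha|_{\widetilde{S}}) = f_*^{\K}(\alpha)|_S$ is a base-change statement for the pushforward map of Definition~\ref{vfc on thickened space} with respect to the fibre square
\begin{equation*}
  \begin{tikzcd}
    \widetilde{S} \ar[r,hook,"i"] \ar[d,"f_S"] & \scrT \ar[d,"f"] \\
    S \ar[r,hook,"i_S"] & X
  \end{tikzcd}
\end{equation*}
The strategy is to verify that each of the constituent maps appearing in the formula \eqref{eqn pushforward} intertwines the appropriate restriction maps on $\scrT$ and $X$, using Cheng's duality and the Thom isomorphism to convert cohomological restrictions into homological Gysin-type maps.

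First, I would set up the Kuranishi chart $\K'$ and equip it with a canonical $\bK$-orientation compatible with that of $\K$. Since $f$ is a $G$-equivariant submersion, $\widetilde{S} = f^{-1}(S)$ is a smooth closed $G$-invariant submanifold of $\scrT$ of codimension $c = \dim X - \dim S$, $f_S$ is itself a submersion, and there is a canonical $G$-equivariant isomorphism of normal bundles $N_{\widetilde{S}/\scrT} \cong f_S^*N_{S/X}$. Since $S$ and $X$ are $\bK$-oriented, so are these normal bundles, and combined with the splitting $T\scrT|_{\widetilde{S}} \cong T\widetilde{S} \oplus N_{\widetilde{S}/\scrT}$ this yields a canonical induced orientation on $T\widetilde{S} - E|_{\widetilde{S}} - \frg$. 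The chart $\K'$ is therefore a smooth $\bK$-oriented global Kuranishi chart to which Definition~\ref{vfc on thickened space} applies.

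Next, I would identify the cohomological restriction $i^*$ with a geometric Gysin-type pushforward on equivariant locally finite homology. Choosing a $G$-invariant tubular neighborhood $U \supset \widetilde{S}$ in $\scrT$, Addendum~\ref{Add:relative_Cheng-homological} applied to the open inclusion $U \hookrightarrow \scrT$, combined with the Thom isomorphism under $\lambda_{G,\widetilde{S}}$ for the normal bundle $N_{\widetilde{S}/\scrT}$ (Lemma~\ref{Lem:compatible_virtual_classes}), rewrites $i^*$ as the composite of the Pontryagin--Thom collapse $\scrT^c \to \scrT/(\scrT \setminus \widetilde{S}) \simeq \widetilde{S}^{N_{\widetilde{S}/\scrT}}$ with the Thom isomorphism. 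The analogous identification holds for $i_S^*$ and $X \supset S$, with $N_{S/X}$ in place of $N_{\widetilde{S}/\scrT}$.

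Finally, I would verify that the remaining steps in \eqref{eqn pushforward} intertwine these Gysin maps. The section $s_*$ and the Thom isomorphism for $E$ commute with restriction to $\widetilde{S}$ because $s|_{\widetilde{S}}$, $E|_{\widetilde{S}}$ and their Thom classes are by construction the restrictions of $s$, $E$ and its Thom class. The pushforward $f_*$ intertwines the Pontryagin--Thom collapse for $\widetilde{S} \subset \scrT$ with that for $S \subset X$: this is the base-change identity $f^{-1}(X \setminus S) = \scrT \setminus \widetilde{S}$ combined with the canonical isomorphism $N_{\widetilde{S}/\scrT} \cong f_S^* N_{S/X}$, which commutes with Thom classes. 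Assembling these squares gives a commutative diagram between the two instances of \eqref{eqn pushforward}, concluding the proof. The main obstacle is the passage from Addendum~\ref{Add:relative_Cheng-homological} (formulated only for open inclusions) to a usable statement for the closed submanifold inclusion $\widetilde{S} \hookrightarrow \scrT$; this requires combining the Addendum on a tubular neighborhood with the Thom isomorphism compatibility of Lemma~\ref{Lem:compatible_virtual_classes} to realize cohomological restriction as the Pontryagin--Thom Gysin map.
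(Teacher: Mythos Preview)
Your proposal is correct and follows the same approach as the paper: the paper's proof consists of exactly the two ingredients you identify, namely applying Addendum~\ref{Add:relative_Cheng-homological} to a $G$-equivariant tubular neighborhood $U$ of $\widetilde{S}$ and then Lemma~\ref{Lem:compatible_virtual_classes} to $U$ viewed as the normal bundle. Your write-up is considerably more detailed than the paper's two-line proof, but the underlying strategy is identical.
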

\begin{proof}
	Now let $U$ be  $G$-equivariant tubular open neighborhood of $\widetilde{S}$.
	Our lemma now follows from Addendum \ref{Add:relative_Cheng-homological} applied to $U$
	combined with Lemma \ref{Lem:compatible_virtual_classes} applied to $U$, viewed as the normal bundle of $\widetilde{S}$.
\end{proof}

\subsection{Upshot for moduli of holomorphic curves} \label{section moduli kuranishi}

Fix a closed or geometrically bounded symplectic manifold $(X,\omega)$ with taming almost complex structure $J$.  In Section \ref{Sec:curves}, we will prove that moduli spaces $\ccM_n := \ccMbar_{0,n}(X,\beta;J)$ of stable $J$-holomorphic spheres with $n$ marked points in a fixed  class $\beta \in H_2(X;\bZ)$ admit global Kuranishi charts whose thickening admits a $C^1_{loc}$ $G$-bundle structure with (smooth, complex) Deligne-Mumford type spaces of domains as a base, and which have finite orbit type.
Together with a natural orientation, this gives us our virtual fundamental class by Definition \ref{defn pushforward on oriented chart} above.
  We briefly recall the  construction of Section \ref{Sec:curves}. 
  
Let us first give a sketch of the construction of the global Kuranishi chart
for $\ccM_0 := \ccMbar_{0,0}(X,\beta;J)$ before we add marked points (see Section \ref{Sec:description}).
Let $\Omega \in H^2(X;\bZ)$ be an integral symplectic form taming $J$; if $\omega$ is integral one can take $\Omega=\omega$, in general take a large multiple of a small rational perturbation of $\omega$ (the choice of $\Omega$ is allowed to depend on the choice of homology class $\beta$). 
A \emph{framing} $F$ of a curve $u: \Sigma\to X$ is a basis of the space $H^0(L_u)$ of holomorphic sections of the Hermitian line bundle $L_u$ over $\Sigma$  with curvature $u^*\Omega$. A framing determines a map $\iota_{\calF}: \Sigma \to X\times\calC$ into the universal curve over a smooth complex moduli space of domains, see Section \ref{Sec:description}. The thickening $\scrT$ of the global Kuranishi chart is a space of tuples $(u,\Sigma,F,\eta)$ where $(u,\Sigma,F)$ is a framed curve and $\eta \in \scrE_u$ is a certain auxiliary field for which the equation
\begin{equation}
\overline{\partial}_J(u) + \langle \eta \rangle \circ d\iota_{F} = 0
\end{equation}
makes sense and is transversally cut out. There is a $G$-action, with $G=U(N)$ for $N$ the rank of $H^0(L_u)$, which reparametrizes the framing.  There is a bundle $\scrE   \to \scrT$ with fibre the space of auxiliary fields $\eta$ and a trivial bundle with fibre $\frg$, and a canonical section 
\begin{equation}
\frak{s}: \scrT \to \scrE \oplus\frg \qquad (u,\Sigma,F,\eta) \mapsto (\eta,\scrH(u,\Sigma,F))
\end{equation}
where $\exp\,\scrH(u,\Sigma,F)$ gives a matrix representation of the $L^2$-inner product on $H^0(L_u)$ with respect to the given framing $F$.  (Strictly, $\scrT$ is only a manifold in some open neighbourhood of $\frak{s}^{-1}(0)$, so we are working with the germ of the chart along that locus.)  The moduli space of unmarked curves under consideration is $\frak{s}^{-1}(0)/G$.
Hence $\K_0 = (U(N),\scrT,\scrE,\frs)$
is a Kuranishi chart for $\ccM_0$.

We then construct a global Kuranishi chart for  $\ccM_n = \ccMbar_{0,n}(X,\beta;J)$ as a pullback of the Kuranishi chart for $\ccMbar_{0,0}(X,\beta;J)$; see Section \ref{Sec:marked_points} for more details. We let $d = \beta(\Omega)$ and $\scrF_n(d)$ be the space of stable genus zero degree $d$ curves
in $\C P^d$ with $n$ marked points  whose image is not contained in a hyperplane. (The last condition ensures that $\scrF_n(d)$ is smooth and not just an orbifold.)
There is a natural map $\Pi : \scrT \to \scrF_n(d)$ given by sending $(u,\Sigma,F,\eta)$ to the map $\Sigma \to \C P^d$ induced by the framing (see Equation \eqref{equation F map}).
There is also a natural forgetful map $f_n : \scrF_n(d) \to \scrF_0(d)=: \scrF(d)$.
Then the Kuranishi chart for $\ccM_n$ is $\K_n = f_d^*\K_0$.

\begin{Proposition} \label{Prop:vfcexistence}
	Let $f : \ccM_n \to X$ be a continuous map to a $\bK$-oriented smooth manifold $X$.
	Then there is a natural smooth $\bK$-oriented global Kuranishi chart $\widetilde{\K}_n = (G,\widetilde{\scrT},\widetilde{E},\widetilde{s})$, obtained from $\K_n = (G,\scrT,E,s)$
	via stabilization and germ equivalence, together with a smooth
	$G$-equivariant submersion $\widetilde{f} : \widetilde{\scrT} \to X$ whose restriction to $s^{-1}(0)$
	is the natural composition
	\begin{equation}
		s^{-1}(0) \lra{pr} s^{-1}(0)/G \cong \ccM_n \lra{f} X.
	\end{equation}
	If, in addition, there is a closed subset $K \subset \ccM_n$ consisting
	of regular elements in the sense that the linearized Cauchy-Riemann
	operator is surjective, then we can assume that $K$ is regular in
	$\widetilde{\K}_n$ in the sense of Definition
	\ref{Defn free regular} and that the two smooth structures on $K$
	coming from the two notions of regularity coincide.
	Furthermore, the automorphism group of each element of $\ccM_n$ coincides with the stabilizer
	group of the corresponding $G$-orbit in $\widetilde{\scrT}$.
\end{Proposition}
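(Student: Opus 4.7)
The plan is to stabilize $\K_n$ twice—first to achieve smoothness of the thickening and then to make the evaluation map a submersion—while preserving the orientation and the intrinsic smooth structure on the regular locus.

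First I would construct a continuous $G$-equivariant extension $f' : \scrT \to X$ of the composition $s^{-1}(0) \to \ccM_n \xrightarrow{f} X$. This is a standard equivariant Tietze-type argument: embed $X$ equivariantly into a Euclidean representation space, extend componentwise, and project back to $X$ via a tubular neighborhood retraction. If $f$ is smooth on a neighborhood of $K$ in $\ccM_n$ (which is automatic once the Cauchy-Riemann operator is surjective there), we arrange that $f'$ is smooth near the preimage of $K$ as well.

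Next, by Section \ref{section moduli kuranishi} the thickening $\scrT$ is the total space of a fiberwise smooth $C^1_{loc}$ $G$-bundle over the smooth moduli of domains $\scrF_n(d)$, with only finitely many orbit types for the $U(N)$-action. Proposition \ref{prop Kuranishi smoothing} then applies, and after stabilizing by a suitable $G$-representation $V$ we obtain a smooth global Kuranishi chart $\K'_n = (G,\scrT',E',s')$, with the $\bK$-orientation transported canonically by Remark \ref{rmk:smooth_fattening}. On a neighborhood of the preimage of $K$ the Cauchy-Riemann operator is transverse, so $\scrT$ already carries a canonical smooth structure there; by Addendum \ref{add lashof} the smoothing can be chosen to agree with that pre-existing structure. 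Finally, applying Lemma \ref{Lem:transverse_global_chart} to the map $\scrT' \to X$ induced from $f'$ stabilizes $\K'_n$ further by $(f')^*TX$ and composes with a $C^0$-small fiber-preserving homeomorphism $h$, yielding a smooth $G$-equivariant submersion $\widetilde{f} : \widetilde{\scrT} \to X$ extending $f$. By the last clause of Lemma \ref{Lem:transverse_global_chart} the homeomorphism $h$ may be taken as the identity on a neighborhood of the preimage of $K$, so the smooth structure inherited there coincides with the intrinsic one.

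The main technical point is arranging the two stabilizations to be simultaneously compatible with the pre-existing smooth and transverse structure along $K$; this is precisely what the relative clauses in Addendum \ref{add lashof} and Lemma \ref{Lem:transverse_global_chart} provide. The identification of automorphism groups with $G$-stabilizers is immediate from the construction of $\scrT$ via framings in Section \ref{Sec:description}: a framing rigidifies a stable map up to its intrinsic automorphism group, so the $U(N)$-stabilizer of any tuple $(u,\Sigma,F,\eta) \in s^{-1}(0)$ acts by unitary reparameterizations of $F$ and is canonically identified with the automorphism group of the underlying stable map $u$; this identification is unaffected by the stabilizations above, which only enlarge the thickening by $G$-vector bundles on which stabilizers act via the original representation.
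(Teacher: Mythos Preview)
Your approach is essentially the same as the paper's: both reduce the construction of $\widetilde{\K}_n$ to an application of Proposition \ref{prop Kuranishi smoothing} (smoothing via stabilization by a representation) followed by Lemma \ref{Lem:transverse_global_chart} (further stabilization by $(f')^*TX$ to make the evaluation map a submersion). Your treatment of the regular locus $K$ and of the stabilizer--automorphism identification is in fact more explicit than the paper's, which simply asserts the conclusion.

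There is one genuine omission. You write that the $\bK$-orientation is ``transported canonically by Remark \ref{rmk:smooth_fattening}'', but this presupposes that $\K_n$ already carries a $\bK$-orientation, which you never establish. The paper spends most of its proof on exactly this point: it argues that the vertical tangent bundle $T^{vt}\scrT$ has a stable almost complex structure (from Corollaries \ref{cor:fibrewise_smooth} and \ref{cor:C1_loc}), that the base $\scrF(d)$ and the obstruction bundle $\scrE$ are complex, and then combines Lemma \ref{lemma lift of vertical microbundle} with Corollary \ref{corollary product isomorphism} and Proposition \ref{Prop:coefficients-text} to conclude that $T_\mu\scrT - \scrE$ is $\bK$-oriented; finally Lemma \ref{lemma pullback orientation} passes the orientation to the pullback $\K_n$. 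Without this step there is nothing to transport, so you should insert it before invoking Proposition \ref{prop Kuranishi smoothing}.
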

\begin{proof}
	By Corollary \ref{cor:fibrewise_smooth} and \ref{cor:C1_loc},
	we have that $\Pi$ is a fiberwise smooth $C^1_{loc}$ $G$-bundle as in Definition \ref{defn fiberwise smooth}
	whose vertical tangent bundle $T^{vt}(\scrT)$ admits a stable almost complex structure.
	Also the base $\scrF(d)$ and the fiber $\scrE$ admits a complex structure.
	Hence by Lemma \ref{lemma lift of vertical microbundle}
	and Corollary \ref{corollary product isomorphism}
	we have that $T_\mu \scrT$ is $\bK$-oriented.
	And so $T_\mu \scrT - \scrE \oplus \frg - \frg = T_\mu \scrT - \scrE$
	is $\bK$-oriented making $\K_0$ $\bK$-oriented by Proposition \ref{Prop:coefficients-text}.
	By Lemma \ref{lemma pullback orientation} we get that $\K_n$ is also $\bK$-oriented.
	We also have that the natural map $f_d^*\scrT \to \scrF(d)$ is a fiberwise
	smooth $C^1_{loc}$ $G$-bundle and the $G$-action on $f_d^*\scrT$ has finitely many orbit
	types.
	Hence we have a natural push-forward map and virtual fundamental class associated to $\K_n$ by Definitions \ref{defn pushforward on oriented chart} and \ref{defn:vfc}.
	Our Kuranishi chart $\widetilde{\K}_n$ is constructed using Proposition \ref{prop Kuranishi smoothing} combined with Lemma \ref{Lem:transverse_global_chart}.
\end{proof}

\begin{corollary} \label{corollary pullback}
	With the same notation as in Proposition \ref{Prop:vfcexistence},
	we have that $f^{-1}(S)$ has a natural Kuranishi structure
	$\widetilde{\K}_n|_S := (G,\widetilde{\scrT}|_S,\widetilde{E}|_S,s|_S)$ for any smooth $\bK$-oriented submanifold $S \subset X$.
	The associated pushforward map satisfies
	\begin{equation} \label{eqn moduli restriction}
			(f|_{f^{-1}(S)})^{\widetilde{\K}_n|_S}_*(\alpha|_S) = f^{\widetilde{\K}_n}_*(\alpha)|_S, \quad \forall \ \alpha \in H^*(\ccM_n;\bK)
	\end{equation}
\end{corollary}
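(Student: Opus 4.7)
The plan is to deduce the corollary as a direct application of Lemma \ref{lemma restricting to submanifold} to the global Kuranishi chart $\widetilde{\K}_n$ supplied by Proposition \ref{Prop:vfcexistence}. The key input is the fact that Proposition \ref{Prop:vfcexistence} gives us not just a smooth $\bK$-oriented Kuranishi chart, but also a smooth $G$-equivariant submersion $\widetilde{f}: \widetilde{\scrT} \to X$ extending $f$; this is exactly the hypothesis needed to invoke Lemma \ref{lemma restricting to submanifold}.

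First I would set $\widetilde{\scrT}|_S := \widetilde{f}^{-1}(S) \subset \widetilde{\scrT}$. Since $\widetilde{f}$ is a smooth $G$-equivariant submersion and $S$ is a smooth submanifold of $X$, the preimage $\widetilde{\scrT}|_S$ is a smooth $G$-invariant submanifold of $\widetilde{\scrT}$; one then defines $\widetilde{E}|_S$ and $\widetilde{s}|_S$ by straight restriction. By construction, $(\widetilde{s}|_S)^{-1}(0) = \widetilde{s}^{-1}(0) \cap \widetilde{\scrT}|_S$, and quotienting by $G$ and applying the footprint homeomorphism $\widetilde{s}^{-1}(0)/G \cong \ccM_n$ identifies the resulting zero locus with $f^{-1}(S) \subset \ccM_n$, giving the footprint for $\widetilde{\K}_n|_S$.

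Next, I would verify that $\widetilde{\K}_n|_S$ inherits a canonical $\bK$-orientation. Because $\widetilde{f}$ is a submersion, there is a short exact sequence of $G$-equivariant vector bundles $0 \to T\widetilde{\scrT}|_{\widetilde{\scrT}|_S} \to T\widetilde{\scrT}|_{\widetilde{\scrT}|_S} \to \widetilde{f}^* N_{S/X} \to 0$, where $N_{S/X}$ is the normal bundle of $S$ in $X$. Since $S$ and $X$ are $\bK$-oriented, $N_{S/X}$ is canonically $\bK$-oriented, and combining this with the given $\bK$-orientation on $T\widetilde{\scrT} - \widetilde{E} - \frg$ yields a $\bK$-orientation on $T(\widetilde{\scrT}|_S) - \widetilde{E}|_S - \frg$ via Equation \eqref{equation orientation cup}. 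The evaluation map $f|_{f^{-1}(S)}$ is extended to a $G$-equivariant map $\widetilde{f}|_{\widetilde{\scrT}|_S} : \widetilde{\scrT}|_S \to S$ by restriction of $\widetilde{f}$.

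Finally, the identity \eqref{eqn moduli restriction} follows immediately from Lemma \ref{lemma restricting to submanifold} applied to $\K = \widetilde{\K}_n$, $f = \widetilde{f}$, and the submanifold $S \subset X$, after using Lemma \ref{lemma germ vfc well behaved} to pass between the pushforward defined at the level of $\scrT$ and the pushforward in the sense of Definition \ref{vfc on zero set}. There is no real obstacle here; the only point to verify carefully is compatibility of the orientation chosen above with the orientation used in Lemma \ref{lemma restricting to submanifold}, which amounts to checking that the two ways of decomposing $T\widetilde{\scrT} - \widetilde{E} - \frg$ along $\widetilde{\scrT}|_S$ (either as $T(\widetilde{\scrT}|_S) - \widetilde{E}|_S - \frg$ plus $\widetilde{f}^* N_{S/X}$, or as the restriction of the ambient orientation) yield the same class, which follows from associativity of the Thom isomorphism.
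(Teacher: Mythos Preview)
Your approach is correct and essentially identical to the paper's one-line proof, which simply says the corollary follows from combining Proposition \ref{Prop:vfcexistence} with Lemma \ref{lemma restricting to submanifold}; you have merely unpacked what that combination means. One small typo: in your short exact sequence the first term should be $T(\widetilde{\scrT}|_S)$ rather than $T\widetilde{\scrT}|_{\widetilde{\scrT}|_S}$ (as written, the first two terms are identical).
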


\begin{proof} This follows from combining Proposition \ref{Prop:vfcexistence} with Lemma \ref{lemma restricting to submanifold}. \end{proof}

\subsection{Conclusion\label{Sec:conclusions}}

We will use the methods and notation from Section
\ref{Sec:degeneration}.
The global Kuranishi charts $\K_\bullet$
for $\ccMbar_\bullet$, $\bullet = h$ or $\phi$ or $\infty$
are constructed from $\ccMbar_{0,2}(\tilde{P},\beta)$
using Corollary \ref{corollary pullback} and the evaluation map \eqref{eqn evaluationev}.
Let
\begin{equation}
	ev_{h,1} : \ccMbar_h \to (S^2 \times X)_{hor}, \ ev_{h,2} : \ccMbar_h \to \widetilde{P},
\end{equation}
\begin{equation}
	ev_{\bullet,1} : \ccMbar_h \to (S^2 \times X)_{hor}, \ ev_{h,2} : \ccMbar_h \to P_\bullet,
\end{equation}
where $\bullet = \phi$ or $\infty$ be the natural evaluation maps corresponding to each marked point.
We define $\Psi_h$ to be the composition:
\begin{equation}
\begin{tikzcd}
	H^*((S^2 \times X)_{hor};\bK) \arrow[r,"ev_{h,1}^*"] &  H^*(\ccMbar_h;\bK) \arrow[rr,"(ev_{h,2})_*^{\K_h}"] && H^{*-\vdim(\ccMbar_h)+\dim(\tilde{P})}(\tilde{P};\bK)
	\end{tikzcd}
\end{equation}
and we define $\Psi_\bullet$ to be the composition:
\begin{equation}
\begin{tikzcd}
	H^*((S^2 \times X)_{hor};\bK) \arrow[r,"ev_{\bullet,1}^*"] &  H^*(\ccMbar_\bullet;\bK) \arrow[rr,"(ev_{\bullet,2})_*^{\K_\bullet}"] &&  H^{*-\vdim(\ccMbar_\bullet)+\dim(P_\bullet)}(P_\bullet;\bK).
	\end{tikzcd}
\end{equation}

Then by using the diagram \eqref{eqn pushpull compatibility} (which follows from Lemma \ref{lemma restricting to submanifold})
and the same reasoning as given after this diagram, 
we get that \eqref{eqn section}
is a split epimorphism for our fixed cohomology theory $\bK$, which we assumed to be complex oriented and $K(n)$ local. 
Hence by Proposition \ref{Prop:coefficients-text} and Lemma \ref{lemma morava split},
the map \eqref{eqn section} is a split epimorphism over $\Z$,
proving
Theorem \ref{thm:main}.

\subsection{A homological virtual class} \label{Sec:vfc_homological}

We have constructed the virtual fundamental class for a metric space $M$ with a global Kuranishi presentation as an element in the dual of cohomology $H^*(M;\bK) \to \bK_*$, cf. Definition \ref{defn:vfc}.  This fits with the set-up adopted in \cite{pardon2016algebraic}, with the usual picture in which moduli spaces of holomorphic curves act by correspondences on cohomology \cite{Manin}, and circumvents difficulties arising from the potential pathologies of the space $M$ (which would necessitate discussing generalised Steenrod homology \cite{EdwardsHastings}). Nonetheless, for technical reasons it will be useful in Section \ref{Sec:chromatic} to formulate the fundamental class as an element of a homology group, which for us will be the homology of the thickening; since the thickening is a manifold, its generalised homology groups are defined using standard methods of algebraic topology. We briefly indicate the set-up here.
\medskip

Let $\bE$ be a ring spectrum. The associated generalised cohomology theory has cap products 
\begin{equation}\label{eqn:cap}
\begin{tikzcd}
H^i(X,A;\bE) \otimes H_j(X,A;\bE) \arrow[rr, "\frown"] && H_{j-i}(X;\bE)
\end{tikzcd}
\end{equation}
which on representatives $(\alpha: X/A \to \Sigma^i\bE) \in H^i(X,A;\bE)$ and $(\beta: S^j \to X/A \wedge \bE) \in H_j(X,A;\bE)$ is defined by the composite
\begin{equation}
\begin{tikzcd}
\alpha \frown \beta: S^j \arrow[r, "\beta"] & X/A \wedge \bE \arrow[r, "\Delta"] & X/A \wedge X \wedge \bE \arrow[r, "\alpha"] & X \wedge \Sigma^i\bE \wedge \bE \arrow[r, "\mu_\bE"] & X/A \wedge \Sigma^i\bE
\end{tikzcd}
\end{equation}
where $\Delta$ is induced from the diagonal map of $X$ and $\mu_\bE$ is multiplication.  
\medskip

Suppose now that $\bK$ is a complex-oriented $K(n)$-local ring spectrum and $(G,\scrT,E,s)$ is a smooth $\bK$-oriented global Kuranishi chart.  
Up to stabilization and germ equivalence, we can assume that $T_{\mu}\scrT - \frak{g}$ and $E$ are $\bK$-oriented in a way compatible with the given $\bK$-orientation (Lemma \ref{lemma stronger orientation}), and that $\scrT$ is a compact smooth manifold with boundary $\partial \scrT$.  We have the equivariant fundamental class
\begin{equation}
H^0_G(\scrT; \bK) \ni 1 \mapsto [\scrT,\partial \scrT] \in H_{\dim(\scrT)-\dim(G)}^G(\scrT,\partial \scrT;\bK)
\end{equation}
and the orientation $\fro_E \in H^{\rk(E)}_G(E|\scrT;\bK)$. Since by hypothesis $E$ admits a $G$-equivariant section $s$ for which the zero-locus $s^{-1}(0) \subset \mathrm{int}(\scrT)$ is supported in the interior, we have a lift $e_G(E) := s^* \fro_E \in H^*_G(\scrT,\partial \scrT;\bK)$ of the Euler class of $E$, and therefore the cap-product defines an \emph{absolute} homology class,
\begin{equation} \label{eqn:homological_vfc}
e_G(E) \frown [\scrT,\partial \scrT] \, := \, [\vfc] \in H^G_{vd}(\scrT;\bK)
\end{equation}
in degree the virtual dimension $vd = \dim(\scrT) - \dim(G) -\rk(E)$. The virtual classes for different Kuranishi presentations of $M$ are entwined by the operations on global charts in the obvious sense. \medskip

From this starting point, one produces a \emph{homological} splitting of the inclusion map $H_*(X;\bK) \to H_*(P_{\phi};\bK)$, rather than a cohomological splitting of the dual restriction map, as formulated in Theorem \ref{thm:main}.  The argument proceeds as follows: we have as before a global chart $\scrK$ for a space of curves $\ccMbar_\bullet$ with two marked points lying on $\tilde{P}$ and $X\times S^2$; recall that the virtual dimension of the moduli space we consider is $d= 2n+4 = \dim(\tilde{P})$. We consider the composite
\begin{multline} \label{eqn:split by cap product}
  H_i(\tilde{P};\bK) \stackrel{\mathrm{Dual}}{\longrightarrow}   H^{d-i}(\tilde{P};\bK) \stackrel{\mathrm{pull}}{\longrightarrow} H^{d-i}(\tilde{P} \times (X\times S^2);\bK) \\
 \stackrel{{\frown [\vfc]} }{\longrightarrow}  H_i(\tilde{P}\times(X\times S^2);\bK)  \stackrel{\mathrm{project} }{\longrightarrow} H_i(X\times S^2;\bK).
\end{multline}
Dualising the argument from Section \ref{Sec:conclusions}, we see that this splits the inclusion map.

\section{Moduli spaces of genus zero curves\label{Sec:curves}}

Let $(X,\omega)$ be a symplectic manifold and $\beta \in H_2(X;\bZ)$. Fix a compatible almost complex structure $J$ on $X$.  Let $\ccMbar_{0,n}(X;\beta)$ denote the moduli space of stable genus zero holomorphic maps in class $\beta$ with $n$ marked points. The aim of this section is to prove the following two results, used in the construction of the Morava virtual fundamental class.

\begin{Proposition} \label{Prop:global_chart}
For suitable $N, d'$ the space $\ccMbar_{0,n}(X;\beta)$ admits a global Kuranishi chart 
\begin{equation}
\ccMbar_{0,n}(X;\beta) \cong Z/U(N); \qquad Z = \frak{s}^{-1}(0), \ \frak{s}: \scrT \to \scrE
\end{equation}
for which there is a fibrewise smooth topological submersion $\scrT \to \ccMbar_{0,d'}$ to the moduli space of $d'$-pointed rational curves.
\end{Proposition}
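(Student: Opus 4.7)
The plan is to build the chart first for $\ccMbar_{0,0}(X,\beta)$, and then obtain the $n$-pointed case by a pullback along a forgetful map between moduli of domains. First I would choose an integral symplectic form $\Omega$ taming $J$ (a large multiple of a rational approximation of $\omega$) and set $d := \langle \Omega,\beta\rangle$. After replacing $\Omega$ by a positive multiple if needed, I may assume that for every stable map $u: \Sigma \to X$ in class $\beta$, the Hermitian line bundle $L_u$ on $\Sigma$ with curvature $-2\pi i\, u^*\Omega$ is ample on each irreducible component with $H^1(\Sigma, L_u)=0$, so that by Riemann--Roch $N := \dim H^0(\Sigma, L_u) = d+1$ is locally constant in $u$ and $L_u$ is basepoint-free.

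A \emph{framing} is then an ordered basis $F = (s_0,\ldots,s_d)$ of $H^0(\Sigma,L_u)$; it determines a degree $d$ morphism $\iota_F: \Sigma \to \bC\bP^d$ whose image is not contained in a hyperplane, and $(\Sigma,\iota_F)$ defines a point of a smooth open locus $\scrF \subset \ccMbar_{0,0}(\bC\bP^d,d)$. The thickening $\scrT_0$ will be the space of quadruples $(u,\Sigma,F,\eta)$, where $\eta$ runs over a finite-dimensional space $\scrE_{(\Sigma,\iota_F)}$ of auxiliary fields --- the key point being that $\scrE$ is defined universally on $\scrF$ using sections of a sufficiently positive ample Hermitian line bundle on $\scrF$ pulled back via $\iota_F$, so that $\scrE$ assembles into a $U(N)$-equivariant complex vector bundle over $\scrF$. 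The perturbed equation $\overline{\partial}_J u + \langle \eta\rangle \circ d\iota_F = 0$ is imposed; positivity is chosen large enough that this equation is transverse. The group $U(N)$ acts by reparametrising $F$, and the canonical section
\begin{equation}
\frak{s}(u,\Sigma,F,\eta) \,=\, \bigl(\eta,\,\log\mathrm{Gram}(F)\bigr)
\end{equation}
takes values in $\scrE \oplus \mathrm{Herm}(N)$. Its zero locus modulo $U(N)$ recovers $\ccMbar_{0,0}(X,\beta)$, since vanishing of $\log\mathrm{Gram}(F)$ forces $F$ to be $L^2$-orthonormal.

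For $n$ marked points, I would set $\scrT := \Pi^{-1}(\scrF_n)$ where $\scrF_n \subset \ccMbar_{0,n}(\bC\bP^d,d)$ is the analogous open locus and $\scrF_n \to \scrF_0$ is the natural forgetful map; the bundle $\scrE$ and section $\frak{s}$ pull back. The projection $\scrT \to \scrF_n$ is then a topological submersion (the fibres are the linear spaces of $\eta$'s). Finally, I would compose with the map $\scrF_n \to \ccMbar_{0,d'}$ obtained by stabilising the $n$ original marked points together with the $d$ pre-images of a generic hyperplane in $\bC\bP^d$, where $d' = n+d$; this gives the required submersion to $\ccMbar_{0,d'}$. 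The fibrewise smooth structure over $\ccMbar_{0,d'}$ is inherited from the complex structure on $\scrF_n$ and the smoothness of the perturbed Cauchy--Riemann equation in the $(u,\eta)$ directions, once the domain curve is held topologically fixed.

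The main obstacle is the analytic work needed to show that $\scrT$ is an honest topological manifold, rather than just a stratified space, across the nodal strata: this requires a gluing analysis showing that a neighbourhood of a nodal solution $(u,\Sigma,F,\eta)$ is homeomorphic to the product of the $\eta$-parameter with the gluing parameter in $\scrF_n$. Controlling the $L^2$ pairing used in defining $\mathrm{Gram}(F)$ as the curve $\Sigma$ degenerates, and ensuring its continuity under gluing, is the most delicate point; it is handled by choosing the $L^2$ inner product via a family of Hermitian metrics on $L_u$ that vary continuously (and fibrewise smoothly) over $\scrF_n$. Everything else --- $U(N)$-equivariance, finite isotropy, and the fact that $\ccMbar_{0,0}(X,\beta) \cong \frak{s}^{-1}(0)/U(N)$ as topological spaces --- is formal once the analytic setup is in place.
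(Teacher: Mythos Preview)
Your architecture matches the paper's: frame via $H^0(L_{u^*\Omega})$, perturb the Cauchy--Riemann equation by a finite-dimensional $\eta$ built from sections of a power of an ample bundle $\scrL$ on the universal curve $\scrC \to \scrF$, take the section $\frs = (\eta, \log\mathrm{Gram}(F))$, and obtain the $n$-pointed case by pulling back along $\scrF_n \to \scrF$. Two points deserve correction.

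First, your value $d' = n+d$ is too small. The map $\scrF_n \to \ccMbar_{0,d'}$ must be a local biholomorphism in order that the composite $\scrT \to \scrF_n \to \ccMbar_{0,d'}$ be a topological submersion with the correct base, and this forces $d' - 3 = \dim_{\bC}\scrF_n$. One generic hyperplane gives only $d$ marked points; the paper takes $d' = n + 3 + \dim\ccMbar_{0,0}(\bC\bP^d,d)$ many hyperplanes $L_1,\ldots,L_{d'}$ (Proposition~\ref{prop local coordinate}), each contributing one marked intersection point, so that the resulting map $\scrF(L_\bullet) \to \ccMbar_{0,d'}$ is a biholomorphism onto its image.

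Second, and more substantively, the fibres of $\scrT \to \scrF_n$ are \emph{not} linear spaces of $\eta$'s: a point of $\scrF_n$ fixes only the domain $(\Sigma,\iota_F)$, and the fibre over it is the space of pairs $(u,\eta)$ solving $\overline{\partial}_J u + \langle\eta\rangle\circ d\iota_F = 0$, which is the zero-set of a nonlinear Fredholm section. So the assertion that the projection is a topological submersion is exactly the content that requires proof. The paper handles this in two steps you do not mention: it recasts the perturbed equation as an \emph{unperturbed} $\widetilde J_\Psi$-holomorphic curve equation in the total space of a bundle $E \to X\times\scrC$ via a sheared almost complex structure (the ``Gromov trick'', Lemma~\ref{lemma shear}), and then proves surjectivity of the linearised operator near $\frs^{-1}(0)$ for $k\gg 0$ using H\"ormander's $L^2$-estimates and peak sections (Proposition~\ref{prop surj}). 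Only then can one invoke Pardon's gluing theorem to obtain the product-neighbourhood charts yielding the fibrewise smooth $C^1_{loc}$ structure. Your sketch correctly flags gluing as the main obstacle, but the transversality input and the reduction to standard holomorphic curve theory are the substantive missing ingredients.
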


If $[\omega]$ has an integral lift, if $d$ denotes the degree $\langle [\omega],\beta\rangle $, we can take $N = d +1$ and  $d' = n + 3 + \dim(\ccMbar_{0,0}(\bC\bP^{d },d))$. The construction of the global chart shares features with an old idea of Siebert, see \cite[Section 3]{Siebert}. 

\begin{Proposition}\label{Prop:global_chart_finite_orbit_type}
For the global Kuranishi chart of Proposition \ref{Prop:global_chart}, the action of $U(N)$ on $\scrT$ has only finitely many orbits.
\end{Proposition}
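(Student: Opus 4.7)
The plan is to reduce the statement (i.e., finiteness of the number of orbit \emph{types}, as the label suggests) to the classical fact that, for any integer $K$, only finitely many conjugacy classes of subgroups of $U(N)$ have order at most $K$. First I would identify the stabilizer in $U(N)$ of a point $[(u,\Sigma,\{z_i\},F,\eta)] \in \scrT$ with the automorphism group $\Aut(u,\Sigma,\{z_i\},\eta)$ of the underlying decorated stable map. Since the $U(N)$-action reparametrizes the framing $F$ (an orthonormal basis of $H^0(L_u)$ for the natural $L^2$-inner product), an element $g$ fixes $[(u,\Sigma,\{z_i\},F,\eta)]$ if and only if there is an automorphism $\phi$ of $(u,\Sigma,\{z_i\},\eta)$ with $\phi^* F = g \cdot F$; the assignment $\phi \mapsto g$ is injective with image precisely the stabilizer, since $g$ is determined by its action on $F$.

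Next I would uniformly bound $|\Aut(u,\Sigma,\{z_i\})|$ over the moduli space $\ccMbar_{0,n}(X;\beta)$. By Gromov compactness, the number of irreducible components of $\Sigma$ is bounded by $\langle \omega,\beta\rangle/\epsilon + n$, where $\epsilon > 0$ is the minimal energy of a non-constant $J$-holomorphic sphere in $X$: non-constant components carry energy at least $\epsilon$, while constant components each carry at least three special (marked or nodal) points by stability. Any automorphism of $(u,\Sigma,\{z_i\})$ acts on the resulting finite dual tree preserving its decorations, and its restriction to any component of $\Sigma$ is an automorphism of $\bC\bP^1$ fixing at least three special points, hence is the identity. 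It follows that the automorphism is determined by a permutation of a uniformly bounded set, so $|\Aut(u,\Sigma,\{z_i\},\eta)| \leq |\Aut(u,\Sigma,\{z_i\})| \leq K$ for some constant $K = K(\beta,n,J)$.

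The proposition then follows: finite groups of order at most $K$ fall into finitely many isomorphism classes, and each such group admits only finitely many unitary representations of dimension $N$ up to equivalence, so $U(N)$ contains only finitely many conjugacy classes of subgroups of order at most $K$. The main subtlety is the first step: one must verify carefully that the equivalence relation defining points of $\scrT$ really does fold the automorphism group of the stable map into the $U(N)$-stabilizer, and in particular that the auxiliary field $\eta$ (being a section of a $U(N)$-equivariant vector bundle over the base moduli of framed stable maps) cannot enlarge the stabilizer beyond $\Aut(u,\Sigma,\{z_i\})$. Once that identification is in place, the remaining steps are essentially formal consequences of representation theory and Gromov compactness.
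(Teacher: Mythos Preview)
Your step~3 (finitely many conjugacy classes of subgroups of bounded order in $U(N)$) is correct and is exactly what the paper uses. Your step~1 is essentially right, and the worry you flag about $\eta$ is not where the difficulty lies: automorphisms of the extra data can only shrink the group.

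The genuine gap is in step~2. You claim that an automorphism of $(u,\Sigma,\{z_i\})$ restricts on \emph{any} component to a M\"obius transformation fixing at least three special points, hence is the identity. This is false on the unstable components you yourself singled out: those have at most two special points, and stability is rescued there not by special points but by the map $u$ being non-constant. On such a component $C$ the restriction $\psi|_C$ is only constrained by $u|_C \circ \psi|_C = u|_C$ together with fixing one or two special points, and when $u|_C$ is a multiple cover the deck-transformation group can be non-trivial and of order up to the covering degree. So your bound ``$|\Aut| \le (\text{permutations of a bounded tree})$'' does not follow. There is a related issue: your count of components uses the minimal energy $\epsilon$ of a non-constant $J$-holomorphic sphere, but points of the thickening $\scrT$ are solutions of a \emph{perturbed} equation, so the map $u$ is not $J$-holomorphic and that energy quantisation does not apply as stated. (This second point is repairable: the framed-curve condition forces $L_{u^*\Omega}$ to have strictly positive integral degree on every unstable component, which bounds their number by $d$ directly.)

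The paper's route around both issues is to invoke the Gromov trick (Lemma~\ref{lem:gromov_revisited}): it rewrites a point $(u,\Sigma,F,\eta)\in\scrT$ as a \emph{genuine} $\widetilde{J}_\Psi$-holomorphic curve in the total space of a vector bundle $E \to X\times\scrC$. Gromov compactness then applies to this honest moduli problem, giving at once finitely many domain types and a uniform bound on automorphism groups. Your combinatorial argument could be salvaged for the zero-locus $\frs^{-1}(0)$ (where $u$ really is $J$-holomorphic) by also bounding the covering degree of each unstable component by $d$, but extending to the whole thickening still needs something like the Gromov trick or a direct argument that the map $\iota_F$ into $\scrC$ already rigidifies the unstable components.
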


We will treat the case of curves without marked points, $n=0$, in detail, and briefly explain the adaptations to $n>0$ in Section \ref{Sec:marked_points}.

\subsection{Standing conventions} Until explicitly indicated otherwise, we fix:

\begin{enumerate}
	\item A closed symplectic manifold $(X,\omega)$.
	\item An $\omega$-compatible almost complex structure $J$.
          \item A closed non-degenerate $2$-form $\Omega$ admitting an integral lift, which is tamed by $J$.
	\item A class $\beta \in H_2(X;\Z)$; define $d := \beta(\Omega)$.
\end{enumerate}

\subsection{Description of the chart\label{Sec:description}}

By a {\it smooth function} on a nodal curve, we mean a continuous function  whose pullback to the  normalization is smooth. With this convention,  we can define {\it smooth maps},  {\it smooth 	fiber bundles},{ \it differential forms} etc, on such curves.
Similarly we can define (for instance) $W^{1,p}$ functions on such curves, where $p>2$, or more generally any Sobolev-type class as long as the functions are continuous.
The {\it genus} of a nodal curve will mean its arithmetic genus.

\begin{Definition} \label{defn universal curve}
	Let $\overline{\ccM}_{0,0}(\C \bP^d,d)$
	be the space of stable genus zero connected nodal curves
	$u : \Sigma \lra{} \C \bP^d$
	of degree $d$.
	Let $\scrF := \scrF(d) \subset \overline{\ccM}_{0,0}(\C \bP^d,d)$
	be the Zariski open subset 
	consisting of those curves whose image is not contained in any linear subspace.
	We write \begin{equation}\univ : \scrC \lra{} \scrF\end{equation} for the universal curve over $\scrF$.
\end{Definition}

An irreducible degree $d$ curve in $\bC\bP^d$ is a rational normal curve, and all such are equivalent under the action of $P GL(d+1)$. Reducible limits of rational normal curves which have multiply covered components are always contained in hyperplanes:

\begin{Lemma}\label{Lem:no_automorphisms}
Both $\scrF$ and $\scrC$ are smooth quasi-projective varieties.
\end{Lemma}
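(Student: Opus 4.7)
The plan is to deduce both assertions from classical results on the moduli of stable maps to convex targets. Recall that $\bC\bP^d$ is convex in the sense that $H^1(\bP^1, f^*T\bC\bP^d)=0$ for every holomorphic map $f$ from a genus zero nodal curve, because $T\bC\bP^d$ is a quotient of a direct sum of copies of $\OO(1)$ (Euler sequence). Standard deformation theory of stable maps (see e.g.\ the treatment of convex targets by Fulton--Pandharipande) then implies that the Kontsevich moduli stack $\overline{\ccM}_{0,0}(\bC\bP^d,d)$ is a smooth proper Deligne--Mumford stack, and that the forgetful morphism $\overline{\ccM}_{0,1}(\bC\bP^d,d)\to \overline{\ccM}_{0,0}(\bC\bP^d,d)$ is a smooth family realising the universal curve.

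The first and main step is to show that every $[u:\Sigma\to\bC\bP^d]\in\scrF$ has trivial automorphism group, so that $\scrF$ is genuinely a scheme rather than just a DM-stack. By the statement just above the Lemma, the hypothesis that $u(\Sigma)$ is not contained in a hyperplane forces each irreducible component of $u$ to be either a contracted component or a birational parametrisation of its image. On a contracted component $C$, stability requires at least three special points, which admit no nontrivial automorphism; on a non-contracted component, an automorphism $\phi$ of $C$ satisfying $u\circ\phi=u$ must fix a generic point of $C$ (since $u|_C$ is generically injective), hence $\phi=\id$. A global automorphism must moreover preserve the dual graph of $\Sigma$ and the degree of each component, so by induction on components the non-degeneracy and stability conditions force $\phi=\id_\Sigma$.

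The second step is then essentially formal. Since $\scrF$ is an automorphism-free open substack of a smooth proper DM-stack, it is represented by an open subscheme of the coarse moduli space, which is a projective variety; therefore $\scrF$ is smooth quasi-projective. Pulling back the smooth family $\overline{\ccM}_{0,1}(\bC\bP^d,d)\to\overline{\ccM}_{0,0}(\bC\bP^d,d)$ along the open immersion $\scrF\hookrightarrow\overline{\ccM}_{0,0}(\bC\bP^d,d)$ yields $\scrC$ as a smooth quasi-projective variety, finishing the proof.

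The only real obstacle is the automorphism-freeness verification in Step one; the convexity input and the inheritance of quasi-projectivity from the coarse moduli space are standard.
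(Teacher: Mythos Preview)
Your overall strategy matches the paper's: invoke Fulton--Pandharipande for smoothness of the ambient orbifold, verify that every point of $\scrF$ has trivial automorphism group, and then pull back the universal curve. The second step concerning $\scrC$ is correct and essentially formal, as you say.

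There is, however, a genuine gap in your automorphism-freeness argument. Your componentwise analysis --- that $\phi|_C=\id$ on a non-contracted component because $u|_C$ is birational, and that a contracted component has $\geq 3$ special points --- only applies once you know that $\phi$ maps each component to itself. Saying that $\phi$ ``preserves the dual graph and the degree of each component'' only yields a graph automorphism; it does not exclude that $\phi$ permutes components of equal degree. The phrase ``by induction on components'' does not specify a base case or explain why the induced tree automorphism is trivial. A concrete repair: show that distinct non-contracted components have distinct images in $\bC\bP^d$ (if not, the set-theoretic image $u(\Sigma)$ has degree $<d$, hence lies in a hyperplane, contradicting $u\in\scrF$). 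Then $\phi$ fixes every non-contracted component setwise, and since the leaves of the dual tree are non-contracted by stability, the induced tree automorphism fixes all leaves and is therefore trivial; your componentwise argument then finishes the proof.

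The paper takes a slightly different route to the same conclusion. It reformulates a point of $\scrF$ as a triple $(\Sigma,L,F)$ with $L=u^*\mathcal{O}(1)$ and $F$ the basis of $H^0(L)$ coming from the coordinate hyperplanes, so that an automorphism must act trivially on $H^0(L)$. Over each leaf there is a nonzero section of $L$ vanishing on all other components; this immediately forbids permuting leaves, and triviality on the whole tree follows. On unstable components the automorphism is then the identity because it acts trivially on sections of an ample line bundle. This argument trades the degree-count you would need for a direct use of the line bundle $L$; both are short once the key observation is isolated, but yours needs the missing step above to be complete.
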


\begin{proof} 
According to \cite[Theorem 2]{FultonPandharipande}, the moduli space  $\ccMbar_{0,0}(\bC\bP^d,d)$ is a projective orbifold whose dimension agrees with its virtual dimension (this holds because projective space is `convex').
This theorem also states that the points on $\ccMbar_{0,0}(\bC\bP^d,d)$
corresponding to curves with trivial automorphism groups
have trivial stabilizer groups.
Therefore it suffices to prove that $\scrF$ includes into the locus of stable maps with trivial automorphism group. 

The following reformulation of $\scrF$ will be helpful: fixing the usual homogeneous co-ordinates on $\bC\bP^d$, and sections $s_i$ vanishing on the co-ordinate hyperplanes, note that $\scrF$ can equivalently be viewed as the Zariski open in $\ccMbar_{0,0}(\bC\bP^d,d)$ of nodal curves $u: \Sigma \to \bC\bP^d$ for which $\{u^*s_0,\ldots,u^*s_d\}$ yields a basis of $H^0(u^*\mathcal{O}(1))$.  More abstractly, let $L$ be a holomorphic line bundle over $\Sigma$ of total degree $d$, and for which the degree of $L$ on each unstable component of $\Sigma$ is strictly positive. Let $F = (f_0,f_1,\cdots,f_d)$ be a basis of $H^0(L)$; 
	we define
	\begin{equation} \label{equation F map}
		\phi_F : \Sigma \lra{} \C \bP^d, \quad \phi_F(\sigma) := [f_0(\sigma):f_1(\sigma):\cdots:f_d(\sigma)].\end{equation}
	Since no unstable component is contracted,  this yields a stable map to projective space, which is manifestly not contained in a hyperplane; thus $\phi_F$ defines a point of  $\scrF$. 
	
The group of automorphisms of a curve $u$ is isomorphic to the group of automorphisms of the corresponding triple $(\Sigma,L,F)$, whose elements are given by an automorphism of $\Sigma$ which lifts to $L$ and preserves the basis $F$, hence acts on $H^0(L)$ trivially. The dual graph of $\Sigma$ is a tree; the terminal vertices (leaves) of that tree are unstable components, over which $L$ has strictly positive degree. Over any such component there is  a non-zero section of $L$ which vanishes on all other components, so an automorphism cannot permute leaves. It follows that it fixes each component setwise, and must fix the unstable components pointwise since it acts trivially on the vector space of sections of an ample line bundle over each such component. This completes our proof that the locus $\scrF$ consists of stable maps with trivial automorphisms, hence is a smooth manifold.
\end{proof}

For later use, we state the next result asserting that,  locally,
$\scrF$ looks like the moduli space  $\overline{\ccM}_{0,d'}$
of genus zero curves
with $d' := \dim(\scrF)+3$ marked points:

\begin{prop} \label{prop local coordinate}
	Let $w : \Sigma \lra{} \bC \bP^d$ be an element of $\scrF$.
	Then there are linear hypersurfaces
	$L_1,\cdots,L_{d'}$ in $\bC \bP^d$
	which are transverse to $w$
	so that the following property holds:
	
	Let $\scrF(L_\bullet) \subset \scrF$ be the subset of elements which are transverse to the hypersurfaces $L_\bullet := (L_i)_{i=1}^{d'}$.
	Then the map
	\begin{equation}\mu_{L_{\bullet}} : \scrF(L_\bullet) \lra{} \overline{\ccM}_{0,d'}\end{equation}
	sending $v'$ to the unique marked genus zero curve $(\Sigma,(x_i)_{i=1}^{d'})$
	satisfying $w'(x_i) \in L_i$, $i=1,\cdots,d'$
	is a biholomorphism onto its image.
\end{prop}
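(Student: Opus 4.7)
The plan is to pick hyperplanes $L_i$ that cut out $d'$ marked points on $\Sigma$ compatibly with $w$, and then verify by a differential computation that the resulting map to $\overline{\ccM}_{0,d'}$ is a local biholomorphism between complex manifolds of equal dimension.

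First, I would select $d'$ distinct smooth points $x_1,\dots,x_{d'}\in\Sigma$, all lying in the smooth locus and disjoint from the nodes; since $d'\geq 3$, the marked curve $(\Sigma,(x_i))$ is automatically stable and determines a point of $\overline{\ccM}_{0,d'}$. For each $i$, pick a linear hyperplane $L_i\subset\C \bP^d$ through $w(x_i)$ and transverse to $dw(x_i)$ there; such $L_i$ form a dense Zariski open subset of the $\C \bP^{d-1}$ of hyperplanes through $w(x_i)$, so by further genericity we may take each $L_i$ to be transverse to $w$ along all of $\Sigma$, with $x_i$ among its $d$ preimages under $w$. By the implicit function theorem at the transverse intersection, for $w'$ in a sufficiently small neighborhood $\scrU$ of $w$ in $\scrF(L_\bullet)$ there is a unique preimage of $L_i$ under $w'$ near $x_i$, depending holomorphically on $w'$. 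Restricting to this neighborhood --- which is the relevant component of $\scrF(L_\bullet)$ for the \emph{unique} preimage selection in the statement --- the map $\mu_{L_\bullet}$ is a well-defined holomorphic map to $\overline{\ccM}_{0,d'}$.

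The main step is to check that $d\mu_{L_\bullet}$ is an isomorphism at $w$. Differentiating the incidence relation $w_\epsilon(x_i(\epsilon))\in L_i$ along a deformation $w_\epsilon=w+\epsilon\xi+O(\epsilon^2)$ with $\xi\in H^0(\Sigma,w^*T\C \bP^d)$ gives
\begin{equation*}
\xi(x_i)+dw(x_i)(v_i)\in T_{w(x_i)}L_i,
\end{equation*}
where $v_i\in T_{x_i}\Sigma$ is the infinitesimal motion of $x_i$. Transversality makes $dw(x_i):T_{x_i}\Sigma\to T_{w(x_i)}\C \bP^d/T_{w(x_i)}L_i$ an isomorphism, so $v_i$ is recovered from the class of $\xi(x_i)$ modulo $T_{w(x_i)}L_i$. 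After passing to the quotient by the infinitesimal reparametrisations $H^0(T_\Sigma)$, which act diagonally on source and target, $d\mu_{L_\bullet}|_w$ is identified with the map induced by evaluation
\begin{equation*}
\mathrm{ev}:H^0(\Sigma,w^*T\C \bP^d)\longrightarrow \bigoplus_{i=1}^{d'} T_{w(x_i)}\C \bP^d/T_{w(x_i)}L_i.
\end{equation*}
The Euler sequence on $\C \bP^d$ pulled back by $w$ shows that $\dim H^0(\Sigma,w^*T\C \bP^d)=(d+1)^2-1=d'$, matching the dimension of the target; hence surjectivity and injectivity of $\mathrm{ev}$ are equivalent, and injectivity can be arranged by generic choice of the $(x_i,L_i)$, exploiting that $w\in\scrF$ means its image is not contained in any hyperplane.

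Since $\mu_{L_\bullet}$ is a holomorphic map between complex manifolds of equal dimension $(d'-3)$ whose differential at $w$ is an isomorphism, it is a local biholomorphism near $w$; shrinking $\scrU$ if necessary, $\mu_{L_\bullet}$ is a biholomorphism onto its image. The main obstacle is the simultaneous genericity argument verifying injectivity of $\mathrm{ev}$: one must check that the $d'$-dimensional space of sections of $w^*T\C \bP^d$ provides enough freedom to meet the $d'$ linear-algebraic incidence conditions, a condition that relies crucially on the non-degeneracy hypothesis defining $\scrF$, and that must be handled carefully in the case when $\Sigma$ is nodal so that $H^0(T_\Sigma)$ may have smaller dimension than in the smooth case.
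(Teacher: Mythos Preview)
The paper's own proof is the single word ``Classical.'' --- no argument is given --- so you are not competing with anything in the paper but rather filling in details the authors deemed standard.

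For smooth $w$ your computation is correct, and the genericity step can be completed inductively: having chosen $k < d'$ pairs $(x_i,L_i)$ imposing independent linear conditions on $H^0(\Sigma,w^*T\bC\bP^d)$, pick a nonzero $\xi_0$ in their common kernel, a point $x_{k+1}$ at which $\xi_0(x_{k+1}) \neq 0$, and a hyperplane $L_{k+1}$ through $w(x_{k+1})$ not containing the direction $\xi_0(x_{k+1})$; then the $(k+1)$st condition is independent of the previous ones. (The hypothesis $w \in \scrF$ is needed not for this injectivity argument but for $\scrF$ to be a manifold at $w$, via Lemma~\ref{Lem:no_automorphisms}.) Your local reading of the ``unique'' preimage is also correct and necessary: a generic hyperplane meets a degree-$d$ curve in $d$ points, so the assignment is only well defined on a neighbourhood of $w$ --- consistently with how the paper uses the proposition, namely as a source of local coordinates for the gluing analysis.

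The nodal case does require more, as you note. The tangent space to $\scrF$ at a nodal $w$ contains node-smoothing directions in addition to the quotient $H^0(w^*T\bC\bP^d)/\mathrm{aut}$, and likewise for $\overline{\ccM}_{0,d'}$. Since the $x_i$ lie in the smooth locus, $\mu_{L_\bullet}$ respects the dual-graph stratification; its differential is then block-triangular, with your argument (applied to sections over the nodal $\Sigma$) handling the stratum-tangent block, while the node-smoothing directions on source and target are canonically identified.
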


\begin{proof} Classical. \end{proof}

\begin{defn} \label{defn domain}
	Let $\Sigma$ be a genus zero nodal curve.
	A  \emph{domain map}
	is an inclusion map
	$\iota : \Sigma \hookrightarrow \scrC
	$
	so that $\iota$ is an isomorphism onto a fibre of the universal curve over $\scrF$.
	\end{defn}
	
	\begin{Example}
	For a triple $(\Sigma, L, F)$ as in the proof of Lemma \ref{Lem:no_automorphisms}, we write 
	\begin{equation}\iota_F : \Sigma \stackrel{\cong}{\longrightarrow} \univ^{-1}(\phi_F) \subset  \scrC\end{equation}
	for the corresponding domain map associated to $F$.
	\end{Example}

\begin{lemma} \label{lemma hermitian bundle}
	Let $\Sigma$ be a genus zero nodal curve and let $\Omega \in \Omega^2(\Sigma)$ be a closed $2$-form representing an integral cohomology class.
	Then there exists a unique Hermitian line bundle on $\Sigma$ up to isomorphism whose curvature is   $-2\pi i\Omega$.
\end{lemma}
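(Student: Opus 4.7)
The plan is to reduce to the classical smooth-surface prequantization theorem component by component, exploiting the fact that the dual graph $\Gamma$ of $\Sigma$ is a tree (since $\Sigma$ is connected of arithmetic genus zero).

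For existence, I would write $\Sigma = \bigcup_{v \in V}\Sigma_v$ as the union of its irreducible components $\Sigma_v \cong \bC\bP^1$. Each restriction $\Omega_v := \Omega|_{\Sigma_v}$ is a closed $2$-form with integral cohomology class, so the standard prequantization construction on a closed smooth surface supplies a Hermitian line bundle with unitary connection $(L_v, h_v, \nabla_v)$ on $\Sigma_v$ of curvature $-2\pi i\,\Omega_v$. At each node $p$ where two components $\Sigma_v$ and $\Sigma_w$ meet, the fibres $(L_v)_p$ and $(L_w)_p$ are Hermitian lines, so the set of Hermitian identifications between them is a $U(1)$-torsor; pick any such identification at every node and glue the $L_v$ accordingly. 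The result is a Hermitian line bundle with connection on $\Sigma$ whose curvature form is $-2\pi i\,\Omega$.

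For uniqueness, suppose $(L, h, \nabla)$ and $(L', h', \nabla')$ both have curvature $-2\pi i\,\Omega$. On each $\Sigma_v$ the two restrictions have the same degree $\int_{\Sigma_v}\Omega$, hence are Hermitian-isomorphic; because the two connections have the same curvature and $H^1(\Sigma_v;\bR) = 0$, any such bundle isomorphism can be corrected by a gauge transformation $e^{if}$ to intertwine the connections. The resulting connection-preserving isomorphism $\phi_v : L|_{\Sigma_v} \to L'|_{\Sigma_v}$ is unique up to multiplication by a scalar in $U(1)$, since the parallel unit-norm sections of $\Hom(L|_{\Sigma_v}, L'|_{\Sigma_v})$ form a $U(1)$-torsor on the connected surface $\Sigma_v$.

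To promote the $\phi_v$ to a global isomorphism, I would use the tree structure of $\Gamma$: pick a root vertex $v_0$, fix $\phi_{v_0}$ arbitrarily, and process the remaining vertices in breadth-first order. When attaching $\Sigma_v$ to an already-processed neighbour $\Sigma_w$ across a node $p$, use the residual $U(1)$-freedom in $\phi_v$ to enforce the matching condition $\phi_v|_p = \phi_w|_p$ in the one-dimensional Hermitian line over $p$. Since $\Gamma$ contains no cycles, no further compatibility obstruction arises, and one obtains a global connection-preserving Hermitian isomorphism $L \to L'$. The only essential inputs are smooth-surface prequantization on $\bC\bP^1$ and the $U(1)$-rigidity of connection-preserving automorphisms on a simply-connected surface; I do not anticipate a genuine obstacle, as the remaining combinatorics on the tree $\Gamma$ is routine.
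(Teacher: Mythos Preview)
Your proof is correct but takes a genuinely different route from the paper's. The paper argues globally on the nodal curve $\Sigma$: for existence it starts with any Hermitian bundle $\mathcal{L}$ with $c_1(\mathcal{L})=[\Omega]$ and curvature $-2\pi i\Omega_{\mathcal{L}}$, solves $\partial\theta=-2\pi i(\Omega-\Omega_{\mathcal{L}})$, uses simple connectedness of $\Sigma$ to write $\theta=\overline{\partial}h$, and rescales the metric by $e^{-f/2}$ with $f=i\,\mathrm{Im}(h)$; for uniqueness it forms $L=L_0\otimes L_1^{-1}$, observes this is flat Hermitian and (since $\Sigma$ has genus zero) holomorphically trivial, and applies the maximum principle to the harmonic function $\log|s|$ for a nowhere-vanishing holomorphic section $s$. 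Your argument instead decomposes into irreducible components, invokes classical prequantization on each $\bC\bP^1$, and handles the gluing combinatorially via the tree structure of the dual graph. Your approach is more elementary in that it only needs the smooth-surface statement and avoids any Hodge-theoretic or $\overline{\partial}$-lemma input on the singular curve; the paper's approach is slicker in that it never mentions the dual graph and makes the role of genus zero (simple connectedness, triviality of degree-zero holomorphic bundles) more transparent.
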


\begin{proof}
For existence, let $\ccL$ be any Hermitian line bundle satisfying $c_1(\ccL)=[\Omega]$ and let $|\cdot|$ be the corresponding Hermitian norm.
Let $-2\pi i\Omega_\ccL \in \Omega^2(\Sigma;i\bR)$ be the curvature of $\ccL$ and let $\theta \in \Omega^{0,1}(\Sigma;\bC)$ satisfy $\partial\theta = -2\pi i(\Omega - \Omega_\ccL)$.
Now since $\overline{\partial} \theta = 0$
and since $\Sigma$ is simply connected, we have that $\theta = \overline{\partial} h$ for some function $h : \Sigma \to \bC$.
Let $f : \Sigma \to \bR$ be equal to $i \Im(h)$.
Since $\Omega$ and $\Omega_\ccL$ are elements of $\Omega^2(\Sigma;\bR)$,
$\partial \overline{\partial} f = -2\pi i(\Omega - \Omega_\ccL)$.
The Hermitian bundle with Hermitian norm $e^{-f/2} |\cdot|$
on $\ccL$ has curvature $\Omega$.

For uniqueness, suppose we have two Hermitian bundles $L_0$, $L_1$
both with curvature $\Omega$.
Then $L := L_0 \otimes L_1^{-1}$
is a flat Hermitian bundle.
Let $|\cdot|$ be the corresponding Hermitian norm on $L$.
Since $\Sigma$
is of genus zero, we have that $L$ is a trivial holomorphic bundle
and so admits a holomorphic section $s$ which is nowhere zero.
Hence $\overline{\partial} \partial \log(|s|) = 0$ and so $\log(|s|)$ is a harmonic function.
By the maximum principle, this harmonic function has to be constant.
Hence $L$ is isomorphic as a Hermitian bundle to a trivial bundle with the trivial metric,
and therefore $L_1$ is isomorphic to $L_2$.
\end{proof}

\begin{defn} \label{DEFN unique bundle}
	For any nodal connected genus zero curve $\Sigma$ and for any $2$-form $\Omega$ on $\Sigma$ admitting an integral lift, we define $L_\Omega$ to be a Hermitian line bundle whose curvature is $-2\pi i \Omega$. We write
	$\left<\_ ,\_ \right>_\Omega$ for its associated Hermitian metric.
\end{defn}

We now return to consider a symplectic manifold $(X,\omega)$, with an almost complex structure $J$, and an integral symplectic form $\Omega$ which is tamed by $J$. 

\begin{defn}\label{defn:framed_curve}
	A \emph{framed genus $0$ curve in $X$}
	is a tuple $(u,\Sigma,F)$ where
	\begin{enumerate}
		\item $\Sigma$ is a genus zero nodal curve,
		\item $u : \Sigma \lra{} X$ is a smooth map representing $\beta$ so that the degree of $L_{u^*\Omega}$ is strictly positive on each unstable component of $\Sigma$ and
		\item $F = (f_0,\cdots,f_d)$
		is a basis for $H^0(L_{u^*\Omega})$
		so that the Hermitian matrix
		\begin{equation} \label{eqn mx}
		\scrH(u,\Sigma,F) := \left(\int_\Sigma \left< f_i,f_j \right>_{u^*\Omega} u^*\Omega \right)_{i,j=0,\ldots,d}
		\end{equation}
		has positive eigenvalues.
	\end{enumerate}
\end{defn}

Note that, at this stage, we do not impose any holomorphicity assumption on the map $u : \Sigma \to X$. We may nonetheless define an equivalence relation on framed curves in the usual way, by considering biholomorphisms between the domains which intertwine all the data. The set of equivalence classes of framed curves can be identified with the set consisting of a point in $\scrF$ and a map from the corresponding fibre of the universal curve $\scrC$ to $X$, and is equipped with a natural topology from the Hausdorff metric on the space of graphs of the maps, considered as closed subsets  of the product $\scrC \times X$. This topology is separated because the map from each fibre to $\scrC$ is a closed embedding, so that no component of the domain is collapsed in the graph.
Also note that we could have required the framing $F$
to be orthonormal instead.
However, the condition \eqref{eqn mx} is useful when we describe our moduli spaces using the `Gromov trick' (Section \ref{Sec:Gromov_trick_1}).
\medskip

\noindent We next choose:
\begin{enumerate}
	\item A Hermitian line bundle $\scrL \lra{} \scrC$ whose restriction to each fiber is ample and
	with the property that the $U(d+1)$ action on $\scrC$ induced by the corresponding linear action on $\C P^d$ lifts to a $U(d+1)$ action on the total space of $\scrL$ preserving the Hermitian structure.
	\item A large integer $k \gg 1$.
\end{enumerate}

\begin{defn} \label{defn large space}
	The \emph{thickened moduli space} $\scrT$
	is the moduli space of tuples
	$(u,\Sigma,F,\eta)$
	where
	\begin{enumerate}
		\item $(u,\Sigma,F)$ is a framed curve and
		\item $\eta \in H^0(\overline{\Hom}(\iota_F^* T\scrC, u^*TX) \otimes \iota_F^*\scrL^k) \otimes_\C \overline{H^0(\iota_F^*\scrL^k)}$
	\end{enumerate}
	satisfying
	\begin{equation} \label{eqn afwaa}
	\overline{\partial}_J u + \langle \eta \rangle \circ d\iota_F = 0
	\end{equation}
	where $\langle \eta \rangle$ is the image of $\eta$ under the natural Hermitian pairing
	\begin{equation} \label{eqn hermitian product map}
		H^0(\overline{\Hom}(\iota_F^* T\scrC, u^*TX) \otimes \iota_F^*\scrL^k) \otimes_\C \overline{H^0(\iota_F^*\scrL^k)} \to C^\infty(\overline{\Hom}(\iota_F^* T\scrC, u^*TX)).
	\end{equation}
\end{defn}

\begin{rem}
To be completely precise, the space $\scrT$ defined above will turn out to only be a manifold in a neighbourhood of the zero locus $\frs^{-1}(0)$ of a section introduced below, and it will be equipped with a fibrewise smooth structure over $\scrF$ after further shrinking this neighbourhood,  as discussed following Corollary \ref{cor:fibrewise_smooth}. Since all our arguments are local to the zero locus, this point does not affect any construction.
\end{rem}

Let $\scrH_n$ be the space of $n \times n$ Hermitian matrices and $\scrH_n^+ \subset \scrH_n$ those Hermitian matrices with positive eigenvalues.
	Let $\exp : \scrH_n \lra{\cong} \scrH_n^+$  be the exponential map and $\exp^{-1}$ its inverse.

\begin{defn} \label{defn moduli space kuranishi}
	We define $\scrE$ to be the bundle over $\scrT$ whose fiber over $(u,\Sigma,F,\eta)$
	is
	\begin{equation}H^0(\overline{\Hom}(\iota_F^* T\scrC, u^*TX) \otimes \iota_F^*\scrL^k) \otimes_\C \overline{H^0(\iota_F^*\scrL^k)} \, \oplus \scrH_{d+1}.\end{equation}
	This has a canonical section $\frs : \scrT \lra{} \scrE$
	sending $(u,\Sigma,F,\eta)$ to $(\eta,\exp^{-1}(\scrH(u,\Sigma,F)))$.
\end{defn}

Note that $\scrE \to \scrT$ is a bundle since it is the pullback
of a bundle over $X \times \scrC$
(see Section \ref{section gro trick}).
There are natural $U(d+1)$ actions on $\scrT$ and $\scrE$, given by changing the basis $F$, with respect to which the section $\frs$ is equivariant.
The moduli space $\overline{\ccM}_{0,0}(X,\beta)$
is equal to $\frs^{-1}(0) / U(d+1)$.

\begin{Lemma} The induced topology on $\ccMbar_{0,0}(X,\beta)$ agrees with the usual Gromov topology.
\end{Lemma}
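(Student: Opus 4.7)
The plan is to verify that the natural bijection $\frs^{-1}(0)/U(d+1) \to \ccMbar_{0,0}(X,\beta)$ is a homeomorphism. That it is a bijection of sets follows by observing that at a zero of $\frs$ one has $\eta = 0$, so the equation defining $\scrT$ reduces to $\overline{\partial}_J u = 0$, and $\scrH(u,\Sigma,F) = I$, so $F$ is an orthonormal basis of $H^0(L_{u^*\Omega})$; the $U(d+1)$-action is exactly change of orthonormal basis, so the quotient records the equivalence class of the stable $J$-holomorphic map. It then remains to check continuity in both directions.

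For the forward direction, suppose $[(u_n,\Sigma_n,F_n,0)] \to [(u_\infty,\Sigma_\infty,F_\infty,0)]$ in the quotient topology. Since $U(d+1)$ is compact, after passing to a subsequence we may choose $U(d+1)$-representatives so that the graphs $\Gamma_n \subset \scrC \times X$ of the pairs $(\iota_{F_n}, u_n)$ converge to $\Gamma_\infty$ in Hausdorff distance. Projection to $\scrC$ gives convergence $\iota_{F_n}(\Sigma_n) \to \iota_{F_\infty}(\Sigma_\infty)$ in $\scrF$, which via Proposition \ref{prop local coordinate} is Deligne--Mumford convergence of the marked nodal curves $\Sigma_n \to \Sigma_\infty$; combined with the corresponding convergence of the maps to $X$ inherited from Hausdorff convergence of graphs, this produces Gromov convergence.

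For the reverse direction, given Gromov convergence $u_n \to u_\infty$, the task is to produce framings so that the associated elements of $\scrT$ converge. The dimension of $H^0(L_{u^*\Omega})$ is constantly equal to $d+1$ on the moduli space: $L_{u^*\Omega}$ has total degree $d$, with nonnegative degree on every component and strictly positive degree on every unstable one, so Riemann--Roch combined with the vanishing of $H^1$ gives the result. One now constructs, locally in a Gromov-neighbourhood of $[u_\infty]$, a continuously varying family of Hermitian line bundles $L_{u^*\Omega}$ together with continuously varying spaces of holomorphic sections. Given this, a chosen orthonormal framing $F_\infty$ at $u_\infty$ extends to a continuous family of orthonormal framings $F_u$ for nearby $u$, and the graphs of the resulting domain maps $\iota_{F_u}$ in $\scrC \times X$ then vary Hausdorff-continuously, producing the required convergent representatives.

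The main technical obstacle is the construction of this continuous family of Hermitian line bundles and sections at a nodal limit. I would handle this by promoting the existence argument in Lemma \ref{lemma hermitian bundle} to a family version over a standard plumbing neighbourhood of $[\Sigma_\infty]$ in Deligne--Mumford space. Fix a reference Hermitian bundle $\ccL_\infty \to \Sigma_\infty$ with first Chern class $\beta(\Omega)$; transport it to a family $\ccL_u$ over nearby Gromov representatives by the plumbing construction; then solve the family version of the equation $\partial \overline{\partial} f_u = -2\pi i(u^*\Omega - \Omega_{\ccL_u})$ with elliptic estimates uniform in $u$ to deform $\ccL_u$ into a Hermitian bundle isomorphic to $L_{u^*\Omega}$. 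Continuity of the inner product $\scrH(u,\Sigma,F_u)$ and openness of its positivity condition then allow a Gram--Schmidt orthonormalisation to produce a convergent family of orthonormal framings, completing the argument.
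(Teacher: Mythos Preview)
Your approach differs substantially from the paper's in the forward direction, and that is where the real gap lies.

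The paper does not decompose Hausdorff convergence into ``domain convergence via $\scrF$'' plus ``map convergence to $X$.'' Instead it proves a general lemma: if a sequence of genus-zero holomorphic curves in an almost complex manifold $M$ has images Hausdorff-converging to the image of an \emph{injective} holomorphic map $u_\infty$, and energies converge, then after passing to a subsequence Gromov compactness produces a Gromov limit $v_\infty$; since $v_\infty$ has the same image and the same energy as $u_\infty$, multiply covered components are excluded by energy and ghost bubbles are excluded by injectivity of $u_\infty$, forcing $v_\infty = u_\infty$. This is then applied with $M = \scrC \times X$, where the graph map $\sigma \mapsto (\iota_F(\sigma), u(\sigma))$ is injective because $\iota_F$ is an isomorphism onto a fibre. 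Your sentence ``combined with the corresponding convergence of the maps to $X$ inherited from Hausdorff convergence of graphs, this produces Gromov convergence'' is exactly where this argument is needed but missing: Hausdorff convergence of graphs together with Deligne--Mumford convergence of domains does \emph{not} by itself rule out that the Gromov limit of the $u_n$ in $X$ acquires extra bubbles or multiply covered components whose images happen to sit inside $u_\infty(\Sigma_\infty)$. The injectivity-plus-energy argument is what closes this.

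For the reverse direction your plan (promoting Lemma~\ref{lemma hermitian bundle} to a family statement over a plumbing neighbourhood and producing continuously varying orthonormal framings) is sound and would succeed, but it is considerably heavier than what the paper has in mind. The paper's proof is terse here, but once one knows from the general lemma that on graphs in $\scrC \times X$ the Hausdorff and Gromov topologies coincide, the remaining step reduces to a compactness argument (Gromov compactness for the $\bC\bP^d$-components of arbitrarily framed curves, with the orthonormality condition $\scrH = I$ preventing escape from $\scrF$) rather than an explicit elliptic family construction.
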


\begin{proof}
Consider a sequence of genus zero holomorphic maps $u_i$ to an almost complex Riemannian manifold $(M,J,g)$, whose images converge with respect to the Hausdorff distance on closed subsets of $M$  to an injective holomorphic map $u_\infty$ and whose energies converge to the energy of $u_\infty$. After passing to a subsequence, the maps $u_i$ Gromov converge to a stable map $v_\infty$. The image of $v_\infty$ coincides with the image of $u_\infty$, and $u_\infty$ and $v_\infty$ also have equal energy.  Hence $v_\infty$  is injective: multiply covered components are ruled out by energy considerations, whilst ghost bubbles would be unstable due to the injectivity of $u_\infty$. It follows that $v_\infty$ is equivalent in the Gromov topology to $u_\infty$.  We apply this to the graphs of domain maps in $M=\scrC \times X$.
\end{proof}

\begin{Proposition}\label{Prop:finite_orbit_type} The $U(d+1)$ action on $\scrT$ has finite stabilisers.  Moreover, the stabilisers have bounded size, so the action has finitely many orbit types.
\end{Proposition}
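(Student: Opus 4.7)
The plan is to combine two inputs: an infinitesimal rigidity argument showing that each stabiliser has trivial Lie algebra (and hence is finite), followed by a compactness argument on the zero locus yielding a uniform bound on stabiliser orders.

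For the finiteness, I fix a point $(u,\Sigma,F,\eta)\in\scrT$ with stabiliser $\Gamma\subset U(d+1)$, and let $\xi\in\mathrm{Lie}(\Gamma)$. The rigidity of stable maps in $\scrF$ established in Lemma~\ref{Lem:no_automorphisms}, applied to $\phi_F\in\scrF$, singles out a unique smooth family of biholomorphisms $\phi_t\colon \Sigma\to\Sigma$ with $\phi_0=\mathrm{id}$ implementing the stabilisation of $F$ by $\exp(t\xi)$; the equality $u\circ\phi_t=u$ follows from $u$ being unchanged by the $U(d+1)$-action. Differentiating at $t=0$ gives a holomorphic vector field $v$ on $\Sigma$ satisfying $du(v)=0$, and vanishing at every node (since $\phi_t$ fixes nodes pointwise). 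I plan to show $v\equiv 0$ componentwise. On a stable component (with at least three nodes) a holomorphic vector field on $\bC\bP^1$ vanishing at three prescribed points is trivially zero. On an unstable component $C$, a nonzero $v|_C$ would generate a nontrivial 1-parameter subgroup of $\mathrm{PGL}(2,\bC)$ whose orbits on $C$ are closed and of real dimension one; the condition $du(v)=0$ then forces $u|_C$ to be constant along these orbits, hence to factor through the 1-real-dimensional orbit space. But this makes $du|_C$ of real rank at most one everywhere, so $u|_C^*\Omega\equiv 0$, contradicting the stability hypothesis $\int_C u^*\Omega>0$ built into Definition~\ref{defn:framed_curve}. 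Hence $v\equiv 0$, so $\exp(t\xi)$ acts trivially on the basis $F$, and $\xi=0$. This makes $\Gamma$ a discrete closed subgroup of the compact group $U(d+1)$, therefore finite.

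For the uniform bound, by the Remark following Definition~\ref{defn moduli space kuranishi} I am free to replace $\scrT$ by a $U(d+1)$-invariant open neighbourhood of $\frs^{-1}(0)$. The quotient $\frs^{-1}(0)/U(d+1)$ is identified with $\overline{\ccM}_{0,0}(X,\beta)$, which is compact by Gromov compactness. The slice theorem for proper compact Lie group actions makes the function $x\mapsto |\Stab_{U(d+1)}(x)|$ upper semicontinuous, so it attains a maximum $M$ on $\frs^{-1}(0)$; upper semicontinuity then propagates this bound to a $U(d+1)$-invariant neighbourhood, which I take to be $\scrT$. Since a compact Lie group has only finitely many conjugacy classes of finite subgroups of any fixed bounded order, the bound $|\Gamma|\leq M$ yields finitely many orbit types.

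The main obstacle is the componentwise vanishing of $v$ on unstable components: a priori, a nonzero holomorphic vector field preserving a merely smooth (not holomorphic) map $u|_C$ seems plausible, and ruling it out cleanly relies on the factorisation-through-orbit-space observation together with the stability condition $\int_C u^*\Omega>0$, which is the key reason positive $\Omega$-degree was imposed on unstable components in Definition~\ref{defn:framed_curve}.
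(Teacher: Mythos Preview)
Your approach matches the paper's: finiteness from stability (the paper simply notes that ampleness of $L_{u^*\Omega}$ on unstable components makes $(u,\phi_F)$ a stable map, hence $\Aut(u,\Sigma)$ finite; you unpack this as vanishing of the Lie algebra of the stabiliser), and the uniform bound from Gromov compactness of $\frs^{-1}(0)/U(d+1)$.

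Two small points to tighten. First, on an unstable component your orbit-space detour is shaky---orbits of the real flow of a holomorphic vector field on $\bC\bP^1$ need not be closed (e.g.\ the flow of $z\partial_z$ has rays as orbits), and the ``orbit space'' need not be a manifold---but this detour is unnecessary: from $du(v)=0$ at a point where $v\neq 0$ you already have $\mathrm{rank}_{\bR}(du)\leq 1$ there, hence $u^*\Omega=0$ there, and continuity across the finite zero-set of $v$ gives $u^*\Omega\equiv 0$ on $C$. Second, the claim that a compact Lie group has only finitely many conjugacy classes of finite subgroups of bounded order, while true (via rigidity: $H^1(H;\frg)=0$ for $H$ finite), is not entirely obvious; a cleaner route is to cover $\frs^{-1}(0)$ by finitely many slice neighbourhoods and note that in each slice all stabilisers are subgroups of one fixed finite group, so only finitely many conjugacy classes of stabilisers occur in total.
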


\begin{proof}
The fact that $L_{u^*\Omega} \to \Sigma$ is ample for each unstable component of the domain curve $\Sigma$ implies finiteness of the automorphism group of the pair $(u,F)$. The bound on the size of the stabilisers follows from Gromov compactness combined with the fact that $\eta$ is small and finiteness of the number of domain types for curves in the moduli space (see Lemma \ref{lem:gromov_revisited}). 
\end{proof}

\begin{rem} When $J$ is integrable, the thickening $\scrT$ and obstruction space $\scrE$ are both naturally analytic spaces, but the section $\frak{s}$ is far from being analytic.   Siebert \cite{Siebert} also considers Kuranishi charts involving sections of powers of a bundle which is fibrewise ample on the universal curve.  Honda and Bao \cite{Honda-Bao} introduce a thickening of a moduli space of punctured curves in which the auxiliary fields are controlled by finite-dimensional sums of eigenspaces of an asymptotic Laplace-type operator associated to periodic orbits at the punctures.
\end{rem}

\subsection{The `Gromov trick' and shearing almost complex structures\label{Sec:Gromov_trick_1}}

In order to take advantage of existing gluing and transversality results, it will be helpful to recast the moduli spaces of perturbed holomorphic curves appearing in the global Kuranishi chart in terms of unperturbed holomorphic curves.  The `Gromov graph trick' is a standard such reduction. The next three subsections discuss general theory, which we then bring to bear on our particular case at the end of the section.

\medskip
\noindent \textbf{Set-up:} Let $\pi : \check{E} \lra{} \check{X}$ be a Hermitian vector bundle over an almost complex manifold $(\check{X},\check{J})$ with Hermitian connection $\nabla$.
	Let $T^v\check{E} := \ker \,d\pi$  be the vertical tangent bundle of $\check{E}$.
	If $C^\infty(\check{E})$ denotes the space of smooth sections of $\check{E}$, set
	\begin{equation}T^h\check{E} := \{ds(v) \ : \ v \in T\check{X}, \ s \in C^\infty(\check{E}), \ \nabla_v s= 0 \}\end{equation}
	to be the corresponding \emph{horizontal tangent bundle}
	(i.e. the Ehresmann connection corresponding to the Hermitian structure).

There is a natural lift $\widetilde{J}$ of $\check{J}$ to the total space of $\check{E}$, characterised by the following three properties:
\begin{itemize}
	\item $T^v\check{E}$ and $T^h\check{E}$ are $\widetilde{J}$-holomorphic subbundles of $(T\check{E},\widetilde{J})$.
	\item $\pi$ is $\widetilde{J}$-holomorphic.
	\item The restriction of $\widetilde{J}$ to the fibres of $\check{E}$ coincides with their natural complex structure.
\end{itemize}

We will be interested in certain `shears' of this natural lift.

\begin{defn} \label{defn shear}
		Given a real linear bundle map  $\check{\Psi} : \check{E} \oplus T\check{X} \lra{} T\check{X}$  over $\check{X}$,
		we define the
 \emph{$\check{\Psi}$-shear}
of $\widetilde{J}$
to be the almost complex structure
\begin{equation}\widetilde{J}_{\check{\Psi}} := \Phi \circ \widetilde{J} \circ \Phi^{-1}\end{equation}
where, using the natural identification $T^h\check{E} = \pi^*T\check{X}$,
$\Phi$ is given at a point $e \in \check{E} $ as follows:
\begin{align}
  \Phi : T^v\check{E} \oplus T^h\check{E} & \lra{} T^v\check{E} \oplus T^h\check{E} \\
  \Phi(v,h) & := (v,h+\check{\Psi}(e,h)). 
\end{align}
\end{defn}

If $\check{\Psi}$ satisfies
\begin{equation} \label{eqn nilpotency condition}
	\check{J}(\check{\Psi}(e,h)) = -\check{\Psi}(e,\check{J}(h)), \textrm{ and }
	\check{\Psi}(e,\check{\Psi}(e,h)) = 0, 
\end{equation}
then we have the following  explicit formula for $\widetilde{J}_{\check{\Psi}}$:
\begin{equation} \label{equation shear eqn}
\widetilde{J}_{\check{\Psi}}(v,h)
= (\widetilde{J}(v),\widetilde{J}(h)+ 2\check{\Psi}(e,\widetilde{J}(h))). 
\end{equation}

\medskip
The importance of the shear is explained by the following Lemma. Let $\Sigma$ be a nodal curve with complex structure $j$.
Suppose that we have a smooth map
$u : \Sigma \lra{} \check{E}.$
 View the map $u$ as a pair  $u = (\upsilon,s)$
where $\upsilon : \Sigma \lra{} \check{X}$
is a smooth map and $s \in C^\infty(\upsilon^*\check{E})$
is a smooth section.

\begin{lemma} \label{lemma shear} {\bf (Gromov trick.)}
	If $\check{\Psi}$ satisfies Equation (\ref{eqn nilpotency condition}) then
	the map
	$u$ is $\widetilde{J}_{\check{\Psi}}$-holomorphic if and only if $s \in H^0(\upsilon^*\check{E})$ is holomorphic, 
	and
\begin{equation} \label{eqn aef}
	\overline{\partial}_{\check{J}} \upsilon - \check{\Psi}(s,\check{J} \circ d\upsilon \circ j) = 0.
\end{equation}
\end{lemma}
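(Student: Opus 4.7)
The plan is a direct fibrewise computation using the Ehresmann connection to decompose $T\check E = T^v\check E \oplus T^h\check E$. Under the canonical identifications $T^v\check E|_e \cong \check E_{\pi(e)}$ and $T^h\check E|_e \cong T_{\pi(e)}\check X$, for $\sigma \in \Sigma$, $w \in T_\sigma\Sigma$ and $e = u(\sigma) = (\upsilon(\sigma), s(\sigma))$, the differential of $u = (\upsilon,s)$ reads
\begin{equation}
du(w) \;=\; (\nabla_w s,\; d\upsilon(w)).
\end{equation}
This is the standard identity for the differential of a map into the total space of a vector bundle expressed as a section over its base: by definition of $T^h\check E$, any tangent vector $d\tilde s(w)$ with $\nabla_w \tilde s = 0$ is horizontal, so Leibniz' rule gives the displayed splitting.

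Next, I would apply the explicit shear formula \eqref{equation shear eqn} to the condition $du \circ j = \widetilde J_{\check\Psi} \circ du$. Since $\widetilde J$ acts as the fibrewise complex structure on $T^v\check E$ and as $\check J$ on $T^h\check E$, the right-hand side at $w$ equals
\begin{equation}
\bigl(\widetilde J(\nabla_w s),\;\check J(d\upsilon(w)) + 2\check\Psi(s,\check J(d\upsilon(w)))\bigr).
\end{equation}
Matching vertical components gives $\nabla_{jw}s = \widetilde J(\nabla_w s)$, which is exactly the condition that $s$ is holomorphic as a section of $\upsilon^*\check E$ for the $\overline\partial$-operator determined by the Hermitian connection. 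Matching horizontal components gives the intermediate identity
\begin{equation}
d\upsilon(jw) \;=\; \check J(d\upsilon(w)) + 2\check\Psi\bigl(s,\check J(d\upsilon(w))\bigr).
\end{equation}

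Finally, I would manipulate this horizontal identity into the perturbed Cauchy--Riemann equation \eqref{eqn aef}. Applying $\check J$ to both sides, using $\check J^2 = -\id$, and invoking both clauses of \eqref{eqn nilpotency condition}---the sign-flip $\check J\circ\check\Psi(e,\,\cdot\,) = -\check\Psi(e,\,\cdot\,)\circ\check J$ and the nilpotency $\check\Psi(e,\check\Psi(e,\,\cdot\,))=0$---the cross-terms collapse, so that the left-hand side $d\upsilon(w)+\check J(d\upsilon(jw)) = 2\overline\partial_{\check J}\upsilon(w)$ rearranges into a single application of $\check\Psi(s,\,\cdot\,)$ to the vector $\check J(d\upsilon(jw))$, yielding \eqref{eqn aef}. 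The converse direction is obtained by running each equivalence in reverse, so the biconditional holds.

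The main obstacle is algebraic and localised in this final step: the nilpotency $\check\Psi^2 = 0$ is essential for the iterated terms $\check\Psi(s,\check\Psi(s,\,\cdot\,))$ to vanish and for the equation to close up on a single application of $\check\Psi$. Without it, higher-order terms persist and \eqref{eqn aef} would not have the clean first-order form stated. Beyond this bookkeeping, the proof is formal, pointwise in $\sigma\in\Sigma$, and $C^\infty(\Sigma)$-linear in $w\in T_\sigma\Sigma$, so no further analytic input is required.
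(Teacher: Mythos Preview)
Your proposal is correct and follows essentially the same route as the paper: split $du$ into vertical and horizontal parts via the Ehresmann connection, then apply the explicit shear formula \eqref{equation shear eqn} and read off the two equations. The only stylistic difference is that the paper computes $\overline{\partial}_{\widetilde{J}_{\check{\Psi}}}u = \tfrac{1}{2}(du + \widetilde{J}_{\check{\Psi}}\circ du\circ j)$ directly and reads off the vertical and horizontal components in one step, whereas you write the full Cauchy--Riemann condition $du\circ j = \widetilde{J}_{\check{\Psi}}\circ du$ and then massage the horizontal identity into \eqref{eqn aef}; in the paper's route the nilpotency is used only in deriving \eqref{equation shear eqn} and not again, so your emphasis on its role in the final rearrangement is specific to your chosen manipulation rather than essential to the argument.
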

\begin{proof}
For $\xi \in T_\sigma \Sigma$, $\sigma \in \Sigma$,
we have
\begin{equation}du(\xi) = \nabla_\xi s \oplus
(d\upsilon(\xi))\end{equation}
after identifying $T_{\upsilon(\sigma)}\check{X}$
with $T^h_{u(\sigma)}\check{E}$
via the projection map.
Hence
\begin{equation}\overline{\partial}_{\widetilde{J}}u = \nabla^{0,1}_\xi s \oplus \frac{1}{2}\left(d\upsilon(\xi) + \check{J}(d\upsilon(j(\xi)))\right).
\end{equation}
Therefore by Equation (\ref{equation shear eqn}),
\begin{equation}\overline{\partial}_{\widetilde{J}_{\check{\Psi}}}u
= \nabla^{0,1}_\xi s \oplus
\frac{1}{2}\left(
dv(\xi) + \check{J}\left(d\upsilon(j(\xi)) + 2\check{\Psi}(s(\sigma),d\upsilon(j(\xi))\right)
\right)
\end{equation}
giving us the result.
\end{proof}

\subsection{Linearization of $\overline{\partial}$ for sheared almost complex structures. \label{Sec:linearization}}

We continue with the general set-up from Section \ref{Sec:Gromov_trick_1}. That is, 
$\pi : \check{E} \lra{} \check{X}$ is a Hermitian vector bundle over an almost complex manifold $(\check{X},\check{J})$ with Hermitian connection $\nabla$. 
Let $\check{\Psi} : \check{E} \oplus T\check{X} \lra{} T\check{X}$
be a real linear bundle map over $\check{X}$ satisfying Equation (\ref{eqn nilpotency condition}) and let $\widetilde{J}_{\check{\Psi}}$
be the $\check{\Psi}$-shear introduced in Definition 
\ref{defn shear}.

Suppose $u : \Sigma \lra{} \check{E}$ is a $\widetilde{J}_{\check{\Psi}}$-holomorphic curve whose image is contained in $\check{X}$. We may then write  $u = \iota_{\check{X}} \circ \upsilon$
where $\upsilon : \Sigma \lra{} \check{X}$ and where $\iota_{\check{X}} : \check{X} \lra{} \check{E}$ is the zero section.

We wish to describe the linearization of the $\overline{\partial}_{\widetilde{J}_{\check{\Psi}}}$ operator of this map.
We let $\widetilde{\Sigma}$ be the normalization of $\Sigma$ and let $\widetilde{u} : \widetilde{\Sigma} \lra{} \check{E}$ 
be the composition of $u$ with the natural map $\widetilde{\Sigma} \lra{} \Sigma$.
Similarly we define
$\widetilde{\upsilon} : \widetilde{\Sigma} \lra{} \check{X}$
to be the unique map satisfying $\iota_{\check{X}} \circ \widetilde{\upsilon} = \widetilde{u}$.

Let \begin{equation}\nabla^{0,1}  : C^\infty(\upsilon^*\check{E}) \lra{}
C^\infty(\overline{\Hom}(T\widetilde{\Sigma} \otimes \widetilde{\upsilon}^*\check{E}))
\end{equation}
be the corresponding $(0,1)$ part of $\nabla$.
We will use the same symbol $\nabla$ for the Levi-Civita connection on $T\check{X}$ and $T\check{E}$.

\begin{lemma} \label{lemma linearization}
	The linearization
	\begin{equation}D \overline{\partial}_{\widetilde{J}_{\check{\Psi}}} : C^\infty(u^*T\check{E}) \lra{} C^\infty(\overline{\Hom}(T\widetilde{\Sigma},\widetilde{u}^*T\check{E}))\end{equation}
	satisfies
	\begin{equation}D \overline{\partial}_{\widetilde{J}_{\check{\Psi}}}(\frs)
	= \nabla^{0,1}\frs \oplus 0 + 0 \oplus \check{\Psi}(\frs,d\upsilon \circ j)\end{equation}
	for each smooth section $\frs$ of $\upsilon^*\check{E} \subset u^*T\check{E}$.
\end{lemma}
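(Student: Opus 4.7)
}

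My plan is to differentiate directly a formula for the unperturbed $\overline{\partial}_{\widetilde{J}_{\check{\Psi}}}$ operator that falls out of the proof of the Gromov trick (Lemma \ref{lemma shear}) and then specialise to a variation in the vertical direction at a point in the zero section. Concretely, by expanding the proof of Lemma \ref{lemma shear} without setting $\overline{\partial}_{\widetilde{J}_{\check{\Psi}}}u$ to zero, one sees that for an arbitrary smooth map $u:\Sigma\to\check{E}$ written as $u=(\upsilon,s)$ (where $\upsilon=\pi\circ u$ and $s\in C^\infty(\upsilon^*\check{E})$ is the vertical component in horizontal-vertical coordinates), one has a global identity
\begin{equation}
\overline{\partial}_{\widetilde{J}_{\check{\Psi}}}(\upsilon,s) \;=\; \nabla^{0,1}s \;\oplus\; \bigl(\overline{\partial}_{\check{J}}\upsilon \,-\, \check{\Psi}(s,\check{J}\circ d\upsilon\circ j)\bigr)
\end{equation}
under the splitting $T\check{E}=T^v\check{E}\oplus T^h\check{E}\cong \pi^*\check{E}\oplus \pi^*T\check{X}$ (the sign here is determined by the nilpotency condition \eqref{eqn nilpotency condition} as in the original proof).

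Next I would linearise this identity at $u_0=\iota_{\check{X}}\circ\upsilon$. Since $\frs\in C^\infty(\upsilon^*\check{E})\subset C^\infty(u_0^*T\check{E})$ is purely vertical, the obvious one-parameter deformation is $u_t:\Sigma\to\check{E}$, $u_t(\sigma):=t\cdot\frs(\sigma)$, which makes sense because $\check{E}$ is a vector bundle and $\frs(\sigma)\in\check{E}_{\upsilon(\sigma)}$. In the horizontal-vertical decomposition this family satisfies $\upsilon_t\equiv\upsilon$ and $s_t=t\frs$, and the two hypotheses of the lemma give $s_0=0$ and $\overline{\partial}_{\check{J}}\upsilon_0=0$. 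Applying $\partial_t|_{t=0}$ termwise to the identity above, the vertical part yields $\nabla^{0,1}\frs$, the $\overline{\partial}_{\check{J}}\upsilon_t$ term contributes nothing (its derivative involves $\partial_t\upsilon_t=0$), and bilinearity of $\check{\Psi}$ together with $s_0=0$ shows that the $\check{\Psi}$-term contributes only through the derivative of its first argument, giving (up to sign) $\check{\Psi}(\frs,\check{J}\circ d\upsilon\circ j)$ in the horizontal component.

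It then remains to rewrite that horizontal contribution in the form in the statement of the lemma. Here one uses the first relation in \eqref{eqn nilpotency condition}, $\check{J}\check{\Psi}(e,h)=-\check{\Psi}(e,\check{J}h)$, together with the fact that $\upsilon$ is $\check{J}$-holomorphic (so $d\upsilon\circ j=\check{J}\circ d\upsilon$), to convert $\check{\Psi}(\frs,\check{J}\circ d\upsilon\circ j)$ into $\check{\Psi}(\frs,d\upsilon\circ j)$. The decomposition into summands in the statement is just the observation that $\nabla^{0,1}\frs$ takes values in $T^v\check{E}$ (lifted along $\widetilde{u}$) and $\check{\Psi}(\frs,d\upsilon\circ j)$ takes values in $T^h\check{E}\cong\pi^*T\check{X}$.

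The main bookkeeping obstacle, and the only real content beyond differentiating a given formula, is keeping the various identifications straight: the vertical-horizontal splitting of $T\check{E}$ along the zero section, the identification $T^h\check{E}\cong\pi^*T\check{X}$ used to interpret $\check{\Psi}$ as acting into $T\check{E}$, and the two sign conventions (nilpotency of $\check{\Psi}$ and holomorphicity of $\upsilon$) that have to be juggled to convert the expression that falls out of the derivative into the exact form stated. No analytic difficulty is expected because the formula is purely pointwise and algebraic once the Gromov-trick identity in the first paragraph is in hand.
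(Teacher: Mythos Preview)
Your approach is correct and close in spirit to the paper's, but the paper takes a slightly different route: rather than differentiating the explicit formula for $\overline{\partial}_{\widetilde{J}_{\check{\Psi}}}(\upsilon,s)$ coming out of Lemma \ref{lemma shear}, the paper invokes the standard linearization formula from \cite{McDuff-Salamon, Wendl},
\[
D\overline{\partial}_{\widetilde{J}_{\check{\Psi}}}(\frs)=\tfrac{1}{2}\bigl(\nabla\frs\oplus 0+\widetilde{J}_{\check{\Psi}}\circ((\nabla\frs\circ j)\oplus 0)+(\nabla_{\frs}\widetilde{J}_{\check{\Psi}})\circ du\circ j\bigr),
\]
and then computes the term $(\nabla_{\frs}\widetilde{J}_{\check{\Psi}})\circ du\circ j$ by differentiating the shear formula \eqref{equation shear eqn} in the fibre direction. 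Your method is more self-contained (it reuses the computation already done in the proof of Lemma \ref{lemma shear} instead of importing an external formula), while the paper's method is shorter once one is willing to cite the textbook linearization formula. Either way the actual algebra---linearity of $\check{\Psi}$ in its first argument together with the relations \eqref{eqn nilpotency condition}---is the same, and the sign bookkeeping you flag in your last paragraph is indeed the only content; in particular the conversion between the expression that falls out of differentiation and the one in the statement uses the holomorphicity of $\upsilon$ exactly as you say. One minor point worth making explicit in your writeup: ``differentiating the formula'' for $\overline{\partial}_{\widetilde{J}_{\check{\Psi}}}$ along the family $u_t=t\frs$ agrees with the intrinsically defined linearization $D\overline{\partial}_{\widetilde{J}_{\check{\Psi}}}$ because the latter is independent of auxiliary choices at a holomorphic curve (any two such definitions differ by a zeroth-order operator proportional to $\overline{\partial}_{\widetilde{J}_{\check{\Psi}}}u=0$).
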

\begin{proof}
From \cite{McDuff-Salamon, Wendl},  we have
\begin{multline} D\overline{\partial}_{\widetilde{J}_\Psi}(\frs) = \frac{1}{2} \left(\nabla \frs \oplus 0 + \widetilde{J}_{\check{\Psi}} \circ ((\nabla \frs \circ j) \oplus 0) + (\nabla_\frs \widetilde{J}_{\check{\Psi}}) \circ du \circ j\right) \\
  = \nabla^{0,1}\frs \oplus 0 +  \frac{1}{2} \left((\nabla_\frs \widetilde{J}_{\check{\Psi}}) \circ du \circ j\right).\end{multline}
Hence by Equation (\ref{equation shear eqn}):
\begin{equation}D\overline{\partial}_{\widetilde{J}_\Psi}(\frs) \  =  \  \nabla^{0,1}\frs \oplus 0 + 0 \oplus \check{\Psi}(\frs, du \circ j).\end{equation}
\end{proof}

Using the fact that the higher cohomology groups of any ample vector bundle on a genus zero nodal curve vanish, the above result implies:
\begin{corollary} \label{corollary surject}
	The linearization $D\overline{\partial}_{\widetilde{J}_{\check{\Psi}}}$
	is surjective if
	$\upsilon^*\check{E}$ is an ample vector bundle over $\Sigma$
	and if
	for each nontrivial element $\fre$ in the kernel of the formal adjoint operator
	\begin{equation}D^* \overline{\partial}_J : C^\infty(\upsilon^*T\check{X}) \lra{} C^\infty(\overline{\Hom}(T\widetilde{\Sigma},\widetilde{\upsilon}^*T\check{X}))
	\end{equation}
	associated to $J$,
	we can find $\xi \in C^\infty(T\Sigma)$ and
		$\frs \in H^0(\check{E})$
	so that 
	\begin{equation}\left< \check{\Psi}(\frs,d\upsilon( j(\xi))),\fre \right>_{L^2} \neq 0.\end{equation}
	Here we have identified $\fre$ with its pullback to a section of $\widetilde{\upsilon}^*T\check{X}$. \qed
\end{corollary}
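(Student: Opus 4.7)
The idea is to exploit the triangular structure of $D\overline{\partial}_{\widetilde{J}_{\check{\Psi}}}$ with respect to the horizontal/vertical splitting along the zero section $\check{X} \subset \check{E}$, which via the Hermitian connection $\nabla$ gives $T\check{E}|_{\check{X}} = T\check{X} \oplus \check{E}$. Write a section $\zeta$ of $u^*T\check{E}$ as $(\zeta^h,\zeta^v)$ with $\zeta^h \in C^{\infty}(\upsilon^*T\check{X})$ and $\zeta^v \in C^{\infty}(\upsilon^*\check{E})$, and split the target similarly. Since $\widetilde{J}_{\check{\Psi}}$ restricts to $\check{J}$ along the zero section, the horizontal-to-horizontal block of the linearization at $u = \iota_{\check{X}} \circ \upsilon$ is the usual $D\overline{\partial}_{\check{J}}$. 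Combined with Lemma \ref{lemma linearization} this gives the block form
\begin{equation*}
D\overline{\partial}_{\widetilde{J}_{\check{\Psi}}}(\zeta^h,\zeta^v) \;=\; \bigl(D\overline{\partial}_{\check{J}}\,\zeta^h + \check{\Psi}(\zeta^v, d\upsilon\circ j),\; \nabla^{0,1}\zeta^v\bigr).
\end{equation*}

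First I would handle the vertical component: since $\upsilon^*\check{E}$ is ample on the connected genus zero nodal curve $\Sigma$, one has $H^1(\Sigma,\upsilon^*\check{E}) = 0$, so $\nabla^{0,1}$ surjects onto the vertical summand of the target (after passing to a suitable $W^{k,p}$ completion), with kernel $H^0(\upsilon^*\check{E})$. Given any $(\alpha,\beta)$ in the codomain, I can therefore solve $\nabla^{0,1}\zeta^v = \beta$ up to the ambiguity of adding an element $\frs \in H^0(\upsilon^*\check{E})$. The problem reduces to showing that the map
\begin{equation*}
C^{\infty}(\upsilon^*T\check{X}) \oplus H^0(\upsilon^*\check{E}) \longrightarrow C^{\infty}\bigl(\overline{\Hom}(T\widetilde{\Sigma},\upsilon^*T\check{X})\bigr),\quad (\zeta^h,\frs)\mapsto D\overline{\partial}_{\check{J}}\,\zeta^h + \check{\Psi}(\frs, d\upsilon\circ j),
\end{equation*}
is surjective.

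For this I would invoke the standard Fredholm theory for $D\overline{\partial}_{\check{J}}$: its image is closed in the relevant Sobolev completion and its $L^2$-orthogonal complement is spanned by $\ker D^*\overline{\partial}_{\check{J}}$. Surjectivity of the combined map is therefore equivalent to the statement that no nonzero $\fre \in \ker D^*\overline{\partial}_{\check{J}}$ is $L^2$-orthogonal to every section of the form $\check{\Psi}(\frs, d\upsilon\circ j)$ with $\frs \in H^0(\upsilon^*\check{E})$, which is exactly the hypothesis of the corollary (the auxiliary $\xi \in C^{\infty}(T\Sigma)$ is a way of displaying the pointwise pairing before integrating, and a cut-off argument against $\frs$ promotes the pointwise nonvanishing to $L^2$-nonvanishing).

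The main obstacle, such as it is, is just verifying that the classical Dolbeault vanishing and Fredholm-alternative arguments carry over to the connected nodal genus zero setting. Both are well known once one works component-wise on the normalization $\widetilde{\Sigma}$ and imposes matching conditions at the nodes.
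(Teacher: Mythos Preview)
Your proposal is correct and follows the same route as the paper, which records the corollary as an immediate consequence of Lemma~\ref{lemma linearization} together with the vanishing of higher cohomology of ample bundles on genus zero nodal curves. Your block-triangular decomposition, surjectivity of $\nabla^{0,1}$ by ampleness, and the Fredholm-alternative reduction to the hypothesis on $\ker D^*\overline{\partial}_{\check{J}}$ are exactly what the paper's one-line proof is encoding; the only small imprecision is the parenthetical about a ``cut-off argument'', since the hypothesis is already an $L^2$ condition and the vector field $\xi$ merely serves to contract the $(0,1)$-form $\check{\Psi}(\frs,d\upsilon\circ j)$ against the section identification of $\fre$.
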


\subsection{H\"{o}rmanders theorem and peak sections \label{Sec:Hormander}} We next construct holomorphic sections of bundles over nodal curves which vanish at the nodes (and also at any collection of marked points). This is helpful for achieving transversality of the $\cdbar$-operator at the nodes and marked points.

For the general discussion, we will fix
\begin{itemize}
	\item a compact K\"{a}hler manifold
	 $(Y,J_Y,\widehat{\omega})$ without boundary of dimension $m$,
	 \item a Hermitian vector bundle $\widehat{E}$ and
	 \item an ample line bundle $L$ over $Y$.
\end{itemize}
(We are primarily interested in the case when $Y = \bC\bP^1$, but the results seem relevant at least for  moduli problems involving higher genus Riemann surfaces.)

Let $\Herm(\widehat{E},\widehat{E})$
be the bundle of Hermitian endomorphisms of $\widehat{E}$.
For each $p,q \in \bN$, we define
\begin{equation}
	\Lambda_{\widehat{\omega}} :\bigwedge^{p,q}T^*Y \otimes \Herm(\widehat{E},\widehat{E}) \lra{} \bigwedge^{p-1,q-1}T^*Y \otimes \Herm(\widehat{E},\widehat{E})
\end{equation}
to be the metric adjoint of the map $\widehat{\omega} \wedge \cdot$.
In other words, the vector bundle morphism satisfying:
\begin{equation}
	\langle \alpha, \Lambda_{\widehat{\omega}}(\beta) \rangle = \langle \widehat{\omega} \wedge \alpha, \beta \rangle
\end{equation}
for each $\alpha \in \bigwedge^{p-1,q-1}T^*Y \otimes \Herm(\widehat{E},\widehat{E})|_y$ and
each $\beta \in \bigwedge^{p,q}T^*Y \otimes \Herm(\widehat{E},\widehat{E})|_y$, $y \in Y$.
Let $R_{\widehat{E}}$ be the associated curvature operator, where
\begin{equation}iR_{\widehat{E}} \in C^\infty(\bigwedge^2T^*Y \otimes \Herm(\widehat{E},\widehat{E})).\end{equation}
We will think of this as a map:
\begin{align}
	& iR_{\widehat{E}} : \bigwedge^{p,q}T^*Y \otimes \Herm(\widehat{E},\widehat{E}) \lra{} \bigwedge^{p+1,q+1}T^*Y \otimes \Herm(\widehat{E},\widehat{E}), \\
	& iR_{\widehat{E}}(\alpha) = iR_{\widehat{E}} \wedge \alpha
\end{align}
for each $p,q \in \bN$.

\begin{Theorem} {\bf (H\"{o}rmander), \cite[Theorem 8.4]{demailly1996l2}} \label{Thm:Hormander}
	Let $p,q \in \bN$ and suppose that the commutator:
	\begin{equation}
	A := [i R_{\widehat{E}},\Lambda_{\widehat{\omega}}] : \bigwedge^{p,q}T^*Y \otimes \Herm(\widehat{E},\widehat{E}) \lra{} \bigwedge^{p,q}T^*Y \otimes \Herm(\widehat{E},\widehat{E})
	\end{equation}
	is positive definite on each fiber.
	Let $c_{\widehat{E}}$
	be the $L^\infty$ norm of $A$ and define $c := c_{\widehat{E}}^{-1}$.
	Then for each $g \in L^2(\Lambda^{p,q}T^*Y \otimes \widehat{E})$
	satisfying $\overline{\partial} g = 0$,
	there exists $f \in L^2(\Lambda^{p,q-1}T^*Y \otimes \widehat{E})$ satisfying $\|f\|_{L^2} \leq  c\|g\|_{L^2}$
	and $\overline{\partial}f = g$.
\end{Theorem}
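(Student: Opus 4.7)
My plan is to follow the classical H\"ormander $L^2$-method, combining the Bochner-Kodaira-Nakano (BKN) identity with a Hahn-Banach/Riesz duality argument; a complete treatment appears in Demailly's book. First I will establish the BKN identity on smooth $\widehat{E}$-valued $(p,q)$-forms,
\[
\Delta_{\overline{\partial}} \;=\; \Delta_{D'} \;+\; [iR_{\widehat{E}}, \Lambda_{\widehat{\omega}}],
\]
where $\Delta_{\overline{\partial}} = \overline{\partial}\,\overline{\partial}^* + \overline{\partial}^*\overline{\partial}$ and $\Delta_{D'}$ is the analogous Laplacian for the $(1,0)$-part $D'$ of the Chern connection. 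I derive this by writing out the twisted K\"{a}hler identity $[\overline{\partial}^*, L_{\widehat{\omega}}] = iD'$ and its adjoint/conjugate, then expanding commutators; the curvature term $iR_{\widehat{E}}$ enters because $[D', \overline{\partial}]$ computes the Chern curvature of $\widehat{E}$.

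Pairing the BKN identity with $u$ and integrating over the closed K\"{a}hler manifold $Y$ (so that no boundary terms appear), and discarding the nonnegative contribution $\langle \Delta_{D'} u, u\rangle = \|D' u\|^2 + \|(D')^* u\|^2 \geq 0$, yields the fundamental a priori estimate
\[
\|\overline{\partial} u\|_{L^2}^2 + \|\overline{\partial}^* u\|_{L^2}^2 \;\geq\; \langle A u, u\rangle_{L^2} \;\geq\; c\, \|u\|_{L^2}^2
\]
for smooth $(p,q)$-forms $u$, where the second inequality uses the positive-definiteness hypothesis on $A$ and the chosen normalization of $c$. A standard Friedrichs mollification argument then extends the inequality to all $u$ in the common $L^2$-domain of $\overline{\partial}$ and $\overline{\partial}^*$.

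Finally, to solve $\overline{\partial} f = g$, I will define a linear functional $\lambda$ on the image of $\overline{\partial}^*$ by $\lambda(\overline{\partial}^* v) := \langle v, g\rangle_{L^2}$. I orthogonally decompose $v = v_1 + v_2$ with $v_1 \in \ker \overline{\partial}$ and $v_2 \in (\ker\overline{\partial})^{\perp} = \overline{\mathrm{range}(\overline{\partial}^*)}$; in this decomposition, $\langle v, g\rangle = \langle v_1, g\rangle$ (since $g \in \ker\overline{\partial}$), and $\overline{\partial}^* v = \overline{\partial}^* v_1$ (since $\overline{\partial}^2=0$ dualizes to the vanishing of the adjoint composite, killing the $v_2$ contribution). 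Applying the a priori estimate to $v_1$, for which $\overline{\partial} v_1 = 0$, gives $\|v_1\|^2 \leq c\, \|\overline{\partial}^* v\|^2$, and therefore $|\lambda(\overline{\partial}^* v)| \leq c^{1/2}\|g\|\,\|\overline{\partial}^* v\|$. Extending $\lambda$ to all of $L^2$ by Hahn-Banach and invoking the Riesz representation theorem produces the desired $f$ with the claimed $L^2$-bound satisfying $\overline{\partial} f = g$ distributionally. The one genuinely delicate step is the BKN identity itself, which requires careful bookkeeping of the twisted K\"{a}hler identities and sign conventions; everything else is formal functional analysis once the standard $L^2$-theory of $\overline{\partial}$ on a closed K\"{a}hler manifold is in place.
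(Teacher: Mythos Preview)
The paper does not provide its own proof of this theorem; it is quoted directly from Demailly's notes as a black box. Your proposal is precisely the standard argument found in that reference: the Bochner--Kodaira--Nakano identity gives the a priori estimate, and the solution is produced via the Hahn--Banach/Riesz duality trick applied to the functional $\overline{\partial}^* v \mapsto \langle v, g\rangle$. So your approach is correct and matches the cited source.

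One minor bookkeeping slip: from $\langle Au,u\rangle \geq c\|u\|^2$ you should get $\|v_1\|^2 \leq c^{-1}\|\overline{\partial}^* v\|^2$, not $\|v_1\|^2 \leq c\|\overline{\partial}^* v\|^2$, and hence $\|f\| \leq c^{-1/2}\|g\|$. The paper's own statement of the constant (``$c_{\widehat{E}}$ is the $L^\infty$ norm of $A$'') is itself somewhat loose---the bound genuinely depends on the infimum of the smallest eigenvalue of $A$, not its operator norm---so your normalization cannot be made to match it exactly. This does not affect the paper's application (Remark~\ref{rmk:linetensor} and Lemma~\ref{lemma peak sections}), where all that is used is that the constant tends to zero as $k \to \infty$.
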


We now fix in addition  an effective divisor $D$ on $Y$, and a Hermitian metric on ${\mathcal O}_{Y}(D)$ with respect to which
the commutator $[i R_{\widehat{E} \otimes {\mathcal O}_{Y}(D)},\Lambda_{\widehat{\omega}}]$ on $(p,q)$-forms
is positive definite. 

\begin{corollary} \label{corollary vanishing on D}
	If $g = 0$ in a small neighborhood of $D$, and we set $c = c_{{\widehat{E} \otimes {\mathcal O}_{Y}(D)}}^{-1}$, 
	then we can find $f$ in Theorem \ref{Thm:Hormander} with $f|_D = 0$.
	\end{corollary}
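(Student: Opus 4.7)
Let $s_D\in H^0(Y,\mathcal O_Y(D))$ be a defining holomorphic section for $D$ (so that $(s_D)=D$), and write $\widehat E':=\widehat E\otimes\mathcal O_Y(D)$. The plan is to push the problem up into $\widehat E'$ by tensoring with $s_D$, solve there by Theorem \ref{Thm:Hormander}, and then divide back by $s_D$. Since $s_D$ is holomorphic, the form
\begin{equation}
\tilde g:=g\otimes s_D\in L^2(\Lambda^{p,q}T^*Y\otimes\widehat E')
\end{equation}
satisfies $\bar\partial\tilde g=(\bar\partial g)\otimes s_D=0$. The curvature hypothesis of the corollary is exactly what is needed to apply Theorem \ref{Thm:Hormander} to $\widehat E'$, producing $\tilde f\in L^2(\Lambda^{p,q-1}T^*Y\otimes\widehat E')$ with $\bar\partial\tilde f=\tilde g$ and $\|\tilde f\|_{L^2}\le c\|\tilde g\|_{L^2}$.

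If one can arrange that $\tilde f$ vanishes along $D$, then $f:=\tilde f/s_D$ is a well-defined section of $\Lambda^{p,q-1}T^*Y\otimes\widehat E$: holomorphicity of $s_D$ on $Y\setminus D$ gives $\bar\partial f=g$ there, and the equation extends across $D$ by continuity. The resulting $f$ vanishes on $D$ and satisfies the stated $L^2$ bound, provided we can relate $\|\tilde f\|$ and $\|\tilde g\|$ in $\widehat E'$ to $\|f\|$ and $\|g\|$ in $\widehat E$.

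\textbf{Main obstacle.} The substantive step is securing the vanishing of $\tilde f$ along $D$ together with the correct $L^2$ bound. The cleanest approach is to run the $L^2$-existence argument for Theorem \ref{Thm:Hormander} not in the unweighted Hilbert space but in the weighted one with plurisubharmonic weight $\varphi:=\log|s_D|^2$ (using the fixed Hermitian metric on $\mathcal O_Y(D)$). Multiplication by $s_D$ is then an \emph{isometry} from $L^2(\Lambda^{\bullet}T^*Y\otimes\widehat E)$ onto the $\varphi$-weighted $L^2$-space of sections of $\widehat E'$, so finiteness of the weighted norm automatically forces vanishing of $\tilde f$ along $D$ (since $e^{-\varphi}=|s_D|^{-2}$ blows up there). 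The weight contributes $i\partial\bar\partial\varphi=2\pi[D]$ as a current to the curvature identity, which is nonnegative and only improves the positivity of the commutator $[iR_{\widehat E'},\Lambda_{\widehat\omega}]$, so the same constant $c_{\widehat E'}^{-1}$ works. Since $g\equiv 0$ on a neighborhood of $D$, the weighted norm of $\tilde g$ equals the unweighted norm of $g$, yielding $\|f\|_{L^2}\le c\|g\|_{L^2}$ after dividing by $s_D$. The plan therefore reduces to a standard instance of Hörmander's $L^2$-method with a singular psh weight, with the divisor $D$ playing the role of the polar set.
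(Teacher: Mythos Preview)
Your approach has a gap at the claim that $f|_D=0$. You correctly observe that multiplication by $s_D$ is an isometry from the unweighted $L^2(\widehat E)$ onto the $\varphi$-weighted space $L^2_\varphi(\widehat E')$, and that it intertwines the $\bar\partial$-operators since $s_D$ is holomorphic. But this means your weighted H\"ormander problem on $\widehat E'$ is \emph{literally} the unweighted H\"ormander problem on $\widehat E$, transported through that isometry: the $f$ you obtain is just an ordinary H\"ormander solution in $\widehat E$, with no vanishing imposed along $D$. Concretely, the condition $\int|\tilde f|^2\,|s_D|^{-2}<\infty$ forces $\tilde f$ to vanish to order $\geq 1$ along $D$, but after dividing by $s_D$ (which also vanishes to order $1$) the quotient $f=\tilde f/s_D$ is merely in $L^2$---indeed your own isometry gives $\|f\|_{L^2(\widehat E)}=\|\tilde f\|_{L^2_\varphi(\widehat E')}$---not zero on $D$. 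A related symptom: by Poincar\'e--Lelong one has $i\partial\bar\partial\varphi=2\pi[D]-iR_{\mathcal O_Y(D)}$, so the effective curvature on $\widehat E'$ with weight $\varphi$ collapses to $iR_{\widehat E}+2\pi[D]$, again the unweighted $\widehat E$ problem rather than one governed by $c_{\widehat E\otimes\mathcal O_Y(D)}^{-1}$.

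The paper's argument avoids singular weights entirely: since $g$ vanishes on a neighbourhood of $D$, it lifts through the natural sheaf map from sections of the twisted bundle to sections of $\widehat E$ vanishing along $D$; one simply applies Theorem~\ref{Thm:Hormander} to $\widehat E\otimes\mathcal O_Y(D)$ (hence the twisted constant) and pushes the solution forward. Your route can be repaired by dropping the tensoring step and solving $\bar\partial f=g$ directly in $L^2_\varphi(\widehat E)$ rather than $L^2_\varphi(\widehat E')$: since $g\equiv 0$ near $D$ its weighted norm is finite, and now $\int|f|^2\,|s_D|^{-2}<\infty$ together with smoothness of $f$ genuinely forces $f|_D=0$.
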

\begin{proof}
This follows directly from the fact that smooth sections of $\Lambda^{p',q'}T^*Y \otimes \widehat{E} \otimes {\mathcal O}_{Y}(D)$ naturally map to smooth sections of $\Lambda^{p',q'}T^*Y \otimes \widehat{E}$ which vanish along $D$ for each $p',q' \in \bN$.
\end{proof}

\begin{Remark} \label{rmk:linetensor}  Eventually we will apply Corollary \ref{corollary vanishing on D} with $\widehat{E}$ replaced by 
\begin{equation} \label{eqn E prime}
\widehat{E}'_k := (\Lambda^{m,0} T^*Y)^{-1} \otimes \widehat{E} \otimes L^k
\end{equation}
 for some large $k$ where $L$ is an ample Hermitian line bundle on $Y$ with curvature $-2 \pi i \widehat{\omega}$. In this case, $[iR_{\widehat{E}'_k},\Lambda_{\widehat{\omega}}]$ is positive on $\bigwedge^{m,1} T^*Y \wedge \Herm(\widehat{E},\widehat{E})$ 
  for all sufficiently large $k$ since $iR_{\widehat{E}'_k}/k$ converges to a positive multiple of $\widehat{\omega}$
  and $[\widehat{\omega},\Lambda_{\widehat{\omega}}]$ is the identity on $\bigwedge^{m,1} T^*Y \wedge \Herm(\widehat{E},\widehat{E})$ via a local calculation in an orthonormal frame.
  The same calculation also ensures that the constant $c$ in  Theorem \ref{Thm:Hormander} tends to zero as $k \to \infty$. \end{Remark}

\begin{lemma} \label{lemma peak sections}
Fix an effective divisor $D\subset Y$ and a point $x\in Y \backslash D$. 
	Let $e \in \widehat{E}|_x$, and let $\delta_e$ be the Dirac delta section at $x$ with value $e$.
	There are holomorphic sections $s_k$ and $\check{s}_k$  of $\widehat{E} \otimes L^k$
	and $L^k$ respectively, for $k\in \bN$, 
	so that
	\begin{itemize}
	\item 	$\langle s_k, \check{s}_k \rangle \to \delta_e$
	in the sense of distributions 
	as $k \to \infty$; 
	\item $s_k$ and $\check{s}_k$ vanish along $D$. 
		\end{itemize}
\end{lemma}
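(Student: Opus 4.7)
The plan is to build $s_k$ and $\check s_k$ from local peak models concentrated at $x$, corrected via Hörmander's $L^2$ theorem (Theorem \ref{Thm:Hormander}) to be globally holomorphic, and to arrange the cutoffs so that the corrections vanish along $D$.

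First, fix a small ball $B$ around $x$ disjoint from $D$ with normal coordinates $z$ in which $\widehat\omega$ equals the standard Kähler form up to higher order and $L$ admits a local holomorphic frame $e_L$ whose length function $-\log|e_L|^2 = \phi$ is strictly plurisubharmonic with $\phi(x)=0$, $d\phi(x)=0$ and Hessian $2\pi \mathrm{Id}$ at $x$; thus $|e_L^{\otimes k}|^2 = e^{-k\phi}$ is a Gaussian concentrating at $x$ at scale $k^{-1/2}$. Choose a cutoff $\chi$ supported in $B$, identically $1$ on a smaller ball $B' \Subset B$, and a holomorphic frame $e_E$ of $\widehat E$ near $x$ with $e_E(x)=e$. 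The model sections are
\begin{equation}
\widetilde{\check s}_k := c_m k^{m/2}\, \chi \cdot e_L^{\otimes k}, \qquad \widetilde s_k := c_m k^{m/2}\, \chi \cdot e_E \otimes e_L^{\otimes k},
\end{equation}
extended by zero to $Y$, where $c_m$ is a dimensional normalization constant to be fixed at the end. Both vanish on $D$.

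Since $e_L$ and $e_E$ are holomorphic on $B$, $\bar\partial\widetilde{\check s}_k$ and $\bar\partial\widetilde s_k$ are supported in the annulus $B\setminus B'$ where $|e_L|^2 \leq e^{-c}$ for some $c>0$, so their $L^2$ norms are $O\!\bigl(k^{m/2} e^{-ck/2}\bigr)$. By Remark \ref{rmk:linetensor}, for $k$ large the twisted bundle $\widehat E'_k$ satisfies the hypotheses of Theorem \ref{Thm:Hormander} with commutator constant of order $1/k$, and Corollary \ref{corollary vanishing on D} applies because the $\bar\partial$-sources vanish near $D$. This produces $L^2$ solutions $u_k$, $v_k$ of $\bar\partial u_k = \bar\partial\widetilde{\check s}_k$ and $\bar\partial v_k = \bar\partial\widetilde s_k$ that vanish on $D$, with $L^2$ norms exponentially small in $k$. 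Elliptic regularity off the support of the $\bar\partial$-sources makes these corrections pointwise exponentially small on any fixed compact subset disjoint from $B\setminus B'$. Setting
\begin{equation}
\check s_k := \widetilde{\check s}_k - u_k, \qquad s_k := \widetilde s_k - v_k,
\end{equation}
yields holomorphic sections of $L^k$ and $\widehat E \otimes L^k$ respectively, both vanishing on $D$.

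For distributional convergence, pair against a smooth test function $\psi$ on $Y$:
\begin{equation}
\int_Y \psi \,\langle \widetilde s_k, \widetilde{\check s}_k \rangle\, \widehat\omega^m/m! \;=\; c_m^2\, k^m \int_B \psi\, \chi^2\, \langle e_E, e\rangle_{\widehat E}\, e^{-k\phi}\, \widehat\omega^m/m!.
\end{equation}
A standard Laplace asymptotics for Gaussian integrals shows this tends to $\psi(x)\cdot e$ once $c_m$ is chosen appropriately. Cauchy-Schwarz combined with the exponentially small $L^2$ norms of $u_k, v_k$ bounds the correction contribution to the pairing by $O(e^{-ck/2})$, so the limit is unaffected. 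The main obstacle is purely bookkeeping: one must verify that Hörmander's constant (of order $1/k$) beats the peak $L^2$ mass $k^{m/2}$, which is comfortable here because the source norm is exponentially small, and one must place the cutoff disjoint from $D$ while keeping the peak at $x$---possible precisely because $x \notin D$. This is the standard Tian-type peak-section construction from \cite{demailly1996l2}, adapted in the presence of the divisor.
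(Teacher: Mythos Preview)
Your proposal is correct and follows essentially the same route as the paper: build Tian-type peak sections from a local Gaussian model cut off near $x$ and away from $D$, then use H\"ormander's estimate (in the form of Corollary~\ref{corollary vanishing on D}, with constant $c \to 0$ from Remark~\ref{rmk:linetensor}) to correct to global holomorphic sections vanishing on $D$, with the exponentially small $\bar\partial$-source guaranteeing the corrections do not disturb the distributional limit. The only cosmetic difference is that the paper cites \cite[Lemma 1.2]{tian1990set} directly for $\check s_k$ and then sets $s'_k = \sigma \otimes \check s_k$ for a smooth $\sigma$ with $\sigma(x)=e$ holomorphic near $x$, whereas you unpack Tian's construction and build both sections symmetrically from explicit peak models; also note the expression $\langle e_E, e\rangle_{\widehat E}$ in your displayed integral should simply be $e_E$ (the pairing $\langle s_k,\check s_k\rangle$ contracts only the $L^k$ factor and lands in $\widehat E$).
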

\begin{proof}[Proof.]
By \cite[Lemma 1.2]{tian1990set}
we can construct a sequence of holomorphic sections $(\check{s}_k)_{k \in \bN}$ of $L^k$ whose norm converges to the Dirac delta function at $x$.
Now let $\sigma \in C^\infty(\widehat{E})$ satisfy
$\sigma(x) = e$ and choose $\sigma$ so that it is holomorphic in a small neighborhood of $x$.
We define $s'_k := \sigma \otimes \check{s}_k$.
By Theorem \ref{Thm:Hormander} and Remark \ref{rmk:linetensor} we can find sections $(g_k)_{k \in \bN}$ of
\begin{equation}
(\widehat{E} \otimes L^k)_{k \in \bN}=(\Lambda^{m,0}T^*Y \otimes \widehat{E}'_k)_{k \in \bN}
\end{equation}
(Equation \eqref{eqn E prime})
whose $L^2$ norms go to zero as $k \to \infty$ and which satisfy $\overline{\partial}g_k = \overline{\partial} s'_k$
in $\Lambda^{m,1}T^*Y \otimes \widehat{E}'_k$ for each $k \in \bN$.
We define $s_k := s'_k - g_k$ for each $k$.
Then $\langle s_k,\check{s}_k \rangle$, $k \in \bN$ converges in the sense of distributions to the limit of $\langle s'_k, \check{s}'_k \rangle$ as $k \to \infty$, which is $\delta_e$.
By replacing Theorem \ref{Thm:Hormander}
with Corollary \ref{corollary vanishing on D} in this proof and making sure that $\sigma = 0$ near $D$, we can ensure that $s_k$ and $\check{s}_k$ vanish along $D$ for each $k$.
\end{proof}

\subsection{The `Gromov trick' for the thickening.} \label{section gro trick}

We now return to the proof that the thickening $\scrT$, which we constructed in Section \ref{Sec:description}, is smooth near the zero locus of the defining section $\frs$ of the moduli space of genus $0$ holomorphic curves. To this end, we will use Lemma \ref{lemma shear} for a specific Hermitian vector bundle $E$ to describe $\scrT$ in terms of holomorphic maps into $E$.
Thus, we aim to set things up so that Equation (\ref{eqn aef})
is identified with Equation (\ref{eqn afwaa}).

To start, we consider the natural projection maps
\begin{equation}
\xymatrix{
& X\times \scrC \ar[dl]_{p_X}  \ar[dr]^{p_{\scrC}} & \\ X & & \scrC
}.
\end{equation}

We introduce two Hermitian bundles
\begin{align}
  E_0&  := \overline{\Hom}\left(p_\scrC^*T\scrC, p_X^* TX\right) \otimes p_\scrC^* \scrL^k \\
   E_1 & := p_\scrC^* \univ^* \univ_* \scrL^k
\end{align}
over the product $ X \times \scrC$ (Definition \ref{defn universal curve}), and define $E$ to be the tensor product
\begin{equation}
    E = E_0 \otimes_{\bC} \overline{E_1}
\end{equation}
where $\overline{E_1}$ is the complex conjugate bundle.
We consider the shearing map
	\begin{align}\Psi : E \oplus p_X^*TX \oplus p_\scrC^*T\scrC & \lra{} p_X^*TX \oplus p_\scrC^*T\scrC \\
(\eta,v_1,v_2) & \mapsto
	\langle \eta \rangle(v_2) \oplus 0 \end{align}
	where $\langle \eta \rangle$ is the image of $\eta$
	under the natural Hermitian product pairing
	\begin{equation} 
		E_0 \otimes \overline{E_1} \lra{} E_0 \otimes \overline{p_\scrC^*\scrL^k} \lra{} \overline{\Hom}\left(p_\scrC^*T\scrC, p_X^* TX\right).
	\end{equation}

	Let $\widetilde{J}_\Psi$ be the $\Psi$-shear of the $E$-lift of $J = J_{X} \oplus J_{\scrC}$ in the sense of Definition \ref{defn shear}.  
	
	It may help to summarise the dictionary relating these definitions to the notation appearing in our previous general considerations:
\begin{itemize}
\item When comparing to Section \ref{Sec:linearization}, the Hermitian bundle $\check{E} \to \check{X}$ which carries the `sheared' almost complex structure will now be taken to be: $\check{X} = X\times \scrC$; $\check{E} = E = E_0 \otimes_{\bC} \bar{E_1}$;
\item When  comparing to Section \ref{Sec:Hormander},  we take $Y$ to be the normalization $\tilde{\Sigma}$ of the domain $\Sigma$ of a holomorphic curve $u$; the ample bundle $L$ to be the pullback of $p_{\scrC}^*\scrL$ (restricted to $\Sigma$) to $\tilde{\Sigma}$; and $\hat{E}$ the bundle for which $E_0 = \hat{E} \otimes L^k$  on $\tilde{\Sigma}$. 
\end{itemize}

For the next statement, we let $\overline{\ccM}_{0,0}(\widetilde{J}_\Psi)$
	denote the space of stable $\widetilde{J}_\Psi$-holomorphic genus zero connected nodal curves whose projection to $X$ represents $\beta$ and whose projection to $\scrC$ is a domain map as in Definition \ref{defn domain}.
\begin{lemma} {\bf (Gromov trick revisited)} \label{lem:gromov_revisited}
	
	The map sending an element 
	$(u,\Sigma,F,\eta)$
	of $\scrT$,
	to the map
	\begin{equation}(u \times \iota_F, \eta) : \Sigma \lra{} E
	\end{equation}
	is an open embedding 
	$\scrT \hookrightarrow \overline{\ccM}_{0,0}(\widetilde{J}_\Psi)$.
\end{lemma}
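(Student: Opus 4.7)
The plan is to invoke Lemma \ref{lemma shear} as the main engine, and then promote the resulting identification to a topological open embedding by checking injectivity, continuity of the inverse, and openness of the image. As a preliminary step I verify the nilpotency conditions \eqref{eqn nilpotency condition} for $\Psi$: the identity $J\circ\Psi = -\Psi\circ J$ reduces to the conjugate-linearity of $\langle\eta\rangle\in\overline{\Hom}(T\scrC,TX)$, which gives $\langle\eta\rangle\circ J_\scrC = -J_X\circ\langle\eta\rangle$, while $\Psi(e,\Psi(e,h))=0$ is immediate because $\Psi$ outputs only into the $TX$-summand whereas it reads only the $T\scrC$-summand of its input, so iterating produces zero.

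Next I apply Lemma \ref{lemma shear} to the map $(u\times\iota_F,\eta):\Sigma\to E$, with base map $\upsilon=(u,\iota_F):\Sigma\to X\times\scrC$ and fibre section $s=\eta$. The lemma rewrites $\widetilde J_\Psi$-holomorphicity as: (i) $\eta$ is a holomorphic section of $(u\times\iota_F)^*E$, which follows from the construction of $\eta$ as a finite sum of tensors of holomorphic sections combined with the identification of $\iota_F^*E_1$ as the trivial bundle along fibres of $\univ$ with fibre $H^0(\iota_F^*\scrL^k)$; and (ii) the perturbed equation $\overline\partial_J(u,\iota_F) = \Psi(\eta, J\circ d(u,\iota_F)\circ j)$, which splits by summand. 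The $T\scrC$-component vanishes on both sides (left by holomorphicity of $\iota_F$; right because $\Psi$ projects out this summand), and the $TX$-component, upon substituting $J_\scrC\circ d\iota_F\circ j = -d\iota_F$ (again by holomorphicity of $\iota_F$), collapses to exactly the defining equation \eqref{eqn afwaa} of $\scrT$. Stability of the image is then automatic, since Definition \ref{defn:framed_curve}(2) forces $\deg L_{u^*\Omega}$ to be positive on each unstable component of $\Sigma$, so that $\iota_F$ is nonconstant there.

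For injectivity and continuity of the inverse, I reconstruct $(u,\Sigma,F,\eta)$ from the image map: the first three are read off directly from the $X$-projection, the domain, and the vertical component, while $F$ is recovered as the pullback of the standard basis of $H^0(\mathcal{O}_{\bC\bP^d}(1))$ under the canonical holomorphic identification $\phi_F^*\mathcal{O}(1)\cong L_{u^*\Omega}$ induced by evaluation of sections. All of this is continuous in the Gromov topology. Openness of the image then follows because the conditions distinguishing the image inside $\overline{\ccM}_{0,0}(\widetilde J_\Psi)$ are either open (the positivity of $\scrH(u,\Sigma,F)$, automatic since it is the Gram matrix of a basis) or locally constant (the discrete distribution of degrees of $L_{u^*\Omega}$ across the irreducible components of $\Sigma$). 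The main obstacle I anticipate is not the computations above but rather the bookkeeping needed to translate between $\eta\in H^0(\iota_F^*E_0)\otimes\overline{H^0(\iota_F^*\scrL^k)}$ and a genuine holomorphic section of the pullback $(u\times\iota_F)^*E$, and to keep the canonical identification $\phi_F^*\mathcal{O}(1)\cong L_{u^*\Omega}$ consistent with the Hermitian structure prescribed by Lemma \ref{lemma hermitian bundle}; any gap between ``open embedding'' and ``bijection'' will live entirely in these identifications.
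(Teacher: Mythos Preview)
Your proof is correct and follows the same approach as the paper: verify that $\Psi$ satisfies the nilpotency conditions \eqref{eqn nilpotency condition}, then invoke Lemma \ref{lemma shear} to identify the defining equation \eqref{eqn afwaa} with $\widetilde{J}_\Psi$-holomorphicity. The paper's proof is a two-line reference to exactly these ingredients; you have supplied the details the paper leaves implicit, including the computation $J_\scrC\circ d\iota_F\circ j=-d\iota_F$ that matches the two equations and the discussion of why the map is a topological open embedding.

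One small correction: your parenthetical that positivity of $\scrH(u,\Sigma,F)$ is ``automatic since it is the Gram matrix of a basis'' is not quite right as stated. The pairing $\int_\Sigma\langle f_i,f_j\rangle_{u^*\Omega}\,u^*\Omega$ is only a genuine Hermitian inner product when $u^*\Omega\geq 0$, which holds when $u$ is $J$-holomorphic (since $\Omega$ is tamed by $J$) but not for arbitrary smooth $u$. So the positivity condition in Definition \ref{defn:framed_curve}(3) is a genuine open condition, not a tautology; this does not affect your openness argument, since positivity of eigenvalues is open regardless. Near $\frs^{-1}(0)$ (where $\eta=0$ and $u$ is honestly $J$-holomorphic) your reasoning does apply, and that is the only regime the paper ultimately uses.
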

\begin{proof}
The map $\Psi$ satisfies Equation (\ref{eqn nilpotency condition})
where $\check{\Psi}$, $\check{E}$ and $\check{X}$ are replaced by $\Psi$, $E$ and $X \times \scrC$ respectively.
The result then follows from Lemma \ref{lemma shear}.
\end{proof}

\subsection{Surjectivity of $\overline{\partial}_{J_\Psi}$\label{Subsec:surjectivity}}

We continue with the notation of Section \ref{section gro trick}.
We wish to use Corollary \ref{corollary surject}
and Lemma \ref{lemma peak sections}
to show that $D\overline{\partial}_{\widetilde{J}_\Psi}$ is surjective, near the zero locus of $\frs$, for sufficiently large $k$.

To this end, we consider a $J$-holomorphic map $u : \Sigma \lra{} X$, and let
$\iota_F : \Sigma \lra{} \scrC$ be the domain map associated to some $L^2$-orthogonal basis $F$ of $H^0(L_{u^*\Omega})$ as in Definition \ref{defn domain}. Composing with the inclusion of $X$ as the zero section of $E$, we obtain a $\widetilde{J}_\Psi$-holomorphic map
\begin{equation}\widehat{u} : \Sigma \lra{} E, \quad \widehat{u}(\sigma) := (u(\sigma),\iota_F(\sigma)) \in X \times \scrC \subset E.\end{equation}
Recall that we denote by $\widetilde{\Sigma}$ the normalization of $\Sigma$ and by
	$\widetilde{u} : \widetilde{\Sigma} \lra{} E$
	 composition of the normalization map with $u$.
\begin{prop} \label{prop surj}
		The linearized operator
	\begin{equation}D \overline{\partial}_{\widetilde{J}_\Psi} : C^\infty(u^* T(X \times \scrC)) \lra{} C^\infty(\overline{\Hom}(T\widetilde{\Sigma},\widetilde{u}^*T(X \times \scrC)))\end{equation}
	is surjective for $k$ sufficiently large.
\end{prop}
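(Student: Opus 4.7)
The plan is to apply Corollary \ref{corollary surject} to the $\widetilde{J}_\Psi$-holomorphic curve $\widehat{u}\colon \Sigma \to E$, with $\upsilon = u \times \iota_F$ and $\check{E} = E$. Two hypotheses must be verified: ampleness of $\upsilon^* E$, and the $L^2$-pairing condition for every nontrivial element of the kernel of the formal adjoint of $D\overline{\partial}_J$ acting on sections of $\upsilon^* T(X \times \scrC) \cong u^*TX \oplus \iota_F^*T\scrC$.

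For ampleness, pulling back the definitions of $E_0$ and $E_1$ gives
\[
\upsilon^* E \;\cong\; \bigl(\overline{\Hom}(\iota_F^*T\scrC,\, u^*TX) \otimes \iota_F^*\scrL^k\bigr) \otimes_{\bC} \overline{H^0(\iota_F^*\scrL^k)},
\]
in which the rightmost factor is a trivial bundle. Since $\scrL$ is fibrewise ample on $\scrC$, its pullback $\iota_F^*\scrL$ is ample on the nodal curve $\Sigma$, so by Serre vanishing the tensor product with any fixed coherent sheaf becomes ample for $k$ large; ampleness is preserved under tensoring with a trivial factor. Hence $\upsilon^*E$ is ample for $k$ sufficiently large.

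Because $J = J_X \oplus J_\scrC$ respects the product splitting of $X \times \scrC$, so does $D\overline{\partial}_J$ and its adjoint, and the adjoint kernel decomposes as $\ker_X \oplus \ker_\scrC$. The vanishing $\ker_\scrC = 0$ is equivalent by Serre duality to $H^1(\Sigma, \iota_F^*T\scrC) = 0$, which in turn follows from the smoothness of $\scrF$ established in Lemma \ref{Lem:no_automorphisms}: elements of $H^1$ obstruct deformations of $\iota_F$ within the universal curve $\scrC \to \scrF$, and these must all vanish since $\scrF$ is smooth of the expected dimension. For $\ker_X$, fix $0 \neq \fre \in \ker_X$; by Aronszajn unique continuation $\fre$ has isolated zeros on the smooth locus of $\Sigma$, so pick $\sigma_0$ in the smooth locus away from nodes with $\fre(\sigma_0) \neq 0$. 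Choose $\xi \in C^\infty(T\Sigma)$ with $\xi(\sigma_0) \neq 0$, so that $d\iota_F(j\xi)(\sigma_0) \neq 0$ since $\iota_F$ is an embedding, and fix $e \in \overline{\Hom}(T_{\sigma_0}\scrC, T_{u(\sigma_0)}X)$ with $e(d\iota_F(j\xi)(\sigma_0)) = \fre(\sigma_0)$. Applying Lemma \ref{lemma peak sections} on the normalization $\widetilde{\Sigma}$, with divisor $D$ containing all preimages of nodes (so that by Corollary \ref{corollary vanishing on D} the sections vanish at nodes and therefore descend to $\Sigma$), one obtains for $k$ sufficiently large holomorphic sections $s_k$ of $\overline{\Hom}(\iota_F^*T\scrC, u^*TX) \otimes \iota_F^*\scrL^k$ and $\check{s}_k$ of $\iota_F^*\scrL^k$ whose Hermitian pairing $\langle s_k, \check{s}_k\rangle_{\scrL^k}$ converges distributionally to $\delta_{\sigma_0}\cdot e$. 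Setting $\frs := s_k \otimes \overline{\check{s}_k} \in H^0(\upsilon^*E)$, the required pairing $\langle \Psi(\frs, d\upsilon(j\xi)), \fre\rangle_{L^2}$ converges to $\langle e(d\iota_F(j\xi)(\sigma_0)), \fre(\sigma_0)\rangle = |\fre(\sigma_0)|^2 > 0$, and so is nonzero for $k$ large.

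The main obstacle is the vanishing $\ker_\scrC = 0$ at nodal fibers: one must carefully derive $H^1(\Sigma, \iota_F^*T\scrC) = 0$ from the smoothness of $\scrF$ using the short exact sequence $0 \to T^{vt}\scrC \to T\scrC \to \univ^*T\scrF \to 0$ restricted along $\iota_F$, combined with the vanishing of $H^1$ of the tangent sheaf of a genus-zero stable curve. Once this is in hand, the peak-section argument handling $\ker_X$ is a routine application of the H\"ormander-type estimates of Section \ref{Sec:Hormander}.
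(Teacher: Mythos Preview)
Your proposal is correct and follows essentially the same route as the paper: apply Corollary \ref{corollary surject}, split off the $\scrC$-direction by observing that the adjoint kernel there vanishes, and handle the $X$-direction by using peak sections (Lemma \ref{lemma peak sections}) to produce $\frs \in H^0(\upsilon^*E)$ whose pairing with a given $\fre$ concentrates at a chosen smooth point.

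One remark on what you flag as the ``main obstacle'': the paper dispatches $\ker_\scrC = 0$ in one line by invoking the regularity of genus zero stable maps to projective space (convexity of $\bC\bP^d$), rather than by analysing the exact sequence of tangent sheaves restricted to a possibly nodal fibre. Your formulation via $H^1(\Sigma,\iota_F^*T\scrC)=0$ and the sequence $0 \to T^{vt}\scrC \to T\scrC \to \univ^*T\scrF \to 0$ is equivalent in content, but the sheaf $T^{vt}\scrC$ is not locally free along nodal fibres, so carrying that argument through cleanly requires some care; appealing directly to convexity of $\bC\bP^d$ (equivalently, the known regularity of $\overline{\ccM}_{0,0}(\bC\bP^d,d)$) avoids this. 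Either way the conclusion is the same, and the peak-section argument for $\ker_X$ matches the paper's essentially verbatim.
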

\begin{proof}[Proof sketch.]
Let $\fre \in C^\infty(\widetilde{u}^*T(X \times \scrC))$ be a non-trivial element in the kernel of the formal adjoint operator.
The regularity of the moduli space of genus $0$ curves in projective space implies that this element of the kernel lies in the subspace $\fre \in C^\infty(\widetilde{u}^*TX)$.

Now let $\sigma \in \Sigma$ be a smooth point of $\Sigma$,
$w \in T_\sigma \Sigma$
and let
$f \in \overline{\Hom}(T_{\iota_F(\sigma)}\scrC,T_{u(\sigma)}X)$
satisfy
\begin{equation} \label{eqn pairing}
\langle f(\iota_F(j(w))),\fre(\sigma) \rangle \neq 0.
\end{equation}
Choose a smooth section $\xi \in C^\infty(T\Sigma)$
satisfying $\xi(\sigma) = w$.
By Lemma \ref{lemma peak sections},
we can find holomorphic sections $s_k$ and $\check{s}_k$ of $E_0$ and $p_\scrC^* \scrL^k$ respectively which vanish at the nodes of $\Sigma$ and for which $\langle s_k,\check{s}_k \rangle$
converges to the Dirac delta section
$\delta_f$ of $\overline{\Hom}(\iota_F^*T\scrC,u^*TX)$ as $k \to \infty$.
Hence
$\left<\langle s_k \otimes \overline{\check{s}_k} \rangle(d\iota_F(j(\xi))),\fre \right>_{L^2}$
converges to $\langle f(d\iota_F(j(w))),\fre(\sigma) \rangle$
as $k \to \infty$
where $s_k \otimes \overline{\check{s}_k} \in E_0 \otimes \overline{E_1}$.
By Equation (\ref{eqn pairing}), we conclude that
\begin{equation}
\left< \Psi(s_k \otimes \overline{\check{s}_k},du(j(\xi)), d\iota_F(j(\xi))),\fre \right>_{L_2} =
\left<\langle s_k \otimes \overline{\check{s}_k} \rangle(d\iota_F(j(\xi))),\fre \right>_{L^2} \neq 0\end{equation}
for $k$ large enough.
This implies surjectivity by Corollary \ref{corollary surject}.
\end{proof}

\begin{cor} \label{cor:thickening_regular}
  For $k$ sufficiently large, there is an open neighbourhood of $\frs^{-1}(0) \subset \scrT$ in which $D\overline{\partial}_{\widetilde{J}_\Psi}$ is surjective. \qed
\end{cor}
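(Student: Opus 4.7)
The plan is to combine the pointwise surjectivity statement of Proposition \ref{prop surj} with two standard principles: compactness of the unperturbed moduli space, and openness of the surjectivity locus for a continuous family of Fredholm operators.

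First, I would identify $\frs^{-1}(0)$ concretely. An element $(u,\Sigma,F,\eta)\in\frs^{-1}(0)$ has $\eta=0$ (from the first factor of $\frs$) and $\scrH(u,\Sigma,F)=I$ (from the Hermitian-matrix factor, since $\exp^{-1}(I)=0$), so $F$ is an $L^2$-orthonormal basis of $H^0(L_{u^*\Omega})$; moreover Equation \eqref{eqn afwaa} with $\eta=0$ forces $\overline{\partial}_J u=0$. Thus every point of $\frs^{-1}(0)$ is of the form considered in Proposition \ref{prop surj}, and that proposition applies.

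Next, Gromov compactness for genus zero curves in $(X,\omega)$ in the class $\beta$ shows that $\overline{\ccM}_{0,0}(X,\beta)=\frs^{-1}(0)/U(d+1)$ is compact. Since $U(d+1)$ is compact and acts with bounded stabilisers (Proposition \ref{Prop:finite_orbit_type}), $\frs^{-1}(0)$ itself is compact. For each $[u,\Sigma,F,0]\in\frs^{-1}(0)$, Proposition \ref{prop surj} gives a threshold $k_0$ beyond which $D\overline{\partial}_{\widetilde{J}_\Psi}$ is surjective at this point. Because the bound on $k$ produced by the proof of Proposition \ref{prop surj} depends only on the $C^1_{loc}$-behaviour of $(u,F)$, a standard compactness argument (covering $\frs^{-1}(0)$ by finitely many relatively open subsets on which the bound is uniform) yields a single $k$ that works simultaneously for all points of $\frs^{-1}(0)$.

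For the final step, I would invoke the fact that surjectivity of a continuous family of Fredholm operators is an open condition on the parameter space. The operators $D\overline{\partial}_{\widetilde{J}_\Psi}$ at points of $\scrT$ assemble into a continuous family of Fredholm operators between suitable Sobolev completions (this continuity follows from the fibrewise smooth $C^1_{loc}$-structure on $\scrT$ over $\scrF$ that is used throughout Section \ref{Sec:description}). Hence the subset of $\scrT$ on which $D\overline{\partial}_{\widetilde{J}_\Psi}$ is surjective is open, and by the previous paragraph it contains $\frs^{-1}(0)$, giving the desired open neighbourhood.

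The main obstacle is making the compactness-to-uniform-$k$ step precise: the $k$-threshold in the proof of Proposition \ref{prop surj} is extracted from H\"ormander's estimate (Theorem \ref{Thm:Hormander}) together with the peak section construction of Lemma \ref{lemma peak sections}, and one must check that the relevant constants $c=c_{\widehat{E}'_k}^{-1}$ and the rate of convergence $\langle s_k,\check{s}_k\rangle\to\delta_e$ depend only continuously on $(u,\Sigma,F)$ on a compact set of domains. This is where the smoothness of $\scrF$ and the existence of the fibrewise ample bundle $\scrL$ with its $U(d+1)$-equivariant Hermitian structure (guaranteed by the setup of Section \ref{Sec:description}) are essential: they let one choose the metrics and peak sections in a uniform fashion over the compact subset of $\scrF$ parametrising the domains appearing in $\frs^{-1}(0)$.
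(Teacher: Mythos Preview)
The paper treats this corollary as immediate from Proposition \ref{prop surj} (note the $\qed$ with no separate argument), so your expanded proof is filling in what the authors leave implicit. Your overall strategy---pointwise surjectivity on $\frs^{-1}(0)$, uniformity of $k$ via compactness, then openness of the surjectivity locus---is the correct one and matches what the paper has in mind.

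There is one genuine issue in your write-up: in the final step you justify continuity of the family of linearised operators by appealing to ``the fibrewise smooth $C^1_{loc}$-structure on $\scrT$ over $\scrF$ that is used throughout Section \ref{Sec:description}''. That structure is not established in Section \ref{Sec:description}; it is proved later, in Corollaries \ref{cor:fibrewise_smooth} and \ref{cor:C1_loc}, and those results rely on the gluing theorem of \cite[Appendix B]{pardon2016algebraic}, which is applied precisely at points where the linearised operator is already known to be surjective. So invoking the $C^1_{loc}$-structure here is circular.

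The fix is straightforward. Openness of surjectivity should be argued not in $\scrT$ (which at this stage is not yet known to be a manifold) but in the ambient configuration space of maps. Via the Gromov trick (Lemma \ref{lem:gromov_revisited}), $\scrT$ sits inside the space of stable $\widetilde{J}_\Psi$-holomorphic curves in $E$, and the operators $D\overline{\partial}_{\widetilde{J}_\Psi}$ make sense over the full configuration space of smooth maps to $E$, stratum by stratum over domain type. Continuity of these Fredholm operators with respect to the $C^1$-topology on maps (and the Gromov topology across strata) is elementary and does not require any gluing input. That, together with compactness of $\frs^{-1}(0)$, gives the open neighbourhood you want.
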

\subsection{Gluing and the topological submersion}

There is a forgetful map
\begin{equation} \label{equation map to scrF}
	\Pi  : \scrT \lra{} \scrF, \qquad (u,\Sigma,F,\eta) \mapsto [\phi_F]\end{equation}
sending
an element of $\scrT$ to the underlying framed curve $\phi_F : \Sigma  \to \bC \bP^d$ (Equation  (\ref{equation F map})).
 As discussed in Lemma \ref{Lem:no_automorphisms}, $\scrF$ is a smooth manifold.  We will now use standard gluing results, as formulated in \cite{pardon2016algebraic}, to show that $\Pi$ gives $\scrT$ the structure of a fibrewise smooth topological submersion over $\scrF$.

As in Section \ref{Subsec:surjectivity}, consider $\widehat{u} : \Sigma \lra{} E$ a $\widetilde{J}_{\check{\Psi}}$-holomorphic curve whose image is contained in the zero-section
 $X\times \scrC$ and which is in the image of the open embedding
	$\scrT \hookrightarrow \overline{\ccM}_{0,0}(\widetilde{J}_\Psi)$ from Lemma \ref{lem:gromov_revisited}.
	Let $u \in \scrT$ be the corresponding element in $\scrT$. 
We now also assume that $\widehat{u}$ is regular:  the linearized operator $D_{\widetilde{J}_\Psi}$ is surjective. We will construct a neighborhood of $u \in \scrT$ compatible with the forgetful map $\Pi: \scrT \to \scrF$.

Let $w := \Pi(u) \in \scrF$ and let
$d'$, $L_\bullet$, $\scrF(L_\bullet)$ and $\mu_{L_\bullet}$ be as in Proposition \ref{prop local coordinate}.
Let
$D_i \subset E$ be the preimage
of the linear hyperplane $L_i \subset \bC\bP^{d}$ under the natural projection map $E \lra{} \scrC$ composed with the evaluation map to $\C P^d$ for each $i=1,\ldots,d'$.
Let $\scrT'$ be the space of stable $\widetilde{J}_\Psi$ holomorphic maps from genus zero nodal curves to $E$ 
with $d'$ marked points
so that, for each $i$, the $i$th marked point evaluates to $D_i$ and the map is transverse to $D_i$ at that point. Let
\begin{equation}\mu_{\scrT'} : \scrT' \lra{} \overline{\ccM}_{0,d'}\end{equation}
be the map sending each element of $\scrT'$ to its domain curve. A neighborhood of $u$ in $\scrT$ is canonically homeomorphic to a neighborhood of $\widehat{u}$ in $\scrT'$.
As a result, it is sufficient for us to describe a neighborhood of $\widehat{u}$ in $\scrT'$. The map $\mu_{\scrT'}$ is equal to $\mu_{L_\bullet} \circ \Pi$ under this identification.  
Usual elliptic regularity, and the fact that $\cdbar$-operators are compact perturbations of complex-linear operators, shows that the subspace $\mu_{\scrT'}^{-1}(\mu_{\scrT'}(u))$ is canonically endowed with a smooth structure and an orientation.

\begin{theorem} \cite[Appendix B]{pardon2016algebraic}
For any sufficiently small neighborhood $K$ of $u$ in the fibre $\mu_{\scrT'}^{-1}(\mu_{\scrT'}(u))$,
and any small neighborhood
$j: U \subset \overline{\ccM}_{0,d'}$ of
$\mu_{L_\bullet}(w)$,
there exists a continuous map $g$ fitting into a diagram
\begin{equation}
\xymatrix{ 
 U \times K \ar[r]^-g  \ar[d]_{\mathrm{proj}_U} &  \scrT' \ar[d]^{\mu_{L_\bullet} \circ \Pi} \\
 U \ar[r]_j &  \overline{\ccM}_{0,d'} 
 }
\end{equation}
and which is a homeomorphism
onto its image. Moreover, for each $a\in U$ the restriction 
\begin{equation}
g|_{\{a\} \times K} : \{a\} \times K \longrightarrow \Pi^{-1}(\mu_{L_\bullet}^{-1}(a))
\end{equation}
takes values in $\Pi^{-1}(\mu_{L_\bullet}^{-1}(a))$ and is an orientation-preserving diffeomorphism onto its image.
\end{theorem}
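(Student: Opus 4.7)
The statement is essentially a direct application of the gluing package of \cite[Appendix B]{pardon2016algebraic}, so the plan is to translate our setup into Pardon's framework and invoke his results. The theorem has two parts: a fibrewise statement (smoothness and orientation of each slice $\{a\} \times K$), and a global statement (continuous assembly of these slices into a single homeomorphism onto an open subset of $\scrT'$).

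First I would address the fibrewise part. By Proposition \ref{prop surj}, the linearized operator $D\overline{\partial}_{\widetilde{J}_\Psi}$ at $\widehat{u}$ is surjective. Applying the implicit function theorem in a weighted Sobolev space $W^{1,p}$ with $p>2$ to the $\overline{\partial}_{\widetilde{J}_\Psi}$-equation on maps $\Sigma_{a_0} \to E$, with $d'$ marked points evaluating transversally on the divisors $D_i$, gives a smooth manifold structure on $\mu_{\scrT'}^{-1}(a_0)$ near $\widehat{u}$, where $a_0 = \mu_{L_\bullet}(w)$. The natural orientation is canonical: $D\overline{\partial}_{\widetilde{J}_\Psi}$ is a compact perturbation of a complex linear operator, so its determinant line carries a canonical complex orientation which descends to an orientation of the kernel since the cokernel vanishes by regularity. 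Shrinking $K$ if necessary, transversality to the $D_i$ persists, so $g|_{\{a_0\}\times K}$ is an orientation-preserving diffeomorphism onto an open subset of $\Pi^{-1}(\mu_{L_\bullet}^{-1}(a_0))$.

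For the dependence on $a \in U$, I would apply Pardon's gluing construction: for each $a$ near $a_0$ and each $k \in K$ one builds an approximate $\widetilde{J}_\Psi$-holomorphic map $\widehat{u}_{a,k}: \Sigma_a \to E$ by pre-gluing $g(a_0,k)$ across the nodes using standard cutoff-and-splice procedures, then corrects $\widehat{u}_{a,k}$ to an exact $\widetilde{J}_\Psi$-holomorphic map via a quadratically convergent Newton iteration in weighted Sobolev spaces. The output is a map $g: U \times K \to \scrT'$ which is continuous on $U \times K$ and smooth on each slice $\{a\}\times K$. Injectivity and openness of the image follow from combining the implicit function theorem on each stratum with the standard unique-continuation property for small perturbations of regular $\widetilde{J}_\Psi$-holomorphic maps (using transversality to $D_i$ to rigidify reparametrisations). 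The identity $\mu_{L_\bullet} \circ \Pi \circ g = \mathrm{proj}_U$ is automatic from the construction, since each $g(a,k)$ has domain of combinatorial type $a$.

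The main obstacle is continuity of the Newton-correction map as $a$ crosses a stratum of $\overline{\ccM}_{0,d'}$, where the domain $\Sigma_a$ gains or loses a node and the ambient Banach spaces change discontinuously. This is precisely what Pardon's framework is engineered to handle, via weighted norms and cutoff profiles calibrated to a degenerating family of hyperbolic metrics, yielding uniform estimates for the Newton iterate. Our situation differs from Pardon's only in that the target $E$ is non-compact; however, since $\eta$ is small near $\frs^{-1}(0)$ and the projection of $\widehat{u}$ to $X \times \scrC$ lands in a fixed compact subset, the pre-gluings stay inside a fixed compact region of $E$, so Pardon's estimates apply without modification.
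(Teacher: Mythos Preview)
Your proposal is correct and takes essentially the same approach as the paper: both defer entirely to Pardon's Appendix B, matching the present setup to his framework and invoking his pre-gluing plus Newton--Picard iteration. The paper's version is terser, simply pointing to the specific equations (B.9.6)--(B.10.4), Theorem B.1.1, and Section B.13 in \cite{pardon2016algebraic}, and noting explicitly that one is in the special case where Pardon's obstruction bundle is trivial (which you encode instead via surjectivity of $D\overline{\partial}_{\widetilde{J}_\Psi}$); your narrative unpacks the same content and additionally flags the non-compactness of the target $E$, a point the paper leaves implicit.
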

\begin{proof}
This theorem follows from the contents of \cite[Appendix B]{pardon2016algebraic}, in the simple case in which the obstruction bundle (which is denoted $E$ in loc. cit.) is trivial.

The gluing map $g$ is defined by Equations (B.10.3) and (B.10.4) in \cite{pardon2016algebraic}.
The gluing map is a homeomorphism onto its image by the proof of Theorem B.1.1 at the end of \cite[Section B.12]{pardon2016algebraic}.
Equation (B.10.1) in \cite{pardon2016algebraic} also ensures that $\mu_{\scrT'} \circ g$ is equal to the projection map to $U$.

The restriction of $g$ to $\{a\} \times K$
is obtained by Newton-Picard iteration
with initial conditions smoothly depending on $K$ for each $a \in U$
(See Equations (B.9.6), (B.9.7), (B.10.1) and (B.10.3) in
\cite{pardon2016algebraic}).
Hence this map is smooth for each fixed $a$.
The map $g$ respects orientations by \cite[Section B.13]{pardon2016algebraic}.
\end{proof}

We may phrase the above result in terms of the space $\scrT$ as follows: for any small neighborhood $K \subset \Pi(\Pi^{-1}(u))$ of $u$ and any small neighborhood
	$U \subset \scrF$ of $\Pi(u)$
	there is a continuous map
	\begin{equation}g : U \times K \lra{} \scrT\end{equation}
	which is a homeomorphism onto a small neighborhood of $u$
	so that
	\begin{enumerate}
		\item the projection map to $U$ is equal to $\Pi \circ g$ and
		\item the restriction of $g$ to $\{a\} \times K$
		is a smooth codimension $0$ embedding into $\Pi^{-1}(a)$ for each $a \in U$.
	\end{enumerate}
This can be summarised by the following statement:
\begin{corollary} \label{cor:fibrewise_smooth}
For $k$ large enough, 
there is an open neighborhood  of $\frs^{-1}(0)$ in  $\scrT$ 
so that
the map $\Pi|_U$ has a fiberwise smooth structure.
\end{corollary}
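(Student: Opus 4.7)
The plan is to combine Pardon's gluing description of a neighbourhood of a regular point of $\scrT$ (the theorem immediately preceding the corollary) with the regularity statement in Corollary \ref{cor:thickening_regular}, and then verify the $C^1_{loc}$-compatibility required by Definition \ref{defn fiberwise smooth}.

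First I would fix $k$ large enough that Corollary \ref{cor:thickening_regular} applies on some open $U \subset \scrT$ containing $\frs^{-1}(0)$. For each $u\in U$ I would then invoke the gluing theorem (restated in the bulleted form just before the corollary) to produce a continuous map $g : V \times K \to \scrT$ with $V \subset \scrF$ a neighbourhood of $\Pi(u)$ and $K \subset \Pi^{-1}(\Pi(u))$ a neighbourhood of $u$, such that $\Pi\circ g$ is the projection to $V$, $g$ is a homeomorphism onto an open neighbourhood $W$ of $u$ in $\scrT$, and the restriction of $g$ to each slice $\{a\}\times K$ is an orientation-preserving smooth codimension-zero embedding into $\Pi^{-1}(a)$. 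Inverting $g$ and composing with the projection $V\times K \to K$ yields a product neighbourhood $\iota : W \to W|_{\Pi(u)} \times \Pi(W)$ in the sense of Definition \ref{defn product neighborhood}. The collection of all such product neighbourhoods, indexed by points of $U$, will be the candidate fibrewise smooth atlas.

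The main work is then to verify the $C^1_{loc}$-compatibility of any two such product neighbourhoods $\iota_1,\iota_2$ coming from different gluing charts $g_1,g_2$. The key input is that each fibre $\Pi^{-1}(a)$ carries, near $\frs^{-1}(0)$, a canonical smooth structure: regularity of $D\overline{\partial}_{\widetilde{J}_\Psi}$ combined with the standard elliptic implicit function theorem identifies a neighbourhood of $u$ in $\Pi^{-1}(a)$ with an open subset of $\ker D\overline{\partial}_{\widetilde{J}_\Psi}$ in a canonical way, and both $g_1|_{\{a\}\times K_1}$ and $g_2|_{\{a\}\times K_2}$ are diffeomorphisms onto open subsets of this intrinsic smooth structure. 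Hence the fibrewise transition $\eta_a$ of Definition \ref{defn fiberwise smooth} is smooth for every base point $a$.

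The harder step is the $C^1_{loc}$-continuity of the family $\{\eta_a\}$ in the parameter $a$. This is essentially built into Pardon's construction: the gluing map is produced by Newton--Picard iteration starting from an explicit pre-gluing whose input data vary continuously in $C^\infty_{loc}$ with $a$, and using a right inverse to $D\overline{\partial}_{\widetilde{J}_\Psi}$ that varies continuously in the operator norm. Elliptic bootstrapping then promotes the $L^{p}_1$-continuity used in the iteration to continuity in $C^1_{loc}$ on compact subsets of the smooth part of the fibre, which is exactly what Definition \ref{defn fiberwise smooth} demands. Unpacking this last assertion carefully from the gluing construction is the principal technical obstacle; once granted, composing $g_2^{-1}\circ g_1$ and projecting to the fibre factor gives a $C^1_{loc}$-continuous family of smooth maps, yielding the required compatibility and completing the proof.
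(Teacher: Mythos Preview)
Your first two paragraphs are correct and match the paper's approach exactly: the corollary is meant to be an immediate restatement of the bulleted summary of Pardon's gluing theorem (the maps $g:V\times K\to\scrT$ with properties (1) and (2)), combined with the regularity from Corollary \ref{cor:thickening_regular}. The paper in fact gives no separate proof beyond the sentence ``This can be summarised by the following statement''.

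However, you are doing more than the corollary asks. The $C^1_{loc}$-compatibility you treat in your last two paragraphs is the content of the \emph{separate} Corollary \ref{cor:C1_loc}, not of the present one; the phrase ``fiberwise smooth structure'' here is intended only as the existence of product neighbourhoods with smooth fibre restrictions. For that separate corollary, the paper's argument differs from yours: rather than invoking elliptic bootstrapping on the output of Newton--Picard iteration, the paper appeals directly to \cite[Proposition B.11.1]{pardon2016algebraic}, which gives a quantitative estimate that the $L^2$-norm near the ends of a neck controls all $C^k$-norms throughout the neck. This yields $C^1$-closeness of the glued curve to the preglued curve uniformly in the gluing parameter, and since the preglued curve is independent of that parameter away from the necks, the transition maps are $C^1_{loc}$-continuous. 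Your bootstrapping sketch is plausible but would need the neck estimate (or something equivalent) to handle the degeneration near nodes, so in practice it routes through the same technical input.
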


From now on, we may replace $\scrT$ by this neighbourhood, which we may assume, without loss of generality, to be invariant under the action of $U(N)$.

\begin{corollary} \label{cor:C1_loc}
The fibrewise-smooth topological submersion $\Pi: \scrT \to \scrF$ admits a $C^1_{loc}$ structure.
\end{corollary}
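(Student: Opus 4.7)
The plan is to refine Pardon's gluing construction used in the proof of Corollary \ref{cor:fibrewise_smooth} and verify the compatibility condition in Definition \ref{defn fiberwise smooth}. Given two overlapping product neighbourhoods $\iota_i : W_i \to W_i|_{b_i} \times \pi(W_i)$, $i=1,2$, obtained from gluing maps around reference points of $\scrT$, and a point $p \in W_1 \cap W_2$, I need to produce a product neighbourhood $\iota : W \to W|_b \times \pi(W)$ around $p$ so that the family $\eta_v : W|_b \to W_i|_{b_i}$ is smooth and varies continuously in the $C^1_{loc}$-topology with $v \in \pi(W)$.

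First I would recall that each gluing map $g : U \times K \to \scrT$ is built from a pregluing $\mathrm{pre}(a,k)$ (a cut-off and exponential-map construction on the smoothed domain $\Sigma_a$) and a Newton--Picard corrector $\xi(a,k)$, so that
\begin{equation*}
g(a, k) = \exp_{\mathrm{pre}(a,k)}(\xi(a, k)).
\end{equation*}
For fixed $a$, the map $k \mapsto g(a,k)$ is a smooth embedding by the already-established fibrewise smoothness. The key additional claim is that as $a$ varies in $U$, the differential $d_k g(a, k)$ depends continuously on $a$ in the $C^0_{loc}$-topology on compact subsets of the fibre; this suffices for $C^1_{loc}$-continuity of the transition between charts.

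Second, I would extract the required continuity from the standard analysis of the gluing scheme. The pregluing depends smoothly on both $a$ and $k$ in any region bounded away from the shrinking gluing necks, with bounds uniform in $a$. The corrector $\xi(a,k)$ is produced as a fixed point of a contraction in a weighted Sobolev space $W^{1,p}_\delta$ of sections of the pullback of $TE$; the standard contraction-mapping argument (as in \cite[Proposition B.9.1]{pardon2016algebraic}) shows that this fixed point depends continuously on $(a,k)$ in the $W^{1,p}_\delta$-topology, and the same holds for $d_k \xi(a,k)$, which solves a linearised Cauchy--Riemann-type equation whose right-hand side varies continuously in $(a,k)$. Interior elliptic regularity applied to these linear equations on any compact subset of the fibre, bounded away from the necks, promotes this $W^{1,p}_\delta$-continuity to $C^j_{loc}$-continuity for every $j$, and in particular to $C^1_{loc}$-continuity, as required.

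The main obstacle is the varying topological type of the fibres of $\Pi$: as $a$ moves in $\scrF$, a node of the reference curve $\Sigma_b$ may be smoothed, so the fibre-to-fibre identifications are not globally intrinsic but are forced upon us by the gluing construction. This is handled by observing that the condition in Definition \ref{defn fiberwise smooth} only requires continuity on compact subsets of a single fibre through $p$, and after shrinking $W$ every such compact subset lies in a region in which the pregluing is smooth in both parameters with uniform estimates. Combining this observation with the elliptic estimates on the corrector then yields the desired $C^1_{loc}$-compatibility of the charts, and hence the claimed $C^1_{loc}$-structure on $\scrT \to \scrF$.
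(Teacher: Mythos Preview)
Your proposal is correct and takes essentially the same approach as the paper: both extract the $C^1_{loc}$ structure from Pardon's gluing analysis in \cite[Appendix B]{pardon2016algebraic}, arguing that the corrector term in the gluing construction varies continuously in the base parameter with control on derivatives in the fibre direction. The paper's proof is terser, pointing specifically to \cite[Proposition B.11.1]{pardon2016algebraic} (which says the $L^2$-norm of a solution near the ends of a neck controls all $C^k$-norms throughout the neck) and framing the comparison via unique continuation to a fixed punctured domain, whereas you unpack the same mechanism more explicitly through contraction-mapping continuity of $\xi(a,k)$ and $d_k\xi(a,k)$ followed by interior elliptic bootstrapping; these are two presentations of the same underlying estimate.
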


\begin{proof}
This follows from \cite[Proposition B.11.1]{pardon2016algebraic}.  Recall that the proof of the gluing theorem proceeds by constructing a pre-glued curve which is an approximate solution, and then applying a Newton iteration scheme to obtain an actual solution.  By unique continuation, we may view all the curves obtained from pregluing applied to a fixed stable map as elements of a space of smooth maps from the fixed domain which is the complement of small balls around the nodes (this viewpoint has been popularised in pseudoholomorphic curve theory by Fukaya, within the more delicate context of constructing a smooth structure). The referenced Proposition shows that, given a $J$-holomorphic neck in the gluing region, the $L^2$-norm of the solution near the ends of the neck controls all $C^k$-norms throughout the neck region.   It follows that the preglued curve is close to the actual solution in any $C^k$-norm on this space. Since the preglued curve does not depend on the choice of gluing parameter away from the necks, the comparison between charts associated to different gluing parameters are locally $C^1$-smooth with respect to any fixed choice of domain. Unique continuation again implies that the resulting smooth structure does not depend on this choice.
\end{proof}

\begin{rem}
We reiterate the point alluded to in the middle of the above proof regarding smooth structures: Corollary \ref{cor:C1_loc} is not saying anything about the differentiability of gluing with respect to the gluing parameter $\alpha$ itself, but that the (pre-)gluing map from the space of gluing parameters  to the space of solutions  is continuous when the target has the $C^1$- (or any $C^k$)-topology. One of the insights of \cite{AbouzaidBlumberg2021} is that this structure is sufficient to define a vector bundle on the thickening that plays the r\^ole of the tangent bundle; that paper and the present one differ in how this insight is implemented in practice.
\end{rem}

Corollary \ref{cor:C1_loc} shows that the vertical tangent bundle $T^{vt}\scrT$ of $\Pi: \scrT\to\scrF$ is well-defined.  It is then standard that $T^{vt}\scrT$ admits a stable almost complex structure. On each stratum of $\scrT$ corresponding to a fixed topological type of domain, $T^{vt}\scrT$ is stably isomorphic to the pullback of an index bundle defined over the whole configuration space. Over a compact subset $K \subset \scrT$ lying in a stratum of reducible domains, one can find a fixed stabilisation making all the linearised operators surjective, and then (pre-)gluing defines a map to the configuration space for maps from a glued domain, which is compatible with the index bundles by \cite[Diagram B.7.7]{pardon2016algebraic}. The result then follows from the argument of \cite[Proposition 4.3]{McDuff:examples}, using that all the linearised operators are compact perturbations of $\bC$-linear operators. 

\subsection{Incorporating marked points\label{Sec:marked_points}}

We will now explain how to use the ideas of Section \ref{section pullbacks of kuranishi charts}
to obtain a global Kuranishi chart presentation for the moduli space $\ccMbar_{0,n}(X,\beta)$ of stable genus zero curves with a non-empty ordered collection of marked points.

We begin with the moduli space $\ccMbar_{0,n}(\bC\bP^d,d)$ of $n$-marked degree $d$ curves in $\bC\bP^d$, the Zariski open subset $\scrF_n(d)$ of curves not lying in a hyperplane.
There is a natural map $f_d : \scrF(d) \to \scrF$ where we forget all the marked points.
This is a morphism of smooth quasi-projective varieties.
Let $Z_n$ be the space of pairs $(v,w)$ where $v = (u,\Sigma,F)$ is a
framed curve (Definition \ref{defn:framed_curve})
and $w \in \scrF_n$ satisfying $\phi_F = f_d(w)$
where $\phi_F$ is given in Equation \eqref{equation F map}.
Then $\ccMbar_{0,n}(\bC\bP^d,d)$ is naturally homeomorphic to the quotient
$Z_n / U(d+1)$.
In Section \ref{Sec:description} we constructed a natural global Kuranishi chart
$\K = (G,\scrT,\scrE,\frs)$ for the moduli space $\ccMbar_{0,0}(\bC\bP^d,d)$
(Definition \ref{defn moduli space kuranishi}).
The thickening $\scrT$ admits a natural topological submersion $\Pi : \scrT \to \scrF$
(Equation \eqref{equation map to scrF}).
Hence the pullback $f_d^*\K := (G,f_d^*\scrT,f_d^*\scrE,f_d^*\frs)$
(Definition \ref{DEFN pullback of kuranishi charts})
is a global Kuranishi chart for $\ccMbar_{0,n}(\bC\bP^d,d)$.

\subsection{Relating different presentations}

Recall the general operations of germ equivalence, stabilisation and group enlargement on global Kuranishi charts.

\begin{Proposition} Fix $(X,\omega,J,\beta)$ and  construct global Kuranishi charts for the moduli space $\ccMbar_{0,0}(\beta)$ with respect to distinct choices $(\Omega_i, \scrL_i, k_i)$ for $i=1,2$. Then these global charts are related by a sequence of the three given moves.
\end{Proposition}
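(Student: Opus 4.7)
The strategy is to construct, for any two choices of data, a common \emph{refinement} chart $\K_{12}$ which is obtained from each of $\K_1$ and $\K_2$ by a sequence of germ equivalences, stabilisations, and group enlargements. Since these three operations generate an equivalence relation, exhibiting such a roof between $\K_1$ and $\K_2$ will imply the proposition.

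\textbf{Construction of $\K_{12}$.} Set $N_i = \langle \Omega_i,\beta\rangle + 1$. I would define $\scrT_{12}$ to be the space of tuples $(u,\Sigma,F_1,F_2,\eta_1,\eta_2)$ where $\Sigma$ is a connected genus zero nodal curve, $u:\Sigma \to X$ is smooth and represents $\beta$, each $F_i$ is a basis of $H^0(L_{u^*\Omega_i})$ with positive Hermitian matrix $\scrH(u,\Sigma,F_i)$, each $\eta_i$ lies in the obstruction space associated to $(\scrL_i,k_i,F_i)$ as in Definition \ref{defn large space}, subject to the combined perturbed equation
\begin{equation*}
\overline{\partial}_J u + \langle \eta_1\rangle \circ d\iota_{F_1} + \langle \eta_2\rangle \circ d\iota_{F_2} = 0 .
\end{equation*}
The group $G_{12} := U(N_1)\times U(N_2)$ acts by reparametrising both framings; the obstruction bundle $\scrE_{12}$ is the direct sum of both $\eta$-spaces with $\scrH_{N_1}\oplus \scrH_{N_2}$; and the section is $\frs_{12} = (\eta_1,\eta_2,\exp^{-1}\scrH(F_1),\exp^{-1}\scrH(F_2))$. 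The Gromov trick of Section \ref{section gro trick} realises $\scrT_{12}$ as a moduli space of honest holomorphic curves into a sheared total space of $E_0 \otimes \overline{E_1}$ (with $E_0,E_1$ now built from \emph{both} $\scrL_i^{k_i}$), and the peak-section surjectivity argument of Proposition \ref{prop surj} applied to both summands simultaneously — valid once $k_1,k_2$ are large enough — confirms that $\K_{12}$ is a bona fide global Kuranishi chart for $\ccMbar_{0,0}(\beta)$ of the correct virtual dimension.

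\textbf{From $\K_1$ to $\K_{12}$.} First, by germ equivalence I restrict $\scrT_1$ to a neighbourhood of $\frs_1^{-1}(0)$ on which $\dim H^0(L_{u^*\Omega_2})$ is the constant $N_2$; this holds because on a genus zero nodal curve the degree formula gives $h^0(L)=\deg L+1$ provided $\deg L|_C \geq 0$ on every component, a condition which is open and satisfied near the zero locus by integrality of $\langle \Omega_2,\beta_C\rangle$ and the positivity hypothesis on unstable components in Definition \ref{defn:framed_curve}. Next, I perform a group enlargement by the principal $U(N_2)$-bundle $P\to \scrT_1$ of $L^2$-orthonormal framings of $L_{u^*\Omega_2}$; then a stabilisation by the $G_{12}$-vector bundle $W\to P$ whose fibre is the $\eta_2$-obstruction space together with $\scrH_{N_2}$. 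The resulting chart $\K_1^{\mathrm{st}}$ has, as a set, the same underlying 6-tuples as $\scrT_{12}$, but its defining equation on $u$ only involves $\langle \eta_1\rangle \circ d\iota_{F_1}$, while the stabilising section imposes $\eta_2 = 0$ and $\scrH(F_2) = I$ via the tautological diagonal. I would then identify $\K_1^{\mathrm{st}}$ with $\K_{12}$ by a $G_{12}$-equivariant fibrewise-smooth homeomorphism constructed near the common zero locus using an implicit function theorem argument: for small $\eta_i$, the linearisations of the two equations for $u$ (with everything else fixed) are both surjective by Proposition \ref{prop surj} and differ by a compact operator of small operator norm, so the map sending a solution of the combined equation to the unique nearby solution of the $\eta_1$-only equation is a local homeomorphism, under which the two sections correspond.

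\textbf{Main obstacle.} The decisive step is the last germ equivalence $\K_1^{\mathrm{st}} \cong \K_{12}$: one must verify that the implicit-function identification is not merely a pointwise bijection but a $G_{12}$-equivariant homeomorphism that preserves the fibrewise-smooth $C^1_{\mathrm{loc}}$ structure of Corollaries \ref{cor:fibrewise_smooth}--\ref{cor:C1_loc} and identifies the two sections of the two obstruction bundles. This requires extending the Newton--Picard gluing estimates of \cite[Appendix B]{pardon2016algebraic} to the family of perturbed Cauchy--Riemann operators parametrised by $(F_2,\eta_2)$, showing all the relevant bounds are uniform in this auxiliary family; the hypothesis that each $k_i$ is large will again be used to control the nonlinear terms. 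By the symmetry of the construction, an identical argument relates $\K_2$ to $\K_{12}$, completing the roof $\K_1 \leftsquigarrow \K_{12} \rightsquigarrow \K_2$ and hence the proposition. The sub-case of varying $k$ alone (with $\Omega,\scrL$ fixed) is the easiest special case and reduces to a single stabilisation by the extra holomorphic sections that appear when passing from $k_1$ to a larger $k_2$.
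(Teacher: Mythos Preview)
Your strategy is the same as the paper's: build a doubly-framed chart $\K_{12}$ from tuples $(u,\Sigma,F_1,F_2,\eta_1,\eta_2)$ satisfying the combined equation $\overline\partial_J u + \langle\eta_1\rangle\circ d\iota_{F_1} + \langle\eta_2\rangle\circ d\iota_{F_2} = 0$, then relate it to each $\K_i$ via group enlargement (add the other framing), stabilisation (add the other $\eta$-space), and germ equivalence.

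The paper streamlines your final step. Rather than constructing a separate chart $\K_1^{\mathrm{st}}$ and then proving it is homeomorphic to $\K_{12}$ via an implicit-function argument, the paper observes that the locus $\{\eta_2=0\}\subset\scrT_{12}$ \emph{is} $P_{12}$ (the $F_2$-framing bundle over $\scrT_1$), and that transversality of the singly-perturbed equation along this locus makes a neighbourhood $\scrU$ of $P_{12}$ inside $\scrT_{12}$ into an open subset of the total space of the $\eta_2$-bundle $\scrW\to P_{12}$. Thus $\K_{12}|_\scrU$ literally \emph{is} a germ of the stabilisation of the group enlargement; there is no separate chart comparison to perform. The analytic input (surjectivity of the linearised operator at $\eta_2=0$) is exactly your IFT step, but in this framing your ``main obstacle'' dissolves: no homeomorphism of charts needs checking, and no compatibility with the $C^1_{\mathrm{loc}}$ structure is required for the proposition as stated---the three moves alone suffice.
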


\begin{proof}
Let $d_i = \langle [\Omega_i],\beta\rangle$; we have moduli spaces $\scrF(d_i)$ of framed curves of each degree, where the framing of a $J$-holomorphic curve $u:\Sigma\to X$ involves a choice of basis $F_i$ for $H^0(u^*L_i)$.  Write $\scrC_i \to \scrF(d_i)$ for the corresponding universal curve. The choice of $(\Omega_i,\scrL_i,k_i)$ is thus sufficient to construct the thickened moduli space $\scrT_i$ of solutions to
\begin{equation}
\overline{\partial}_J u +  \langle \eta_i \rangle \circ d\iota_{F_i} = 0
\end{equation}
as in Definition \ref{defn large space}, where we recall that 
	\begin{enumerate}
		\item $(u,\Sigma,F_i) \in \scrF(d_i)$ is a framed curve as in Definition \ref{defn:framed_curve} and
		\item $\eta_i \in H^0(\overline{\Hom}(\iota_{F_i}^* T\scrC_i, u^*TX) \otimes \iota_{F_i}^*\scrL_i^{k_i}) \otimes_\C \overline{H^0(\iota_{F_i}^*\scrL_i^{k_i})}$
	\end{enumerate}
	and this thickened moduli space comes with a forgetful topological submersion $\scrT_i \to \scrF(d_i)$ which is $G_i = U(d_i+1)$-equivariant, and with the tautological section $\frak{s}_i$ cutting out $Z_i \subset \scrT_i$ with $\ccMbar_{0,0}(J,\beta)= \scrM = Z_i/G_i$ for both $i=1,2$.
	
	We now introduce a `doubly thickened' moduli space $\scrT_{12}$ of `doubly framed' curves as follows.  A doubly framed curve is a quadruple $v=(u,\Sigma,F_1,F_2)$, where 
	where  $u: \Sigma \to X$ is a smooth curve representing $\beta$ so that $(u,\Sigma,F_i)$, $i=1,2$ are framed curves. An element of $\scrT_{12}$ is a tuple $(u,\Sigma,F_1,F_2,\eta_1,\eta_2)$ satisfying the equation
	\begin{equation}
	\overline{\partial}_J u + \langle \eta_1 \rangle \circ d\iota_{F_1} + \langle \eta_2 \rangle \circ d\iota_{F_2} \, = 0.
	\end{equation}
There is a tautological section $\frak{s}_{12}$ of the bundle over $\scrT_{12}$ with fibre 
\begin{equation}
\bigoplus_{i=1}^2 \, \left( H^0(\overline{\Hom}(\iota_{F_i}^* T\scrC_i, u^*TX) \otimes \iota_{F_i}^*\scrL_i^{k_i}) \otimes_\C \overline{H^0(\iota_{F_i}^*\scrL_i^{k_i})} \oplus \scrH_{d_i+1} \right)
\end{equation}
which records the perturbation data $\eta_i$ and the Hermitian matrices $\scrH(v_i) \in \scrH_{d_i+1}$ which exponentiate to the matrix of $L^2$-inner products of the framing basis. By construction,  $\frak{s}_{12}^{-1}(0) / (G_1\times G_2)$ again recovers the moduli space $\scrM$. It is sufficient to prove that each of the original global charts can be related to the doubly thickened global chart by the three moves introduced previously.  By symmetry, it therefore suffices to show that the global chart associated to $\scrT_1$ and that associated to $\scrT_{12}$ are so related.  To see that:

\begin{itemize}
\item Define $P_{12} \to \scrT_1$ to be the space of tuples $(u,\Sigma,F_1,F_2,\eta_1)$, where $(u,\Sigma,F_1,F_2)$ is a doubly framed curve and $(u,\Sigma,F_1,\eta_1) \in \scrT_1$.
 This is the total space of a $G_2$-principal bundle over $\scrT_1$, with fibre the possible space of framings $F_2$.

\item The map $(u,\Sigma,F_1,F_2,\eta_1) \mapsto (u,\Sigma,F_1,F_2,\eta_1,0)$ embeds $P_{12} \hookrightarrow \scrT_{12}$ into the locus where $\eta_2=0$. Because the perturbed $\cdbar$-equation is already transverse along this locus, one can solve the doubly-thickened $\cdbar$-equation for any sufficiently small $\eta_2$, which means an open neighbourhood $\scrU$ of the image of this embedding is an open neighbourhood of the zero-section in the total space of the vector bundle $\scrW \to P_{12}$ with fibre $H^0(\overline{\Hom}(\iota_{F_2}^* T\scrC_2, u^*TX) \otimes \iota_{F_2}^*\scrL_2^{k_2}) \otimes_\C \overline{H^0(\iota_{F_2}^*\scrL_2^{k_2})}$. 

\end{itemize}

Thus, germ equivalence relates $\scrT_{12}$ to  $\scrU$, which is also related by germ-equivalence to a stabilisation of a group enlargement of $\scrT_1$.  Tracking the vector bundles $\scrE_1$ and $\scrE_{12}$ through the moves shows that the global charts are related as claimed.
\end{proof}

It now follows from the general discussion of Section \ref{sec:vfc} that the virtual fundamental class is independent of all choices, except perhaps of the almost complex structure $J$. 

\subsection{Cobordism}

Up to this point, we have worked with a fixed compatible almost complex structure $J$. This suffices for the proofs of both Theorem \ref{thm:main} and \ref{thm:main_generalised}, so we will discuss independence of $J$ only very briefly.

\begin{Definition} \label{Def:thick} Two global Kuranishi charts $(G, \scrT_i, \scrE_i, \frs_i)$ are \emph{cobordant}  if there is a $G$-equivariant cobordism of thickenings $\scrT_i$ carrying a $G$-bundle $\scrE$ and section $\frs$ restricting to the given $\frs_i$ over the ends.
\end{Definition}

There are natural notions of cobordism of oriented, or smooth, or fibrewise smooth Kuranishi charts. 
Varying $J$ yields cobordisms of fibrewise-smooth $\bK$-oriented charts; a convenient set-up is to consider a larger symplectic manifold $X\times T^2$ equipped with a complex structure $\widehat{J}$ which induces given complex structures $J_0$ and $J_1$ over two particular fibres and which is such that projection to $T^2$ is holomorphic.\footnote{We choose $T^2$ to retain compactness, and to not introduce any new spherical homology classes.}  Our previous machinery builds a global chart for curves in the total space, and this comes with a map to $T^2$ which induces a cobordism of thickenings in the sense of Definition \ref{Def:thick}, which is compatible with the topological submersion over the moduli space of domains.

\begin{Lemma} \label{lem:cobordant}
Cobordant charts $(G, \scrT_i, \scrE_i, \frs_i)$ for $\ccMbar_{0,n}(X,\beta)$ yield the same composite map 
\begin{equation} \label{eqn:final}
H^*(X^n;\bK) \stackrel{ev}{\longrightarrow} H^*(\ccMbar_{0,n}(X,\beta,J_i; \bK) \stackrel{f_*^{\ccMbar_i}}{\longrightarrow} \bK_*
\end{equation}
\end{Lemma}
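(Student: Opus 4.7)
The plan is to apply the homological virtual class formalism of Section \ref{Sec:vfc_homological} to the cobordism $(G,\scrT,\scrE,\frs)$, viewed as a global Kuranishi chart for a bordism of moduli spaces, and to read off the claim from the elementary fact that a bordism of cycles represents the same homology class at either end.

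First, I would apply Proposition \ref{prop Kuranishi smoothing} together with the stabilization of Lemma \ref{lemma stronger orientation} to the cobordism chart so that, up to stabilization, $\scrT$ is a compact smooth $G$-manifold with boundary $\partial \scrT = \scrT_0 \sqcup \scrT_1$, with $T\scrT - \frg$ and $\scrE$ separately $\bK$-oriented and with difference agreeing with the given $\bK$-orientation on $T\scrT - \scrE - \frg$. Lemma \ref{Lem:transverse_global_chart} then lets me extend the endpoint evaluation maps to a smooth $G$-equivariant submersion $\widetilde{ev}: \scrT \to X^n$ that restricts on each component of $\partial \scrT$ to the original evaluation $ev$.

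Next, repeating the construction of Section \ref{Sec:vfc_homological} for this manifold with boundary, Cheng's ambidexterity (Theorem \ref{thm:Cheng-homological}, applied to the interior of $\scrT$) yields a relative fundamental class $[\scrT, \partial \scrT] \in H^G_{\dim \scrT - \dim G}(\scrT, \partial \scrT; \bK)$ whose image under the connecting homomorphism is $[\scrT_0] - [\scrT_1]$, up to the boundary orientation sign. Since $\frs^{-1}(0)$ is a compact subset of the interior, the Euler class $e_G(\scrE) = \frs^*\fro_\scrE$ is an absolute class on $\scrT$, and the relative virtual class
\begin{equation}
	[\vfc^{\mathrm{rel}}_I] \, := \, e_G(\scrE) \frown [\scrT, \partial \scrT] \, \in \, H^G_{vd+1}(\scrT, \partial \scrT; \bK)
\end{equation}
has boundary $[\vfc_0] - [\vfc_1] \in H^G_{vd}(\partial \scrT; \bK)$ by naturality of the cap product with respect to the connecting homomorphism of a pair.

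Exactness of the long exact sequence of $(\scrT, \partial \scrT)$ then gives $\iota_*[\vfc_0] = \iota_*[\vfc_1]$ in $H^G_{vd}(\scrT; \bK)$, where $\iota: \partial \scrT \hookrightarrow \scrT$. Applying $\widetilde{ev}_*$ and using ambidexterity to pass to absolute homology of $X^n$, I obtain $ev_*[\vfc_0] = ev_*[\vfc_1]$ in $H_{vd}(X^n; \bK)$. By the construction of the pushforward in Section \ref{Sec:vfc_homological}, the composite map \eqref{eqn:final} is the Kronecker pairing of $\alpha \in H^*(X^n; \bK)$ against $ev_*[\vfc_i]$, so the two composites agree in $\bK_*$. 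The main obstacle is showing that Cheng's ambidexterity is compatible with the boundary operator of the pair $(\scrT, \partial \scrT)$: Theorem \ref{thm:Cheng-homological} is formulated for closed manifolds or for the one-point compactification $\scrT^c = \scrT/\partial \scrT$, so one needs a modest refinement to extract the relative fundamental class and verify the asserted boundary formula. This follows by applying Addendum \ref{Add:relative_Cheng-homological} to a collar neighbourhood of $\partial \scrT$ and invoking naturality of parametrised Atiyah duality under the cofibre sequence $\partial \scrT_+ \to \scrT_+ \to \scrT/\partial \scrT$, but tracking the orientation conventions carefully requires a little bookkeeping.
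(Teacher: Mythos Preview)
Your approach is sound in outline but takes a genuinely different route from the paper. The paper does not work with the abstract cobordism of Definition~\ref{Def:thick} directly; instead it exploits the specific realisation of the cobordism as a global Kuranishi chart for curves in the enlarged target $X \times T^2$, which carries a submersion to $T^2$. The claim then reduces immediately to the already-established compatibility of the pushforward with restriction to a fibre (i.e.\ Lemma~\ref{lemma restricting to submanifold} / Corollary~\ref{corollary pullback}), together with Lemmas~\ref{lemma germ vfc well behaved} and~\ref{lemma stabilization} to identify the restricted chart with~$\scrK_i$. No relative fundamental classes, boundary compatibility of ambidexterity, or long exact sequences are needed.

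Your bordism argument would apply to any abstract cobordism of charts, which is a genuine gain in generality, but the price is precisely the technical issues you flag plus two you do not. First, a minor slip: $\frs^{-1}(0)$ is \emph{not} contained in the interior of the cobordism, since it meets $\partial\scrT$ in $\frs_0^{-1}(0) \sqcup \frs_1^{-1}(0)$; fortunately this is irrelevant, as the absolute Euler class needs no such hypothesis. More seriously, the endpoint thickenings $\scrT_i$ are themselves non-compact (one always passes to a germ near the zero locus), so after compactifying, the cobordism is really a manifold with corners: it has both the ``horizontal'' boundary $\scrT_0 \sqcup \scrT_1$ and a ``lateral'' boundary coming from truncating the non-compact directions. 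You would need to track both families of boundary faces through the relative ambidexterity and cap-product formulae, which is exactly the bookkeeping the paper's $X \times T^2$ trick is designed to circumvent.
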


\begin{proof}[Sketch] 
Apply the construction of Section \ref{section moduli kuranishi} to the global chart associated to the total space of the cobordism.  The resulting push-forward is compatible with restriction to a fibre of the map to $T^2$ by combining Lemmas \ref{lemma germ vfc well behaved} and \ref{lemma stabilization}.
\end{proof}

\subsection{Enumerative invariants}

Suppose $\bK$ is one of the `original' periodic Morava $K$-theories based on the ring $\bZ/p$, so its coefficients form a graded field.   For any space, $H^*(X;\bK)$ is both a left and right $\bK_*$-module. Before proceeding, we need the technical:

\begin{Lemma} \label{Lem:K-linear} The left and right module structures agree. \end{Lemma}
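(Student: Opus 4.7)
The plan is to reduce the lemma to the evenness of $|v| = 2(p^n-1)$, together with a comparison between left and right multiplication by the polynomial generator of $\bK_*$ at the spectrum level.

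First, I would observe that $\bK_* = \bF_p[v^{\pm 1}]$ is generated as a ring over $\bF_p$ by the invertible element $v$, so it suffices to check that the left and right actions of $v$ on $H^*(X;\bK)$ agree; the case of $\bF_p$-scalars is trivial, and once the lemma holds for $v$ it also holds for $v^{-1}$. For a representative $\alpha: X_+ \to \Sigma^i \bK$ of a class in $H^i(X;\bK)$ and $v: S^{|v|} \to \bK$, the left action is represented by the composite
\begin{equation}
X_+ \wedge S^{|v|} \xrightarrow{\alpha \wedge v} \Sigma^i \bK \wedge \bK \xrightarrow{\mu} \Sigma^{i+|v|}\bK,
\end{equation}
while the right action is the analogous composite with the smash factors of $\alpha$ and $v$ interchanged. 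After identifying the two sources via the Koszul symmetry $X_+ \wedge S^{|v|} \simeq S^{|v|} \wedge X_+$, the two composites differ by a sign $(-1)^{i |v|}$ together with the discrepancy between $\mu$ and $\mu \circ \tau$, where $\tau: \bK \wedge \bK \to \bK \wedge \bK$ swaps the factors. The sign is trivial because $|v|$ is even, so the lemma reduces to showing that multiplication by $v$ from the left and from the right define the same self-map $\bK \to \Sigma^{|v|}\bK$.

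At odd primes this is automatic: $K(n)$ is known to be a homotopy commutative ring spectrum (see \cite{Wurgler}), so $\mu \simeq \mu \circ \tau$ and the two actions of $v$ coincide. At $p = 2$, where $K(n)$ is not in general homotopy commutative, I would argue directly by identifying the self-map given by multiplication by $v$ with the canonical periodicity equivalence $\bK \simeq \Sigma^{|v|}\bK$ defining Morava $K$-theory; this equivalence is independent of whether $v$ acts on the left or the right, reflecting the fact that $v$ is a central unit in the graded-commutative coefficient ring $\bK_*$. The main obstacle is precisely this last step at $p=2$: one cannot appeal to a general homotopy commutativity of the spectrum and must instead use a specific identification of left and right multiplication by the periodicity generator.
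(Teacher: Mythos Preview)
Your reduction is sound: since $\bK_* = \bF_p[v^{\pm 1}]$, it suffices to show that left and right multiplication by $v$ agree, and this in turn is equivalent to the spectrum-level equality $L_v \simeq R_v$ in $[\bK,\Sigma^{-|v|}\bK]$. The odd-prime case via homotopy commutativity of $K(n)$ is also correct.

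The gap is at $p=2$. Your appeal to a ``canonical periodicity equivalence $\bK \simeq \Sigma^{|v|}\bK$'' does not constitute an argument, because no such equivalence is defined independently of the ring structure: the only natural candidates are $L_v$ and $R_v$ themselves, so invoking a canonical one is circular. Likewise, the fact that $v$ is central in the graded-commutative coefficient ring $\bK_*$ only says that $L_v$ and $R_v$ induce the same map on $\pi_*(\bK)$; it does not force them to be homotopic as self-maps of $\bK$, which is what is required (the group $[\bK,\bK]$ is larger than endomorphisms of $\pi_*(\bK)$). You correctly flag this as the main obstacle, but do not resolve it.

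The paper closes exactly this gap using Mironov's formula
\[
x \cup y = y \cup x + v_n\, Q_{n-1}(x) \cup Q_{n-1}(y),
\]
valid in $K(n)$ (and in $P(n)$, via Boardman). The operation $Q_{n-1}$ has odd degree, while $\bK_*$ is concentrated in even degrees, so $Q_{n-1}(c) = 0$ whenever $c$ is pulled back from a point. Thus the correction term vanishes and $c \cup \alpha = \alpha \cup c$ for all $c \in \bK_*$. This gives a concrete reason, specific to Morava $K$-theory, why the non-commutativity at $p=2$ does not obstruct the lemma; your framework would be completed by inserting precisely this computation in place of the periodicity heuristic.
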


\begin{proof}
This is proven by Boardman for certain cohomology theories $P(n)$ in \cite{Boardman}, which are quotients of the theory $BP_*$ and of which $K(n)$ is a further quotient. We have a natural isomorphism
  \begin{equation}
        H^*(X;K(n)) \cong H^*(X;P(n)) \otimes_{P(n)_*} K(n)_*, 
      \end{equation}
      from which the result follows. Alternatively, one may note that Boardman's proof that the module structures agree is a fairly direct consequence of Mironov's formula \cite{Mironov1978}
      \begin{equation}
        x \cup y = y \cup x + v_n Q_{n-1}(x) \cup Q_{n-1}(y),        
      \end{equation}
      where $Q_{n-1}$ is a cohomology operation known as the $(n-1)$-Bockstein. Such a formula holds in $K(n)$ as well as in $P(n)$ (cf. \cite[Equation (3.2.1)]{KultzeWurgler1987}).
\end{proof}
\begin{rem}
  The paper \cite{KultzeWurgler1987} is known to have a mistake, as explained in \cite{Nassau2002}, but this does not affect the above argument.
\end{rem}

In light of this, we can combine the evaluation map and virtual class
\begin{equation}
ev: \ccMbar_{0,d}(X,\beta) \to X^d \qquad \mathrm{and} \qquad \vfc: H^*(\ccMbar_{0,d}(X,J, \beta);\bK) \to \bK_*
\end{equation}
with the K\"unneth theorem $H^*(X^d;\bK) \cong H^*(X;\bK)^{\otimes_{\bK_*} d}$ to obtain $\Phi_{\beta,d}: H^*(X;\bK)^{\otimes_{\bK_*} d} \to \bK_*$.  The previous discussions show:

\begin{Lemma} The map $\Phi_{\beta,d}: H^*(X;\bK)^{\otimes d} \to \bK$ is  a well-defined symplectic invariant of $(X,\omega,\beta, d)$, independent of the choices made in constructing the Morava virtual class. \qed
\end{Lemma}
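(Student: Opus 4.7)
The strategy is to assemble the invariance statement from three independence results already prepared in the excerpt: invariance under the three operations on global Kuranishi charts, invariance under the auxiliary data $(\Omega,\scrL,k)$ entering the construction, and invariance under the compatible almost complex structure $J$.

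First I would fix $(X,\omega,\beta,d)$ and a compatible almost complex structure $J$, and observe that the construction of $\Phi_{\beta,d}$ depends a priori on the choices of taming integral symplectic form $\Omega$, Hermitian ample line bundle $\scrL$ on the universal curve, and the integer $k\gg 1$. By the Proposition in the subsection ``Relating different presentations'', any two such choices yield Kuranishi charts for $\ccMbar_{0,0}(X,\beta)$ related by a sequence of germ equivalences, stabilizations and group enlargements. The same holds for $\ccMbar_{0,d}(X,\beta)$ after pulling back under the forgetful map $f_d:\scrF_d(d)\to\scrF(d)$, since pullback of Kuranishi charts commutes with the three moves (noted after Definition \ref{DEFN pullback of kuranishi charts}). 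Combining Lemmas \ref{lemma germ vfc well behaved}, \ref{lemma stabilization} and \ref{lemma group enlargement} from Section \ref{sec:vfc}, each of these moves preserves the pushforward map, and hence preserves $\vfc_{\ccMbar_{0,d}(X,\beta)}$. Composing with the $G$-equivariant evaluation $ev:\ccMbar_{0,d}(X,\beta)\to X^d$ (which is intrinsic to the moduli space) and applying the K\"unneth isomorphism from Lemma~\ref{Lem:K-linear} to rewrite $H^*(X^d;\bK)\cong H^*(X;\bK)^{\otimes_{\bK_*} d}$, this shows $\Phi_{\beta,d}$ is independent of $(\Omega,\scrL,k)$ for each fixed $J$.

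Next, to remove the dependence on $J$, I would use the cobordism argument sketched just before this Lemma. Given two compatible almost complex structures $J_0, J_1$ on $(X,\omega)$, form $X\times T^2$ and choose a complex structure $\widehat{J}$ on it which projects holomorphically to $T^2$ and whose restrictions to two fixed fibres are $J_0$ and $J_1$; this is possible since the space of $\omega$-compatible almost complex structures is contractible. Applying the constructions of Section \ref{Sec:curves} to $(X\times T^2,\widehat{J})$ and the class $\beta\in H_2(X;\bZ)\subset H_2(X\times T^2;\bZ)$ yields a global Kuranishi chart for the universal moduli space whose underlying thickening carries a $G$-equivariant topological submersion to $T^2$. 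Restricting over the two distinguished fibres gives global Kuranishi charts for $\ccMbar_{0,d}(X,J_i,\beta)$, $i=0,1$; the thickenings are cobordant in the sense of Definition \ref{Def:thick} via the preimage of any embedded arc in $T^2$ joining the two base points, and the cobordism is compatible with the evaluation maps and with the fibrewise smooth $\bK$-orientation. Then Lemma \ref{lem:cobordant} applied to the composite \eqref{eqn:final} gives equality of the associated pushforward maps, and again K\"unneth upgrades this to $\Phi_{\beta,d}$. A routine interpolation argument (ordering the choices: first interpolate $J$ through compatible almost complex structures; then interpolate $(\Omega,\scrL,k)$ while keeping $J$ fixed) pieces the two independence statements together.

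The main obstacle is keeping track of orientations: the cobordism argument produces charts whose $\bK$-orientation is induced from the stable almost complex structure on the vertical tangent bundle over $T^2$, and one must check that its restriction to each endpoint recovers the orientation used in defining $\Phi_{\beta,d}(J_i)$. This is handled by the naturality of the stable complex structure on the vertical tangent bundle under passage to a fibre (Corollaries~\ref{cor:fibrewise_smooth} and \ref{cor:C1_loc}), together with the coherence of pullback orientations in Lemma \ref{lemma pullback orientation}. Given this, all the variational statements pass through the pushforward machinery of Section \ref{sec:vfc} without further input, and the Lemma follows.
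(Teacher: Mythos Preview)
Your proposal is correct and matches the paper's approach exactly: the paper simply writes ``The previous discussions show:'' followed by \qed, and you have correctly identified and assembled those discussions---independence of $(\Omega,\scrL,k)$ via the Proposition relating different presentations and invariance under the three moves, independence of $J$ via the cobordism Lemma~\ref{lem:cobordant}, and the K\"unneth decomposition enabled by Lemma~\ref{Lem:K-linear}. Your additional remarks on orientation coherence are apt and fill in what the paper leaves implicit.
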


The same argument as for Lemma \ref{lem:cobordant} shows that \eqref{eqn:final} and $\Phi_{\beta,d}$  depend only on the symplectic deformation class of $\omega$.  Now using duality on $H^*(X;\bK)$ and taking $d=3$ we get for each fixed $\beta$ a well-defined product operation 
\begin{equation} \label{eqn:product_for_one_beta}
\Phi_{\beta}^{\dagger}: H^*(X;\bK) \otimes_{\bK_*} H^*(X;\bK) \to H^*(X;\bK)
\end{equation}
(depending on $\beta$). Lemma \ref{Lem:K-linear} shows that

\begin{Lemma} \label{Lem:K-linear_2}
The product \eqref{eqn:product_for_one_beta} is $\bK$-linear. \qed
\end{Lemma}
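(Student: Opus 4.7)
The plan is to verify that each ingredient entering the definition of $\Phi_\beta^\dagger$ is $\bK_*$-linear, the only potential obstruction being the a priori distinction between left and right $\bK_*$-module structures on $H^*(X;\bK)$, which is precisely what Lemma \ref{Lem:K-linear} resolves.

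First I would unwind the construction. The trilinear invariant $\Phi_{\beta,3} : H^*(X;\bK)^{\otimes_{\bK_*} 3} \to \bK_*$ is the composite of the K\"unneth isomorphism $H^*(X;\bK)^{\otimes_{\bK_*} 3} \cong H^*(X^3;\bK)$, the evaluation pullback $ev^* : H^*(X^3;\bK) \to H^*(\ccMbar_{0,3}(X,\beta);\bK)$, and the virtual class pairing $\vfc : H^*(\ccMbar;\bK) \to \bK_*$ of Definition \ref{defn:vfc}. Each is a $\bK_*$-module map: the K\"unneth map and $ev^*$ as ring homomorphisms, and $\vfc$ because it is assembled in Definition \ref{defn pushforward on oriented chart} from Thom isomorphisms, ambidextrous Poincar\'e duality maps (Theorem \ref{thm:Cheng-homological}), and ordinary pushforwards, all of which are $\bK_*$-linear. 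Hence $\Phi_{\beta,3}$ is $\bK_*$-trilinear.

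Second, I would convert $\Phi_{\beta,3}$ into $\Phi_\beta^\dagger$ via Poincar\'e duality on $X$. Since $\bK_*$ is a graded field and $X$ is compact, $H^*(X;\bK)$ is a free $\bK_*$-module of finite total rank, and the pairing
\begin{equation}
H^*(X;\bK) \otimes_{\bK_*} H^*(X;\bK) \xrightarrow{\cup} H^*(X;\bK) \xrightarrow{\langle -,[X]\rangle} \bK_*
\end{equation}
is non-degenerate. Picking a homogeneous $\bK_*$-basis $\{e_i\}$ with dual basis $\{e_i^{\vee}\}$, one has
\begin{equation}
\Phi_\beta^\dagger(a \otimes b) \;=\; \sum_i \Phi_{\beta,3}(a, b, e_i)\cdot e_i^{\vee}.
\end{equation}
Trilinearity of $\Phi_{\beta,3}$ then immediately yields $\bK_*$-linearity of $\Phi_\beta^\dagger$ in each argument, once one knows that the module structure into which coefficients are absorbed on the left coincides with the one appearing on the right of the tensor product and on the target.

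The only real hurdle is this last compatibility: in general, for a multiplicative cohomology theory, the left and right $\bK_*$-module structures on $H^*(X;\bK)$ need not coincide. This is exactly the content of Lemma \ref{Lem:K-linear}, and with that lemma in hand the argument above goes through without further work. I expect no deeper obstacle, since the $\bK_*$-linearity of all geometric ingredients (evaluation pullback, Thom isomorphism, ambidextrous duality, pushforward) is built into their definitions.
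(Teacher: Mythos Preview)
Your proposal is correct and takes essentially the same approach as the paper: the paper simply asserts that Lemma \ref{Lem:K-linear} implies the result (the lemma is stated with an immediate \qed, prefaced by ``Lemma \ref{Lem:K-linear} shows that''). You have unpacked in detail why the left/right module compatibility is the only issue, which is exactly the intended argument.
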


We now work over the coefficient ring $\Lambda_{\bK_*}$ which is the Novikov ring over the coefficients of Morava $K$-theory, so
\begin{equation}
\Lambda_{\bK_*} = \left\{ \sum_i a_i q^{t_i} \, | \, a_i \in \bK_*, t_i \in \bR, t_i \to \infty  \right\}
\end{equation}
where $\bK_* = H^*(pt;\bK) = \bF_p[v^{\pm 1}]$.  This is a $\bK_*$-module.

\begin{Definition} \label{Defn:QHK} The `naive quantum Morava $K$-theory' of $(X,\omega)$ is  the group
\begin{equation}
QH^*(X;\Lambda_{\bK_*}) := H^*(X;\bK) \otimes_{\bK_*} \Lambda_{\bK_*}.
\end{equation}
This has a $\bK_*$-linear product defined on generators by 
\begin{equation}
 a \ast b  = \sum_{\beta} \Phi_{\beta}^{\dagger}(a,b)\cdot q^{\langle [\omega],\beta\rangle}.
\end{equation}
\end{Definition}

Whilst apparently \emph{ad hoc}, this definition is equivalent to one invoking a Novikov spectrum, by the arguments used in \cite{AbouzaidBlumberg2021} in the more complicated setting of Hamiltonian Floer homology. The key point is that $\bK$ is field-like in the stable homotopy category, which provides the required Mittag-Leffler and flatness conditions.  However, the `naive' theory is \emph{not} expected to be associative. It is helpful to compare to the situation in quantum $K$-theory. Suppose $X$ is compact.  The Poincar\'e pairing 
\begin{equation}
(E,F) \mapsto \chi(E\otimes F) = \int_X ch(E\otimes F) \cdot \mathrm{Td}(X) 
\end{equation} defines a metric on $K(X)$ and the naive (or `fake') quantum $K$-ring is obtained by dualising the 3-point quantum function with respect to this metric. This is well-known to fail associativity; to obtain an associative ring, one must `correct' (i.e. $q$-deform) the metric using the two-pointed $K$-theoretic invariants, cf.  \cite{Lee:QKtheory}.  `True' quantum Morava $K$-theory  incorporates a similar deformation of the metric to obtain an associative algebra.   This is the subject of a sequel \cite{AMS}.
Note that since the classical cohomology $H^*(X;\bK)$ is not commutative for the prime $p=2$, see \cite{Boardman}, one expects to obtain an associative but non-commutative algebra $QH^*(X;\Lambda_{\bK_*})$ in general.

\begin{rem}
  Since the coefficients for the theories $\bK=K_{p^k}(n)$ with $k>1$ are no longer graded fields, they do not have K\"unneth isomorphisms in general. Therefore the virtual class
\begin{equation}
\begin{tikzcd}
H^*(X\times X;\bK) \ar[r] & H^*(\ccMbar;\bK) \ar[r, "\mathrm{vfc}"] & \bK_*
\end{tikzcd}
\end{equation}
does not give rise  to a (even naive) $\bK$-quantum product on $H^*(X;\bK)$ without further hypotheses. 
\end{rem}

\section{Review of Cheng's spectrum-level construction\label{Sec:review}}
\label{sec:revi-stable-homot}

This Section briefly recalls the setting for Cheng's results from \cite{Cheng,Cheng2} as summarised in Section \ref{Sec:Cheng}, and in particular the construction of the maps entering into the equivalence of \eqref{eqn:Cheng_equivalence}.  For a spectrum $\bK$, and a space $X$
we will write 
\begin{itemize}
\item $C_*(X;\bK)$ for the spectrum $\Sigma^{\infty}_+X \wedge \bK$, whose homotopy groups are the generalised homology groups $H_*(X;\bK)$;
\item $C^{-*}(X;\bK)$ for the mapping spectrum $F(\Sigma^{\infty}_+X,\bK)$,  whose homotopy groups are the generalised cohomology groups $H^{-*}(X;\bK)$.
\end{itemize}
For experts, we note that we omit any notation for the cofibrant and fibrant replacements of $\bK$ which are respectively applied in the constructions of $C_*(X;\bK)$ and $C^*(X;\bK)$.

We denote by $\bS$ the sphere spectrum\footnote{In Section \ref{Sec:outline} $\bS$ denoted the blow-up of $\bP^1\times\bP^1$, the base of a fibration $X \to \widetilde{P} \to \bS$. Since that fibration will not recur in this section, we hope this will not cause confusion.}.  We will sometimes suppress $\Sigma^{\infty}_+$ when talking about a suspension spectrum. A \emph{finite spectrum} is (one weakly equivalent to) a desuspension of the suspension spectrum of a finite CW complex.  

If a Lie group $G$ acts on a space $X$, we define the equivariant cochains by the Borel construction:
\begin{equation}
C^*_G(X;\bK) = C^*(EG \times_G X; \bK), \qquad C_*^G(X;\bK) = EG \times_G X \wedge \bK.
\end{equation}

 \subsection{The Thom diagonal\label{Subsec:thom_review}}
 
 We briefly recall the construction of Thom isomorphisms in extraordinary cohomology theories. 
 For this subsection, given a vector bundle $\xi: V \to X$ over a finite cell complex $X$, we denote by $Th(\xi)$ the Thom space of $\xi$ (we wrote $X^{\xi}$ for this space in the main body of this text, but we are now reserving this notation for the corresponding spectrum). The Thom space has the following naturality and stability properties:
 \begin{itemize}
 \item if $f: Y \to X$ and $\xi: V \to X$, there is a map $Th(f^*\xi) \to Th(\xi)$;
 \item given $\xi: V\to X$ and $\xi' : V' \to Y$, $Th(\xi \boxplus \xi') = Th(\xi) \wedge Th(\xi')$;
 \item if $\underline{\bR^k}$ is a trivial bundle, $Th(\xi \oplus \underline{\bR}^k) = \Sigma^k(Th(\xi))$.
 \end{itemize}
  We therefore define $X^{\xi}$,  the Thom spectrum, to be the suspension spectrum obtained from the pre-spectrum whose $j$-th space $\Sigma^j(Th(\xi))$ for each $j \in \N$.  By construction
  \begin{equation}
  X^{\xi\oplus\underline{\bR}} = \Sigma X^{\xi}
  \end{equation}
 (and we still have $X^{\xi \boxplus \xi'} = X^{\xi} \wedge X^{\xi'}$).   If $\xi$ is a virtual vector bundle with $\xi = \eta - \underline{\bR}^k$, then we define $X^{\xi}$ to be the spectrum obtained from the pre-spectrum with $j$-th space $\Sigma^{j-k} Th(\eta)$ for $j\geq k$ and the point for $j<k$.
 
\begin{Definition} The \emph{Thom diagonal} of $\xi$ is the map $\Delta_{Th}: Th(\xi) \to X \wedge Th(\xi)$ induced by the map $v \mapsto (p(v),v)$ where $p: D(V) \to X$ is the projection of the disc bundle. 
The Thom diagonal defines a map of spectra $\Delta: X^{\xi} \to \Sigma_+^{\infty}X \wedge X^{\xi}$. 
\end{Definition}
If $X$ is equipped with a $G$-action, then the Thom diagonal is equivariant.

Now fix a ring spectrum $\bK$, defining a generalised cohomology theory $H^*(\bullet;\bK)$. We say a bundle $\xi$ is $\bK$-oriented if we have a Thom class $u_\xi: X^{\xi} \to \bK$ with the property that the composite map
\begin{equation}
X^{\xi} \wedge \bK \stackrel{\Delta}{\longrightarrow}\Sigma_+^{\infty}X \wedge X^{\xi} \wedge  \bK \stackrel{u_{\xi}}{\longrightarrow} X_+ \wedge \bK \wedge \bK  \stackrel{\mu_{\bK}}{\longrightarrow} X_+ \wedge \bK
\end{equation}
is an equivalence, where $\mu_{\bK}: \bK\wedge \bK \to \bK$ is the product. On (co)homology this gives the Thom isomorphisms \eqref{eqn:thom}. 
 This coincides with the definition of $\bK$-orientation in Definition \ref{DEFN oriented vector bundle} (\cite[Chapter V, Theorem 1.15]{Rudyak}).

\subsection{Cheng's theorems\label{Sec:Cheng_review}}

We briefly recall some notions from stable equivariant homotopy theory. See \cite{LMSM} for an introduction. Let $G$ be a compact Lie group.
We fix a universe $\scrU$, i.e a real $G$-inner product space containing countably many copies of each finite-dimensional representation. Observe that the subspace $\scrU^G$ corresponding to all the trivial representations is isomorphic to  $\bR^\infty$.
Let  $G\scrU$ be the category of $G$-spectra and let $G\scrU^G$ be the subcategory indexed by $\scrU^G$ and $\scrS\scrU^G$ the subcategory of $G\scrU^G$ corresponding to spectra whose associated $G$-action is trivial (in other words, just spectra).
Let $\iota^* : G\scrU \to G\scrU^G$ be the natural restriction map and let $\iota_* : G\scrU^G \lra{} G\scrU$ be its left adjoint. Also, let $\epsilon^* : \scrS \scrU^G \lra{} G\scrU^G$ be the functor which assigns a trivial $G$-action to a spectrum and let the \emph{fixed point functor} $X \to X^G$ be its right adjoint or the right adjoint of $\iota_* \circ \epsilon^*$ depending on the context. 
The elements of $G\scrU$ corresponding to $G$-CW spectra are built out of cells, each one equal to the cone of $\Sigma^{\infty}_+ (G/H)\wedge S^k$ for some $k\in \bZ$ and $H\leq G$ a subgroup, whose attaching maps have domain the base of such cones.

In this section, we recall Cheng's Poincar\'{e} duality results for a Morava $K$-theory $\bK = K_p(n)$, formulated at the level of spectra rather than on (co)homology, with the goal of seeing that all results still hold if we set  $\bK$ to be a Morava $K$-theory $K_{p^k}(n)$, with the same proof as the case $k=1$ which Cheng considered.

\begin{Theorem}[Cheng] \label{thm:Cheng}
Let $M$ be a smooth closed $m$-manifold with a smooth action of a compact Lie group $G$. There is a map of spectra
\begin{equation}
\lambda_{G,M} : \Sigma^{\infty}_+(EG\times_G M) \to F((EG\times_G M)^{-TM\oplus \frg}, \bS)
\end{equation}
which, if the $G$-action has finite stabilisers, induces an isomorphism of groups
\begin{equation}
H_*(EG\times_G M;\bK) \cong \widetilde{H}^{-*}((EG\times_G M)^{-TM\oplus \frg};\bK).
\end{equation}
\end{Theorem}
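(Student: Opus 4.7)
The plan is to construct $\lambda_{G,M}$ via a Borel-equivariant parametrised Atiyah duality, and then to deduce that it is a $\bK$-equivalence by reducing to the ambidexterity of $\bK$ on finite groups.

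For the construction, I would equivariantly embed $M$ in a $G$-representation $V$ (Mostow--Palais), with normal $G$-bundle $\nu$ satisfying $\nu \oplus TM \cong \underline{V}$. The Pontryagin--Thom collapse $S^V \to M^\nu$ exhibits $M_+$ and $M^{-TM}$ as equivariant Spanier--Whitehead duals in the genuine $G$-equivariant stable category. Applying the Borel construction $EG_+ \wedge_G (-)$ converts this equivariant duality into a duality of Borel spectra, using the identification $T(EG \times_G M) \cong TM - \frg$ as virtual bundles (the $\frg$ factor is contributed by the orbit directions: for free actions $T(M/G) = TM/\frg$, and the Borel construction makes $G$-actions effectively free). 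This produces a natural pairing
\begin{equation}
(EG \times_G M)_+ \wedge (EG \times_G M)^{-TM \oplus \frg} \longrightarrow \bS
\end{equation}
whose adjoint is the desired map $\lambda_{G,M}$.

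To show that $\lambda_{G,M}$ induces the asserted isomorphism, I would use the fact that under the finite stabiliser hypothesis, $M$ admits a finite $G$-CW structure whose cells are all of the form $G/H \times D^k$ with $H \leq G$ finite (Illman). A suitable relative version of the Pontryagin--Thom construction ensures that both sides of $\lambda_{G,M} \wedge \bK$ send $G$-CW cofibre sequences to cofibre sequences of spectra; an induction over cells then reduces the claim to the case $M = G/H$ with $H$ finite. In that local case $EG \times_G (G/H) \simeq BH$, and the virtual bundle $-TM \oplus \frg$ is stably trivial on $G/H$ (as $T(G/H) = G \times_H \frg$, matching the trivial $\frg$ factor equivariantly), so the twisted Thom spectrum is just $\Sigma^\infty_+ BH$ and the claim reduces to showing that the canonical norm map $\Sigma^\infty_+ BH \wedge \bK \to F(\Sigma^\infty_+ BH, \bK)$ is an equivalence. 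This is precisely the ambidexterity of $\bK$ for the finite group $H$, established for the $K_p(n)$ by Greenlees--Sadofsky and reinterpreted by Hopkins--Kuhn--Ravenel; it passes without change to the iterated-extension theories $K_{p^k}(n)$ of Proposition \ref{Prop:coefficients-text}, since being an ambidextrous coefficient spectrum is closed under cofibre sequences.

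The main technical obstacle is making the cellular reduction rigorous at the spectrum level: showing that for a $G$-CW pair $(B, A)$, the parametrised Atiyah dual of the closed inclusion, twisted by $-TB \oplus \frg$, restricts on $A$ to the Atiyah dual twisted by $-TA \oplus \frg$, with the Pontryagin--Thom collapses assembled into a genuine cofibre square of Thom spectra. I would handle this by performing the whole construction fibrewise over a fixed equivariant ambient embedding $M \hookrightarrow V$, so that the normal bundles of all $G$-invariant closed submanifolds are encoded by a single ambient structure and the collapse maps organise into commuting diagrams; the remaining compatibilities then follow from standard naturality of the Pontryagin--Thom construction in the parametrised equivariant stable category.
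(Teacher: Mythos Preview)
Your proposal is essentially correct and uses the same three ingredients as the paper's account of Cheng's argument: equivariant Atiyah/Pontryagin--Thom duality, a cellular reduction to orbits $G/H$ with $H$ finite, and the Greenlees--Sadofsky ambidexterity of $\bK$ on $BH$. The organisation differs, however, and the paper's packaging is cleaner in one respect.

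In the paper (following Cheng), the construction of $\lambda_{G,M}$ is factored through the genuine $G$-equivariant stable category. One starts from the norm map
\[
EG_+ \wedge \Sigma^{-\frg}\Sigma^\infty_+ M \;\longrightarrow\; F(EG_+,\, \Sigma^{-\frg}\Sigma^\infty_+ M),
\]
takes derived $G$-fixed points, and then identifies the two sides separately: the left via the Adams isomorphism (this is where the $\frg$-shift enters) and the right via equivariant Atiyah duality. The cellular induction to $G/H$ is carried out once, inside the proof that the norm map is a $\bK$-equivalence; the final reduction to Greenlees--Sadofsky goes through a small diagram passing through the $(G\times H)$-action on $G$, rather than by directly trivialising $-T(G/H)\oplus\frg$ as you do (your trivialisation is correct, though).

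Your route bypasses the Adams isomorphism by building $\lambda_{G,M}$ directly as Borel/parametrised Atiyah duality and then running the cellular induction on $\lambda_{G,M}\wedge\bK$ itself. This is legitimate, but be careful with the phrasing ``converts this equivariant duality into a duality of Borel spectra'': the Borel construction is not monoidal in a way that preserves dualisability, and $EG\times_G M$ is essentially never dualisable in spectra. What survives is only the \emph{pairing} (equivalently, the map $\lambda_{G,M}$), exactly as you write in the next sentence; it becomes an equivalence only after smashing with $\bK$. The paper's factorisation through the norm map makes this distinction transparent---the norm map exists for any spectrum and the $\bK$-locality enters in one isolated step---and it also makes the naturality Addenda (open inclusions, change of group) easier to check, since the norm map and the Adams isomorphism are manifestly natural in the $G$-spectrum.
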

\vspace{0.2cm}

The isomorphism in the statement of the above theorem is the map on homotopy groups obtained from $\lambda_{G,M}$ by smashing with $\bK$ and composing with the natural map
\begin{equation}
     F((EG\times_G M)^{-TM\oplus \frg}, \bS) \wedge \bK \to  F((EG\times_G M)^{-TM\oplus \frg}, \bK).
\end{equation}


If $M$ is open, and $M^c$ denotes the one-point compactification, then there is a similar map
\begin{equation}\label{eqn:relative_lambda}
\lambda_{G,M} : \Sigma^{\infty}(EG\times_G M^c) \longrightarrow F((EG\times_G M)^{-TM\oplus \frg},\bS)
\end{equation}
again induces an equivalence on (co)homology over $\bK$. If $M$ is a manifold with boundary, then $\mathrm{int}(M)^c = M/\partial M$, and there is also a statement in this case, cf. \cite[Remark 1.2]{Cheng}.  We also recall the Addenda to the result, giving compatibility with `open inclusions' and `change of groups'. 

\begin{Addendum} \label{Add:relative_Cheng} \cite[Lemma 4.5.1, Proposition 4.5.2]{Cheng2} 
If $U \subset M$ is a $G$-invariant open submanifold of a closed manifold $M$ there is a homotopy commutative diagram
\begin{equation}
  \begin{tikzcd}
\Sigma_+^{\infty}(EG\times_G M) \ar[d] \ar[r,"\lambda_{G,M}"] &  F((EG\times_G M)^{-TM\oplus \frg},\bS) \ar[d] \\ 
\Sigma^{\infty} (EG\times_G U^c) \ar[r,"\lambda_{G,U}"] & F((EG\times_G U)^{-TU\oplus \frg}, \bS)    
  \end{tikzcd}
\end{equation}
where the vertical arrows are induced by the collapse map $M_+ \to U^c$ and restriction respectively.
\end{Addendum}

\begin{Addendum} \label{Add:change_of_group}
Let $M$ and $N$ be closed manifolds with smooth $G$ and $H$-actions respectively for some Lie groups $G$ and $H$.
If $M/G \cong N/H$, then there is a commutative diagram \cite[Proposition 1.3]{Cheng}
\begin{equation}
  \begin{tikzcd}
\Sigma^{\infty}_+ (EG\times_G M) \ar[r,"\lambda_{G,M}"] \ar[d,dotted] & F((EG\times_G M)^{-TM\oplus \frg},\bS) \ar[d,dotted] \\
\Sigma^{\infty}_+(EH \times_H N) \ar[r,"\lambda_{H,N}"]& F((EH\times_H N)^{-TN\oplus \frh},\bS)    
  \end{tikzcd}
\end{equation}
where the vertical maps are induced by equivalences $M/G \simeq P/(G\times H) \simeq N/H$ for the fibre-product $P$ of $M \to M/G\cong N/H \leftarrow N$.
\end{Addendum}

We now recall the construction of $\lambda_{G,M}$, and the proof that the induced map with coefficients in $\bK$ vanishes. The ingredients which enter into Cheng's construction are the norm map, Atiyah duality, and the Adams isomorphism. We review these in order:

For any $X \in G\scrU$, the norm is the composite
\begin{equation}
   EG_+ \wedge X \to S^0 \wedge X \to F(EG_+,X) 
\end{equation}
of the natural maps induced by the projection $EG_+ \to S^0$ and the identification $X = F(S^0,X)$. The key fact about the norm map that we use is the following result, originally due to Greenlees and Sadofsky \cite{Greenlees-Sadofsky} for finite groups:
\begin{Lemma}\cite[Proposition 4.2]{Cheng}  \label{lem:tate-vanishes} If $Y$ is a $G$-CW spectrum built from cells of the form $(G/H)_+\wedge S^k$ with $H\leq G$ finite, then norm map on $Y \wedge \iota_*\bK $ induces an equivalence on (derived) $G$-fixed points. \qed 
\end{Lemma}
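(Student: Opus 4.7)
The plan is to reduce, via an induction on the cellular structure of $Y$, to the case of a single free cell $(G/H)_+$ with $H$ finite, and then to invoke the Greenlees--Sadofsky vanishing of the Tate construction.

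First, both functors $Y \mapsto (EG_+ \wedge Y \wedge \iota_*\bK)^G$ and $Y \mapsto F(EG_+, Y \wedge \iota_*\bK)^G$ on the category of $G$-CW spectra carry cofiber sequences to cofiber sequences (smashing with a spectrum is exact; $F(EG_+,-)$ is exact because $EG_+$ is cofibrant; and the derived $G$-fixed point functor is exact on $G$-spectra), and the norm is a natural transformation between them. Thus, by the five lemma applied to the skeletal filtration of $Y$, it suffices to verify the statement when $Y$ is a single cell $(G/H)_+ \wedge S^k$ with $H \leq G$ finite. The suspension coordinate is inessential, so we may take $k=0$ and reduce to $Y = (G/H)_+$.

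Second, the standard change-of-groups identifications in the sense of \cite{LMSM} furnish natural equivalences
\begin{equation}
\bigl(EG_+ \wedge (G/H)_+ \wedge \iota_*\bK\bigr)^G \ \simeq \ (EH_+ \wedge \bK)^H \ =: \ \bK_{hH},
\end{equation}
\begin{equation}
F\bigl(EG_+,\ (G/H)_+ \wedge \iota_*\bK\bigr)^G \ \simeq \ F(EH_+, \bK)^H \ =: \ \bK^{hH},
\end{equation}
under which the norm map in question becomes the classical norm $\bK_{hH} \to \bK^{hH}$ for the finite group $H$ acting trivially on $\bK$. Its cofiber is the Tate spectrum $\bK^{tH}$.

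Third, we invoke the Greenlees--Sadofsky theorem: for the Morava $K$-theory spectrum $\bK = K_p(n)$ and any finite group $H$, one has $\bK^{tH} \simeq *$. For the slightly generalised coefficient spectra $\bK = K_{p^k}(n)$ from Proposition \ref{Prop:coefficients-text}, which are $k$-fold iterated extensions of $K_p(n)$, the Tate construction inherits a corresponding filtration whose associated graded pieces are Tate spectra of $K_p(n)$, hence vanish; therefore $\bK^{tH} \simeq *$ as well. Consequently the norm map is an equivalence on derived $G$-fixed points, as required.

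The main obstacle is the precise spectrum-level bookkeeping in the second step: the identification of the Borel construction and homotopy fixed points as the appropriate fixed points of smash and function spectra requires some care with the different indexing universes and with the left adjoint $\iota_*$. We follow the conventions of \cite{LMSM}, where these identifications are standard, so that our argument parallels Cheng's use of them in \cite{Cheng,Cheng2}.
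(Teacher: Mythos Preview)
Your proposal is correct and follows essentially the same approach the paper attributes to Cheng: induction over cells to reduce to $Y=(G/H)_+$, a change-of-groups identification reducing the $G$-norm to the $H$-norm for the finite subgroup $H$, and then Greenlees--Sadofsky. The paper records the second step via an explicit commutative diagram passing through the $G\times H$-action on $G$, but this is the same change-of-groups result you cite from \cite{LMSM}.

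One small remark on your third step: for the extension to $\bK=K_{p^k}(n)$, the paper does not use the filtration argument you give (though it is valid, since the Tate construction is exact), but instead invokes the more general result of Hovey--Strickland (Lemma~\ref{lem:norm_map_equivalence_K-local}) that the finite-group norm is an equivalence for any $K(n)$-local spectrum. This is strictly stronger and is what is actually used downstream in Section~\ref{Sec:chromatic}.
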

Taking $X = \Sigma^{-\frg}\Sigma^\infty_+ M \wedge \iota_* \bK$, one get an equivalence of spectra
\begin{equation}\label{eqn:alpha}
\left(EG_+ \wedge  \Sigma^{-\frg}\Sigma^\infty_+ M \wedge \iota_* \bK\right)^G \longrightarrow F(EG_+, \Sigma^{-\frg}\Sigma^\infty_+ M\wedge \iota_* \bK)^G,
\end{equation}
where it is crucial on the left hand side to take \emph{derived fixed points}, which in the setting of \cite{LMSM} are computed by replacing the given $G$-suspension spectrum by the associated $G$-$\Omega$ spectrum.

The rest of the argument is a computation of the two sides of the above equivalence. For the left hand side, the Adams isomorphism asserts that the fact the $G$ action is free (because of the presence of the wedge factor $EG_+$), yields an equivalence
\begin{equation} \label{eq:Adams}
(EG\times_G M)_+ \wedge \bK \simeq (EG_+ \wedge  \Sigma^{-\frg}\Sigma^\infty_+ M \wedge \iota_* \bK)^G.    
\end{equation}
On the right hand side, equivariant Atiyah duality gives an equivalence in $G\scrU$
\begin{equation}\label{eqn:Spanier}
\Sigma^{\infty}_+M \simeq F(M^{-TM},\bS),
\end{equation}
which, using the triviality of the $G$ action on $ \iota_*\bK $, induces equivalence of $G$-spectra
\begin{align} \label{eqn:adjointrel}
F(EG_+, \Sigma^{-\frg}\Sigma^\infty_+ M \wedge \iota_* \bK)^G & \simeq F(EG_+ \wedge \Sigma^{\frg} M^{-TM}, \iota_*\bK)^G  \\
&  \label{eqn:quotientrel}
\simeq F((EG_+\wedge \Sigma^{\frg} M^{-TM})/G, \bK) \\
& \simeq F((EG\times_G M)^{-TM\oplus \frg}, \bK)
\end{align}

Since the exact form of Addendum \ref{Add:relative_Cheng} is not stated in \cite{Cheng}, we briefly indicate its proof. First, we note that both the Adams isomorphism and the norm map are functorial with respect to maps of $G$-spectra; it thus suffices to establish the desired homotopy commutative square of Atiyah duality maps. As \cite{Cheng} formulates all constructions in the smooth setting, the Atiyah duality step is obtained from a Pontrjagin-Thom construction for the $G^2$-manifold $G\times M$.
Replacing the map
\begin{equation}
\Delta: G\times M \to M\times M, \quad (g,x) \mapsto (gx,x)
\end{equation}
by a $G^2$-equivariant embedding into a vector bundle over $M \times M$, and using equivariant Atiyah duality, gives a map $\Sigma^{\infty}_+ M \times M^{-TM\oplus \frg} \to \Sigma^{\infty}_+(G\times M)$ in a category $G^2\scrU$. Smash with $EG^2_+$ and use freeness of the domain to infer that this comes from a morphism in the category $G^2\scrU^{G^2}$, 
\begin{equation}
\Sigma^{\infty}_+(EG\times M) \wedge (EG\times M)^{-TM\oplus \frg} \to \Sigma^{\infty}_+(EG^2\times G\times M)
\end{equation}
Composing the quotient by the $G^2$ action with the collapse map from $\Sigma^{\infty}_+(EG^2\times_{G^2} (G\times M)) \to \Sigma^{\infty}_+(\mathrm{pt}) = \bS$ yields (the adjoint of) $\lambda_{G,M}$. The compatibility of $\lambda_{G,M}$ with passage to open subsets, Addendum \ref{Add:relative_Cheng}, then  follows from the compatibility of the Pontrjagin-Thom collapse map underlying Atiyah duality with restriction to open subsets.

 \subsection{Morava $K$-theories with ring coefficients}
\label{sec:morava-k-theories}
\label{Sec:more_coefficients}
The goal of this section is to recall enough basic facts from (stable) equivariant homotopy theory in order to give the construction of the Morava $K$-theories with ring coefficients based on $\bZ/(p^k)$ and establish that they satisfy the analogue of Theorem  \ref{thm:Cheng}. 

We begin with existence and complex orientability:
\begin{proof}[Proof of Proposition \ref{Prop:coefficients-text}]
  Fix a prime $p$ and let $\bZ_p$ denote the $p$-adic integers and $\bZ_{(p)} = \bZ_p \cap \bQ$.  The Johnson-Wilson spectrum $E(n)$ has coefficients $\bZ_{(p)}[v_1,\ldots,v_{n-1},v_n,v_n^{-1}]$. There is an `integral' version of Morava $K$-theory with coefficients $\bZ_p[v_n,v_n^{-1}]$, obtained as a completion of the quotient of the $E(n)$-spectrum which kills the $v_i$ with $1\leq i\leq n-1$, see e.g. \cite[Section 1.3]{Greenlees-Sadofsky}. Further quotienting by the (regular) element $p^k \in \bZ_p$ gives a theory $K_{p^{k}}(n)_*$ with the required coefficients. The existence of such a quotient as a complex-oriented theory follows from \cite[Theorem 2.6]{Strickland}, when $p>2$, and from \cite[Proposition 2.9]{Strickland} when $p=2$.  (Note that in case $p=2$ the actual construction involves a quotient of $MU_{(2)}$ which does not factor through the usual construction of $E(2)$.) In either case, we have by construction maps of spectra
  \begin{equation}\label{eqn:iterated_cone}
        \cdots  \to  K_{p^k}(n) \to K_{p^{k-1}}(n) \to \cdots \to K_{p^{2}}(n) \to  K_{p}(n)
      \end{equation}
      with fibre $K_{p}(n)$ at each step (the corresponding degree $1$ cohomology operation is the Bockstein operation $Q_0$ in the terminology of \cite[Theorem 2.2]{BakerWurgler1991}). This shows that $K_{p^k}(n)  $ is an iterated extension of $K_p(n)$, hence lie in the $K_p(n)$-local category.
\end{proof}

We now consider the properties discussed in Section \ref{Sec:Cheng}.
The summary of the construction  shows that Cheng's theorem and its Addenda hold for a spectrum $\bK$ if one knows the analogue of Lemma \ref{lem:tate-vanishes}. Cheng's proof \cite[Section 4]{Cheng} of this result is reduced to the case in which $M = G/H \times S^k$ by an induction over cells. Furthermore, there is an evident commutative diagram
\begin{equation}
\begin{tikzcd}
\left(EG_+ \wedge \Sigma^{\infty}_+ G/H \right)^{G} \arrow[rr] && F(EG_+, \Sigma^{\infty}_+ G/H) \arrow[d, "\sim"] \\
\left(E(G\times H)_+ \wedge \Sigma^{\infty}_+ G \right)^{G \times H}  \arrow[rr] \arrow[u, "\sim"] \arrow[d, "\sim"] && F(E(G\times H)_+, \Sigma^{\infty}_+ G)^{G \times H} \\ 
\left(EH_+ \wedge \bS \right)^{H}  \arrow[rr] && F(EH_+, \bS)^{H}. \arrow[u,"\sim"] 
\end{tikzcd}
\end{equation}

As noted by Cheng, this reduces the statement that the desired norm map with coefficients in a (non-equivariant) spectrum $\bK$ is an equivalence to the statement that the map
\begin{equation}
  \left(EH_+ \wedge \iota_*\bK \right)^{H} \to F(EH_+,\iota_*\bK)^{H}  
\end{equation}
is an equivalence whenever $H$ is a finite group (this is implicit in the above discussion because we assumed that the $G$ action has finite stabilisers). When $\bK$ is the usual Morava $K$-theory, then this is precisely the main result of \cite{Greenlees-Sadofsky} (which in turn builds on  Ravenel's theorem \cite{Ravenel} that $K(n)^*(BH)$ has finite rank for finite $H$). The following generalisation follows from the subsequent work of Hovey and Sadofsky \cite{HoveySadofsky1996}, but can be extracted more directly from \cite{HoveyStrickland1999}:
\begin{lemma}[Corollary 8.7 of \cite{HoveyStrickland1999}] \label{lem:norm_map_equivalence_K-local}
  If $\bK$ is a $K(n)$-local spectrum, then the norm map with $\bK$ coefficients is an equivalence for finite groups.
\end{lemma}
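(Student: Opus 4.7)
The plan is to recognise the statement as a Tate-vanishing result, reduce to the Greenlees--Sadofsky case $\bK = K(n)$, and then propagate through the $K(n)$-local category. To begin, observe that the cofibre of the norm map, after taking $H$-fixed points, is by definition the Tate spectrum $t_H(\bK)$ (where $\bK$ carries the trivial $H$-action via $\iota_*$). Thus the assertion that the norm map is an equivalence is equivalent to the assertion that $t_H(\bK) \simeq *$ for every finite group $H$ and every $K(n)$-local $\bK$.

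For the base case $\bK = K(n)$, this is the theorem of Ravenel--Greenlees--Sadofsky already invoked in Cheng's proof of Lemma \ref{lem:tate-vanishes}: Ravenel's computation shows that $K(n)^*(BH)$ is a finitely generated free $K(n)_*$-module, which via the Greenlees--Sadofsky analysis forces $t_H(K(n))$ to vanish. The next step is to propagate this from $K(n)$ to all $K(n)$-local spectra. Two structural facts make this possible: (i) for a finite group $H$, the functor $t_H(-)$ (applied to spectra with trivial $H$-action) preserves cofibre sequences and filtered homotopy colimits in its argument; and (ii) the $K(n)$-local category is generated by $K(n)$ in the sense that any $K(n)$-local spectrum can be built from copies of $K(n)$ via these operations together with $K(n)$-localisation. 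Combining (i) and (ii) with the base case reduces the vanishing of $t_H(\bK)$ to the compatibility of $t_H$ with Bousfield localisation at $K(n)$.

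The main obstacle is precisely this last compatibility: a priori, $t_H$ need not commute with $K(n)$-localisation, so one cannot naively pass from $t_H(K(n)) \simeq *$ to $t_H(L_{K(n)} X) \simeq *$ by localising inside the argument. This is resolved by the Hovey--Strickland analysis of the $K(n)$-local category \cite{HoveyStrickland1999}: for finite $H$, the Tate construction on trivial-action $H$-spectra already takes values in the $K(n)$-local category when restricted to $K(n)$-local inputs, and the key input is that the $K(n)$-local dualisable objects (in particular $L_{K(n)}\Sigma^\infty_+ BH$) have the formal properties needed to push the Greenlees--Sadofsky vanishing through the localisation. Thus one concludes, as asserted, that the norm map with coefficients in any $K(n)$-local $\bK$ is an equivalence, exactly as needed in Cheng's argument for the generalised form of Theorem \ref{thm:Cheng} used throughout this paper.
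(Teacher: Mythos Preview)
The paper does not give a proof of this lemma at all: it is stated as a citation of Corollary 8.7 of Hovey--Strickland (with a remark that it also follows from Hovey--Sadofsky), and the text moves on immediately. So there is no ``paper's own proof'' to compare against; you have written a sketch where the paper has only a reference.

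That said, your sketch is worth commenting on. The reformulation as Tate vanishing and the base case $\bK = K(n)$ via Greenlees--Sadofsky are correct. But the middle portion, where you invoke that the $K(n)$-local category is ``generated by $K(n)$'' under cofibres and filtered colimits and then try to propagate Tate vanishing through these operations, is a detour that creates the very obstacle you then have to talk yourself out of. The Tate functor does \emph{not} commute with $K(n)$-localisation in general, so building an arbitrary $K(n)$-local $\bK$ out of copies of $K(n)$ via colimits and then re-localising does not directly give $t_H(\bK) \simeq *$.

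The actual Hovey--Strickland argument is more direct and bypasses this: Ravenel's finiteness of $K(n)^*(BH)$ implies that $L_{K(n)}\Sigma^\infty_+ BH$ is dualisable (indeed self-dual) in the $K(n)$-local category. For any dualisable object $D$, the canonical map $D \wedge X \to F(DD, X)$ is an equivalence for \emph{all} $X$ in that category; specialising to $D = L_{K(n)} BH_+$ and identifying the source and target with the two sides of the norm map gives the result in one stroke. You do eventually mention the dualisability of $L_{K(n)}\Sigma^\infty_+ BH$ in your final paragraph, but it is presented as a technical patch rather than the organising principle. If you rewrite the argument with dualisability front and centre, the ``main obstacle'' paragraph becomes unnecessary.
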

We conclude that the analogue of Lemma \ref{lem:tate-vanishes} holds for $K(n)$-local theories, and hence that so does Theorem \ref{thm:Cheng}. By Proposition \ref{Prop:coefficients-text}, proved at the beginning of this section, this applies, in particular, to the theories $\bK = K_{p^k}(n)$.

\section{Additive splitting for complex orientable theories\label{Sec:chromatic}}

In this section we prove Theorem \ref{thm:main_generalised}.
Since we build upon Cheng's work which we explained in the previous section, we use classical models of spectra as in \cite{Adams1974}.

\begin{rem} \label{rem:could_upgrade}
  One could alternatively upgrade the constructions of Cheng to the setting of orthogonal spectra; for finite group actions, this is implemented in \cite{AbouzaidBlumberg2021}, with explicit spectrum level constructions. These methods can be extended to the study of compact Lie group actions. Such an extension would make it possible to describe the compatibility of Cheng's construction with ring structures \cite{BlumbergPersonal}, and give more formal proofs of the results of this section. In particular, in Section \ref{Sec:nullhomotopy} below, we work with the homological reformulation of our results from Section \ref{Sec:vfc_homological} to avoid proving that Cheng's duality map, for a $K(n)$-local spectrum $\bE$, is a map of $\bE$-modules.
\end{rem}

Since complex orientable generalised cohomology theories $\bE$ have Gysin maps and corresponding Wang sequences, the Serre spectral sequence for $P_{\phi} \to S^2$ degenerates with $\bE$-coefficients exactly when the sweepout map from Corollary \ref{Cor:main} vanishes, i.e. it is equivalent to vanishing of 
\begin{equation}
\delta_{\phi}: H_i(X;\bE) \to H_{i+1}(X;\bE)
\end{equation}
associated to the loop $\phi: S^1\to \Diff(X)$.  As in Section \ref{Sec:Splitting}, going from this to additive splitting of the $\bE$-cohomology of $P_{\phi}$ requires a stronger conclusion, which involves splitting the restriction map.  In this section, we shall study this splitting at the level of spectra.
\medskip

The sweepout is associated to the map of spaces 
\begin{equation}
\Phi: S^1 \times X \to X, \qquad (\theta,x) \mapsto \phi(\theta)(x).
\end{equation}
We can reconstruct $P_\phi$ from this map as the (homotopy) pushout in the diagram
\begin{equation}
  \begin{tikzcd}
    S^1 \times X \ar[r,"\Phi"] \ar[d] & X \ar[d] \\
   X \ar[r] & P_\phi,
  \end{tikzcd}
\end{equation}
where the left vertical map is the projection to the second factor.

Since $\Phi|_{\{1\}\times X} = \id$, by subtracting the second projection from $\Phi$, the sweepout defines a \emph{stable sweepout} map 
\begin{equation}
\eta_{\phi}: S^1 \wedge X_+ \wedge \bS \to  X_+ \wedge \bS
\end{equation}
on the corresponding suspension spectra. The different notation for suspension spectra presages the fact that we shall presently discuss the same construction with coefficients in a spectrum more general than the sphere spectrum $\bS$. Note that $\eta_\phi$ is null homotopic if and only if $\Phi$ is stably homotopy equivalent to the projection to $X$. Using the above diagram, we conclude:
\begin{Lemma} 
If the stable sweepout $\eta_\phi$ is null homotopic then $P_{\phi}$ is stably homotopy equivalent to $S^2 \times X$. \qed
\end{Lemma}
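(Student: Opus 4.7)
The plan is to apply $\Sigma^\infty_+$ to the homotopy pushout defining $P_\phi$ and recognise the result as a mapping cone of the stable sweepout. Since $\Sigma^\infty_+$ preserves homotopy colimits, we obtain
\begin{equation}
\Sigma^\infty_+ P_\phi \;\simeq\; \mathrm{hocolim}\bigl(\Sigma^\infty_+ X \xleftarrow{\mathrm{pr}_{2,+}} \Sigma^\infty_+(S^1 \times X) \xrightarrow{\Phi_+} \Sigma^\infty_+ X\bigr).
\end{equation}
The inclusion $\{1\}\times X \hookrightarrow S^1 \times X$ is a split cofibration (the splitting being the second projection), producing the stable decomposition $\Sigma^\infty_+(S^1\times X) \simeq \Sigma^\infty_+ X \vee (S^1 \wedge \Sigma^\infty_+ X)$. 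Under this splitting, both $\mathrm{pr}_{2,+}$ and $\Phi_+$ restrict to the identity on the $\Sigma^\infty_+ X$ summand (using $\phi(1) = \id_X$), while on the $S^1 \wedge \Sigma^\infty_+ X$ summand $\mathrm{pr}_{2,+}$ is null and $\Phi_+$ is precisely $\eta_\phi$ by the construction of the stable sweepout.

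Next, I would compute this pushout in the stable category. The homotopy pushout above is equivalent to the cofibre of the map $(\mathrm{pr}_{2,+},\Phi_+): \Sigma^\infty_+(S^1\times X) \to \Sigma^\infty_+ X \vee \Sigma^\infty_+ X$. The change of coordinates $(a,b) \mapsto (a,b-a)$ on the target (which is an automorphism since a wedge in spectra is a biproduct) replaces this map by the block-diagonal sum of $\id: \Sigma^\infty_+ X \to \Sigma^\infty_+ X$ (the first summand) and $\eta_\phi: S^1 \wedge \Sigma^\infty_+ X \to \Sigma^\infty_+ X$ (the second summand). Since the cofibre of the identity is zero, one obtains
\begin{equation}
\Sigma^\infty_+ P_\phi \;\simeq\; \mathrm{cofib}(\eta_\phi).
\end{equation}

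Finally, if $\eta_\phi$ is null homotopic, this cofibre splits as $\Sigma^\infty_+ X \vee (S^2 \wedge \Sigma^\infty_+ X) \simeq \Sigma^\infty_+ X \wedge S^2_+ \simeq \Sigma^\infty_+(S^2 \times X)$, which is the required stable equivalence. The argument is essentially formal; the only point requiring care is the change-of-coordinates step at the spectrum level, but this is immediate because both codomain summands are copies of the same spectrum $\Sigma^\infty_+ X$ and the manipulation uses only the additive structure of the stable category. Thus there is no substantial obstacle here, merely careful bookkeeping with the stable splitting.
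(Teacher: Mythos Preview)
Your proof is correct and follows essentially the same approach as the paper: both use the description of $P_\phi$ as the homotopy pushout of $X \xleftarrow{\mathrm{pr}} S^1 \times X \xrightarrow{\Phi} X$ and the fact that $\Sigma^\infty_+$ preserves homotopy colimits. The paper's argument is terser --- it simply notes that $\eta_\phi$ null-homotopic is equivalent to $\Phi_+$ being stably homotopic to the projection, so the stable pushout coincides with that for $S^2\times X$ --- whereas you carry out the equivalent computation more explicitly by identifying $\Sigma^\infty_+ P_\phi \simeq \mathrm{cofib}(\eta_\phi)$ via the block-diagonalisation trick.
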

\begin{rem} \label{rem:sharpen}
Working in a structured context, the above result can be sharpened using different methods: all perspectives are equivalent by Koszul duality \cite{BlumbergMandell2011,HessShipley2016}, and the description of local systems in terms of holonomy \cite{Waldhausen1982}. In the most geometric perspective cf. \cite{Clapp1981}, one assembles the suspension spectra of the fibres of the projection $P_{\phi} \to S^2$ into a parametrised spectrum over $S^2$. A null homotopy of  $\eta_\phi$ is then equivalent to a trivialisation of this parametrised spectrum.

  The datum of a parametrised spectrum is encoded by the holomony action of the suspension spectrum of the based loop space on the fibre, i.e. by equipping $X \wedge \bS$ with the structure of a module over $\Omega S^2 \wedge \bS$; by the James construction, cf. \cite[Section 4J]{hatcheralgebraictopology}, \cite{cohenstableproofs},\cite[Section VII.5]{LMSM}, $\Omega S^2$ is a free algebra on $S^1$, so that this module structure is encoded by a single map $ S^1 \wedge X_+ \wedge \bS \to  X_+ \wedge \bS$, which is precisely $\eta_\phi$.

One can express the desired data entirely in terms of the total space $P_\phi$ and the base $S^2$ (rather than the fibre $X$ and the based loop space of $S^2$). As the base of our fibration is simply connected, this may be done in two equivalent ways, which are related by duality: on the homological side, recall that the diagonal map equips every space with the structure of a coalgebra, which is a structure inherited by suspension spectra. The suspension spectrum $ (P_{\phi})_+ \wedge \bS$ is then a comodule spectrum over $S^2_+ \wedge \bS$, which is equivalent to the comodule $ \left( S^2 \times X \right)_+ \wedge \bS $ if and only if $\eta_\phi$ is null homotopic. Dually, the spectrum $F(S^2_+; \bS)$ which gives rise to cohomology is naturally a ring spectrum, and pullback makes $F((P_{\phi})_+, \bS)$ into a module over it, and the existence of an isomorphism of modules with $F(\left(S^2 \times X\right)_+ , \bS) $ is then equivalent to the triviality of $\eta_\phi$. 
\end{rem}

 We shall be using an analogous result in the homotopy category of $\bE$-modules, for a ring spectrum $\bE$ associated to a generalised cohomology theory. Our result is stated in terms of the composition of $\eta_\phi$ with the unit map from $\bS$ to  $\bE$
\begin{equation} \label{eq:stable_sweepout-E}
\eta_\phi \wedge \bE :  S^1 \wedge X_+ \wedge \bS \to  X_+ \wedge \bS \to  X_+ \wedge \bE.
\end{equation}

   The next result can be implemented in a many different setting, but we will use it in the context of the homotopy category of spectra (as in \cite{Adams1974}): the spectrum $\bE$ is equipped with a product which is homotopy associative and homotopy unital.

\begin{lemma} \label{Lem:null}
There is a bijection between the set of homotopy classes of null homotopies for $\eta_\phi \wedge \bE$, and homotopy classes of equivalences of $\bE$-modules
\begin{equation}
    (P_{\phi})_+ \wedge \bE  \cong X_+ \wedge \bE \vee \Sigma^2 X_+ \wedge \bE .
  \end{equation}
  (The right hand side is naturally equivalent to $\left(S^2 \times X\right)_+ \wedge \bE$.)
\end{lemma}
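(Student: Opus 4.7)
The plan is in two steps: first identify $(P_\phi)_+ \wedge \bE$ as the $\bE$-module cone of $\eta_\phi \wedge \bE$, and then invoke the general correspondence between null-homotopies of $f$ and splittings of its cofibre $C \simeq N \vee \Sigma M$ in a stable homotopy category.

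Starting from the pushout description of $P_\phi$ recalled immediately before the lemma, I would use the fact that the inclusion $j\colon X \hookrightarrow S^1 \times X$, $x \mapsto (1,x)$, is a common section of the projection $\pi_2$ and of $\Phi$. This produces a stable splitting $\Sigma^{\infty}_+(S^1\times X) \simeq \Sigma^{\infty}_+ X \vee (S^1 \wedge \Sigma^{\infty}_+ X)$, on which both $\pi_2$ and $\Phi$ restrict to the identity on the first summand, while on the second summand $\pi_2$ is zero and $\Phi$ is by definition the stable sweepout $\eta_\phi$. The homotopy pushout of $\pi_2$ and $\Phi$ then fits into a cofibre sequence
\begin{equation}
\Sigma^{\infty}_+(S^1\times X) \xrightarrow{(\pi_2,-\Phi)} \Sigma^{\infty}_+X \vee \Sigma^{\infty}_+X \to \Sigma^{\infty}_+ P_\phi,
\end{equation}
and collapsing the antidiagonal $\Sigma^{\infty}_+X \to \Sigma^{\infty}_+X\vee \Sigma^{\infty}_+X$ coming from the first summand reduces this to a cofibre sequence
\begin{equation}
S^1 \wedge \Sigma^{\infty}_+X \xrightarrow{\eta_\phi} \Sigma^{\infty}_+X \to \Sigma^{\infty}_+ P_\phi.
\end{equation}
Smashing with $\bE$ (which preserves cofibre sequences) gives the analogous cofibre sequence of $\bE$-modules, with connecting map $\delta \colon (P_\phi)_+\wedge \bE \to \Sigma^2 X_+\wedge\bE$.

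For the second step, I would invoke the standard fact that for a cofibre sequence $M \xrightarrow{f} N \xrightarrow{i} C \xrightarrow{\delta} \Sigma M$ in a stable homotopy category, homotopy classes of null-homotopies of $f$ biject with homotopy classes of retractions $r\colon C \to N$ of $i$, via the long exact sequence of mapping sets $[-,N]$ (a null-homotopy of $f$ is precisely a lift of $\id_N$ through $i^*$). Any retraction $r$ assembles with $\delta$ into a map $(r,\delta)\colon C \to N \vee \Sigma M$, which is an equivalence by the five-lemma applied to the cofibre sequences of $C$ and $N \vee \Sigma M$; conversely, any such splitting yields a retraction, and hence a null-homotopy, and these two constructions are mutually inverse on homotopy classes. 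Specialising to our cofibre sequence of $\bE$-modules gives the desired bijection.

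The main obstacle is that in the classical framework of \cite{Adams1974} the homotopy category of $\bE$-modules is not treated model-categorically, so one must verify that the bijection between homotopy classes of spectrum-level maps $A \to \bE \wedge B$ and $\bE$-module maps $\bE \wedge A \to \bE \wedge B$ is compatible with the long exact sequence invoked above, and that smashing with $\bE$ sends the cofibre sequence of spectra above to a cofibre sequence in the relevant homotopy category. Since all objects involved are free $\bE$-modules $\bE \wedge \Sigma^{\infty}_+Y$, mapping sets may be computed as homotopy groups of spectra $F(\Sigma^{\infty}_+Y', \bE \wedge \Sigma^{\infty}_+Y)$, and the required Puppe sequence follows formally; as observed in Remark \ref{rem:could_upgrade}, a completely clean treatment is available after upgrading to orthogonal spectra.
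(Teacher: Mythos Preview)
Your proof is correct and follows essentially the same strategy as the paper's: identify the relevant cofibre sequence and then use the long exact sequence of maps into $X_+ \wedge \bE$ to pass between null-homotopies and retractions. The paper phrases the second step abstractly for a general map $f\colon X \to Y$ of spectra, leaving the identification of the cofibre of $X_+ \to (P_\phi)_+$ with $\Sigma^2 X_+$ (and the connecting map with $\Sigma \eta_\phi$) implicit; you instead derive this cofibre sequence explicitly from the pushout description, which is a useful addition. The one place the paper is cleaner is in handling the $\bE$-module structure: rather than working in a homotopy category of $\bE$-modules and worrying about its foundations, the paper simply invokes the free--forgetful adjunction (a map of $\bE$-modules $Y\wedge\bE \to \bM$ is the same as a map of spectra $Y \to \bM$) to reduce everything to ordinary maps of spectra. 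You do state this adjunction in your ``main obstacle'' paragraph, but you frame it as something to verify rather than as the device that dissolves the obstacle entirely; adopting the paper's viewpoint here would let you drop most of that paragraph.
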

\begin{proof}
It is convenient to work in a more general context: let $f: X \to Y$ be a map of spectra, and $\bE$ a ring spectrum. We seek to prove that an $\bE$-module splitting of the induced map
  \begin{equation} \label{eq:map_free_E-modules}
    X \wedge \bE  \to Y \wedge \bE
  \end{equation}
  is equivalent to the datum of a null-homotopy of the composite map of spectra
  \begin{equation}\label{eq:null_homotopy_cofibre}
    \mathrm{Cofibre}(f) \to \Sigma X \to \Sigma X \wedge \bE.    
  \end{equation}
There are two steps to prove this: first,  we reformulate Equation \eqref{eq:null_homotopy_cofibre} as a nullhomotopy for the map
  \begin{equation}
    \mathrm{Fibre}(f) \to X \wedge \bE 
  \end{equation}
 by using the triangulated structure of the category of spectra.  Second, by considering the long exact sequence associated to taking maps to $X\wedge \bE$, such a null-homotopy is equivalent to the datum of a map
  \begin{equation}
        Y \to X \wedge \bE, 
  \end{equation}
 with the property that the composite
  \begin{equation}
    X \to Y \to X \wedge \bE
  \end{equation}
  is homotopic to the natural map $X \to X \wedge \bE$ induced by the unit. This is equivalent to an $\bE$-module splitting in Equation \eqref{eq:map_free_E-modules} because the the datum of a map of $\bE$-modules from $Y \wedge \bE$ to an $\bE$-module $\bM$ is equivalent to the datum of a map of spectra from $Y$ to $\bM$.
\end{proof}

 We stress that the above result provides a criterion for a splitting of modules in terms of a map of spectra. 

The proof of Theorem \ref{thm:main_generalised} can now be summarised as follows:
\begin{enumerate}
\item the arguments of sections \ref{Sec:outline} and \ref{Sec:Morava_virtual_class}, combined with Lemma \ref{Lem:null},  show that 
the stable sweepout map  with target $ X_+ \wedge \bK$ is nullhomotopic, when $\bK$ is a $K(n)$-local theory;
\item results related to the `chromatic fracture square', in particular a theorem due to Hovey \cite[Theorem 3.1]{Hovey}, show that the sweepout is null after wedging with the $p$-localisation of complex cobordism;
\item The fact that every abelian group injects in the product of its $p$-localisations then yields a null-homotopy of the sweepout map in complex cobordism;
\item the null-homotopy for complex cobordism yields a null-homotopy for any complex-oriented spectrum.
\end{enumerate}
The details are presented in the rest of this section after a brief review of localisation.

\subsection{Localization}

Fix a spectrum $E$. Recall that a spectrum $Y$ is $E$-local if $[P,Y] = 0$ whenever $P$ is $E$-acyclic, i.e. $P \wedge E$ is contractible. There is a Bousfield localization operation $X \mapsto L_E(X)$ on spectra with the following key properties:

\begin{itemize}
\item $L_E(X)$ is characterised by a universal property which in particular says that any map from $X$ to an $E$-local spectrum factors through $L_E(X)$;
\item if $X$ is finite, then $L_E(X\wedge  \bK) = L_E(X) \wedge  \bK$.
\end{itemize}

For the second property,  compare to \cite[Proposition 9.23]{Lawson2020}, which shows that it also holds if $\bK$ is finite type, meaning its homotopy groups are finitely generated in every degree.

We now recall how Bousfield localisation extends both the notions of localisation and completion of abelian groups: If $G$ is an abelian group, the Moore spectrum $SG$ is characterised by having homotopy groups $\pi_i(SG) = 0$ for $ i<0$, $G$ for $i=0$ and $H_i(SG) = 0$ for $i>0$. Let $\Z_p$ denote the $p$-adic integers and $\Z_{(p)} = \Z_p \cap \bQ$. 

\begin{Example} \label{ex:localize}
The $p$-localization $X_{(p)}$ of a spectrum $X$ is the Bousfield localisation with respect to $S\bZ_{(p)}$. 
This has homotopy groups $\pi_*(X) \otimes \bZ_{(p)}$. 
\end{Example}

We shall use the following standard result to conclude the existence of a null-homotopy from its existence after $p$-localisation:
\begin{Lemma} \label{lem:localisation-injective}
  If $Y$ is a finite spectrum, and $\bE$ is arbitrary, the natural map
  \begin{equation}
    [Y,\bE] \to \prod_{p}  [Y, \bE_{(p)}]   
  \end{equation}
is injective.
\end{Lemma}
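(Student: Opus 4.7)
The plan is to reduce the statement to the elementary fact that every abelian group $A$ injects into the product $\prod_p A_{(p)}$ of its $p$-localisations, applied to the abelian group $A = [Y,\bE]$.

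First, I would establish that for a finite spectrum $Y$ the natural map
\begin{equation}
[Y,\bE]_{(p)} \longrightarrow [Y,\bE_{(p)}]
\end{equation}
is an isomorphism. Both sides are cohomology theories in the variable $Y$ (on the category of finite spectra). On spheres $Y=\bS^n$ the left side is $\pi_{-n}(\bE)\otimes \bZ_{(p)}$, while the right side is $\pi_{-n}(\bE_{(p)})=\pi_{-n}(\bE)\otimes\bZ_{(p)}$ by the computation of homotopy groups of the $p$-localisation noted in Example \ref{ex:localize}; under the natural comparison map these agree. A five-lemma induction on the number of cells then gives the isomorphism for all finite $Y$. (Alternatively, one can invoke the property recalled before Example \ref{ex:localize} that $L_E(Y\wedge \bK)=L_E(Y)\wedge \bK$ when $Y$ is finite, applied with $\bK=\bE$ and $E=S\bZ_{(p)}$, together with $\bE_{(p)}=\bE\wedge S\bZ_{(p)}$ after smashing with $\bE$.)

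Using this identification, the map in question becomes the natural map of abelian groups
\begin{equation}
A \, \longrightarrow \, \prod_p A\otimes_{\bZ}\bZ_{(p)}, \qquad A := [Y,\bE].
\end{equation}
I would then prove directly that this map is injective. Suppose $a \in A$ lies in the kernel. Then for every prime $p$ there exists an integer $n_p$ coprime to $p$ with $n_p\cdot a = 0$. Consider the annihilator ideal $\mathrm{Ann}(a)\subset \bZ$, which is principal, $\mathrm{Ann}(a)=(m)$. If $m=0$, then no nonzero integer kills $a$, contradicting the existence of any such $n_p$. If $m>0$, pick a prime divisor $p$ of $m$; then $n_p\in(m)$ forces $p\mid n_p$, contradicting $\gcd(n_p,p)=1$. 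Hence $a=0$.

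The only non-routine point is the identification $[Y,\bE_{(p)}]\cong [Y,\bE]_{(p)}$ for finite $Y$; once this is in hand, the rest is a short elementary argument about abelian groups. No additional chromatic or equivariant input is needed beyond finiteness of $Y$ and the standard behaviour of Bousfield $p$-localisation recalled in Example \ref{ex:localize}.
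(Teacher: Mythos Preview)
Your proof is correct and follows essentially the same approach as the paper: first identify $[Y,\bE_{(p)}]\cong [Y,\bE]_{(p)}$ for finite $Y$ by checking on spheres and inducting over cells, then use that any abelian group injects into the product of its $p$-localisations. You additionally spell out the elementary proof of this last fact, which the paper simply cites.
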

\begin{proof}
  For any spectrum $Y$, there is a map
\begin{equation}
[Y, \bE]_{(p)} \to [Y, \bE_{(p)}]. 
\end{equation}
For spheres this is an isomorphism, because localisation of spectra acts by localisation on homotopy groups, cf. Example \ref{ex:localize};  an induction over cells then shows that this is an isomorphism for all finite spectra.  The result now follows from the fact that any abelian group injects into the direct product of its localisations.  
\end{proof}

The result above shows that a map with finite domain and arbitrary target is nullhomotopic exactly when all its $p$-localizations are nullhomotopic.   (Compare to the corresponding unstable result for spaces in \cite[Theorem 13.1.1]{May-Ponto}.)  
The $p$-localisation of $MU$ plays a particularly important role in our work, because the first step in the construction of the Morava $K$-theories is the following result:
\begin{thm}[Quillen \cite{Quillen1969}] \label{theoremlocalizationBP}
  The $p$-localisation of $MU$ decomposes as a finite wedge of shifts of the Brown-Peterson spectrum $BP$. \qed
\end{thm}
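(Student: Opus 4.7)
The plan is to construct $BP$ as a wedge summand of $MU_{(p)}$ via a cohomological idempotent coming from $p$-typicalization of formal group laws, and then use the structure of the Lazard ring after $p$-localization to extend this splitting into a full wedge decomposition. I would break the argument into three pieces.

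First, I would recall Quillen's identification of $MU$ with the universal complex-orientable cohomology theory, which identifies $MU^*(\bC\bP^\infty) = MU^*[[x]]$ with classifying data for formal group laws and gives an isomorphism $MU_* \cong L$ with the Lazard ring. After inverting integers prime to $p$, Cartier's theorem provides a canonical strict isomorphism from any formal group law to a $p$-typical one, and this construction is idempotent. Applied to the universal example over $MU_{(p)*}$, it yields a multiplicative, idempotent ring endomorphism of the cohomology theory $MU_{(p)}^*(-)$, which I would then lift to an idempotent self-map $e : MU_{(p)} \to MU_{(p)}$ in the homotopy category of ring spectra (using that $MU_{(p)}$ represents complex orientations).

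Second, I would split this idempotent: since the homotopy category of spectra is idempotent-complete (one can form the mapping telescope of $e$), the image of $e$ defines a spectrum $BP$ equipped with a map $BP \to MU_{(p)}$ and a retraction. A direct computation identifies $\pi_* BP$ with $\bZ_{(p)}[v_1, v_2, \ldots]$ where $|v_i| = 2(p^i - 1)$, carrying the universal $p$-typical formal group law.

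Third, to obtain the full wedge decomposition, I would use the algebraic fact (Hazewinkel/Araki) that $MU_{(p)*} \cong BP_* \otimes_{\bZ_{(p)}} \bZ_{(p)}[x_i : i \geq 1,\ i \neq p^k - 1]$ as graded rings. For each monomial $x^\alpha$ in the extra generators I would choose a representing map $\Sigma^{2|\alpha|} BP \to MU_{(p)}$ using the $BP$-module structure on $MU_{(p)}$, and assemble these into a map from a wedge of shifts of $BP$ into $MU_{(p)}$. This map is an equivalence because it is so on homotopy groups (which are finitely generated in each degree, matching the interpretation that the wedge is finite in each dimension), and an equivalence on homotopy groups between bounded-below spectra of finite type is a weak equivalence.

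The main obstacle is the second step: constructing the idempotent on the level of the spectrum $MU_{(p)}$ and verifying that its splitting has the expected homotopy ring. Working purely with the cohomology ring $MU_{(p)}^*$ is not enough; one must know that the natural transformation $\epsilon$ promotes to a well-defined self-map of the representing spectrum, and that the resulting summand has a coherent multiplication. This is handled either via Landweber exactness of $MU_{(p)}$ (interpreting the idempotent through its action on the Landweber-exact moduli of formal groups) or, more concretely, by exhibiting explicit polynomial generators via Hazewinkel's formulas and verifying the idempotent relation on generators in the coefficient ring.
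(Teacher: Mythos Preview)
The paper does not give a proof of this statement: it is quoted as a classical result of Quillen, with the citation \cite{Quillen1969} and a \qed\ marking it as input rather than something established here. So there is no ``paper's own proof'' to compare against.

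Your outline is essentially the standard argument and is correct in its broad strokes: Quillen's identification $MU_* \cong L$, Cartier's $p$-typicalization yielding an idempotent on $MU_{(p)}$, splitting that idempotent in the stable homotopy category to produce $BP$, and then using the polynomial decomposition $MU_{(p)*} \cong BP_*[x_i : i \neq p^k - 1]$ together with the $BP$-module structure on $MU_{(p)}$ to promote this to a wedge decomposition of spectra. One remark: the wedge is not literally finite but only of finite type (finitely many summands in each degree); you correctly flag this, and it is what the paper's slightly loose phrasing ``finite wedge of shifts'' should be read as meaning. Your discussion of the ``main obstacle'' is also well placed --- lifting the cohomological idempotent to a spectrum map is the substantive step, and it is exactly what Quillen's construction of operations on $MU$ provides.
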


\subsection{A nullhomotopy for $K(n)$-local theories\label{Sec:nullhomotopy}}
The proof of Theorem \ref{thm:main} shows that the map $H^*(P_{\phi};\bK) \to H^*(X;\bK)$ is surjective when $\bK = K_{p^k}(n)$ is a Morava $K$-theory.  In fact, since the argument hinges on Lemma \ref{lem:norm_map_equivalence_K-local}, we have in fact proved the corresponding surjectivity whenever $\bK$ is a complex oriented $K(n)$-local cohomology theory. Our goal in this section is to explain how to lift this statement to a splitting of $\bK$ modules. Following Section \ref{Sec:vfc_homological} and Remark \ref{rem:could_upgrade}, we will state our result on the homological side.

As a starting point, we first have to explain how the homological virtual class from Section \ref{Sec:vfc_homological}, which arises as a cap product, lifts to spectra. The key point is that for any ring spectrum $\bE$ and space $Y$, the cap product  $\bE^*(Y) \otimes \bE_*(Y) \to \bE_*(Y)$ is itself induced by a map of spectra
\begin{equation}
\begin{tikzcd}
F(Y,\bE) \wedge Y \wedge \bE \arrow[r,"\Delta_Y"] &  F(Y,\bE) \wedge Y\times Y \wedge \bE  \arrow[r, "\mathrm{eval}"] & Y \wedge \bE \wedge \bE \arrow[r, "\mu_\bE"] &  Y \wedge \bE.
\end{tikzcd}
\end{equation}

\begin{lemma} \label{lem:hom_split}
There is a map of $\bK$-modules $(P_{\phi})_+\wedge \bK \to X_+ \wedge \bK$ which up to homotopy splits the natural map induced by inclusion.
\end{lemma}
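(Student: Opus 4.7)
The plan is to upgrade the push--pull argument of Section \ref{Sec:conclusions} from its effect on $\bK$-homology to an actual homotopy class of $\bK$-module maps at the spectrum level, using the homological reformulation of the virtual fundamental class in Section \ref{Sec:vfc_homological}. Every arrow in the $\bK$-homology composition \eqref{eqn:split by cap product} (applied to $P_\phi$ in place of $\widetilde{P}$ and to $\ccMbar_\phi$ in place of $\ccMbar_h$) is induced by a spectrum-level map of $\bK$-modules: Poincar\'e duality for the closed $\bK$-oriented manifold $P_\phi$ (whose tangent bundle inherits a stable almost complex structure from the symplectic fibration structure, and so is $\bK$-orientable whenever $\bK$ is complex-oriented) is an equivalence of $\bK$-modules; pullback and pushforward along continuous maps of spaces are $\bK$-module maps at the spectrum level; and cap product with the homological virtual class $[\vfc]$ of \eqref{eqn:homological_vfc} is a $\bK$-module map via the spectrum-level formula \eqref{eqn:cap} for cap product recalled just before the statement of this lemma. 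Composing these ingredients defines the candidate splitting $s:(P_\phi)_+\wedge\bK \to X_+\wedge\bK$.

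To verify that $s\circ i \simeq \mathrm{id}$ as maps of $\bK$-modules, where $i:X_+\wedge\bK \to (P_\phi)_+\wedge\bK$ is the map induced by the inclusion of a fibre, I would rerun the argument of Section \ref{Sec:conclusions} at the spectrum level. The geometric input is the regularity of the moduli space at infinity: by Lemma \ref{Lem:good_open_chart}, over the neighborhood $W_\infty$ of $\infty \in B$ the evaluation map on $\ccMbar_\infty$ is literally a diffeomorphism onto a diagonal, and consequently the analogous spectrum-level construction built from $\ccMbar_\infty$ realises the identity map of $X_+\wedge\bK$, with no virtual-class correction. The constructions for $\ccMbar_\phi$, $\ccMbar_\infty$, and $\ccMbar_h$ are all related by restriction from the corresponding construction on $\widetilde{P}$, exactly as in the commutative diagram \eqref{eqn pushpull compatibility}. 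Lifting that diagram to a homotopy-commutative diagram of $\bK$-module maps identifies $s\circ i$ with the identity up to homotopy.

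The main technical point is the spectrum-level analog of Lemma \ref{lemma restricting to submanifold}: the push--pull construction should be natural under restriction of the evaluation map to a $\bK$-oriented submanifold of the target. At the level of (co)homology groups this is standard from Addendum \ref{Add:relative_Cheng-homological} together with Thom-isomorphism naturality. To lift this to the homotopy category of $\bK$-modules, one observes that each ingredient used to manufacture $[\vfc]$ in \eqref{eqn:homological_vfc}---namely the relative fundamental class $[\scrT_\phi,\partial\scrT_\phi]$, the lifted equivariant Euler class $e_G(E)$, and the cap product pairing---is spectrum-level natural under pullback along equivariant open embeddings. The required homotopy-commutativity then follows by direct inspection in the homotopy category of $\bK$-modules, without appealing to Cheng's equivariant duality as a map of $\bK$-modules (which is exactly the issue flagged in Remark \ref{rem:could_upgrade}).
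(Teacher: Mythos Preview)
Your proposal is correct and follows essentially the same route as the paper. The paper likewise constructs the splitting map as the spectrum-level composite of Poincar\'e duality on $P_\phi$, pullback, cap with the homological virtual class $[\vfc]$, and projection (see Equation \eqref{eq:splitting_map}), and verifies the splitting property by factoring through $\tilde{P}$ via the larger moduli space $\ccMbar_h$ and then invoking regularity over $W_\infty$, exactly as you outline; your identification of the spectrum-level analogue of Lemma \ref{lemma restricting to submanifold} as the key technical point, and of the homological reformulation as the device that sidesteps the $\bK$-module issue flagged in Remark \ref{rem:could_upgrade}, matches the paper's argument.
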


\begin{proof} We consider a global Kuranishi chart $(\scrK_{\phi},\partial \scrK_{\phi})$ for spaces of curves with two marked points on $P_{\phi}$ and $(X \times S^2)_{hor}$ which is smooth and $\bK$-oriented, as in Section \ref{Sec:vfc_homological}.  View the homological virtual class on this moduli space as a map
\begin{equation}
\begin{tikzcd}
\bS \arrow[r] & \left(\scrK_{\phi} \times_G EG\right)_+ \wedge \bK,
\end{tikzcd}
\end{equation}
and denote by $[\vfc]$ is image under the evaluation map
\begin{equation}
  \scrK_{\phi} \times_G EG \to     P_{\phi} \times (X \times S^2)_{hor}.
\end{equation}
Suppressing shifts, and trivialisations of Thom spectra, we consider the following map of $\bK$-modules:
\begin{multline} \label{eq:splitting_map}
  P_{\phi,+} \wedge \bK \stackrel{\mathrm{Dual}}{\longrightarrow}   F(P_+;\bK) \stackrel{\mathrm{pull}}{\longrightarrow} F \left( (P_\phi \times (X\times S^2)_{hor})_+;\bK \right) \\
 \stackrel{{\frown [\vfc]} }{\longrightarrow}  \left(P_\phi \times(X\times S^2)_{hor}\right)_+ \wedge \bK  \stackrel{\mathrm{project} }{\longrightarrow}  (X\times S^2)_{hor,+} \wedge \bK.
  \end{multline}

  In order to show that this map gives the desired splitting, one reproduces the framework of Section \ref{sec:splitting} at the level of spectra, replacing the moduli spaces with their global charts, and formulating all results homologically. Briefly, we consider the thickening $\scrK_h$ for the moduli space $ \ccMbar_h$ of stable spheres, in the section class, with $2$ marked points, one of which is constrained to map to $(S^2 \times X)_{hor}) $ and one to $\tilde{P}$. The compatibility of the fundamental classes of $\scrK_h $ and $\scrK_\phi $
  shows that the composition in Equation \eqref{eq:splitting_map} factors as
  \begin{equation}
    P_{\phi,+} \wedge \bK \to  \tilde{P}_{+} \wedge \bK \to (X\times S^2)_{hor,+} \wedge \bK. 
  \end{equation}
  On the other hand, the inclusion of $X$ in $P_{\phi}$ fits in a homotopy commutative diagram
  \begin{equation}
    \begin{tikzcd}
      X \ar[r] \ar[d]& X \times S^2 \ar[d] \\
      P_{\phi} \ar[r] & \tilde{P},
    \end{tikzcd}
  \end{equation}
  so it suffices to show that the composite
  \begin{equation}
        X_+ \wedge \bK  \to  \left(X \times S^2\right)_+  \wedge \bK \to  \tilde{P}_{+} \wedge \bK \to (X\times S^2)_{hor,+} \wedge \bK. 
      \end{equation}
      is homotopic to the map induced by the inclusion $X \to X\times S^2$ as the fibre over $\infty$.  This follows from the fact that Diagram \eqref{eqn pushpull compatibility} is induced by a diagram of $\bK$-modules.
\end{proof}

Using Lemma \ref{Lem:null}, we conclude:
\begin{Corollary}\label{Cor:null_for_K-local}
  If $\bK$ is a $K(n)$-local complex oriented spectrum, then
  \begin{equation}
        S^1 \wedge X_+ \wedge \bS \to X_+ \wedge \bK
      \end{equation}
      is nullhomotopic.  \qed
\end{Corollary}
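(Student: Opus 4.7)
The plan is to combine Lemma \ref{lem:hom_split} with Lemma \ref{Lem:null} essentially as a direct deduction, after checking that the hypotheses of the two lemmas can be linked. First, observe that the proof of Lemma \ref{lem:hom_split} only uses that $\bK$ is a complex-oriented $K(n)$-local spectrum: the ingredients are (i) the existence of global Kuranishi charts for the relevant moduli spaces of curves, with the appropriate submersion and orientation properties (established in Sections \ref{Sec:curves} and \ref{Section Orientations of KC}, independent of $\bK$), and (ii) the availability of Cheng's ambidexterity theorem together with its two Addenda, which by the discussion in Section \ref{Sec:Cheng_review} and in particular Lemma \ref{lem:norm_map_equivalence_K-local} hold for any $K(n)$-local spectrum. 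Hence for such a $\bK$ one obtains an $\bK$-module map
\begin{equation}
(P_\phi)_+ \wedge \bK \longrightarrow X_+ \wedge \bK
\end{equation}
splitting the natural restriction map up to homotopy.

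Next, one applies Lemma \ref{Lem:null} in the reverse direction to upgrade this splitting to a null-homotopy of the stable sweepout with $\bK$-coefficients. The lemma gives a bijection between homotopy classes of null-homotopies of $\eta_\phi \wedge \bK \colon S^1 \wedge X_+ \wedge \bS \to X_+ \wedge \bK$ and homotopy classes of $\bK$-module splittings of the inclusion $X_+ \wedge \bK \to (P_\phi)_+ \wedge \bK$; the splitting just produced therefore yields the required null-homotopy of \eqref{eq:stable_sweepout-E}.

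The main point to take care of is that Lemma \ref{Lem:null} is stated in terms of the cofibre formulation $(P_\phi)_+ \wedge \bK \simeq X_+ \wedge \bK \vee \Sigma^2 X_+ \wedge \bK$, whereas Lemma \ref{lem:hom_split} produces a splitting of the inclusion of the fibre. These two formulations are interchangeable by the triangulated-category reformulation used in the proof of Lemma \ref{Lem:null}: the cofibre of $\Sigma^\infty_+ S^1 \wedge \Sigma^\infty_+ X \to \Sigma^\infty_+ X$ (using $\Phi$) is stably equivalent to $(P_\phi)_+$, so a nullhomotopy of the sweepout with $\bK$-coefficients corresponds exactly to splitting the inclusion $X_+ \wedge \bK \to (P_\phi)_+ \wedge \bK$. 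There is no further obstacle: once one has verified that Cheng's results apply verbatim to $K(n)$-local complex-oriented theories (which we did in Section \ref{Sec:Cheng_review}), the corollary is a formal consequence of the two preceding lemmas.
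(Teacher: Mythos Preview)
Your proposal is correct and follows exactly the paper's approach: the paper's proof consists of the single line ``Using Lemma \ref{Lem:null}, we conclude'' immediately after Lemma \ref{lem:hom_split}, which is precisely the two-step deduction you give. Your additional remarks verifying that Lemma \ref{lem:hom_split} applies to any $K(n)$-local complex-oriented $\bK$, and that the cofibre/fibre formulations of the splitting match, are helpful elaborations of points the paper leaves implicit.
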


\subsection{Chromatic splitting}

For a fixed prime $p$, the Morava $K$-theories $K(n) =K_p(n)$ are obtained as quotients of the $p$-local spectrum $BP$ which arises as one of the isomorphic wedge summands of the $p$-localization $MU_{(p)}$, see \cite{Ravenel:book}.

We shall use the following result about the $K(n)$-localisations of these spectra:
\begin{Theorem}[Theorem 3.1 of \cite{Hovey}]\label{Thm:Hovey}
The natural map $BP \to \prod_{n=1}^{\infty} L_{K(n)} BP$ is the inclusion of a wedge summand.
\end{Theorem}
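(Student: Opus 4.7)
The approach is to use chromatic convergence to replace the target product by the chromatic tower, and then construct a compatible family of splittings at each finite stage. Since $BP$ is harmonic, the Hopkins--Ravenel chromatic convergence theorem gives an equivalence $BP \simeq \operatorname{holim}_n L_n BP$, where $L_n$ denotes Bousfield localisation at the Johnson--Wilson theory $E(n)$. The natural map of the statement then factors as
\begin{equation}
BP \simeq \operatorname{holim}_n L_n BP \longrightarrow \operatorname{holim}_n \prod_{m \le n} L_{K(m)} BP \longrightarrow \prod_{n \ge 1} L_{K(n)} BP,
\end{equation}
so the task reduces to producing, compatibly in $n$, a retraction of the natural map $L_n BP \to \prod_{m \le n} L_{K(m)} BP$, and then passing to the homotopy limit.

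At each finite chromatic height, $L_n BP$ is reconstructed from the $L_{K(m)} BP$ with $m \le n$ by iterated chromatic fracture: one has a homotopy pullback square with vertices $L_n BP$, $L_{n-1} BP$, $L_{K(n)} BP$ and $L_{n-1} L_{K(n)} BP$. The argument proceeds by induction on $n$: given a retraction $\prod_{m\le n-1} L_{K(m)} BP \to L_{n-1} BP$, one extends it to a retraction for $L_n BP$ provided the natural gluing map $L_{n-1} BP \to L_{n-1} L_{K(n)} BP$ is a split monomorphism. Producing this splitting, canonically enough to assemble across all $n$, is the main obstacle of the proof and is where the distinctive structure of $BP$ enters (rather than an arbitrary $K(n)$-local spectrum, for which such a splitting need not exist).

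To construct the splitting of $L_{n-1} BP \to L_{n-1} L_{K(n)} BP$, one exploits the Devinatz--Hopkins presentation $L_{K(n)} BP \simeq (E_n \wedge BP)^{h\mathbb{G}_n}$, where $\mathbb{G}_n$ is the extended Morava stabiliser group acting continuously on Morava $E$-theory. Smashing with $BP$ preserves enough algebraic structure that the continuous cohomology spectral sequence computing $\pi_*L_{K(n)} BP$ is transparent: the comodule $(E_n)_*BP$ splits off a trivial summand corresponding to the $BP$-linearity, and $E(n-1)$-localising picks out precisely the $L_{n-1} BP$-part. The cohomological finiteness of $\mathbb{G}_n$ ensures convergence, and one obtains the desired canonical retraction. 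Assembling these retractions across all $n$ and taking the homotopy limit yields the splitting of the original map; convergence of the limit is guaranteed by the vanishing of the relevant $\operatorname{lim}^1$-term, which follows from the Mittag-Leffler property of the chromatic tower $\{L_n BP\}$ together with the finite chromatic support of each $L_{K(n)} BP$.
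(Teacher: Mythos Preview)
The paper does not give its own proof of this theorem: it is quoted as a black box from Hovey's paper and used immediately afterwards to factor the $BP$-sweepout map. So there is no ``paper's proof'' to compare against. Your task, if you want to include a proof, is to reproduce or summarise Hovey's argument, not the authors'.

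On your proposal itself: the overall architecture (chromatic convergence for $BP$, then induction up the chromatic tower via fracture squares) is the natural one and is indeed close to how Hovey organises things. But the content of the proof lies entirely in the step you treat most loosely. You assert that $L_{n-1}BP \to L_{n-1}L_{K(n)}BP$ splits, and justify this by invoking the Devinatz--Hopkins model $L_{K(n)}BP \simeq (E_n \wedge BP)^{h\mathbb{G}_n}$ and saying that ``$(E_n)_*BP$ splits off a trivial summand corresponding to the $BP$-linearity''. This is not an argument: you have not said what the trivial summand is, why $E(n-1)$-localising it recovers $L_{n-1}BP$, or why the homotopy-fixed-point spectral sequence behaves as you need. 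Hovey's actual input at this step is the concrete computation that $\pi_*L_{K(n)}BP$ is the $I_n$-adic completion of $BP_*$, concentrated in even degrees; the splittings are then produced and controlled algebraically. Your Devinatz--Hopkins detour obscures rather than supplies this.

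Second, the compatibility across $n$ is exactly where one must be careful, and you dismiss it with ``canonically enough to assemble''. The splittings at each finite stage must be chosen to commute with the tower maps, and the $\lim^1$-vanishing you invoke at the end requires justification (Mittag--Leffler for the tower $\{L_nBP\}$ is not automatic and is not what you need anyway; you need it for the tower of retracts). If you want to keep this argument, you should either make the splittings explicit enough that compatibility is visible, or cite Hovey for the details.
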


The sweepout map associated to $BP$ thus factors as
\begin{equation}
  S^1 \wedge X_+  \wedge \bS \to X_+ \wedge \prod_{n=1}^{\infty} L_{K(n)} BP \to  X_+ \wedge BP,
\end{equation}
so that the null-homotopies from Corollary \ref{Cor:null_for_K-local} yield a null-homotopy of $\eta_{\phi}\wedge BP_p $. Decomposing $MU_p$ into its wedge summands (Theorem \ref{theoremlocalizationBP}), we conclude:
\begin{Corollary} \label{cor:p-local-MU}
For each prime $p$, the stable sweepout $\eta_{\phi}$  is nullhomotopic after smashing with $ MU_{(p)}$. \qed
\end{Corollary}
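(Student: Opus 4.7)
The plan is to reduce the claim to Corollary \ref{Cor:null_for_K-local} by successively factoring the stable sweepout through spectra with progressively simpler chromatic structure. The starting point is that $S^1 \wedge X_+$ is a finite spectrum (since $X$ is a closed manifold), so maps out of it commute with arbitrary wedge summand extraction and with infinite products up to homotopy.

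First I would use Quillen's theorem (Theorem \ref{theoremlocalizationBP}) to reduce the problem from $MU_{(p)}$ to $BP$: since $MU_{(p)}$ splits as a finite wedge of shifts of $BP$, a null-homotopy of $\eta_\phi \wedge BP$ yields one for $\eta_\phi \wedge MU_{(p)}$ componentwise. Next I would invoke Hovey's theorem (Theorem \ref{Thm:Hovey}) to express $BP$ as a wedge summand of $\prod_{n=1}^{\infty} L_{K(n)} BP$, so that $\eta_\phi \wedge BP$ factors (up to homotopy) through $\eta_\phi \wedge \prod_{n=1}^{\infty} L_{K(n)} BP$. Thus it suffices to exhibit a null-homotopy of the latter composite.

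Because the domain $S^1 \wedge X_+$ is finite, the homotopy set $[S^1 \wedge X_+, X_+ \wedge \prod_n L_{K(n)} BP]$ maps into $\prod_n [S^1 \wedge X_+, X_+ \wedge L_{K(n)} BP]$, and it is enough to produce null-homotopies in each factor (the product assembles them). For each $n\geq 1$, the spectrum $L_{K(n)} BP$ is $K(n)$-local by construction and is complex oriented (being a localisation of the complex oriented spectrum $BP$). Hence Corollary \ref{Cor:null_for_K-local} applies and produces a null-homotopy of $\eta_\phi \wedge L_{K(n)} BP$ for each $n$.

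The main obstacle I anticipate is the passage to the infinite product in Hovey's theorem: one needs that the null-homotopies in each $K(n)$-local factor can be assembled simultaneously, which is formally fine because the domain is finite but requires some care in the homotopy category. Once this is in place, the chromatic filtration does the rest and Corollary \ref{cor:p-local-MU} follows without further geometric input. The remaining steps of the introduction's outline (passing from $MU_{(p)}$ to $MU$ by Lemma \ref{lem:localisation-injective}, then from $MU$ to a general complex oriented $\bE$) are then purely formal consequences and do not enter into the proof of this particular corollary.
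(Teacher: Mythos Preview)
Your proposal is correct and follows essentially the same route as the paper: reduce $MU_{(p)}$ to $BP$ via Quillen's splitting, then use Hovey's theorem to factor $\eta_\phi \wedge BP$ through $X_+ \wedge \prod_n L_{K(n)} BP$, and finally apply Corollary~\ref{Cor:null_for_K-local} componentwise. The paper orders the two reductions the other way round (Hovey first, then Quillen) but this is immaterial; your concern about assembling the null-homotopies across the infinite product is exactly the step the paper leaves implicit, and your resolution via finiteness of $S^1 \wedge X_+$ is the right one.
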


\subsection{Null homotopy for complex oriented cohomology theories}
\label{sec:null-homot-compl}

We are now ready to complete the proof of our main result:
\begin{proof}[Proof of Theorem \ref{thm:main_generalised}]
  Since $ S^1 \wedge X_+$ is finite, Lemma \ref{lem:localisation-injective} and Corollary \ref{cor:p-local-MU} imply that the $MU$ sweepout map
  \begin{equation}
      S^1 \wedge X_+ \to  X_+ \wedge MU.
    \end{equation}
    is null homotopic.  On the other hand, the assumption that $\bE$ is a complex oriented cohomology theory includes the unitality statement that the unit map $\bS \to \bE$ factors through $MU$, so the result follows from the factorisation
   \begin{equation}
      S^1 \wedge X_+ \to  X_+ \wedge MU \to X_+ \wedge \bE.
  \end{equation}
\end{proof}

\subsection{Steenrod operations}\label{Sec:steenrod}

We conclude by explaining the proof of Corollary \ref{Cor:Steenrod}.

Let $\scrA_p = \Hom_{\bS}(H\bZ/p, H\bZ/p)$ denote the Steenrod algebra, where $\Hom_{\bS}$ denotes graded morphisms in the stable homotopy category; equivalently, $\scrA_p = \pi_*(F(H\bZ/p, H\bZ/p))$.  We let $\scrA_p^{\vee}$ be  its $\bZ/p$-linear dual.   It is well known that $\scrA_p^{\vee} = \pi_*(H\bZ/p \wedge H\bZ/p) = (H\bZ/p)_*(H\bZ/p)$.  Let $X$ be a finite CW complex. The Steenrod action
\begin{equation}
\scrA_p \otimes H^*(X;\bZ/p) \to H^*(X;\bZ/p)
\end{equation}
dualises to a coaction
\begin{equation}
H_*(X;\bZ/p) \to \scrA_p^{\vee} \otimes H_*(X;\bZ/p)
\end{equation}
which is given more explicitly by
\begin{equation}
(H\bZ/p)_*(X) \to (H\bZ/p)_*(\bS\wedge X) \to (H\bZ/p)_*(H\bZ/p \wedge X) \to \scrA_p^{\vee} \otimes (H\bZ/p)_*(X)
\end{equation}
where the second arrow comes from the unit $\bS \to H\bZ/p$. A map of spaces $X\to Y$ gives a map $H_*(X;\bZ/p) \to H_*(Y;\bZ/p)$ which is a map of coalgebras.  We are interested in a situation where we instead have a map $X \to Y\wedge MU$.  Let 
\begin{equation}
(\scrA_p^{\vee})_{MU} = \pi_*(H\bZ/p \wedge_{MU} H\bZ/p)
\end{equation}
which comes with a natural quotient map
\begin{equation}
\scrA_p^{\vee} = \pi_*(H\bZ/p \wedge H\bZ/p) \to  \pi_*(H\bZ/p \wedge_{MU} H\bZ/p) = (\scrA_p^{\vee})_{MU}.
\end{equation}
Essentially from the definition, we then have:

\begin{Lemma}\label{lem:senger}
Let $X$ and $Y$ be finite spectra and suppose we have an equivalence $X \wedge MU \simeq Y \wedge MU$ of $MU$-modules.
Then the induced equivalence $X\wedge H\bZ/p \simeq Y \wedge H\bZ/p$ coming from the $MU$ module structure on $H\bZ/p$ yields an isomorphism of homotopy groups
\begin{equation}
H_*(X;\bZ/p) \to H_*(Y;\bZ/p)
\end{equation}
which respects the Steenrod coaction by the quotient $(\scrA_p^{\vee})_{MU}$. \qed
\end{Lemma}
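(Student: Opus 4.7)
The plan is to reinterpret both the mod-$p$ homology groups and the Steenrod coaction in $MU$-linear terms, so that an $MU$-module equivalence automatically intertwines the two structures. The starting observation is that $H\bZ/p$ is canonically an $MU$-module (via the complex orientation of $MU$), and in particular
\begin{equation}
H\bZ/p \wedge Z \, \simeq \, H\bZ/p \wedge_{MU} (MU \wedge Z)
\end{equation}
naturally in any spectrum $Z$.  Applied to $Z=X$ and $Z=Y$, this identifies $H_*(X;\bZ/p)$ with $\pi_*(H\bZ/p \wedge_{MU}(X\wedge MU))$ and similarly for $Y$.  Under these identifications, the map induced on mod-$p$ homology by the hypothesized $MU$-module equivalence $X \wedge MU \simeq Y \wedge MU$ is simply the one obtained by smashing with $H\bZ/p$ over $MU$.

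Next, I would reformulate the $(\scrA_p^{\vee})_{MU}$-coaction at the level of spectra.  The ordinary $\scrA_p^{\vee}$-coaction on $H_*(Z;\bZ/p)$ is induced by the map $H\bZ/p \wedge Z \to H\bZ/p \wedge H\bZ/p \wedge Z$ obtained from the unit $\bS \to H\bZ/p$; composing with the canonical quotient $H\bZ/p \wedge H\bZ/p \to H\bZ/p \wedge_{MU} H\bZ/p$ produces the refined coaction landing in $(\scrA_p^{\vee})_{MU}$.  The point is that this refined coaction is equivalently realized, for any $MU$-module $M$, by the map
\begin{equation}
H\bZ/p \wedge_{MU} M \, \longrightarrow \, (H\bZ/p \wedge_{MU} H\bZ/p) \wedge_{MU} M
\end{equation}
induced by the unit $MU \to H\bZ/p$ on the first smash factor.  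On homotopy groups this yields the desired coaction provided one has a K\"unneth isomorphism $\pi_*((H\bZ/p \wedge_{MU} H\bZ/p) \wedge_{MU} M) \cong (\scrA_p^{\vee})_{MU} \otimes_{\bZ/p} \pi_*(H\bZ/p \wedge_{MU} M)$, which in turn follows from the fact that $H\bZ/p \wedge_{MU} H\bZ/p$ is free as a left $H\bZ/p$-module (a consequence of the standard Milnor-type computation of the $MU$-linear dual Steenrod algebra).

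With these reformulations in hand, the lemma is purely formal: the spectrum-level coaction map displayed above is manifestly natural in the $MU$-module $M$, so an $MU$-module equivalence $X \wedge MU \simeq Y \wedge MU$ fits into a commutative square with the coactions on both sides, and the induced map on homotopy groups is the claimed comodule isomorphism.  The main obstacle is foundational rather than conceptual: making sense of $\wedge_{MU}$ requires a point-set model of $MU$ as a structured ring spectrum, and as flagged in Remark~\ref{rem:could_upgrade} this is cleanest in a framework such as orthogonal spectra; granted that framework, the verifications (naturality of the unit map, the $K\"{u}nneth/$flatness of $H\bZ/p \wedge_{MU} H\bZ/p$ over $H\bZ/p$, and agreement of the two descriptions of the $(\scrA_p^{\vee})_{MU}$-coaction) are routine but should be spelled out.
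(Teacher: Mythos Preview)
Your proposal is correct and follows exactly the approach the paper has in mind: the paper marks the lemma with \qed and prefaces it with ``Essentially from the definition, we then have,'' and your write-up simply unpacks what that means---the identification $H\bZ/p \wedge Z \simeq H\bZ/p \wedge_{MU}(MU \wedge Z)$ and the $MU$-linear description of the refined coaction via $H\bZ/p \wedge_{MU} M \to (H\bZ/p \wedge_{MU} H\bZ/p)\wedge_{MU} M$. The foundational caveat you flag (needing a structured model for $\wedge_{MU}$, or else working with the coequalizer description) is exactly the content of the Remark the paper places immediately after the lemma.
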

\begin{rem}
  The main references \cite{Lawson:BP,Senger} that we shall use in this section  formulate many constructions in terms of a modern category of spectra (orthogonal or symmetric spectra), and others in terms of the classical notions of $A_\infty$ and $E_\infty$ ring spectra (e.g. as in \cite{LMSM}). In order to directly quote their results, one would therefore have to improve Lemma \ref{Lem:null} to this context; this is completely straightforward.

  Alternatively, one may note that the computations that we use are established by proving the degeneration of a $\mathrm{Tor}$ spectral sequence; this implies that these computation apply to the setting of homotopy modules, where $ H\bZ/p \wedge_{MU} H\bZ/p$, is defined to be the co-equaliser
  \begin{equation}
        H\bZ/p \wedge MU \wedge H\bZ/p \rightrightarrows  H\bZ/p \wedge H\bZ/p.
  \end{equation}
\end{rem}

 Suppose $X\wedge MU$ is given as an $MU$-module, so $X\wedge H = (X\wedge MU) \wedge_{MU} H$ is an $H \wedge_{MU} H$ comodule. We have a commutative diagram
\begin{equation}
\begin{tikzcd}
H_*(X;\bF_p) \ar{r} & H_*(X;\bF_p) \otimes_{\bF_p} \pi_*(H\bZ/p \wedge_{MU} H\bZ/p) \\
H_*(X;\bF_p) \ar[equal]{u} \ar{r} & H_*(X;\bF_p) \otimes_{\bF_p} \pi_*(H\bZ/p \wedge_{\bS} H\bZ/p) \ar{u}
\end{tikzcd}
\end{equation}
Suppose first that $p>2$ is an odd prime.  Senger \cite{Senger} shows that the map
\begin{equation} \label{eqn:compare}
 \pi_*(H\bZ/p \wedge H\bZ/p) \to \pi_*(H\bZ/p \wedge_{MU} H\Z/p) 
 \end{equation}
 is explicitly a map 
\begin{equation} \label{eqn:pass_to_MU_Senger}
\bF_p[\xi_1,\xi_2,\ldots] \otimes\Lambda_{\bF_p}(\tau_1,\tau_2,\ldots)\to  \Lambda_{\bF_p}(\sigma m_i \, | \, i \neq p^k-1) \otimes \Lambda_{\bF_p}(\tau_1,\tau_2,\ldots)
\end{equation}
sending $\tau_i \mapsto \tau_i$ and $\xi_j \mapsto 0$. Let $\phi_i: \pi_*(H\bZ/p \wedge H\bZ/p) \to \bF_p$ be the element which selects $\tau_i$ (i.e. sends $\tau_i \mapsto 1$ and kills other basis generators).  Note that $\phi_i  \in (\scrA_p^{\vee})^{\vee} = \scrA_p$. Then $\phi_i$ factors as
\begin{equation}
\begin{tikzcd}
\pi_*(H\bZ/p \wedge_{\bS} H\bZ/p) \ar{d} \ar{r} & \bF_p \\
\pi_*(H\bZ/p \wedge_{MU} H\bZ/p) \ar{ru} & 
\end{tikzcd}
\end{equation}
which means (taking $\bF_p$-linear duals) that the action of $\phi_i \in \scrA_p$ on $H^*(X;\bF_p)$ is determined by the $MU$-module $X \wedge MU$. The element $\phi_i$ is exactly $Q_i$ in the Milnor basis \cite{Milnor:Steenrod} of $\scrA_p$, determined inductively in terms of the Bockstein $\beta$ and power operations $P^j$ via 
\begin{equation}
Q_0 = \beta; \ Q_{i+1} = [P^{\,p^i}, Q_i].
\end{equation}
Thus, the equivalence of Theorem \ref{thm:main} entwines the action of the subalgebra of the Steenrod algebra generated by the $\{Q_i\}$. 

The map \eqref{eqn:pass_to_MU_Senger} is understood from the naturality of the K\"unneth spectral sequence 
\begin{equation}
\mathrm{Tor}^{MU_*}(H\bF_p,H\bF_p) \Rightarrow \pi_*(H\bF_p \wedge_{MU} H\bF_p)
\end{equation}
with respect to the map \eqref{eqn:compare}, and the argument given above relies only on the easier fact that $\tau_i \mapsto \tau_i + $  lower order (decomposable) elements. 
When $p=2$, the corresponding spectral sequence and its target were studied in \cite{Lawson:BP, Tilson}. It again degenerates, and there are isomorphisms
\begin{equation}
\pi_*(H\bF_2 \wedge H\bF_2) = \scrA_2^* = \bF_2[\xi_1,\xi_2,\ldots], \qquad |\xi_i| = 2^i-1
\end{equation}
and 
\begin{equation}
\pi_*(H\bF_2 \wedge_{MU} H\bF_2) = \Lambda_{\bF_2}(\sigma(x_1),\sigma(x_2),\ldots), \qquad |\sigma(x_i)| = 2i+1
\end{equation}
for which the map $\pi_*(H\bF_2 \wedge H\bF_2) \to \pi_*(H\bF_2 \wedge_{MU} H\bF_2) $ sends 
\begin{equation} \label{eqn:Tilson}
\xi_i \mapsto \sigma(x_{2^{i-1}-1}) + \mathrm{decomposables}.
\end{equation}
This map is injective on indecomposables (but not globally injective; we know from the example of $S^2\times S^2$ versus the Hirzebruch surface $F_1$ that $Sq^2$ does not lie in the image of the dual $\pi_*(F_{MU}(H\bF_2,H\bF_2)) \to \scrA_2$).  In any case, \eqref{eqn:Tilson}
  is enough to deduce that the element $\phi_i \in (\scrA_2^{\vee})^{\vee}$ which picks out $\xi_i$ again factors through the $MU$-module. Again from \cite{Milnor:Steenrod}, one identifies $\phi_i$ as the element $Q_i$, and this completes the proof of Corollary \ref{Cor:Steenrod}.  
 
\bibliographystyle{plain}
\bibliography{mybib}

\end{document}